\documentclass{amsart}
\usepackage{amsmath}
\usepackage{enumerate}
\usepackage{amsmath,amsthm,amscd,amssymb}

\usepackage{latexsym}
\usepackage{upref}
\usepackage{verbatim}

\usepackage[mathscr]{eucal}
\usepackage{dsfont}

\usepackage{graphicx}
\usepackage[colorlinks,hyperindex,pagebackref,hypertex]{hyperref}

\newtheorem{theorem}{Theorem}%[section]
\newtheorem{proposition}[theorem]{Proposition}
\newtheorem{corollary}[theorem]{Corollary}
\newtheorem{lemma}[theorem]{Lemma}
\newtheorem{definition}[theorem]{Definition}
\newtheorem{remark}[theorem]{Remark}
\newtheorem{hypothesis}[theorem]{Hypothesis}

% Place your definitions here
\chardef\bslash=`\\ % p. 424, TeXbook

\hfuzz1pc % Don't bother to report overfull boxes if overage is < 1pc

\newcommand{\wh}{\widehat}

\newcommand{\whA}{T}
\newcommand{\whB}{T_{\cB}^\kappa}

\newcommand{\dA}{{\dot A}}

\newcommand{\bbR}{{\mathbb{R}}}

\newcommand{\bbC}{{\mathbb{C}}}

\newcommand{\ran}{\text{\rm{Ran}}}

\newcommand{\ti}{\tilde  }

\newcommand{\dom}{\text{\rm{Dom}}}

\newcommand{\calA}{{\mathcal A}}
\newcommand{\calB}{{\mathcal B}}
\newcommand{\calC}{{\mathcal C}}
\newcommand{\calD}{{\mathcal D}}

\newcommand{\calH}{{\mathcal H}}

\newcommand{\calR}{{\mathcal R}}

\newcommand{\linspan}{\mathrm{lin\ span}}

\newcommand{\mM}{\mathfrak M}

\newcommand{\DdA}{\dom(\dA)}

\renewcommand{\Im}{\text{\rm Im}}

%    Ar-T definitions

\def\sM{{\mathfrak M}}   \def\sN{{\mathfrak N}}

\def\bA{{\mathbb A}}   \def\dB{{\mathbb B}}   \def\dC{{\mathbb C}}

      \def\dR{{\mathbb R}}

   \def\cB{{\mathcal B}}   
      
   \def\cH{{\mathcal H}}

\def\RE{{\rm Re\,}}
\def\Ker{{\rm Ker\,}}

\def\wh{\hat}

\def\uphar{{\upharpoonright\,}}

\DeclareMathOperator{\IM}{Im}

\newcommand{\eval}[2][\right]{\relax
  \ifx#1\right\relax \left.\fi#2#1\rvert}

\begin{document}

\title[Perturbations of Donoghue classes]{Perturbations of Donoghue classes and inverse problems for L-systems}

%    Information for first author
\author{S. Belyi}
\address{Department of Mathematics\\ Troy State University\\
Troy, AL 36082, USA\\
%URL: {\sf http://spectrum.troy.edu/$\sim$belyi/}
}
\curraddr{}
\email{sbelyi@troy.edu}
%\thanks{Thank you.}

%\author{K. A. Makarov}
%\address{Department of Mathematics, University of Missouri, Columbia, MO 63211, USA}
%\email{makarovk@missouri.edu}

%    Information for second author
\author{E. Tsekanovski\u i}
\address{Department of Mathematics\\ Niagara University, NY 14109\\ USA}
\email{tsekanov@niagara.edu}
%\thanks{Thank you very much.}

\subjclass[2010]{Primary: 81Q10, Secondary: 35P20, 47N50}

\dedicatory{Dedicated with  great pleasure to  K.A. Makarov on his $60^{th}$ birthday.}

\keywords{L-system, transfer function, impedance function,  Herglotz-Nevanlinna function, Weyl-Titchmarsh function, Liv\v{s}ic function, characteristic function,
Donoghue class, symmetric operator, dissipative extension, von Neumann parameter, unimodular transformation.}

\begin{abstract}
 We study linear perturbations of Donoghue classes of scalar Her\-glotz-Nevanlin\-na functions by a real parameter $Q$  and their representations as impe\-dance  of  conservative L-systems.  Perturbation classes $\sM^Q$, $\sM^Q_\kappa$, $\sM^{-1,Q}_\kappa$ are introduced  and for each class the realization theorem is stated and proved. We use a new approach that leads to explicit new formulas describing  the von Neumann  parameter of the main operator of a realizing L-system and the unimodular  one corresponding to a self-adjoint extension of the symmetric part of the main operator.
The dynamics of the presented formulas as functions of $Q$ is obtained. As a result, we substantially enhance the existing  realization theorem for scalar Herglotz-Nevanlinna functions.
 In addition, we  solve the inverse problem (with uniqueness condition) of recovering the perturbed L-system  knowing the perturbation parameter $Q$  and the corresponding non-perturbed L-system. Resolvent formulas describing  the resolvents of main operators of perturbed L-systems  are presented.
A concept of a unimodular transformation as well as conditions of transformability of one perturbed L-system into another one are discussed. Examples that illustrate the obtained results are presented.

 \end{abstract}

\maketitle

\tableofcontents

\section{Introduction}\label{s1}

Let $T$ be a non-symmetric, densely defined, and closed linear operator in a Hilbert space $\cH$ such that its resolvent set $\rho(T)$ is not empty. We also assume that
$\dom(T)\cap \dom(T^*)$ is dense and that the restriction $T|_{\dom(T)\cap \dom(T^*)}$ is a closed symmetric operator $\dA$ with finite and equal deficiency indices.
Let $\calH_+\subset\calH\subset\calH_-$ be the rigged Hilbert space associated with $\dot A$ (see the next section for details).%, i.e., $\calH_+=\dom(\dA^*)$ with $\|f\|_+^2=\|f\|^2+\|\dA^*f\|^2$, $f\in\calH_+$.

One of the main objectives  of the current paper is the study of  \textit{L-systems} of the form
\begin{equation}
\label{col0}
 \Theta =
\left(%
\begin{array}{ccc}
  \bA    & K & J \\
   \calH_+\subset\calH\subset\calH_- &  & E \\
\end{array}%
\right),
\end{equation}
where the \textit{state-space operator} $\bA$ is a bounded linear operator from
$\calH_+$ into $\calH_-$ such that  $\dA \subset T\subset \bA$, $\dA^* \subset T^* \subset \bA$,
$K$ is a bounded linear operator from the finite-dimensional Hilbert space $E$ into $\calH_-$,  $J=J^*=J^{-1}$ is a self-adjoint isometry  on $E$
such that $\IM\bA=KJK^*$. Due to the facts that $\calH_\pm$ is dual to $\calH_\mp$ and  {that} $\bA^*$ is a bounded linear operator from $\calH_+$ into $\calH_-$, $\RE\bA=(\bA+\bA^*)/2$ and $\IM\bA=(\bA-\bA^*)/2i$ are  well defined bounded operators from $\calH_+$  into $\calH_-$.
Note that the main operator $T$ associated with L-system $\Theta$ is uniquely determined by the state-space operator $\bA$ as its restriction onto the domain $\dom(T)=\{f\in\calH_+\mid \bA f\in\calH\}$.

Recall that an operator-valued function  given by
\begin{equation*}\label{W1}
 W_\Theta(z)=I-2iK^*(\bA-zI)^{-1}KJ,\quad z\in \rho(T),
\end{equation*}
 is called the \textit{transfer function}  of an L-system $\Theta$ and
\begin{equation*}\label{real2}
 V_\Theta(z)=i[W_\Theta(z)+I]^{-1}[W_\Theta(z)-I] =K^*(\RE\bA-zI)^{-1}K,\quad z\in\rho(T)\cap\dC_{\pm},
\end{equation*}
is called the \textit{impedance function } of $\Theta$.

This article is a part of an ongoing project studying the connections between various subclasses of Herglotz-Nevanlinna functions and conservative realizations of L-systems (see \cite{AlTs2}, \cite{AlTs3}, \cite{ABT}, \cite{BMkT}, \cite{BMkT-2}, \cite{Bro}, \cite{Lv2}, \cite{LiYa}).
The class of all Herglotz-Nevanlinna functions in a finite-dimensional Hilbert space $E$, that can be realized as impedance functions of an L-system, was described in \cite{BT3},
 \cite[Definition 6.4.1]{ABT}. In particular, it was shown in \cite{BT4} (see also \cite{ABT}) that if the  representing  measure of a Herglotz-Nevanlinna function   is unbounded on $\dR$, then  this function can be realized by an L-system for any  constant term in the function integral representation. In this paper we shift our focus to the structure of the realizing L-system and substantially enhance the realization theorem for scalar Herglotz-Nevanlinna functions with unbounded representing measure  obtained in \cite{BT3}, \cite{BT4} (for details see also \cite[Chapter 6]{ABT}).

The main results of the present paper are the following. Relying on our development in \cite{BMkT}, we introduce linear perturbations of the Donoghue classes of Herglotz-Nevanlin\-na functions $\sM$, $\sM_\kappa$, $\sM^{-1}_\kappa$ by a real constant $Q$. Note that all unperturbed classes $\sM$, $\sM_\kappa$, $\sM^{-1}_\kappa$ are different by a normalization condition specific for each class.  The new perturbed classes are denoted by  $\sM^Q$, $\sM^Q_\kappa$, $\sM^{-1,Q}_\kappa$ and for each of these classes we state and prove the realization theorem where an appearing L-system of the form \eqref{col0} has a one-dimensional input-output space. Moreover, we utilize a new approach that leads to explicit new formulas describing the modulus of the von Neumann  parameter $\kappa=\kappa(Q)$ of the main operator and the unimodular one $U=U(Q)$ associated with the real part of the state-space operator $\bA$ of the realizing L-system.
For a fixed pair of $\kappa$ and $U$ we establish a uniqueness condition for  a realizing L-system construction.  It turns out that the parameters $\kappa$ and $U$ do not depend on a particular realization of an unperturbed function but only on perturbing parameter $Q$ and  normalization condition of the representing measure. Consequently, an L-system realizing the perturbed function can  be constructed based on some realization of an unperturbed function with the help of  parameters $\kappa$ and $U$.

In the end of the paper we also consider  perturbations of given  L-systems and solve the inverse problem of recovering the perturbed L-system $\Theta^Q$ from the perturbation parameter $Q$  and a given non-perturbed L-system $\Theta$.

The paper is organized as follows.

In Section \ref{s2}  we recall definition of an L-system, its components,  transfer and impedance functions, and provide the necessary background.

In Section \ref{s3} we describe the standard Donoghue class $\sM$ of Herglotz-Nevanlinna functions and explain the set of hypotheses that we will rely upon in the remainder of the paper. Here we also introduce L-systems with one-dimensional input-output that will be used throughout the text and recall the results connecting these systems to the set of hypotheses.

In Section \ref{s4} we study the most general structure of the generalized Donoghue classes of scalar  Herglotz-Nevanlinna functions. For each of these classes we state and prove realization theorems where functions are realized  as impedance functions of L-systems.% of the form  \eqref{e-62}.

%Sections \ref{s5}--\ref{s9} contain the main results of the paper.
In Section \ref{s5} we consider a simplest ``perturbed" version of the Donoghue class $\sM$ . We state and prove an ``enhanced" version of the realization theorem where the explicit formulas for the moduli of von Neumann's  parameters $\kappa=\kappa(Q)$ and the unimodular ones $U=U(Q)$ are derived.

Section \ref{s6} contains the realization treatment of the perturbed classes $\sM^Q_\kappa$ and $\sM^{-1,Q}_\kappa$ producing analogues results and formulas. It turns out that the formula for the modulus of the von Neumann parameter is an even function of the perturbing real parameter $Q$ while the formula for the unimodular parameter is an ``even function up to taking conjugate", i.e., $U(-Q)=\bar U(Q)$.

Section \ref{s10} describes an alternative realization technique that results in a universal construction of a model L-system that works for all three distinct subclasses. Moreover, the symmetric operator and state space  in this model are independent from a perturbing parameter. In addition to that, we show how an arbitrary realization of an unperturbed function can be used to construct an L-system realizing its perturbed version. This section  also contains resolvent formulas for the resolvents of main operators of perturbed L-systems from all three perturbed classes.

In Section \ref{s7} we deal with ``perturbed" L-systems. We solve the inverse problem of recovering the perturbed L-system $\Theta^Q$ from the perturbation parameter $Q$  and a given non-perturbed L-system $\Theta$.

In Section \ref{s8} we discuss a concept of a unimodular transformation of one L-system into another. We show how linear perturbations can change two L-systems that were not transformable into each other to a transformable pair. This idea is further illustrated in Example 3.

Section \ref{s9} deals with construction of perturbed L-systems out of given ones whose impedance function belongs to one of the Donoghue classes $\sM$, $\sM_\kappa$, or $\sM_\kappa^{-1}$. We also describe a unimodular transformation that changes a given L-system into the one whose impedance function only differs from the impedance of original L-system by the sign of the constant term in its integral representation.

Section \ref{s11} contains a concise summary of all realization techniques and results of the paper presented as a table. We also put forward a condition (in terms of the Liv\u sic function) for a realizing L-system to be unique under the assumption of a fixed symmetric operator and normalized deficiency vectors as well as its transfer function normalization conditions. The case when two different L-systems share the same symmetric operator and deficiency basis and realize the same function is presented in Example 2.

 The paper is concluded with  examples that illustrate all the main results and concepts. Explicit constructions of a model L-system and  the one with a given state-space operator are presented in the Appendices \ref{A2} and \ref{A1} for convenience of the reader.

\section{Preliminaries}\label{s2}

For a pair of Hilbert spaces $\calH_1$, $\calH_2$ we denote by
$[\calH_1,\calH_2]$ the set of all bounded linear operators from
$\calH_1$ to $\calH_2$. Let $\dA$ be a closed, densely defined,
symmetric operator in a Hilbert space $\calH$ with inner product
$(f,g),f,g\in\calH$. Any non-symmetric operator $T$ in $\cH$ such that
\[
\dA\subset T\subset\dA^*
\]
is called a \textit{quasi-self-adjoint extension} of $\dA$.

 Consider the rigged Hilbert space (see \cite{Ber}, \cite{BT3})
%\label{107}
$\calH_+\subset\calH\subset\calH_- ,$ where $\calH_+ =\dom(\dA^*)$ and
%\label{108}
\begin{equation}\label{108}
(f,g)_+ =(f,g)+(\dA^* f, \dA^*g),\;\;f,g \in \dom(A^*).
\end{equation}
Let $\calR$ be the \textit{\textrm{Riesz-Berezansky   operator}} $\calR$ (see \cite{Ber}, \cite{BT3}) which maps $\mathcal H_-$ onto $\mathcal H_+$ such
 that   $(f,g)=(f,\calR g)_+$, ($\forall f\in\calH_+$, $g\in\calH_-$) and
 $\|\calR g\|_+=\| g\|_-$.
 Note that
identifying the space conjugate to $\calH_\pm$ with $\calH_\mp$, we
get that if $\bA\in[\calH_+,\calH_-]$, then
$\bA^*\in[\calH_+,\calH_-].$
% \begin{definition}
An operator $\bA\in[\calH_+,\calH_-]$ is called a \textit{self-adjoint
bi-extension} of a symmetric operator $\dA$ if $\bA=\bA^*$ and $\bA
\supset \dA$.
%\end{definition}
Let $\bA$ be a self-adjoint
bi-extension of $\dA$ and let the operator $\hat A$ in $\cH$ be defined as follows:
\[
\dom(\hat A)=\{f\in\cH_+:\hat A f\in\cH\}, \quad \hat A=\bA\uphar\dom(\hat A).
\]
The operator $\hat A$ is called the \textit{quasi-kernel} of a self-adjoint bi-extension $\bA$ (see \cite{TSh1}, \cite[Section 2.1]{ABT}).
According to the von Neumann Theorem (see \cite[Theorem 1.3.1]{ABT}) the domain of $\wh A$, a self-adjoint extension of $\dA$,  can be expressed as
\begin{equation}\label{DOMHAT}
\dom(\hat A)=\dom(\dA)\oplus(I+U)\sN_{i},
\end{equation}
where von Neumann's parameter $U$ is an isometric  operator from $\sN_i$ into $\sN_{-i}$ in $\calH$
 and $$\sN_{\pm i}=\Ker (\dA^*\mp i I)$$ are the deficiency subspaces of $\dA$.
 A self-adjoint bi-extension $\bA$ of a symmetric operator $\dA$ is called \textit{t-self-adjoint} (see \cite[Definition 3.3.5]{ABT}) if its quasi-kernel $\hat A$ is a
self-adjoint operator in $\calH$.
An operator $\bA\in[\calH_+,\calH_-]$  is called a \textit{quasi-self-adjoint bi-extension} of an operator $T$ if
$\bA\supset T\supset \dA$ and $\bA^*\supset T^*\supset\dA.$  We will be mostly interested in the following type of quasi-self-adjoint bi-extensions.
\begin{definition}[\cite{ABT}]\label{star_ext}
Let $T$ be a quasi-self-adjoint extension of $\dA$ with a nonempty resolvent set $\rho(T)$. A quasi-self-adjoint bi-extension $\bA$ of an operator $T$ is called a \textit{($*$)-extension }of $T$ if $\RE\bA$ is a
t-self-adjoint bi-extension of $\dA$.
\end{definition}
In what follows we assume that $\dA$ has equal and finite deficiency indices and will say that a quasi-self-adjoint extension $T$ of $\dA$ belongs to the
\textit{class $\Lambda(\dA)$} if $\rho(T)\ne\emptyset$, $\dom(\dA)=\dom(T)\cap\dom(T^*)$, and hence  $T$ admits $(*)$-extensions. The description of
all $(*)$-extensions via Riesz-Berezansky   operator $\calR$ can be found in \cite[Section 4.3]{ABT}.

\begin{definition}\label{d-2}
A system of equations
\[
\left\{   \begin{array}{l}
          (\bA-zI)x=KJ\varphi_-  \\
          \varphi_+=\varphi_- -2iK^* x
          \end{array}
\right.,
\]
 or an
array
%\label{141}
\begin{equation}\label{e6-3-2}
\Theta= \begin{pmatrix} \bA&K&\ J\cr \calH_+ \subset \calH \subset
\calH_-& &E\cr \end{pmatrix}%,\quad (\dim E<\infty)
\end{equation}
 is called an \textbf{{L-system}}   if:
\begin{enumerate}
\item[(1)] {$\mathbb  A$ is a   ($\ast $)-extension of an
operator $T$ of the class $\Lambda(\dA)$};
\item[(2)] {$J=J^\ast =J^{-1}\in [E,E],\quad \dim E < \infty $};
\item[(3)] $\IM\bA= KJK^*$, where $K\in [E,\calH_-]$, $K^*\in [\calH_+,E]$, and
$\ran(K)=\ran (\IM\bA).$
\end{enumerate}
\end{definition}
In the definition above   $\varphi_- \in E$ stands for an input vector, $\varphi_+ \in E$ is an output vector, and $x$ is a state space vector in $\calH_+$ ($\subset\calH$).
The operator $\bA$  is called the \textit{state-space operator} of the system $\Theta$, $T$ is the \textit{main operator},  $J$ is the \textit{direction operator}, and $K$ is the  \textit{channel operator}. A system $\Theta$ in \eqref{e6-3-2} is called \textit{minimal} if the operator $\dA$ is a \textit{prime} operator in $\calH$, i.e., there exists no non-trivial reducing
invariant subspace of $\calH$ on which it induces a self-adjoint operator. %A system $\Theta$ in \eqref{e6-3-2} is called \textit{scattering} if $J=I$.
An L-system $\Theta$ defined above is \textit{conservative} in the sense explained in \cite[Section 6.3]{ABT}, \cite[Section 1.1]{Zol} and is an evolved version of linear conservative systems considered in \cite{Bro}, \cite{Lv2}, and \cite{LiYa}.

We  associate with an L-system $\Theta$ the operator-valued function
\begin{equation}\label{e6-3-3}
W_\Theta (z)=I-2iK^\ast (\mathbb  A-zI)^{-1}KJ,\quad z\in \rho (T),
\end{equation}
which is called the \textbf{transfer  function} of the L-system $\Theta$. Clearly, it follows from the system of equations in Definition \ref{d-2} that $\varphi_+=W_\Theta (z)\varphi_-$. We also consider an operator-valued function
\begin{equation}\label{e6-3-5}
V_\Theta (z) = K^\ast (\RE\bA - zI)^{-1} K, \quad z\in\rho(\hat A).
\end{equation}
It was shown in \cite{BT3}, \cite[Section 6.3]{ABT} that both \eqref{e6-3-3} and \eqref{e6-3-5} are well defined. The transfer operator-function $W_\Theta (z)$ of the system
$ \Theta $ and  operator-function $V_\Theta (z)$ of the form (\ref{e6-3-5}) are connected by the following relations valid for $\IM z\ne0$, $z\in\rho(T)$,
\begin{equation}\label{e6-3-6}
\begin{aligned}
V_\Theta (z) &= i [W_\Theta (z) + I]^{-1} [W_\Theta (z) - I] J,\\
W_\Theta(z)&=(I+iV_\Theta(z)J)^{-1}(I-iV_\Theta(z)J).
\end{aligned}
\end{equation}
Function $V_\Theta(z)$ defined by \eqref{e6-3-5} is called the \textbf{impedance function} of an L-system $ \Theta $ of the form (\ref{e6-3-2}). The class of all Herglotz-Nevanlinna functions in a finite-dimensional Hilbert space $E$, that can be realized as impedance functions of an L-system, was described in \cite{BT3}, \cite[Definition 6.4.1]{ABT}.

Two minimal L-systems
$$
\Theta_j= \begin{pmatrix} \bA_j&K_j&J\cr \calH_{+j} \subset\calH_j \subset \calH_{-j}& &E\cr
\end{pmatrix},\quad j=1,2,
$$
are called \textbf{bi-unitarily equivalent} \cite[Section 6.6]{ABT} if there exists a triplet of operators $(U_+, U, U_-)$ that isometrically maps the triplet $\calH_{+1}\subset\calH_1\subset\calH_{-1}$ onto the triplet $\calH_{+2}\subset\calH_2\subset\calH_{-2}$ respectively such that $U_+=U|_{\calH_{+1}}$ is an isometry from $\calH_{+1}$ onto $\calH_{+2}$, $U_-=(U_+^*)^{-1}$ is an isometry from $\calH_{-1}$ onto $\calH_{-2}$, and
\begin{equation}\label{167}
UT_1=T_2U,  \quad U_-\bA_1=\bA_2 U_+,\quad U_-K_1=K_2.
\end{equation}
It is shown in \cite[Theorem 6.6.10]{ABT} and its corollary that if the transfer functions $W_{\Theta_1}(z)$ and $W_{\Theta_2}(z)$ of two minimal L-systems $\Theta_1$ and $\Theta_2$ are the same on
$ z\in\rho(T_1)\cap\rho(T_2)\cap \dC_{\pm}\ne\emptyset$, then $\Theta_1$ and $\Theta_2$ are bi-unitarily equivalent and $U$ is uniquely determined.

\section{Donoghue classes and L-systems with one-dimensional input-output}\label{s3}

Suppose that $\dA$ is a closed prime %\footnote{A symmetric operator $\dot A$ is prime if there does not exist a subspace invariant under $\dot A$ such that the restriction of $\dot A$ to this subspace is self-adjoint.}
densely defined symmetric operator with deficiency indices $(1,1)$. Assume also that $T \ne T^*$ is a  maximal dissipative extension of $\dot A$,
$$\Im(T f,f)\ge 0, \quad f\in \dom(T ).$$
Since $\dot A$ is symmetric, its dissipative extension $T$ is automatically quasi-self-adjoint \cite{ABT},
that  is,
$$
\dot A \subset T \subset \dA^*,
$$
and hence, (see \cite{BMkT})
\begin{equation}\label{parpar}
g_+-\kappa g_-\in \dom
 (T )\quad \text{for some }
|\kappa|<1.
\end{equation}
{Throughout this paper  $\kappa$ will be referred to as the \textbf{ von Neumann  parameter} of operator $T$.}
%Based on the  parametrization \eqref{parpar} of the domain of the  extension $T $, Liv\v{s}ic suggested to call  the M\"obius transformation
%\begin{equation}\label{ch12}
%S(z)=\frac{s(z)-\kappa} {\overline{ \kappa }\,s(z)-1}, \quad z\in \bbC_+,
%\end{equation}
%where $s$ is given  by \eqref{charsym}, the \textbf{characteristic function} of the dissipative extension $T$ \cite{L}. The characteristic function $S(z)$ of a dissipative (maximal) extension $T$ of a densely defined prime %symmetric operator $\dot A$ is a complete unitary invariant of the triple $(\dot A, A, T )$ (see  \cite{MT-S}).

Recall that  Donoghue \cite{D}  introduced a concept of the Weyl-Titchmarsh function $M(\dot A, A)$ associated with a pair $(\dot A, A)$ by
$$M(\dot A, A)(z)=
((Az+I)(A-zI)^{-1}g_+,g_+), \quad z\in \bbC_+,
$$
$$g_+\in \Ker( \dA^*-iI),\quad \|g_+\|=1,$$
where $\dot A $ is a symmetric operator with deficiency indices $(1,1)$, and $A$ is its self-adjoint extension.
%A standard relationship between the  Weyl-Titch\-marsh and the    Liv\v{s}ic functions associated with the  pair $(\dA, A) $ was described in \cite {MT-S}. In particular, if we denote by $M=M(\dot A, A)$ and by $s=s(\dot A, A)$
%  the Weyl-Titchmarsh function and the    Liv\v{s}ic function  associated with the pair $(\dot A, A)$, respectively, then
%\begin{equation}\label{blog}
%s(z)=\frac{M(z)-i}{M(z)+i},\quad z\in \bbC_+.
%\end{equation}
Following our earlier developments in \cite{MT10}, \cite{BMkT} we denote by $\mM$ the \textbf{Donoghue class} of all analytic mappings $M$ from $\bbC_+$ into itself  that admits the representation
 \begin{equation}\label{hernev-0}
M(z)=\int_\bbR \left
(\frac{1}{\lambda-z}-\frac{\lambda}{1+\lambda^2}\right )
d\mu,
\end{equation}
where $\mu$ is an  infinite Borel measure   and
$$
\int_\bbR\frac{d\mu(\lambda)}{1+\lambda^2}=1\,,\quad\text{equivalently,}\quad M(i)=i.
$$
It is known  \cite{D},  \cite{GT}, \cite{GMT97}, \cite {MT-S} that $M\in \mM$ if and only if $M$
can be realized  as the Weyl-Titchmarsh function $M(\dot A, A)$ associated with a pair $(\dot A, A)$.

\begin{hypothesis}\label{setup} Suppose that $\whA \ne\whA^*$  is  a maximal dissipative extension of  a symmetric operator $\dot A$  with deficiency indices $(1,1)$. Assume, in addition, that $A$ is a  self-adjoint extension of $\dot A$. Suppose,  that the deficiency elements $g_\pm\in \Ker (\dA^*\mp iI)$ are normalized, $\|g_\pm\|=1$, and chosen in such a way that
\begin{equation}\label{ddoomm14}g_+- g_-\in \dom ( A)\,\,\,\text{and}\,\,\,
g_+-\kappa g_-\in \dom (\whA )\,\,\,\text{for some }
\,\,\,|\kappa|<1.
\end{equation}
\end{hypothesis}
It is known \cite{MT-S} that if $\kappa=0$, then  quasi-self-adjoint extension $\whA $ coincides with the restriction of the adjoint operator $\dot A^*$ on
$$
\dom(\whA )=\dom(\dot A)\dot + \Ker (\dA^*-iI).
$$
Similar to Hypothesis \ref{setup} we will also consider the ``anti-Hypothesis" as follows.
\begin{hypothesis}\label{setup-1} Suppose
that $\whA \ne\whA^*$  is  a maximal
dissipative extension of  a symmetric operator $\dot A$
 with deficiency indices $(1,1)$. Assume, in addition, that
$A$ is a  self-adjoint extension of $\dot A$. Suppose,  that the
deficiency elements $g_\pm\in \Ker (\dA^*\mp iI)$ are
normalized, $\|g_\pm\|=1$, and chosen in such a way that
\begin{equation}\label{ddoomm14-1}g_++ g_-\in \dom ( A)\,\,\,\text{and}\,\,\,
g_+-\kappa g_-\in \dom (\whA )\,\,\,\text{for some }
\,\,\,|\kappa|<1.
\end{equation}
\end{hypothesis}
\noindent
\begin{remark}\label{r-12}
Without loss of generality, in what follows we assume that $\kappa$ is real and $0\le\kappa<1$: if $\kappa=|\kappa|e^{i\theta}$,
change (the basis) $g_-$ to $e^{i\theta}g_-$ in the deficiency subspace  $\Ker (\dA^*+ i I)$.
\end{remark}
This remark means the following: let
\begin{equation}\label{e-62}
\Theta= \begin{pmatrix} \bA&K&\ 1\cr \calH_+ \subset \calH \subset
\calH_-& &\dC\cr \end{pmatrix}
\end{equation}
be a minimal L-system  with one-dimensional input-output space $\dC$. % whose main operator $T$ and the quasi-kernel $\hat A$ of $\RE\bA$ satisfy the conditions of Hypothesis \ref{setup} or \ref{setup-1}.
If the main operator $T$ of $\Theta$ is parameterized with a \textit{complex} von Neumann's parameter $\kappa$ that corresponds to a chosen normalized pair of deficiency vectors $g_+$ and $g_-$, then we can change the deficiency basis as described in Remark \ref{r-12} and represent $T$ using real value of $|\kappa|$ with respect to the new deficiency basis. This procedure will change the parameter $U$ of the quasi-kernel $\hat A$ of $\RE\bA$ in  \eqref{DOMHAT} and ultimately the way $\bA$ is described (see Appendix \ref{A2}).% but the L-system \eqref{e-62} will remain the same.
%The fact that two different parameterizations of $\bA$ describe the same operator independently follows from uniqueness theorem \cite[Theorem 4.4.6]{ABT}.
 \textit{Thus, for the remainder of this paper (unless otherwise is specified) we will consider L-systems  \eqref{e-62} such that $\kappa$ is real and $0\le\kappa<1$.}

\begin{definition}
We say that an L-system $\Theta$ of the form \eqref{e-62}  \textit{satisfies Hypothesis} \ref{setup} (or \ref{setup-1}) if its main operator $T$ and the quasi-kernel $\hat A$ of $\RE\bA$ satisfy the conditions of Hypothesis \ref{setup} (or \ref{setup-1}) for a fixed set of deficiency vectors of the symmetric operator $\dA$.
\end{definition}

Let $\Theta$ be a minimal L-system of the form \eqref{e-62} that satisfies the conditions of Hypothesis \ref{setup}. It is shown in \cite{BMkT} that  the impedance function $V_\Theta(z)$ can be represented as
\begin{equation}\label{e-imp-m}
    V_{\Theta}(z)=\left(\frac{1-\kappa}{1+\kappa}\right)V_{\Theta_0}(z),
\end{equation}
where $V_{\Theta_0}(z)$ is  the impedance function of an L-system $\Theta_0$ with the same set of conditions but with $\kappa_0=0$, where $\kappa_0$ is the von Neumann parameter of the main operator $T_0$ of $\Theta_0$.

Let $\Theta_1$ and $\Theta_2$ be two minimal L-system of the form \eqref{e-62}  whose components satisfy the conditions of Hypothesis \ref{setup} and Hypothesis \ref{setup-1}, respectively. Then it was proved in \cite[Lemma 5.1]{BMkT-2} that the impedance functions $V_{\Theta_1}(z)$ and $V_{\Theta_2}(z)$ admit the integral representation
\begin{equation}\label{e-60-nu-1}
V_{\Theta_{k}}(z)=\int_\bbR \left(\frac{1}{t-z}-\frac{t}{1+t^2}\right )d\mu_{k}(t),\quad k=1,2.
\end{equation}

Now let us consider a minimal L-system $\Theta$ of the form \eqref{e-62} that satisfies  Hypothesis \ref{setup}. Let also
\begin{equation}\label{e-62-alpha}
\Theta_\alpha= \begin{pmatrix} \bA_\alpha&K_\alpha&\ 1\cr \calH_+ \subset \calH \subset
\calH_-& &\dC\cr \end{pmatrix},\quad \alpha\in[0,\pi),
\end{equation}
 be a one parametric family of L-systems such that
 \begin{equation}\label{e-63-alpha}
    W_{\Theta_\alpha}(z)=W_\Theta(z)\cdot (-e^{2i\alpha}),\quad \alpha\in[0,\pi).
 \end{equation}
The existence and structure of $\Theta_\alpha$ were described in details in \cite[Section 8.3]{ABT}. In particular, it was shown that $\Theta$ and $\Theta_\alpha$ share the same main operator $T$ and that
\begin{equation}\label{e-64-alpha}
    V_{\Theta_\alpha}(z)=\frac{\cos\alpha+(\sin\alpha) V_\Theta(z)}{\sin\alpha-(\cos\alpha) V_\Theta(z)}.
\end{equation}

Let $\Theta$ be a minimal L-system $\Theta$ of the form \eqref{e-62} that satisfies Hypothesis \ref{setup}. Let also $\Theta_{\alpha}$ be a one parametric family of L-systems given by \eqref{e-62-alpha}-\eqref{e-63-alpha}.
It was shown in \cite[Theorem 5.2]{BMkT-2} that in this case the impedance function $V_{\Theta_{\alpha}}(z)$ has an integral representation
$$
V_{\Theta_{\alpha}}(z)=\int_\bbR \left(\frac{1}{t-z}-\frac{t}{1+t^2}\right )d\mu_{\alpha}(t)
$$
if and only if $\alpha=0$ or $\alpha=\pi/2$.

The next result describes the relationship between two L-systems of the form \eqref{e-62} that comply with different Hypotheses. Let
\begin{equation}\label{e-62-1}
\Theta_1= \begin{pmatrix} \bA_1&K_1&\ 1\cr \calH_+ \subset \calH \subset
\calH_-& &\dC\cr \end{pmatrix}
\end{equation}
be a minimal L-system whose main operator $T$ and the quasi-kernel $\hat A_1$ of $\RE\bA_1$ satisfy the conditions of Hypothesis \ref{setup} and let
\begin{equation}\label{e-62-2}
\Theta_2= \begin{pmatrix} \bA_2&K_2&\ 1\cr \calH_+ \subset \calH \subset
\calH_-& &\dC\cr \end{pmatrix}
\end{equation}
be another minimal L-system with the same  operators $\dA$ and $T$ as $\Theta_1$ but with the quasi-kernel $\hat A_2$ of $\RE\bA_2$ that satisfies the conditions of Hypothesis \ref{setup-1}. It was shown in \cite[Theorem 5.3]{BMkT-2} that
\begin{equation}\label{e-55-1}
    W_{\Theta_1}(z)=-W_{\Theta_2}(z),\quad z\in\dC_+\cap\rho(T),
\end{equation}
and
\begin{equation}\label{e-56-1}
    V_{\Theta_1}(z)=-\frac{1}{V_{\Theta_2}(z)},\quad z\in\dC_+\cap\rho(T).
\end{equation}

%%%%%%%%%%%%%%%%%%%%

\section{Realizations of the class $\sN$}\label{s4}

In this section we are going to study classes of scalar  Herglotz-Nevanlinna functions and their realizations as impedance functions of L-systems of the form  \eqref{e-62}.
It follows from  \cite{ABT}, \cite{BT4} that all scalar realizable by one-dimensional input-output L-systems of the form \eqref{e-62} Herglotz-Nevanlinna functions $V(z)$ admit the integral representation
 \begin{equation}\label{hernev-real}
V(z)= Q+\int_\bbR\left (\frac{1}{\lambda-z}-\frac{\lambda}{1+\lambda^2}\right )d\mu,
\end{equation}
where $\mu$ is an infinite Borel measure with
$$
\int_\bbR\frac{d\mu(\lambda)}{1+\lambda^2}<\infty,
$$
 and  $Q=\bar Q. $

Now let us focus on the following class  of scalar  Herglotz-Nevanlinna functions. Let $\sN$ be a class of all Herglotz-Nevanlinna functions $M(z)$ that admit the representation
 \begin{equation}\label{hernev}
M(z)=\int_\bbR \left
(\frac{1}{\lambda-z}-\frac{\lambda}{1+\lambda^2}\right )
d\mu,
\end{equation}
where $\mu$ is an  infinite Borel measure.
Following our definition  in Section \ref{s3}  we note that    the {Donoghue class}  $\mM$ consists  of functions $M\in\sN$  such that
\begin{equation}\label{e-42-int-don}
\int_\bbR\frac{d\mu(\lambda)}{1+\lambda^2}=1,
\end{equation}
in integral representation \eqref{hernev} or, {equivalently,} $M(i)=i$.
% It is known  \cite{D}, \cite{GMT}, \cite{GT}, \cite {MT-S} that $M\in \mM$ if and only if $M$ can be realized  as the Weyl-Titchmarsh function $M(\dot A, A)$ associated with a pair $(\dot A, A)$.
Furthermore, we say (see \cite{BMkT}) that a function $M\in\sN$  belongs to the \textbf{generalized Donoghue class} $\sM_\kappa$, ($0\le\kappa<1$) if  in the representation \eqref{hernev}
\begin{equation}\label{e-38-kap}
\int_\bbR\frac{d\mu(\lambda)}{1+\lambda^2}=\frac{1-\kappa}{1+\kappa}\,,\quad\text{equivalently,}\quad M(i)=i\,\frac{1-\kappa}{1+\kappa}.
\end{equation}
Similarly  (see \cite{BMkT-2}), a function $M\in\sN$  belongs to the  \textbf{generalized Donoghue class} $\sM_\kappa^{-1}$ if in the representation \eqref{hernev}
\begin{equation}\label{e-39-kap}
\int_\bbR\frac{d\mu(\lambda)}{1+\lambda^2}=\frac{1+\kappa}{1-\kappa}\,,\quad\text{equivalently,}\quad M(i)=i\,\frac{1+\kappa}{1-\kappa}.
\end{equation}
Clearly, $\sM_0=\sM_0^{-1}=\sM$.

Now let $M$ be an arbitrary function from $\sN$ with a normalization condition
\begin{equation}\label{e-66-L}
\int_\bbR\frac{d\mu(\lambda)}{1+\lambda^2}=a,
\end{equation}
for some $a>0$. It is easy to see that $M\in\sM$ if and only if $a=1$. Also, if $a<1$, then $M\in \sM_\kappa$ with
\begin{equation}\label{e-45-kappa-1}
\kappa=\frac{1-a}{1+a},
\end{equation}
and if $a>1$, then $M\in \sM_\kappa^{-1}$ with
\begin{equation}\label{e-45-kappa-2}
\kappa=\frac{a-1}{1+a}.
 \end{equation}
This observation allows us to partition our class $\sN$ in three distinct subclasses
$$
\sN=\sM^-\cup\sM\cup\sM^+,
$$
where
\begin{equation}\label{e-66-sub}
   \begin{aligned}
    \sM^+&=\{M\in\sN\mid a<1\},\\
    \sM^-&=\{M\in\sN\mid a>1\},
    \end{aligned}
\end{equation}
and $a$ is defined by \eqref{e-66-L} for a particular representation \eqref{hernev} of a function $M$. Clearly,
\begin{equation}\label{r-unions}
    \sM^+=\bigcup_{0<\kappa<1}\sM_\kappa \quad\textrm{ and }\quad \sM^-=\bigcup_{0<\kappa<1}\sM_\kappa^{-1}.
\end{equation}
Below we will study L-system realizations of the classes $\sM^+$, $\sM$, and $\sM^-$.

The following theorem is a necessary and sufficient condition for the impedance function of an L-system under consideration (with $\kappa\ne0$) not to have a constant term $Q$ in integral representation \eqref{hernev-real}.
\begin{theorem}\label{t-10-new}
Let $\Theta$ of the form \eqref{e-62} be a minimal L-system that  satisfies the conditions of either Hypothesis \ref{setup} or Hypothesis \ref{setup-1} and such that its main operator $T$ has the von Neumann parameter $\kappa$ where $0<\kappa<1$.   Then its impedance function $V_\Theta(z)$ admits integral representation \eqref{hernev}.

Conversely, let a function $V(z)$ have integral representation \eqref{hernev}. Then it can be realized by an L-system $\Theta$ satisfying either Hypothesis \ref{setup} or Hypothesis \ref{setup-1}.
\end{theorem}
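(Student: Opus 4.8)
The plan is to prove the two implications by different means: the forward direction is essentially a restatement of the scaling machinery recalled in Section~\ref{s3}, while the converse rests on the classical Donoghue realization together with the normalization classification \eqref{e-45-kappa-1}--\eqref{e-45-kappa-2}. For the forward implication the key observation is that \eqref{hernev} is \emph{exactly} the integral representation asserted in \eqref{e-60-nu-1}. Thus, once $\Theta$ is assumed to satisfy Hypothesis~\ref{setup} or Hypothesis~\ref{setup-1}, I would simply invoke \cite[Lemma 5.1]{BMkT-2} to conclude that $V_\Theta$ admits representation \eqref{hernev} with an infinite Borel measure and no constant term. To display the dependence on $\kappa$ explicitly, under Hypothesis~\ref{setup} I would further apply \eqref{e-imp-m}, namely $V_\Theta(z)=\tfrac{1-\kappa}{1+\kappa}V_{\Theta_0}(z)$ with $\Theta_0$ carrying parameter $\kappa_0=0$ and hence $V_{\Theta_0}\in\sM$; scaling the representing measure of $V_{\Theta_0}$ by $\tfrac{1-\kappa}{1+\kappa}\in(0,1)$ then exhibits $V_\Theta\in\sM_\kappa$. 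The case of Hypothesis~\ref{setup-1} I would treat symmetrically, passing through the reciprocal relation \eqref{e-56-1} to place $V_\Theta\in\sM_\kappa^{-1}$; in both regimes $V_\Theta\in\sN$ and no constant term survives.

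For the converse I would start from a function $V$ with representation \eqref{hernev}, set $a=\int_\bbR\frac{d\mu(\lambda)}{1+\lambda^2}$ as in \eqref{e-66-L}, and normalize $V_0:=a^{-1}V$, whose representing measure $a^{-1}\mu$ has total weight $1$ against $(1+\lambda^2)^{-1}$, so that $V_0\in\sM$. The idea is to realize $V_0$ by a Donoghue L-system $\Theta_0$ with $\kappa_0=0$ and then adjust the main operator to pick up the correct scaling factor. If $a=1$ there is nothing to adjust and $\Theta_0$ itself realizes $V$ under Hypothesis~\ref{setup}. If $a<1$, I would take $\kappa=\frac{1-a}{1+a}\in(0,1)$ from \eqref{e-45-kappa-1}, retain the symmetric operator $\dA$ and the self-adjoint extension $A$ of $\Theta_0$, and replace the main operator by the quasi-self-adjoint extension with von Neumann parameter $\kappa$; since $\tfrac{1-\kappa}{1+\kappa}=a$, relation \eqref{e-imp-m} shows the resulting system satisfies Hypothesis~\ref{setup} and has impedance $a\,V_0=V$. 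If $a>1$, I would instead use \eqref{e-45-kappa-2} to get $\kappa=\frac{a-1}{1+a}$ with $\frac{1+\kappa}{1-\kappa}=a$ and run the analogous construction under Hypothesis~\ref{setup-1} (equivalently, via \eqref{e-56-1}), realizing $V$ as an $\sM_\kappa^{-1}$ function.

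The main obstacle I anticipate is the explicit realization step underlying both nontrivial cases: one must produce a symmetric $\dA$ with deficiency indices $(1,1)$, normalized deficiency vectors $g_\pm$, a self-adjoint extension $A$, and a $(*)$-extension $\bA$ whose L-system has impedance exactly $V$ \emph{and} whose $g_\pm$ satisfy the correct side condition---$g_+-g_-\in\dom(A)$ for Hypothesis~\ref{setup} when $a<1$, and $g_++g_-\in\dom(A)$ for Hypothesis~\ref{setup-1} when $a>1$. I expect the sign of $a-1$ to force the correct hypothesis automatically, since the admissible scaling factors are $\frac{1-\kappa}{1+\kappa}\le1$ under Hypothesis~\ref{setup} and $\frac{1+\kappa}{1-\kappa}\ge1$ under Hypothesis~\ref{setup-1}, so no function with $a<1$ can be realized under the anti-hypothesis and vice versa. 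Granting the classical Donoghue model for $V_0\in\sM$ (\cite{D}, \cite{MT-S}) and the explicit model L-system construction deferred to the Appendices, the remaining verification that the constructed impedance equals $V$ would reduce to the scaling identities \eqref{e-imp-m} and \eqref{e-56-1} together with the bookkeeping of the normalization constant $a$.
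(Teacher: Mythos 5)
Your proposal is correct and follows essentially the same route as the paper: the forward direction is exactly the citation of \cite[Lemma 5.1]{BMkT-2} (i.e., representation \eqref{e-60-nu-1}), and the converse proceeds by classifying $V$ via the normalization constant $a$ into $\sM$, $\sM_\kappa$, or $\sM_\kappa^{-1}$ and then invoking the model realizations of Appendix \ref{A1} built on the normalized measure, with the von Neumann parameter $\kappa$ chosen from \eqref{e-45-kappa-1} or \eqref{e-45-kappa-2} so that the scaling identities \eqref{e-imp-m} and \eqref{e-56-1} recover $V$. The extra detail you supply about adjusting the main operator of $\Theta_0$ is precisely the content of those model constructions, so nothing is missing.
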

\begin{proof}
In one direction the statement of our Theorem follows directly from \cite[Lemma 5.1]{BMkT-2}.

If $V_\Theta(z)$ admits integral representation \eqref{hernev}, then it belongs to the class $\sN$ and hence falls into one of the disjoint subclasses $\sM$, $\sM_\kappa$, or $\sM_\kappa^{-1}$ for some real $\kappa$ such that $0\le\kappa<1$. If $V(z)\in\sM$ or $V(z)\in\sM_\kappa$, then it can be realized by an L-system satisfying  Hypothesis \ref{setup}. Similarly, if $V(z)\in\sM_\kappa^{-1}$, then it can be realized by an L-system satisfying  Hypothesis \ref{setup-1}. In the first case we can use model realizations $\Theta_1$ of the form \eqref{e-59'} and in the second case model realizations $\Theta_2$ of the form \eqref{e-59''} provided by Appendix \ref{A1}.
%To prove the Theorem in the other direction we assume that the impedance function $V_\Theta(z)$ admits integral representation \eqref{hernev}. Then, as it was shown in the proof of \cite[Theorem 5.2]{BMkT-2}, our L-system $\Theta$ must satisfy either Hypothesis \ref{setup} or Hypothesis \ref{setup-1}.
\end{proof}
In the case when $\kappa=0$ in the first part of of Theorem \ref{t-10-new}, the impedance function $V_\Theta(z)$ belongs to the Donoghue class $\sM$ regardless of the fact whether or not it meets conditions of either of Hypotheses (see \cite[Theorem 11]{BMkT}). Consequently, it admits integral representation \eqref{hernev}.
\begin{theorem}\label{t-12-new}
Let $\Theta$  be a minimal L-system of the form \eqref{e-62} that  satisfies the conditions of  Hypothesis \ref{setup} whose impedance function $V_\Theta(z)$ has integral representation \eqref{hernev-real} with normalization parameter $a$ defined by \eqref{e-66-L}.
Then $V_\Theta(z)\in\sM^+$.

Conversely, let $V(z)\in\sM^+$. Then it can be realized by an L-system $\Theta$ satisfying  Hypothesis \ref{setup}.
\end{theorem}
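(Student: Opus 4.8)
The plan is to prove Theorem~\ref{t-12-new} by tying the analytic normalization condition on the representing measure directly to the structural constraint imposed by Hypothesis~\ref{setup}. The key quantity is the parameter $a=\int_\bbR d\mu(\lambda)/(1+\lambda^2)$ from \eqref{e-66-L}, which by the classification \eqref{e-45-kappa-1}--\eqref{r-unions} records whether the function lands in $\sM^+$, $\sM$, or $\sM^-$. The crux is to show that an L-system satisfying Hypothesis~\ref{setup} with $0\le\kappa<1$ always produces $a\le 1$, i.e.\ $V_\Theta(z)\in\sM^+\cup\sM=\{M\in\sN\mid a\le 1\}$, and conversely that every such function is realizable under Hypothesis~\ref{setup}.

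For the forward direction I would proceed as follows. First, since $\Theta$ satisfies Hypothesis~\ref{setup}, I invoke the representation \eqref{e-imp-m}, namely $V_\Theta(z)=\frac{1-\kappa}{1+\kappa}V_{\Theta_0}(z)$, where $\Theta_0$ is the companion L-system with $\kappa_0=0$. By \cite[Theorem 11]{BMkT} (quoted in the remark following Theorem~\ref{t-10-new}), the $\kappa_0=0$ case forces $V_{\Theta_0}\in\sM$, so its representing measure satisfies $\int_\bbR d\mu_0/(1+\lambda^2)=1$, equivalently $V_{\Theta_0}(i)=i$. Evaluating \eqref{e-imp-m} at $z=i$ then gives $V_\Theta(i)=i\,\frac{1-\kappa}{1+\kappa}$, and comparing with the $M(i)=ia$ computed from \eqref{hernev-real} (note that the constant term $Q$ is real, hence contributes a real part only, so $\IM V_\Theta(i)=a$) yields $a=\frac{1-\kappa}{1+\kappa}$. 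Since $0<\kappa<1$ forces $0<a<1$, the classification \eqref{e-45-kappa-1} places $V_\Theta\in\sM_\kappa\subset\sM^+$; and I must also handle the boundary $\kappa=0$ giving $a=1$, i.e.\ $V_\Theta\in\sM$. One subtlety: \eqref{e-imp-m} was stated for impedance functions of the form \eqref{hernev} (no constant term), so I would first need to observe that the presence of a nonzero $Q$ in \eqref{hernev-real} shifts only the real part and leaves the measure-normalization $a$ untouched, so the identification of $a$ with $\frac{1-\kappa}{1+\kappa}$ persists.

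For the converse, given $V(z)\in\sM^+$, by \eqref{r-unions} there is some $\kappa$ with $0<\kappa<1$ such that $V\in\sM_\kappa$, equivalently $a=\frac{1-\kappa}{1+\kappa}<1$. I would then appeal to the realization machinery of Theorem~\ref{t-10-new}: since $V$ has integral representation \eqref{hernev} and lies in $\sM_\kappa$, it is realizable by an L-system satisfying Hypothesis~\ref{setup}, using the model construction $\Theta_1$ of the form \eqref{e-59'} from Appendix~\ref{A1} with von Neumann parameter exactly this $\kappa$. The normalization of the deficiency vectors in \eqref{ddoomm14}, namely $g_+-g_-\in\dom(A)$, is precisely what forces the companion self-adjoint reference system to sit in $\sM$ and hence pins $a=\frac{1-\kappa}{1+\kappa}$, matching the target.

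The main obstacle I anticipate is the bookkeeping around the constant term $Q$: Theorem~\ref{t-12-new} is stated with the representation \eqref{hernev-real} that \emph{includes} $Q$, whereas the clean normalization identities \eqref{e-38-kap}--\eqref{e-39-kap} and the scaling law \eqref{e-imp-m} are phrased for the $Q=0$ representation \eqref{hernev}. I must be careful that the classification into $\sM^+$, $\sM$, $\sM^-$ depends only on the measure through $a$ and is genuinely insensitive to $Q$; the decomposition $\sN=\sM^-\cup\sM\cup\sM^+$ is defined purely via $a$ in \eqref{e-66-sub}, so this should go through, but it requires explicitly separating real and imaginary parts at $z=i$ and confirming that $\IM V_\Theta(i)=a$ independently of $Q=\bar Q$. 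The rest is a direct application of the already-established scaling identity \eqref{e-imp-m}, the $\kappa=0$ baseline result, and the model realizations supplied in the appendix.
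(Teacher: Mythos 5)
Your proof is correct and follows essentially the same route as the paper: the forward direction rests on the scaling identity \eqref{e-imp-m} together with the $\kappa_0=0$ baseline (which is exactly the content of the cited result \cite[Theorem 12]{BMkT} that the paper invokes directly) to pin $a=\frac{1-\kappa}{1+\kappa}<1$, and the converse uses \eqref{e-45-kappa-1} plus the model L-system $\Theta_1$ of the form \eqref{e-59'} from the appendix, just as the paper does. Your extra care about the constant term is resolved by noting that \eqref{e-imp-m} makes $V_\Theta(i)$ purely imaginary, so $Q=\RE V_\Theta(i)=0$ and membership in $\sM^+\subset\sN$ is legitimate; your observation that $\kappa=0$ lands in $\sM$ rather than $\sM^+$ is a boundary case the paper's own proof silently excludes.
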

\begin{proof}
Let our L-system $\Theta$ satisfy the conditions of Hypothesis \ref{setup}. Then  according to the same \cite[Theorem 12]{BMkT}, $V_\Theta(z)$ belongs to the  class $\sM_\kappa$. Taking into account that in this case the value of $a$ in the integral representation \eqref{hernev} of $V_\Theta(z)$ is found by inverting \eqref{e-45-kappa-1}, that is
$$a=\frac{1-\kappa}{1+\kappa}.$$
Thus, $a<1$  and $V_\Theta(z)\in\sM^+$.

Let $V(z)\in\sM^+$. Then there is a value of $0<a<1$  defined by \eqref{e-66-L} for the integral representation \eqref{hernev} of $V(z)$. Consequently, there is a value of $\kappa$ found via \eqref{e-45-kappa-1} such that $V(z)$ belongs to the generalized Donoghue class $\sM_\kappa$. Using Appendix \ref{A2} we construct a model L-system $\Theta_1$ of the form \eqref{e-59'} that realizes $V(z)$ and satisfies Hypothesis \ref{setup}.
\end{proof}
A similar result holds true for the class $\sM^-$.
\begin{theorem}\label{t-13-new}
Let $\Theta$  be a minimal L-system of the form \eqref{e-62} that  satisfies the conditions of Hypothesis \ref{setup-1} whose impedance function $V_\Theta(z)$ has integral representation \eqref{hernev-real} with normalization parameter $a$ defined by \eqref{e-66-L}.
Then $V_\Theta(z)\in\sM^-$.

Conversely, let $V(z)\in\sM^-$. Then it can be realized by an L-system $\Theta$ satisfying  Hypothesis \ref{setup-1}.
\end{theorem}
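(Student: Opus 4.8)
The plan is to mirror the proof of Theorem~\ref{t-12-new}, interchanging the roles of Hypothesis~\ref{setup} with Hypothesis~\ref{setup-1} and of the class $\sM_\kappa$ with $\sM_\kappa^{-1}$ throughout. For the forward direction, I would start from an L-system $\Theta$ of the form \eqref{e-62} satisfying Hypothesis~\ref{setup-1} with von Neumann parameter $\kappa$, and invoke the anti-Hypothesis analogue of \cite[Theorem 12]{BMkT} (established in \cite{BMkT-2} alongside the introduction of the class $\sM_\kappa^{-1}$) to conclude that $V_\Theta(z)\in\sM_\kappa^{-1}$. By definition \eqref{e-39-kap} this means the normalization integral equals $(1+\kappa)/(1-\kappa)$, which is precisely the value $a$ obtained by inverting \eqref{e-45-kappa-2}. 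Since $0<\kappa<1$ forces $a>1$, I get $V_\Theta(z)\in\sM^-$.

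Alternatively, and perhaps more transparently, the forward direction follows from the duality \eqref{e-56-1}. Given $\Theta$ satisfying Hypothesis~\ref{setup-1}, I would pair it with the companion L-system $\Theta_1$ on the same $\dA$ and $T$ satisfying Hypothesis~\ref{setup} furnished by \cite[Theorem 5.3]{BMkT-2}; the two share the same von Neumann parameter $\kappa$. Theorem~\ref{t-12-new} gives $V_{\Theta_1}(i)=i\,(1-\kappa)/(1+\kappa)$, and the relation $V_{\Theta_1}(z)=-1/V_\Theta(z)$ evaluated at $z=i$ yields $V_\Theta(i)=i\,(1+\kappa)/(1-\kappa)$, so again $a>1$ and $V_\Theta(z)\in\sM^-$.

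For the converse, I would run the argument of Theorem~\ref{t-12-new} in reverse on the opposite side. Starting from $V(z)\in\sM^-$, definition \eqref{e-66-sub} provides a value $a>1$ for the normalization integral in \eqref{hernev}, and \eqref{e-45-kappa-2} then produces a $\kappa$ with $0<\kappa<1$ for which $V(z)$ lies in the generalized Donoghue class $\sM_\kappa^{-1}$. Using the model construction of Appendix~\ref{A2} I would build the L-system $\Theta_2$ of the form \eqref{e-59''} realizing $V(z)$ and satisfying Hypothesis~\ref{setup-1}, which completes the proof.

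The main obstacle is the forward direction's reliance on the $\sM_\kappa^{-1}$ membership result for systems satisfying Hypothesis~\ref{setup-1}: one must be certain the anti-Hypothesis analogue of \cite[Theorem 12]{BMkT} is available in the stated generality. If it cannot be cited cleanly, the duality route of the second paragraph is the safe fallback, since it reduces everything to Theorem~\ref{t-12-new} and the already-established relation \eqref{e-56-1}; the only care needed there is the elementary identity $-1/(ic)=i/c$ and the observation that the companion system shares the same $\kappa$.
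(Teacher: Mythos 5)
Your first paragraph is essentially the paper's own proof: the paper disposes of this theorem in two lines by saying it is "completely similar" to Theorem~\ref{t-12-new}, with the only changes being that $a$ and $\kappa$ are now related via \eqref{e-45-kappa-2} and that the realizing model is the L-system $\Theta_2$ of the form \eqref{e-59''} from Appendix~\ref{A1} — exactly the substitutions you make. Your second route for the forward direction, via the duality \eqref{e-56-1} and the companion system of \cite[Theorem 5.3]{BMkT-2}, is not what the paper does but is a sound alternative: it buys independence from the anti-Hypothesis analogue of \cite[Theorem 12]{BMkT} (which the paper implicitly relies on through \cite{BMkT-2}) at the cost of introducing the companion L-system; your concern about citing that analogue cleanly is reasonable, and the fallback computation $V_\Theta(i)=-1/V_{\Theta_1}(i)=i\,(1+\kappa)/(1-\kappa)$ is correct.
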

\begin{proof}
The proof is completely similar to the one of Theorem \ref{t-12-new}. The only difference is that $a$ and $\kappa$ are related via \eqref{e-45-kappa-2} and we are referring to a model L-system $\Theta_2$ of the form \eqref{e-59''} in Appendix \ref{A1}.
\end{proof}

Below we will state and prove some inverse realization results.

%%%%%%%%%%%

\begin{theorem}\label{t-14}
Let  $V(z)$ belong to the generalized Donoghue class $\sM^+$. Then $V(z)$ can be realized as the impedance function $V_{\Theta}(z)$ of a minimal L-system $\Theta$  of the form \eqref{e-62} with  the triple $(\dot A, T, \hat A)$ that satisfies Hypothesis \ref{setup} with $A=\hat A$, the quasi-kernel of $\RE\bA$, and the von Neumann parameter $\kappa$ of $T$ given by \eqref{e-45-kappa-1}. Moreover,
 \begin{equation}\label{e-73-new}
    V(z)=V_{\Theta}(z)= \frac{1-\kappa}{1+\kappa}\,M(\dA, \hat A)(z),\quad z\in\dC_+,
 \end{equation}
 where $M(\dA, \hat A)(z)$ is the Weyl-Titchmarsh function associated with the pair $(\dA, \hat A)$.
\end{theorem}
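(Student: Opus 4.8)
The plan is to reduce the realization of $V\in\sM^+$ to that of a Donoghue-class function via the scaling \eqref{e-imp-m}, and then read off the proportionality factor $\frac{1-\kappa}{1+\kappa}$. First I would fix a representation \eqref{hernev} of $V$ and set $a=\int_\bbR\frac{d\mu(\lambda)}{1+\lambda^2}$; since $V\in\sM^+$ we have $0<a<1$, and defining $\kappa$ by \eqref{e-45-kappa-1} gives $0<\kappa<1$ together with $a=\frac{1-\kappa}{1+\kappa}$. A direct computation from \eqref{hernev} yields $V(i)=ia$, so the normalized function
\[
M(z):=\frac{1+\kappa}{1-\kappa}\,V(z)=\frac1a\,V(z)
\]
satisfies $M(i)=i$, i.e.\ $M\in\mM$, the Donoghue class.

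Next I would invoke the characterization recalled in Section \ref{s3}: since $M\in\mM$, it can be realized as the Weyl-Titchmarsh function $M(\dA,A)$ of a pair $(\dA,A)$, where $\dA$ is a prime symmetric operator with deficiency indices $(1,1)$ and $A$ is a self-adjoint extension. Choosing normalized deficiency vectors $g_\pm$ with $g_+-g_-\in\dom(A)$ fixes the Hypothesis \ref{setup} normalization for the self-adjoint part. Using the model construction of Appendix \ref{A2}, I would then build the main operator $T$ of class $\Lambda(\dA)$ determined by $g_+-\kappa g_-\in\dom(T)$ with the $\kappa$ found above, together with a $(*)$-extension $\bA$ whose real part $\RE\bA$ has quasi-kernel exactly $\hat A=A$. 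This produces a minimal L-system $\Theta$ of the form \eqref{e-62} satisfying Hypothesis \ref{setup} with $A=\hat A$ and with the prescribed von Neumann parameter $\kappa$; the bare existence of such a realization already overlaps with Theorem \ref{t-12-new}, so the new content is the matching $A=\hat A$ and the explicit formula.

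Finally I would compute the impedance. By \eqref{e-imp-m}, $V_\Theta(z)=\frac{1-\kappa}{1+\kappa}V_{\Theta_0}(z)$, where $\Theta_0$ is the companion L-system built on the same $\dA$ and $\hat A$ but with von Neumann parameter $\kappa_0=0$. The decisive step is the identification $V_{\Theta_0}(z)=M(\dA,\hat A)(z)$: for $\kappa_0=0$ the main operator coincides with $\dA^*\uphar\bl(\dom(\dA)\dot+\Ker(\dA^*-iI)\br)$, and the impedance of the resulting L-system is precisely the Weyl-Titchmarsh function of the pair $(\dA,\hat A)$. Granting this, $V_\Theta(z)=\frac{1-\kappa}{1+\kappa}M(\dA,\hat A)(z)=a\cdot\frac1a\,V(z)=V(z)$, which simultaneously proves realizability and the explicit formula \eqref{e-73-new}. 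The main obstacle is exactly this $\kappa_0=0$ identification $V_{\Theta_0}=M(\dA,\hat A)$: one must pin down that the impedance of the $\kappa_0=0$ model is the abstract Weyl-Titchmarsh function attached to $(\dA,\hat A)$, rather than merely an element of $\mM$. This is where the model construction of the appendix and the earlier results of \cite{BMkT} must be used to control the channel operator $K$ and the quasi-kernel so that no spurious constant term or rescaling appears.
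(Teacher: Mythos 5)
Your proposal is correct and follows essentially the same route as the paper: identify $\kappa$ from $a$ via \eqref{e-45-kappa-1} so that $V\in\sM_\kappa$, realize the normalized function $\frac1a V\in\mM$ as a Weyl--Titchmarsh function, and build the model L-system of Appendix \ref{A1} with that $\kappa$ and quasi-kernel $\hat A=A$. The only difference is presentational: the paper delegates the scaling argument wholesale to \cite[Theorem 14]{BMkT}, whereas you unpack it via \eqref{e-imp-m} together with the $\kappa_0=0$ identification $V_{\Theta_0}=M(\dA,\hat A)$ (Corollary \ref{c-14}, also from \cite{BMkT}), which is exactly the content of the cited result.
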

\begin{proof}
Since $V(z)\in\sM^+$, then there is a value of $0<a<1$  defined by \eqref{e-66-L} for the integral representation \eqref{hernev} of $V(z)$. Consequently, there is a value of $\kappa$ found via \eqref{e-45-kappa-1} such that $V(z)$ belongs to the generalized Donoghue class $\sM_\kappa$. Then we can apply to \cite[Theorem 14]{BMkT} that will guarantee the existence of a minimal L-system $\Theta_\kappa$ that satisfies the conditions of Hypothesis \ref{setup} and such that $V(z)=V_{\Theta_\kappa}(z)$ satisfies \eqref{e-73-new}. Moreover, as it was shown in the proof of \cite[Theorem 14]{BMkT}, the  realizing L-system $\Theta_\kappa$ can be chosen as a minimal model L-system $\Theta_1$ of the form \eqref{e-59'} described in details in Appendix \ref{A1}. Note, that in this case the entire construction of $\Theta_1$ is based upon the measure $\mu$ in the integral representation \eqref{hernev} of the Weyl-Titchmarsh function $M(\dA, \hat A)(z)$ from \eqref{e-73-new} associated with the pair $(\dA, \hat A)=(\dot\cB,\cB)$.
\end{proof}
The following corollary immediately follows from Theorem \ref{t-14} when we set $\kappa=0$.
\begin{corollary}[\cite{BMkT}]\label{c-14}
Let  $V(z)$ belong to the Donoghue class $\sM$. Then $V(z)$ can be realized as the impedance function $V_{\Theta_0}(z)$ of a minimal L-system $\Theta_0$
 of the form \eqref{e-62} with  the triple $(\dot A, T, \hat A)$ that satisfies Hypothesis \ref{setup} with $A=\hat A$, the quasi-kernel of $\RE\bA$ and  $\kappa_0=0$. Moreover,
 \begin{equation}\label{e-73-new-0}
    V(z)=V_{\Theta_0}(z)= \,M(\dA, \hat A)(z),\quad z\in\dC_+,
 \end{equation}
 where $M(\dA, \hat A)(z)$ is the Weyl-Titchmarsh function associated with the pair $(\dA, \hat A)$.
\end{corollary}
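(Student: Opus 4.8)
The plan is to treat this statement as the boundary specialization $\kappa=0$ of Theorem \ref{t-14}. First I would recall that, by the normalization identities collected around \eqref{e-38-kap}--\eqref{e-39-kap} (where it is noted that $\sM_0=\sM_0^{-1}=\sM$), a function $V\in\sM$ is precisely one whose integral representation \eqref{hernev} carries normalization constant $a=1$ in \eqref{e-66-L}. Substituting $a=1$ into \eqref{e-45-kappa-1} gives $\kappa=(1-a)/(1+a)=0$, which is exactly the value separating $\sM$ from the open family $\sM^+=\bigcup_{0<\kappa<1}\sM_\kappa$ treated by Theorem \ref{t-14}.

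Next I would run the realization machinery underlying Theorem \ref{t-14} at this value $\kappa=0$: the model L-system construction of the Appendix, built directly on the measure $\mu$ of \eqref{hernev}, produces a minimal L-system $\Theta_0$ satisfying Hypothesis \ref{setup} whose main operator $T_0$ has von Neumann parameter $\kappa_0=0$ and whose quasi-kernel of $\RE\bA$ is a self-adjoint extension $\hat A=A$ of $\dA$. The impedance identity \eqref{e-73-new} then reads $V(z)=V_{\Theta_0}(z)=\frac{1-\kappa}{1+\kappa}\,M(\dA,\hat A)(z)$, and at $\kappa=0$ the prefactor $(1-\kappa)/(1+\kappa)$ collapses to $1$, yielding \eqref{e-73-new-0} at once.

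The one point deserving care is that Theorem \ref{t-14} is literally stated for $\sM^+$, i.e.\ for the open range $0<\kappa<1$, whereas $\sM$ sits on the boundary $\kappa=0$; so the main obstacle is to confirm that nothing in the construction degenerates there. This is exactly where I would invoke the structural fact recorded just after Hypothesis \ref{setup}: when $\kappa=0$ the quasi-self-adjoint extension reduces to $T_0=\dA^*\uphar\bigl(\dom(\dA)\dot{+}\Ker(\dA^*-iI)\bigr)$, so the model L-system remains well defined and minimal and its impedance function is precisely the Donoghue--Weyl--Titchmarsh function $M(\dA,\hat A)$. This dovetails with the classical characterization quoted in Section \ref{s3}, namely that $M\in\sM$ if and only if $M=M(\dA,A)$ for some pair $(\dA,A)$, which independently guarantees that the $\kappa=0$ realization lands back in $\sM$ with the asserted Weyl--Titchmarsh identification. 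Once this boundary case is secured, the corollary is an immediate reading of \eqref{e-73-new} at $\kappa=0$.
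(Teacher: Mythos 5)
Your proposal is correct and follows essentially the same route as the paper, which simply observes that the corollary is the $\kappa=0$ specialization of Theorem \ref{t-14} (whose realizing model L-system, built in \cite{BMkT} and the Appendix, is valid for all $0\le\kappa<1$). Your extra care about the boundary case $\kappa=0$ versus the open range $0<\kappa<1$ of $\sM^+$ is a reasonable refinement, but it does not change the argument.
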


%\begin{theorem}\label{t-15}
%Let two functions  $V_1(z)$ and $V_2(z)$ belong to the Donoghue class $\sM$. Then  $V_1(z)$ and $V_2(z)$ can be realized as  impedance functions of two L-systems $\Theta_1$ and $\Theta_2$
%sharing the same main operator $T$ if and only if
% \begin{equation}\label{e-75}
%    V_1(z)=\frac{\cos\alpha+(\sin\alpha)V_2(z)}{\sin\alpha-(\cos\alpha)V_2(z)},\quad z\in\dC_+.
% \end{equation}\
% for some $\alpha\in[0,\pi)$.
%\end{theorem}
%\begin{proof}
%Let
%\end{proof}
A similar to Theorem \ref{t-14} result takes place for the class $\sM^-$.
\begin{theorem}\label{t-14-new}
Let  $V(z)$ belong to the generalized Donoghue class $\sM^-$. Then $V(z)$ can be realized as the impedance function $V_{\Theta}(z)$ of a minimal L-system $\Theta$
 of the form \eqref{e-62} with  the triple $(\dot A, T, \hat A)$ that satisfies Hypothesis \ref{setup-1} with $A=\hat A$, the quasi-kernel of $\RE\bA$, and the von Neumann parameter $\kappa$ of $T$  given by \eqref{e-45-kappa-2}. Moreover,
 \begin{equation}\label{e-73-new-2}
    V(z)=V_{\Theta}(z)= \frac{1+\kappa}{1-\kappa}\,M(\dA, \hat A)(z),\quad z\in\dC_+,
 \end{equation}
 where $M(\dA, \hat A)(z)$ is the Weyl-Titchmarsh function associated with the pair $(\dA, \hat A)$.
 \end{theorem}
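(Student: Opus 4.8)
The plan is to mirror the proof of Theorem~\ref{t-14} almost verbatim, interchanging the roles of Hypothesis~\ref{setup} and Hypothesis~\ref{setup-1} and of the classes $\sM_\kappa$ and $\sM_\kappa^{-1}$. Since $V(z)\in\sM^-$, by \eqref{e-66-sub} the normalization constant $a$ in \eqref{e-66-L} for the representation \eqref{hernev} of $V$ satisfies $a>1$. By \eqref{r-unions} there is then a uniquely determined $\kappa$ with $0<\kappa<1$, recovered through \eqref{e-45-kappa-2}, for which $V(z)$ lies in the generalized Donoghue class $\sM_\kappa^{-1}$; inverting \eqref{e-45-kappa-2} returns $a=\frac{1+\kappa}{1-\kappa}$, which is precisely the coefficient appearing in \eqref{e-73-new-2} and matches the normalization \eqref{e-39-kap} defining $\sM_\kappa^{-1}$.

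First I would invoke the realization result for the class $\sM_\kappa^{-1}$ from \cite{BMkT-2} (the counterpart, under Hypothesis~\ref{setup-1}, of \cite[Theorem~14]{BMkT} used in Theorem~\ref{t-14}). This produces a minimal L-system $\Theta$ whose main operator $T$ has von Neumann parameter $\kappa$, whose quasi-kernel $\hat A$ of $\RE\bA$ is a self-adjoint extension $A=\hat A$ of $\dA$, and whose triple $(\dA,T,\hat A)$ satisfies Hypothesis~\ref{setup-1}. Concretely, as in the $\sM^+$ case, the system can be taken to be the model L-system $\Theta_2$ of the form \eqref{e-59''} built in the Appendix directly from the measure $\mu$ in \eqref{hernev}, so that $M(\dA,\hat A)(z)=\int_\bbR\bigl(\frac{1}{\lambda-z}-\frac{\lambda}{1+\lambda^2}\bigr)\,d\mu$ is the Weyl--Titchmarsh function of the pair $(\dA,\hat A)$ and belongs to the standard class $\sM$.

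The one point needing class-specific input is identity \eqref{e-73-new-2}. Under Hypothesis~\ref{setup} the impedance scales as in \eqref{e-imp-m}, that is $V_\Theta=\frac{1-\kappa}{1+\kappa}V_{\Theta_0}$; under Hypothesis~\ref{setup-1} the normalization \eqref{e-39-kap} forces instead $V_\Theta(z)=\frac{1+\kappa}{1-\kappa}\,V_{\Theta_0}(z)$, with $V_{\Theta_0}(z)=M(\dA,\hat A)(z)\in\sM$, and \eqref{e-73-new-2} follows. I expect this inverted scaling relation to be the only genuine obstacle: all the rest is bookkeeping of normalizations, whereas producing the factor $\frac{1+\kappa}{1-\kappa}$ requires the Hypothesis~\ref{setup-1} analogue of \eqref{e-imp-m}. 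Alternatively, one could bypass it through duality: if $V\in\sM_\kappa^{-1}$ then the value $V(i)=i\frac{1+\kappa}{1-\kappa}$ gives $-1/V(i)=i\frac{1-\kappa}{1+\kappa}$, so $-1/V\in\sM_\kappa\subset\sM^+$ is realizable by Theorem~\ref{t-14} via an L-system $\Theta_1$ under Hypothesis~\ref{setup}; passing to its partner $\Theta_2=\Theta$ under Hypothesis~\ref{setup-1} through \eqref{e-55-1}--\eqref{e-56-1} returns $V_\Theta=-1/V_{\Theta_1}=V$, recovering the same realization.
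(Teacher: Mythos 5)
Your proposal follows essentially the same route as the paper: the paper's proof simply says the argument is identical to that of Theorem~\ref{t-14} except that it relies on \cite[Theorem 5.6]{BMkT-2} (the Hypothesis~\ref{setup-1} counterpart you invoke), with the realizing system taken to be the model L-system $\Theta_2$ of the form \eqref{e-59''} from Appendix~\ref{A1} built on the measure $\mu$ of $M(\dA,\hat A)(z)$ for the pair $(\dot\cB,\cB_1)$. Your added duality remark via \eqref{e-55-1}--\eqref{e-56-1} is a reasonable alternative but not needed; the main line of your argument matches the paper's.
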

\begin{proof}
The proof is completely similar to the proof of Theorem \ref{t-14} except for the fact that it relies on \cite[Theorem 5.6]{BMkT-2}. Again, we note that as it was shown in the proof of \cite[Theorem 5.6]{BMkT-2}, the  realizing L-system $\Theta_\kappa$ can be chosen as a minimal model L-system $\Theta_2$ of the form \eqref{e-59''} described in details in Appendix \ref{A1}.  In this case the entire construction of $\Theta_2$ is based upon the measure $\mu$ in the integral representation \eqref{hernev} of the Weyl-Titchmarsh function $M(\dA, \hat A)(z)$ from \eqref{e-73-new-2} associated with the pair $(\dA, \hat A)=(\dot\cB,\cB_1)$.
\end{proof}

%%%%%%%%%%%%%%%%

\section{Perturbations of the class $\sM$}\label{s5}

In this section we consider a  ``perturbed" version of a subclass $\sM$ of the class $\sN$ discussed in Section \ref{s5}. First, we are going to introduce some new notations. Let $Q\ne0$ and $\sN^Q$ consist of all  Herglotz-Nevanlinna functions $V(z)$ admitting the integral representation \eqref{hernev-real}. Clearly, comparing \eqref{hernev-real} and \eqref{hernev} justifies the name and symbolism for the class $\sN^Q$. Similarly, we introduce perturbed classes $\sM^Q$, $\sM^Q_\kappa$, and $\sM^{-1,Q}_\kappa$ if normalization conditions \eqref{e-42-int-don}, \eqref{e-38-kap}, and \eqref{e-39-kap}, respectively, hold  on measure $\mu$ in \eqref{hernev-real}. The ``perturbed" versions of the classes $\sM^-$ and $\sM^+$ are $\sM^{-Q}$ and $\sM^{+Q}$, respectively. We begin with a lemma.
\begin{lemma}\label{l-12}
Let $V_0(z)$ belong to the class $\sN$ and let $V(z)=Q+V_0(z)$ be its perturbation with $Q\ne0$. Then %$V_0(z)$, $V(z)$, and
\begin{equation}\label{e-35-lemma}
    V_{\alpha}(z)=\frac{\cos\alpha+(\sin\alpha)V(z)}{\sin\alpha-(\cos\alpha)V(z)},\quad z\in\dC_+,
 \end{equation}
is a Herglotz-Nevanlinna function with an infinite Borel measure in  representation \eqref{hernev-real}  for any $\alpha\in[0,\pi)$.
\end{lemma}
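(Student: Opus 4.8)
The plan is to avoid computing the Herglotz-Nevanlinna representation of $V_\alpha$ by hand, and instead to route the argument through the L-system machinery already assembled in Sections \ref{s3}--\ref{s4}, reading the Möbius expression \eqref{e-35-lemma} as the impedance transformation \eqref{e-64-alpha}.

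First I would record that $V(z)=Q+V_0(z)$ itself admits the integral representation \eqref{hernev-real}: it inherits the (infinite) representing measure $\mu$ of $V_0\in\sN$ and carries the real constant term $Q$, so $V\in\sN^Q$. Since $\mu$ is unbounded on $\bbR$, the realization result recalled at the beginning of Section \ref{s4} (from \cite{BT4}, \cite{ABT}) applies for an arbitrary real constant term; hence $V$ can be realized as the impedance function $V_\Theta$ of a minimal L-system $\Theta$ of the form \eqref{e-62}.

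Next I would attach to $\Theta$ the one-parameter family $\Theta_\alpha$ through \eqref{e-62-alpha}--\eqref{e-63-alpha}, whose existence and structure are established in \cite[Section 8.3]{ABT}. For each $\alpha\in[0,\pi)$ the impedance functions are related by \eqref{e-64-alpha}, which with $V_\Theta=V$ reads exactly
\[
V_{\Theta_\alpha}(z)=\frac{\cos\alpha+(\sin\alpha)V(z)}{\sin\alpha-(\cos\alpha)V(z)}=V_\alpha(z),\quad z\in\dC_+.
\]
The denominator does not vanish on $\dC_+$, since $\Im V(z)=\Im V_0(z)>0$ there forces $V(z)$ to be non-real while $\sin\alpha,\cos\alpha,\tan\alpha$ are real; thus $V_\alpha$ is analytic on $\dC_+$ and is literally the impedance function of the one-dimensional L-system $\Theta_\alpha$. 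By the structural statement opening Section \ref{s4}, every function realizable as the impedance of a one-dimensional L-system of the form \eqref{e-62} is Herglotz-Nevanlinna and admits the representation \eqref{hernev-real} with an infinite representing measure. Applying this to $V_{\Theta_\alpha}=V_\alpha$ yields both conclusions of the lemma at once, uniformly in $\alpha$.

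The step I expect to carry the real weight is the first one, namely the realizability of the perturbed function $V=Q+V_0$ for every real $Q$: this is exactly where the hypothesis that $\mu$ be infinite (rather than merely finite) is used, through the ``arbitrary constant term'' realization theorem of \cite{BT4}. A secondary point to verify is that the construction of $\Theta_\alpha$ and the impedance identity \eqref{e-64-alpha} are available for our realizing system $\Theta$ in the generality needed here --- a system realizing a function with nonzero constant term --- rather than only under the normalizations of Hypothesis \ref{setup}; this is supplied by \cite[Section 8.3]{ABT}. Once these are in place, the infiniteness of the representing measure of $V_\alpha$ is not something to be checked by direct asymptotic estimates but is automatic from the fact that $V_\alpha$ is an impedance function of a system of the form \eqref{e-62}.
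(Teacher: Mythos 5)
Your argument is correct, but it takes a genuinely different route from the paper's. The paper proves the lemma by hand: it computes $\IM V_\alpha(z)=\IM V_0(z)/|\sin\alpha-(\cos\alpha)(Q+V_0(z))|^2$ to get the Herglotz--Nevanlinna property, and then establishes $\lim_{\eta\to+\infty}\eta\IM V_\alpha(i\eta)=\infty$ and $\lim_{\eta\to+\infty}V_\alpha(i\eta)/(i\eta)=0$ directly, via a case analysis on whether $V_0(i\eta)$ stays bounded, a comparison with the auxiliary function $V_{0,\alpha}$ of \eqref{e-38-new}, and a separate treatment of the exceptional case $Q=\tan\alpha-B$ (handled by passing to $-1/V_\alpha$). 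You bypass all of these asymptotic estimates by realizing $V=Q+V_0$ itself (legitimate, since $V$ carries the same infinite measure $\mu$ as $V_0$, so the arbitrary-constant-term realization theorem of \cite{BT4} applies), invoking the constant $J$-unitary factor theorem to produce $\Theta_\alpha$ with the same densely defined symmetric operator and impedance $V_\alpha$, and then quoting the structural fact (the paper's own appeal to \cite[Theorem 7.1.4]{ABT}) that such impedance functions have representation \eqref{hernev-real} with infinite measure and no linear term. Notably, the paper uses exactly this realization-based shortcut, but only for the unperturbed transform $V_{0,\alpha}$, and then bridges to $V_\alpha$ by estimates; you apply it one level up and the bridge becomes unnecessary. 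What your route costs is a heavier reliance on the prior realization theorem for the perturbed function $Q+V_0$ --- the very statement that Section \ref{s5} is in the business of re-deriving in sharpened form --- which is not a logical circularity (that theorem is established in \cite{BT4} independently of this paper) but does make the lemma less self-contained; what it buys is a substantially shorter proof that also disposes of the exceptional case and the no-linear-term verification for free. Do make sure to state explicitly, as you do, that the denominator in \eqref{e-35-lemma} is nonvanishing on $\dC_+$ because $\IM V(z)=\IM V_0(z)>0$ there.
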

\begin{proof}
The function $V_0(z)\in\sN$ and thus has infinite Borel measure by the definition of the class $\sN$. Function $V(z)$ is only different from $V_0(z)$ by a constant and hence has the Borel measure as $V_0(z)$. We need to show that $V_{\alpha}(z)$ also possesses the desired property.
Let $\bar V_{\alpha}(z)$ be the complex conjugate of the function $V_{\alpha}(z)$ defined by \eqref{e-35-lemma}. Then
$$
\begin{aligned}
V_{\alpha}(z)-\bar V_{\alpha}(z)&=\frac{\cos\alpha+(\sin\alpha)V(z)}{\sin\alpha-(\cos\alpha)V(z)}-\frac{\cos\alpha+(\sin\alpha)\bar V(z)}{\sin\alpha-(\cos\alpha)\bar V(z)}\\
&=\frac{(\cos\alpha+\sin\alpha V(z))(\sin\alpha-\cos\alpha\bar V(z))}{|\sin\alpha-(\cos\alpha)V(z) |^2}\\
&\qquad-\frac{(\sin\alpha-\cos\alpha V(z))(\cos\alpha+\sin\alpha\bar V(z))}{|\sin\alpha-(\cos\alpha)V(z) |^2}\\
&=\frac{\sin^2\alpha(V(z)-\bar V(z))+\cos^2\alpha(V(z)-\bar V(z))}{|\sin\alpha-(\cos\alpha)V(z) |^2}\\&=\frac{V(z)-\bar V(z)}{|\sin\alpha-(\cos\alpha)V(z) |^2}.
\end{aligned}
$$
Consequently,
$$
\IM  V_{\alpha}(z)=\frac{\IM V(z)}{|\sin\alpha-(\cos\alpha)V(z) |^2}.
$$
On the other hand, since $Q=\bar Q$, then $\IM V(z)=\IM V_0(z)$ and hence
\begin{equation}\label{e-36-lemma}
    \IM  V_{\alpha}(z)=\frac{\IM V_0(z)}{|\sin\alpha-(\cos\alpha)(Q+V_0(z)) |^2}.
\end{equation}
Formula \eqref{e-36-lemma} and the fact that $V_0(z)\in\sN$ imply that $V_{\alpha}(z)$ is a Herglotz-Nevanlinna function. It is shown in \cite{KK74} that if $\mu_0$ is a Borel measure in integral representation \eqref{hernev-real} of the function $V_0(z)$, then
$$
\lim_{\eta\to+\infty}\eta\IM V_0(i\eta)=\int_{\mathbb R}d\mu_0(t).
$$
We are going to show that
$$
\lim_{\eta\to+\infty}\eta\IM V_\alpha(i\eta)=\int_{\mathbb R}d\mu_\alpha(t)=\infty,
$$
where $\mu_\alpha$ is the Borel measure in integral representation \eqref{hernev-real} of the function $V_\alpha(z)$.
Let
\begin{equation}\label{e-38-new}
V_{0,\alpha}(z)=\frac{\cos\alpha+(\sin\alpha)V_0(z)}{\sin\alpha-(\cos\alpha)V_0(z)},\quad z\in\dC_+.
\end{equation}
Since $V_0(z)\in\sN$, then $V_0(z)$ falls into one of the disjoint classes $\sM^+$, $\sM$, or $\sM^-$ and hence is realizable by a minimal L-system  of the form \eqref{e-62} (see Theorems \ref{t-14}, \ref{t-14-new}, and Corollary \ref{c-14}).
Relation \eqref{e-38-new} above allows us to apply the Theorem on constant $J$-unitary factor (see \cite[Theorem 8.2.3]{ABT}, \cite{ArTs03}) to the realizing L-system that guarantees the existence of another L-system  with the same main and (densely-defined) symmetric operators and the impedance $V_{0,\alpha}(z)$. Therefore,  (see \cite[Theorem 7.1.4]{ABT})  in this case
$$
\int_{\mathbb R}d\mu_{0,\alpha}(t)=\lim_{\eta\to+\infty}\eta\IM V_{0,\alpha}(i\eta)=\infty,
$$
where $\mu_{0,\alpha}$ is the Borel measure in integral representation \eqref{hernev-real} of the function $V_{0,\alpha}(z)$.

We need to show that $\eta\IM  V_{\alpha}(i\eta)$ tends to infinity as $\eta\to+\infty$. Since $V_0(z)\in\sN$ and therefore has integral representation \eqref{hernev-0}, then  (see \cite{KK74})  we have
\begin{equation}\label{e-38-beta}
 \lim_{\eta\to+\infty}\frac{V_0(i\eta)}{i\eta}=0.
\end{equation}
Equation \eqref{e-38-beta} implies one of two basic cases:
\begin{enumerate}
  \item $|V_0(i\eta)|\le C$ in the neighborhood of infinity;
  \item $V_0(i\eta)$ is unbounded when $\eta\to\infty$.
\end{enumerate}
%(1) $|V_0(i\eta)|\le C$ in the neighborhood of infinity; or (2) $V_0(i\eta)\rightarrow\infty$ when $\eta\to\infty$.
We treat each case individually.

(1) Suppose that $|V_0(i\eta)|\le C$ in the neighborhood of infinity. Then
$$
\frac{1}{|\sin\alpha-(\cos\alpha)(Q+V_0(i\eta)) |}\ge \frac{1}{|\sin\alpha|+|\cos\alpha|\cdot|Q|+|\cos\alpha|\cdot C}.
$$
Then \eqref{e-36-lemma} yields
$$
 \begin{aligned}
\eta\IM  V_{\alpha}(i\eta)&=\frac{\eta\IM V_0(i\eta)}{|\sin\alpha-(\cos\alpha)(Q+V_0(i\eta)) |^2}\\
&\ge \eta\IM V_0(i\eta)\cdot\frac{1}{(|\sin\alpha|+|\cos\alpha|\cdot|Q|+|\cos\alpha|\cdot C)^2}
\end{aligned}
$$
The first term $\eta\IM V_0(i\eta)$ tends to infinity as $\eta\to+\infty$ while the second term is a constant. Thus, the inequality implies that
$$
\lim_{\eta\to+\infty}\eta\IM  V_{\alpha}(i\eta)=\infty,
$$
as we needed.

(2) Suppose $V_0(i\eta)$ is unbounded in the neighborhood of infinity. Then there is a sequence $\{\eta_n\}$ such that $\eta_n\to\infty$ and $V_0(i\eta_n)\rightarrow\infty$ when $n\to\infty$. Applying \eqref{e-36-lemma} yields
\begin{equation}\label{e-37-lemma}
    \begin{aligned}
\eta\IM  &V_{\alpha}(i\eta)=\frac{\eta\IM V_0(i\eta)}{|\sin\alpha-(\cos\alpha)(Q+V_0(i\eta)) |^2}\\
&=\frac{\eta\IM V_0(i\eta)}{|\sin\alpha-(\cos\alpha)V_0(i\eta) |^2}\cdot \frac{|\sin\alpha-(\cos\alpha)V_0(i\eta) |^2}{|\sin\alpha-(\cos\alpha)(Q+V_0(i\eta)) |^2}\\
&=\eta\IM V_{0,\alpha}(i\eta)\cdot \frac{|\sin\alpha-(\cos\alpha)V_0(i\eta) |^2}{|\sin\alpha-(\cos\alpha)(Q+V_0(i\eta)) |^2}.
    \end{aligned}
\end{equation}
Passing to the limit along the sequence  $\eta_n$ in the right hand side of \eqref{e-37-lemma}  we see that the first factor tends to infinity (as explained above) and the second approaches $1$ under our case assumption. Consequently, the left hand side tends to infinity. Summarizing both cases we have
$$
\lim_{\eta\to+\infty}\eta\IM  V_{\alpha}(i\eta)=\infty.
$$
A similar approach can be used to show that
$$
 \lim_{\eta\to+\infty}\frac{V_\alpha(i\eta)}{i\eta}= \lim_{\eta\to+\infty}\left(\frac{\IM V_0(i\eta)}{i\eta}\cdot\frac{1}{|\sin\alpha-(\cos\alpha)(Q+V_0(i\eta)) |^2}\right)=0.
$$
Since $V_0(z)\in\sN$, the limit of the first factor is always zero. The limit of the second factor is zero along a sequence $\eta_n$ if $V_0(i\eta_n)\rightarrow\infty$ and a constant if $\lim_{n\to\infty}V_0(i\eta_n)=B$ provided that $Q\ne\tan\alpha -B$. In the exceptional case when $Q=\tan\alpha -B$ formula \eqref{e-35-lemma} implies that  $\lim_{\eta\to+\infty}V_\alpha(i\eta)=\infty$. To work around this case
we switch  to the function $\ti V_\alpha(z)=-1/V_\alpha(z)$ for which $\lim_{\eta\to+\infty}\ti V_\alpha(i\eta)=0$ and hence $\lim_{\eta\to+\infty}\frac{\ti V_\alpha(i\eta)}{i\eta}=0$.
Repeating the argument above for the function $\ti V_\alpha(z)$ we obtain that $\ti V_\alpha(z)$  has the form \eqref{hernev-real} with infinite Borel measure $\ti\mu_\alpha$ and is realizable (see Section \ref{s10}). % and hence can be realized by L-system.
Therefore  $V_\alpha(z)$ in this exceptional case has representation \eqref{hernev-real} with infinite Borel measure $\mu_\alpha$.
%Then, as we have done earlier in the proof, we apply  \cite[Theorem 8.2.3]{ABT} that will provide us with existence of an L-system whose impedance is $V_\alpha(z)$ thus having integral representation \eqref{hernev-real}.
%Alternatively, as we will show later in this paper, we can realize $V_\alpha(z)$ using a model L-system constructed in the Appendix \ref{A1}.

Summarizing all the cases in the above reasoning and using \cite{KK74} we get
$$
\int_{\mathbb R}d\mu_\alpha(t)=\lim_{\eta\to+\infty}\eta\IM  V_{\alpha}(i\eta)=\infty.
$$
Hence,  $\mu_\alpha$ is an infinite Borel measure corresponding to the function $V_\alpha(z)$ in \eqref{e-35-lemma}.
\end{proof}

%%%%%%%%%%%%%%%%

\begin{remark}\label{r-15}
%In this remark we are going to enhance Theorem \ref{t-18-M-q} by showing that a perturbed function $V(z)=Q+V_0(z)\in\sM^Q$ can be realized  by an L-system that has the same state space and symmetric operator as an L-system realizing the unperturbed function $V_0(z)$.

Suppose $\Theta_0$ is a model L-system of the form \eqref{e-59'} that realizes the function $V_0(z)\in\sM$ %from the proof of  Theorem \ref{t-18-M-q}
 and constructed according to the procedure in Appendix \ref{A1} under Hypothesis \ref{setup} with $\kappa_0=0$. Recall that this model uses the Borel measure $\mu$ from the integral representation \eqref{e-52-M-q} of $V_0(z)$.
Consider a transformation $V_{0,\alpha}(z)$ of $V_0(z)$ similar to \eqref{e-35-lemma} that is
$$
V_{0,\alpha}(z)=\frac{\cos\alpha+(\sin\alpha)V_0(z)}{\sin\alpha-(\cos\alpha)V_0(z)}.
$$
It is known (see \cite{MT-S}, \cite{BMkT}) that $V_{0,\alpha}(z)$ belongs to class $\sM$ and hence is realizable. Let
 $$W_{\Theta_{0,\alpha}}(z)=(-e^{2i\alpha})W_{\Theta_0}(z),$$
where $W_{\Theta_0}(z)$ is the transfer function of $\Theta_0$. It is known \cite{ABT} that $W_{\Theta_{0,\alpha}}(z)$ and $V_{0,\alpha}(z)$ are related via \eqref{e6-3-6}.
We are going to construct an L-system $\Theta_{0,\alpha}$ (out of the elements of $\Theta_0$) that realizes $V_{0,\alpha}(z)$ and has $W_{\Theta_{0,\alpha}}(z)$ as its transfer function. Let $\dot\cB$ of the form \eqref{nacha2-ap} in Appendix \ref{A1} be the symmetric operator in the model realizing L-system $\Theta_{0}$ with deficiency vectors $g_z$ and $g_\pm$ of the form \eqref{e-52-def} in the model space $L^2(\bbR;d\mu)$.
 Consider another pair of normalized deficiency vectors
\begin{equation}\label{e-76-def}
g_+^\alpha=g_+\quad and \quad g_-^\alpha=(-e^{2i\alpha})g_-.
\end{equation}
Using the symmetric operator $\dot\cB$ as above, the model space $L^2(\bbR;d\mu)$, and this new deficiency basis \eqref{e-76-def} we build an L-system $\Theta_{0,\alpha}$ of the form \eqref{e-59'} as described in Appendix \ref{A2} with $\kappa=0$. Note, that $\Theta_{0,\alpha}$ satisfies the conditions of Hypothesis \ref{setup}. Let $s_0(z)$ and $S_0(z)$ be  the Livsic and the characteristic functions   associated with L-system $\Theta_0$, respectively,  while $s_\alpha(z)$ and $S_\alpha(z)$ be the ones related to L-system $\Theta_{0,\alpha}$ (see Appendix \ref{A1} and references therein for details).
Recall that by the definition \cite{L}, \cite{MT-S} the general formulas for $s(z)$ and $S(z)$ are
\begin{equation}\label{e-42-Liv}
s(z)=\frac{z-i}{z+i}\cdot \frac{(g_z, g_-)}{(g_z, g_+)},\quad S(z)=\frac{s(z)-\kappa} {\overline{ \kappa }\,s(z)-1}, \quad z\in \bbC_+,
\end{equation}
where $g_z$ and $g_\pm=g_{\pm i}$ are the deficiency vectors of symmetric operator $\dot\cB$ and $\kappa$ is the von Neumann parameter of the main operator  of the L-system under consideration. Then since $\kappa_0=\kappa_\alpha=0$
$$
s_0(z)=\frac{z-i}{z+i}\cdot \frac{(g_z, g_-)}{(g_z, g_+)},\; S_0(z)=-s_0(z),\; s_\alpha(z)=\frac{z-i}{z+i}\cdot \frac{(g_z, g_-^\alpha)}{(g_z, g_+^\alpha)},\; S_\alpha(z)=-s_\alpha(z).
$$
%where $g_z$ and $g_\pm$ are the deficiency vectors of symmetric operator $\dot\cB$ of the form \eqref{nacha2-ap} in the model space $L^2(\bbR;d\mu)$ while $g_\pm^\alpha$ are the deficiency vectors of $\dot\cB$ in the model space $L^2(\bbR;d\mu^\alpha)$.
We observe that
$$
\begin{aligned}
S_\alpha(z)&=-s_\alpha(z)=-\frac{z-i}{z+i}\cdot \frac{(g_z, g_-^\alpha)}{(g_z, g_+^\alpha)}=-\frac{z-i}{z+i}\cdot \frac{(g_z, (-e^{2i\alpha})g_-)}{(g_z, g_+)}\\
&=(e^{-2i\alpha})\frac{z-i}{z+i}\cdot \frac{(g_z, g_-)}{(g_z, g_+)}=e^{-2i\alpha}s_0(z).\\
\end{aligned}
$$
Therefore,
$$
s_\alpha(z)=(-e^{-2i\alpha})s_0(z), \quad z\in \bbC_+.
$$
It was shown in \cite{BMkT}  that
$$
 S_0(z)=\frac{1}{W_{\Theta_0}(z)},\quad
$$
and %hence from the above
$$
S_\alpha(z)=\frac{1}{W_{\Theta_{0,\alpha}}(z)},
$$
where $z$ is from the domain of definition of $S_0(z)$.
Consequently,
$$
-s_\alpha(z)=S_\alpha(z)=\frac{1}{W_{\Theta_{0,\alpha}}(z)}=e^{-2i\alpha}s_0(z)=(-e^{-2i\alpha})S_0(z)=\frac{1}{(-e^{2i\alpha})W_{\Theta_0}(z)}.
$$
Thus,
$$
W_{\Theta_{0,\alpha}}(z)=(-e^{2i\alpha})W_{\Theta_0}(z),
$$
and hence $\Theta_{0,\alpha}$ realizes  $V_{0,\alpha}(z)$ and has $W_{\Theta_{0,\alpha}}(z)$ as its transfer function. On the other hand, the same L-system $\Theta_{0,\alpha}$ can be considered with the old deficiency basis $g_\pm$. Choosing the old basis does not affect the main operator $\whB$ since $g_+^\alpha=g_+$ and $\kappa=\kappa_0=0$ but the quasi-kernel $\cB_\alpha$ of the real-part of the state-space operator of $\Theta_{0,\alpha}$ has its domain written as
\begin{equation}\label{e-77-qk}
\dom(\cB_\alpha)=\dom (\dot \cB)\dot +\linspan\left\{\,\frac{1}{\cdot -i}+ (e^{2i\alpha})\frac{1}{\cdot +i}\right \}.
\end{equation}
As a result, our L-system $\Theta_{0,\alpha}$ written in terms of the original deficiency basis $g_\pm$ no longer satisfies Hypothesis \ref{setup}.

\end{remark}
%%%%%%%%%%%%%%%%

Let us focus on the  perturbed class $\sM^Q$. Naturally here and below we assume that $Q\ne0$ or otherwise $\sM^0=\sM$ and the class is not perturbed. Recall that every function $V(z)\in\sM^Q$ admits integral representation \eqref{hernev-real} and has condition \eqref{e-42-int-don} on the measure $\mu$. That is,
 \begin{equation}\label{e-52-M-q}
V(z)= Q+\int_\bbR\left (\frac{1}{\lambda-z}-\frac{\lambda}{1+\lambda^2}\right )d\mu,\quad \int_\bbR\frac{d\mu(\lambda)}{1+\lambda^2}=1,\quad Q=\bar Q.
\end{equation}

\begin{theorem}\label{t-18-M-q}
Let  $V(z)$ belong to the  class $\sM^Q$ and have integral representation \eqref{e-52-M-q}.  Then $V(z)$ can be realized \footnote{This means, in particular, that we choose the appropriate rigged Hilbert space and the deficiency vectors of the symmetric operator of the realizing L-system.}
as the impedance function $V_{\Theta}(z)$ of a minimal L-system $\Theta$  of the form \eqref{e-62} with  the main operator $T$ whose von Neumann's parameter  $\kappa$ is determined by the formula
  \begin{equation}\label{e-53-kappa'}
    \kappa=\frac{|Q|}{\sqrt{Q^2+4}},\quad Q\ne0.
 \end{equation}
Moreover, the quasi-kernel $\hat A$ of $\RE\bA$ of the realizing L-system $\Theta$ is defined by \eqref{DOMHAT} with
\begin{equation}\label{e-54-U-M-q}
 U=\frac{Q}{|Q|}\cdot\frac{-Q+2i}{\sqrt{Q^2+4}},\quad Q\ne0.
\end{equation}
%\begin{equation}\label{e-54-U-M-q}
% U=\frac{|Q|(Q+i)(Q-2i)}{Q(Q-i)\sqrt{Q^2+4}},\quad Q\ne0.
%\end{equation}
\end{theorem}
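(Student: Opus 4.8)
The plan is to construct the realizing L-system explicitly and to read off $\kappa$ and $U$ from the behaviour at the single point $z=i$, exploiting that every Liv\v{s}ic function vanishes there. Write $V(z)=Q+V_0(z)$ with $V_0\in\sM$, so that $V_0(i)=i$ and $V$ has the representation \eqref{e-52-M-q}. Throughout, let $s(z)=\frac{z-i}{z+i}\,\frac{(g_z,g_-)}{(g_z,g_+)}$ be the Liv\v{s}ic function of $\dA$ and $S(z)=\frac{s(z)-\kappa}{\bar\kappa s(z)-1}$ the characteristic function \eqref{e-42-Liv} of the main operator $T$, with $\kappa\in[0,1)$.

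The first step is to put the impedance into a normal form. The $(*)$-extension bookkeeping of Appendix \ref{A1}, combined with \eqref{e6-3-6}, expresses $V_\Theta$ as an explicit M\"obius function of $S(z)$ whose only free coefficient is the unimodular von Neumann parameter $U$ of the quasi-kernel \eqref{DOMHAT}; concretely, passing to a different self-adjoint extension multiplies the transfer function $W_\Theta$ by a unimodular constant fixed by $U$. At $z=i$ one has $s(i)=0$, hence $S(i)=\kappa$, so this normal form collapses to a M\"obius function of $\kappa$ and $U$ alone, and for the Hypothesis \ref{setup} value of $U$ it returns $V_\Theta(i)=i\frac{1-\kappa}{1+\kappa}$, consistent with Theorem \ref{t-12-new}. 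Imposing instead $V_\Theta(i)=V(i)=Q+i$, and noting that the datum enters only through $V_0(i)=i$, which cancels, I obtain a single complex equation relating the unimodular $U$ to $\kappa$. Its modulus is compatible with $|U|=1$ only when the argument and the compatibility force
\[
\kappa=\frac{|Q|}{\sqrt{Q^2+4}},\qquad U=\frac{Q}{|Q|}\cdot\frac{-Q+2i}{\sqrt{Q^2+4}},
\]
namely \eqref{e-53-kappa'} and \eqref{e-54-U-M-q}, for which $|U|=1$ automatically; since $V_0$ entered only through $V_0(i)=i$, both $\kappa$ and $U$ depend on $Q$ alone, exactly as asserted.

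It remains to exhibit a minimal L-system with these invariants whose impedance equals $V$ on all of $\dC_+$, not merely at $i$. For this I would reverse the normal form: solving $V_\Theta\equiv Q+V_0$ determines $S(z)$ as a unimodular multiple of the M\"obius transform $\frac{(Q-i)+V_0(z)}{(Q+i)+V_0(z)}$ of the datum, and hence, via the involution $s=\frac{S-\kappa}{\kappa S-1}$, the Liv\v{s}ic function $s(z)$. Because $V_0$ is Herglotz-Nevanlinna, $\IM V_0>0$ on $\dC_+$, so $\bigl|(Q-i)+V_0\bigr|<\bigl|(Q+i)+V_0\bigr|$ and therefore $|S|<1$ there; thus $s$ maps $\dC_+$ into the unit disk with $s(i)=0$ and is a genuine Liv\v{s}ic function, determining a symmetric operator $\dA$ of deficiency indices $(1,1)$ carrying the infinite measure $\mu$ of \eqref{e-52-M-q}. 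Feeding this $\dA$ together with the values of $\kappa$ and $U$ above into the model of Appendix \ref{A2} produces the required L-system; this realizes the freedom announced in the footnote, the rigged space and the deficiency vectors being selected through $s$.

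The genuine obstacle is the first step: establishing the impedance normal form for an \emph{arbitrary} quasi-kernel, i.e.\ verifying that changing the self-adjoint extension in \eqref{DOMHAT} multiplies $W_\Theta$ by a unimodular factor determined by $U$, so that $V_\Theta$ becomes a M\"obius function of $S(z)$ with $U$ as its single free coefficient. This is the only place where the detailed $(*)$-extension computation of Appendix \ref{A1} is needed; once it is in hand, the extraction of $\kappa$ and $U$ is the elementary algebra above, anchored at $z=i$ by $s(i)=0$. A secondary point requiring care is the primeness --- hence minimality --- of the operator built in the previous paragraph, which follows from the infiniteness of $\mu$ through the representation \eqref{hernev}.
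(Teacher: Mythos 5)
Your proposal is correct and arrives at \eqref{e-53-kappa'} and \eqref{e-54-U-M-q}, but by a genuinely more economical route for extracting the parameters. The paper first removes the constant term by the rotation $V\mapsto V_\alpha$ with $\tan 2\alpha=-2/Q$, computes the normalization $a_\alpha$ of the rotated function via the identity \eqref{e-62-denom} and a case analysis over the sign of $Q$ and the quadrant of $\alpha$, reads off $\kappa$ from $a_{\alpha_1}=\tfrac{1-\kappa}{1+\kappa}$, realizes the rotated function by Theorems \ref{t-14}/\ref{t-14-new}, rotates back with the constant $J$-unitary factor theorem, and only then solves the one-point system \eqref{e-70-sys} at $z=-i$ for $U$, with $\kappa$ already in hand. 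You treat that same one-point equation --- available because $s(i)=0$ makes $S(\pm i)$ collapse to $\kappa$ --- as a single complex equation in the pair $(\kappa,U)$ and let $|U|=1$ close the system: eliminating $b(-i)$ in \eqref{e-70-sys} gives $U=\bigl(-2\kappa^2+iQ(1-\kappa^2)\bigr)/(2\kappa)$, and $|U|=1$ forces $4\kappa^2=Q^2(1-\kappa^2)$, i.e.\ \eqref{e-53-kappa'}, after which \eqref{e-54-U-M-q} follows. This eliminates the trigonometry entirely and makes it transparent that $\kappa$ and $U$ depend only on $Q$, since the datum enters only through $V_0(i)=i$. The trade-off is that the paper's detour manufactures the auxiliary functions $V_{\Theta_{\alpha_1}}\in\sM_\kappa$, $V_{\Theta_{\alpha_2}}\in\sM_\kappa^{-1}$ and formulas \eqref{e-62-v1}--\eqref{e-62-WT}, which are recycled in Theorems \ref{t-18}, \ref{t-20} and Section \ref{s10}, and it obtains the global identity $V_\Theta\equiv V$ by citing existing realization theorems, whereas you must reconstruct $S$ and $s$ from $V$ and verify $|S|<1$ on $\dC_+$ yourself (your verification is right: $S$ is a unimodular multiple of $-\tfrac{(Q-i)+V_0}{(Q+i)+V_0}$, the condition $\IM V_0>0$ gives the strict contraction, and $|S(i)|=|Q|/\sqrt{Q^2+4}$ as required). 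Both arguments rest on the pillar you correctly single out --- that changing the quasi-kernel multiplies $W_\Theta$ by a unimodular constant determined by $U$, so that $V_\Theta$ is a M\"obius function of $S$ with $U$ as the only free coefficient; this is the $(*)$-extension computation culminating in \eqref{e-216-ls}, which lives in Appendix \ref{A2}, while the model triple is Appendix \ref{A1}. Two small touch-ups: minimality comes from the primeness of the model multiplication operator $\dot\cB$ asserted at the end of Appendix \ref{A1}, not from the infiniteness of $\mu$ per se (the latter is what makes the class realizable at all); and the unimodular factor needed to align $S(i)$ with the positive number $\kappa$ is precisely the change of deficiency basis of Remark \ref{r-12}, which is what pins down the deficiency vectors promised in the theorem's footnote.
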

\begin{proof}
Since $V(z)\in\sM^Q$, then $$V(z)=Q+V_0(z),$$ where $V_0(z)\in\sM$. Let $\Theta_0$ be a model L-system of the form \eqref{e-59'} that realizes the function $V_0(z)\in\sM$  and is constructed according to the procedure in Appendix \ref{A1} under Hypothesis \ref{setup} with $\kappa_0=0$.
Consider another function $V_{\Theta_\alpha}(z)$ related to our function $V(z)\in \sM^Q$ by
\begin{equation}\label{e-54-frac}
    V_{\Theta_\alpha}(z)=\frac{\cos\alpha+(\sin\alpha)V(z)}{\sin\alpha-(\cos\alpha)V(z)},\quad z\in\dC_+,
 \end{equation}
for some $\alpha\in[0,\pi)$. It is known (see \cite[Theorem 8.3.2]{ABT}) that $V_{\Theta_\alpha}(z)$  falls into the class $\sN^Q$ and hence admits integral representation \eqref{hernev-real} with an infinite Borel measure $\mu_\alpha$ (see Lemma \ref{l-12}).
By direct substitution one gets $V(i)=Q+i$ and hence
$$
\begin{aligned}
V_{\Theta_\alpha}(i)&=\frac{\cos\alpha+(\sin\alpha)V(i)}{\sin\alpha-(\cos\alpha)V(i)}=\frac{\cos\alpha+(\sin\alpha)(Q+i)}{\sin\alpha-(\cos\alpha)(Q+i)}\\
&=\frac{(\cos\alpha+Q\sin\alpha)+i\sin\alpha}{(\sin\alpha-Q\cos\alpha)-i\cos\alpha}=\frac{-Q\cos2\alpha-(1/2)Q^2\sin2\alpha}{(\sin\alpha-Q\sin\alpha)^2+\cos^2\alpha}\\
&+i\,\frac{1}{(\sin\alpha-Q\cos\alpha)^2+\cos^2\alpha}=Q_\alpha+i\int_{\dR}\frac{d\mu_\alpha(\lambda)}{1+\lambda^2}=Q_\alpha+ia_\alpha,
\end{aligned}
$$
where $Q_\alpha$  and  $\mu_\alpha$  are the elements of integral representation \eqref{hernev-real} of the function $V_{\Theta_\alpha}(z)$ and $a_\alpha=\int_{\dR}\frac{d\mu_\alpha(\lambda)}{1+\lambda^2}$. Thus,
\begin{equation}\label{e-55-q}
    Q_\alpha=\frac{-Q\cos2\alpha-(1/2)Q^2\sin2\alpha}{(\sin\alpha-Q\cos\alpha)^2+\cos^2\alpha},
\end{equation}
and
\begin{equation}\label{e-56-q-int}
   \int_{\dR}\frac{d\mu_\alpha(\lambda)}{1+\lambda^2}=\frac{1}{(\sin\alpha-Q\cos\alpha)^2+\cos^2\alpha}.
\end{equation}
Both formulas \eqref{e-55-q} and \eqref{e-56-q-int} hold true for any value of $\alpha\in[0,\pi)$ and if we set $Q_\alpha=0$ in \eqref{e-55-q}, we obtain that either $Q=0$ (we discard this case since it takes us back to the class $\sM$)
%\begin{equation}\label{e-59-q-alpha}
%    Q=-2\cot 2\alpha,
%\end{equation}
or
\begin{equation}\label{e-59-alpha-q}
  \tan 2\alpha=-\frac{2}{Q}.
\end{equation}
Changing left hand side of \eqref{e-59-alpha-q} to $\tan\alpha$ gives
$$
\tan2\alpha=\frac{2\tan\alpha}{1-\tan^2\alpha}=-\frac{2}{Q},
$$
that leads us to the quadratic equation
$$
\tan^2\alpha- Q\tan\alpha-1=0,
$$
whose solutions are
\begin{equation}\label{e-61-tan}
\tan\alpha=\frac{Q\pm\sqrt{Q^2+4}}{2},\quad \alpha\in[0,\pi].
\end{equation}
Using this formula and the identity $\sec^2\alpha=\tan^2\alpha+1$, we obtain
\begin{equation}\label{e-62-cos}
\cos^2\alpha=\frac{2}{{Q^2+4\pm Q\sqrt{Q^2+4}}}.%\frac{\pm 2}{\sqrt{2Q^2+8\pm 2Q\sqrt{Q^2+4}}}.,\quad \sin\alpha=\frac{|Q\pm \sqrt{Q^2+4}|}{\sqrt{2Q^2+8\pm 2Q\sqrt{Q^2+4}}}.
\end{equation}
We are going to apply the above formulas for $\tan\alpha$ and $\cos^2\alpha$ to modify the denominator in \eqref{e-56-q-int} and express it in terms of $Q$.
\begin{equation}\label{e-62-denom}
\begin{aligned}
(\sin\alpha&-Q\cos\alpha)^2+\cos^2\alpha=[\cos\alpha(\tan\alpha-Q)]^2+\cos^2\alpha\\
&=\cos^2\alpha[(\tan\alpha-Q)^2+1]\\
&=\frac{2}{{Q^2+4\pm Q\sqrt{Q^2+4}}}\left[\left(\frac{Q\pm\sqrt{Q^2+4}}{2}-Q\right)^2+1\right]\\
&=\frac{2}{{Q^2+4\pm Q\sqrt{Q^2+4}}}\left[\left(\frac{-Q\pm\sqrt{Q^2+4}}{2}\right)^2+1\right]\\
&=\frac{2}{{Q^2+4\pm Q\sqrt{Q^2+4}}}\cdot\frac{Q^2\mp 2Q\sqrt{Q^2+4}+(Q^2+4)+4}{4}\\
&=\frac{2Q^2+8\mp 2Q\sqrt{Q^2+4}}{2Q^2+8\pm 2Q\sqrt{Q^2+4}}=\frac{Q^2+4\mp Q\sqrt{Q^2+4}}{Q^2+4\pm Q\sqrt{Q^2+4}}=\frac{\sqrt{Q^2+4}\mp Q}{\sqrt{Q^2+4}\pm Q}.
\end{aligned}
\end{equation}
First let us assume that $\alpha\in[0,\pi/2)$. Then   $\tan\alpha\ge0$, and formulas  \eqref{e-61-tan} and \eqref{e-62-cos} become
\begin{equation}\label{e-61-cos-sin}
   \tan\alpha=\frac{Q+\sqrt{Q^2+4}}{2},\quad \cos^2\alpha=\frac{2}{{Q^2+4+ Q\sqrt{Q^2+4}}}.%\sin\alpha=\frac{Q+ \sqrt{Q^2+4}}{\sqrt{2Q^2+8+ 2Q\sqrt{Q^2+4}}}.
\end{equation}
The choice of signs in \eqref{e-61-cos-sin} is unique and follows from the positivity of $\cos\alpha$ and $\tan\alpha$ in \eqref{e-61-tan}. %, and the basic trigonometric identity.
Substituting these values in \eqref{e-56-q-int} we use \eqref{e-62-denom} to get
\begin{equation}\label{e-60-a-alpha}
 a_{\alpha}=   \int_{\dR}\frac{d\mu_{\alpha}(\lambda)}{1+\lambda^2}=\frac{1}{(\sin\alpha-Q\cos\alpha)^2+\cos^2\alpha}=\frac{\sqrt{Q^2+4}+ Q}{\sqrt{Q^2+4}- Q},\quad \alpha\in\left[0,\frac{\pi}{2}\right).
\end{equation}
Note that $a_{\alpha}$ given by  \eqref{e-60-a-alpha} is less than 1 if $Q<0$ and greater than 1 if $Q>0$. Moreover, \eqref{e-59-alpha-q} and \eqref{e-61-cos-sin} imply that if $Q<0$, then
$ \alpha\in\left[0,{\pi}/{4}\right)$ and $a_{\alpha}<1$. Likewise, if $Q>0$, then $ \alpha\in\left({\pi}/{4},\pi/2\right)$ and $a_{\alpha}>1$.

Now let $\alpha\in(\pi/2,\pi]$. Then   $\tan\alpha\le0$ and similar to the above analysis yields
\begin{equation}\label{e-63-cos-sin}
    \tan\alpha=\frac{Q-\sqrt{Q^2+4}}{2},\quad \cos^2\alpha=\frac{2}{{Q^2+4- Q\sqrt{Q^2+4}}}.%\sin\alpha=\frac{-Q+ \sqrt{Q^2+4}}{\sqrt{2Q^2+8- 2Q\sqrt{Q^2+4}}}.
\end{equation}
Consequently,
\begin{equation}\label{e-64-a-alpha}
 a_{\alpha}=   \int_{\dR}\frac{d\mu_{\alpha}(\lambda)}{1+\lambda^2}=\frac{1}{(\sin\alpha-Q\cos\alpha)^2+\cos^2\alpha}=\frac{\sqrt{Q^2+4}- Q}{\sqrt{Q^2+4}+ Q},\quad \alpha\in\left(\frac{\pi}{2},\pi\right].
\end{equation}
Again we note that $a_{\alpha}$ given by  \eqref{e-64-a-alpha} is less than 1 if $Q>0$ and greater than 1 if $Q<0$. Moreover, \eqref{e-59-alpha-q} and \eqref{e-61-cos-sin} imply that if $Q<0$, then
$ \alpha\in\left({\pi}/{2},{3\pi}/{4}\right)$ and $a_{\alpha}>1$. Likewise, if $Q>0$, then $ \alpha\in\left({3\pi}/{4},\pi\right]$ and $a_{\alpha}<1$.

Considering the above, we conclude that \eqref{e-60-a-alpha} and \eqref{e-64-a-alpha} contain two possible answers of the value of $a_\alpha$ for any given non-zero $Q\in\dR$
\begin{equation}\label{e-62-1-2}
a_{\alpha_1}=\frac{\sqrt{Q^2+4}- |Q|}{\sqrt{Q^2+4}+ |Q|}<1\quad\textrm{ and }\quad a_{\alpha_2}=\frac{\sqrt{Q^2+4}+ |Q|}{\sqrt{Q^2+4}- |Q|}>1,
\end{equation}
and clearly $a_{\alpha_1}=1/a_{\alpha_2}$. Moreover, we have that
$$
V_{\Theta_{\alpha_1}}(i)=-\frac{1}{V_{\Theta_{\alpha_2}}(i)}.
$$
This (see  \cite[Theorem 5.3]{BMkT-2}) indicates that $V_{\Theta_{\alpha_1}}(z)\in\sM_\kappa$ and $V_{\Theta_{\alpha_2}}(z)\in\sM_\kappa^{-1}$ for the same value of $\kappa$ that we can find via
\eqref{e-45-kappa-1} and \eqref{e-45-kappa-2}. Indeed,
\begin{equation}\label{e-38-kap'}
a_{\alpha_1}=\int_\bbR\frac{d\mu_{\alpha_1}(\lambda)}{1+\lambda^2}=\frac{1-\kappa}{1+\kappa},
\end{equation}
and
\begin{equation}\label{e-39-kap'}
a_{\alpha_2}=\int_\bbR\frac{d\mu_{\alpha_2}(\lambda)}{1+\lambda^2}=\frac{1+\kappa}{1-\kappa},
\end{equation}
for some value $\kappa\in(0,1)$. Combining \eqref{e-45-kappa-1} and \eqref{e-45-kappa-2} with \eqref{e-38-kap'},  \eqref{e-39-kap'},  and \eqref{e-62-1-2} to solve for $\kappa$ yields
$$
\begin{aligned}
\kappa&=\frac{1-a_{\alpha_1}}{1+a_{\alpha_1}}=\frac{1-\frac{\sqrt{Q^2+4}- |Q|}{\sqrt{Q^2+4}+ |Q|}}{1+\frac{\sqrt{Q^2+4}- |Q|}{\sqrt{Q^2+4}+ |Q|}}=\frac{\sqrt{Q^2+4}+|Q|-\sqrt{Q^2+4}+|Q|}{\sqrt{Q^2+4}+|Q|+\sqrt{Q^2+4}-|Q|}\\
&=\frac{2|Q|}{2\sqrt{Q^2+4}}=\frac{|Q|}{\sqrt{Q^2+4}}=\frac{a_{\alpha_2}-1}{1+a_{\alpha_2}}.
\end{aligned}
$$
Thus,
\begin{equation}\label{e-65-kappa}
\kappa(Q)=\frac{|Q|}{\sqrt{Q^2+4}}.
\end{equation}
The graph of $\kappa$ as a function of $Q$ is shown on Figure \ref{fig-1}. We note that $\kappa(Q)$ is an even function whose derivative for $Q>0$ is
$$
\kappa'(Q)=\frac{4}{(Q^2+4)^{3/2}},\quad Q>0,
$$
giving the slope of the graph at $Q=0$ as $\kappa'(0+)=1/2$. The graph of the function is symmetric with respect to the $\kappa$-axis.
%%%%%%%%%%%
\begin{figure}
  % Requires \usepackage{graphicx}
  \begin{center}
  \includegraphics[width=110mm]{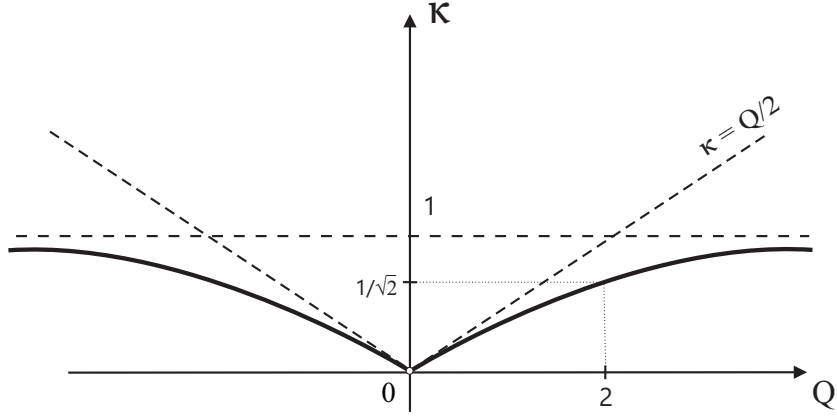}
  \caption{Class $\sM^Q$: $\kappa$ as a function of $Q$}\label{fig-1}
  \end{center}
\end{figure}
%%%%%%%%%%%

The formulas for $V_{\Theta_{\alpha_1}}(z)\in\sM_\kappa$ and $V_{\Theta_{\alpha_2}}(z)\in\sM_\kappa^{-1}$ can be derived in a more explicit format. In order to do this we begin by solving the formulas \eqref{e-62-1-2} for $Q$ which leads to (in both formulas)
$$
Q^2=\frac{(a-1)^2}{a}=a-2+\frac{1}{a},\quad\textrm{ where }\quad a=a_{\alpha_{1}} \quad\textrm{ or }\quad a=a_{\alpha_{2}}=\frac{1}{a_{\alpha_1}},
$$
and hence
\begin{equation}\label{e-60-Q}
    |Q|=\frac{1-a_{\alpha_1}}{\sqrt{a_{\alpha_1}}}=\frac{a_{\alpha_2}-1}{\sqrt{a_{\alpha_2}}}.
\end{equation}
Furthermore, using \eqref{e-61-tan} with \eqref{e-62-1-2} yields
\begin{equation}\label{e-61-new}
\begin{aligned}
\tan^2\alpha_{1,2}&=\frac{(Q\pm\sqrt{Q^2+4})^2}{4}=\frac{(Q\pm\sqrt{Q^2+4})^2}{Q^2+4-Q^2}\\
&=\frac{(\sqrt{Q^2+4}\mp |Q|)^2}{(\sqrt{Q^2+4}\pm |Q|)(\sqrt{Q^2+4}\mp |Q|)}=\frac{\sqrt{Q^2+4}\mp |Q|}{\sqrt{Q^2+4}\pm |Q|}=a_{\alpha_{1,2}}.
\end{aligned}
\end{equation}
Consequently, applying \eqref{e-35-lemma} with $\alpha=\alpha_1$ and $V(z)=Q+V_0(z)$, $V_0(z)\in\sM$, and using \eqref{e-60-Q} and \eqref{e-61-new} gives us
\begin{equation}\label{e-62-v1}
    \begin{aligned}
 V_{{\Theta_{\alpha_1}}}(z)&=\frac{\cos\alpha_1+(\sin\alpha_1)V(z)}{\sin\alpha_1-(\cos\alpha_1)V(z)}=\frac{1+\tan\alpha_1(Q+V_0(z))}{\tan\alpha_1-(Q+V_0(z))}\\
&=\frac{1\pm\sqrt{a_{\alpha_1}}(Q+V_0(z))}{\pm\sqrt{a_{\alpha_1}}-Q-V_0(z)}
=\frac{1\pm\sqrt{a_{\alpha_1}}\left(\mp\frac{1-a_{\alpha_1}}{\sqrt{a_{\alpha_1}}}+V_0(z)\right)}{\pm\sqrt{a_{\alpha_1}}\pm\frac{1-a_{\alpha_1}}{\sqrt{a_{\alpha_1}}}-V_0(z)}\\
&={a_{\alpha_1}}\frac{\sqrt{a_{\alpha_1}}+V_0(z)}{1-\sqrt{a_{\alpha_1}}\,V_0(z)}
=a_{\alpha_1}\frac{\tan\alpha_1+V_0(z)}{1-(\tan\alpha_1) V_0(z)}\\
&={a_{\alpha_1}}\frac{\sin\alpha_1+(\cos\alpha_1)V_0(z)}{\cos\alpha_1-(\sin\alpha_1)V_0(z)}.
    \end{aligned}
\end{equation}
One can notice that \eqref{e-60-Q} implies that the value of $|Q|$ does not change after replacing $a=a_{\alpha_1}$ with $a=a_{\alpha_2}=1/a_{\alpha_1}$ and hence working with $\alpha=\alpha_2$ is similar to the above and yields
\begin{equation}\label{e-62-v2}
    \begin{aligned}
  V_{{\Theta_{\alpha_2}}}(z)&=\frac{\cos\alpha_2+(\sin\alpha_2)V(z)}{\sin\alpha_2-(\cos\alpha_2)V(z)}=a_{\alpha_2}\frac{\sin\alpha_2+(\cos\alpha_2)V_0(z)}{\cos\alpha_2-(\sin\alpha_2)V_0(z)}.\\
%\frac{1+\tan\alpha_2(Q+V(z))}{\tan\alpha_2-(Q+V(z))}\\
%&=\frac{1\pm\sqrt{a_{\alpha_2}}(Q+V(z))}{\pm\sqrt{a_{\alpha_2}}-Q-V(z)}=\frac{1\pm\sqrt{a_{\alpha_2}}\left(\pm\frac{a_{\alpha_2}-1}{\sqrt{a_{\alpha_2}}}+V(z)\right)}{\pm\sqrt{a_{\alpha_2}}\mp\frac{a_{\alpha_2}-1}{\sqrt{a_{\alpha_2}}}-V(z)}\\
%&=a_{\alpha_2}\frac{\sqrt{a_{\alpha_2}}+V(z)}{1-\sqrt{a_{\alpha_2}}\, V(z)}=a_{\alpha_2}\frac{\tan\alpha_2+V(z)}{1-\tan\alpha_2\, V(z)}\\
%&=a_{\alpha_2}\frac{\sin\alpha_2+(\cos\alpha_2)V(z)}{\cos\alpha_2-(\sin\alpha_2)V(z)}.
%&=-a_{\alpha_2}\left( \frac{\cos\alpha_2+(\sin\alpha_2)V(z)}{\sin\alpha_2-(\cos\alpha_2)V(z)}\right)^{-1}.
    \end{aligned}
\end{equation}
The expressions for $V_{\Theta_{\alpha_1}}(z)\in\sM_\kappa$ in \eqref{e-62-v1} and $V_{\Theta_{\alpha_2}}(z)\in\sM_\kappa^{-1}$  in \eqref{e-62-v2}  clearly confirm that
$$
V_{\Theta_{\alpha_1}}(z)=-\frac{1}{V_{\Theta_{\alpha_2}}(z)},\quad z\in\dC_+,
$$
since $\alpha_2=\alpha_1+\pi/2$ and $a_{\alpha_2}=1/a_{\alpha_1}$. Moreover, it is easy to see that the functions
\begin{equation}\label{e-62-WT}
V_{0,\alpha_1}(z)=\frac{\sin\alpha_1+(\cos\alpha_1)V_0(z)}{\cos\alpha_1-(\sin\alpha_1)V_0(z)} \quad\textrm{ and }\quad V_{0,\alpha_2}(z)=\frac{\sin\alpha_2+(\cos\alpha_2)V_0(z)}{\cos\alpha_2-(\sin\alpha_2)V_0(z)}
\end{equation}
are both members of the class $\sM$ since they are obtained from the function $V_0(z)\in\sM$ via transformation \eqref{e-35-lemma} with the angle values $\alpha=\alpha_1+\pi/2$ and $\alpha=\alpha_2+\pi/2$, respectively.

Applying Theorems  \ref{t-14} or \ref{t-14-new} we can realize the  function $V_{\Theta_{{\alpha_1}}}(z)$ in \eqref{e-62-v1} (or $V_{\Theta_{{\alpha_2}}}(z)$ in \eqref{e-62-v2}) by a minimal L-system   that satisfies either Hypothesis \ref{setup} (or Hypothesis \ref{setup-1}). We will, however, find the realizing L-systems in a way that they will share the state-space and symmetric operator $\dot\cB$ with the model L-system $\Theta_0$ mentioned in the beginning of the proof. The procedure is described in Remark \ref{r-15} and just needs to be adapted to the current construction.
%%%%

%Remark \ref{r-16}.

%%%%%
Let $\Theta_{0,\alpha_1}$ and $\Theta_{0,\alpha_2}$ be the L-systems realizing  functions $V_{0,\alpha_1}(z)$ and $V_{0,\alpha_2}(z)$  of the class $\sM$ given by \eqref{e-62-WT} and sharing the model state space and symmetric operator $\dot\cB$ with $\Theta_0$  described above. That is, both L-systems $\Theta_{0,\alpha_1}$ and $\Theta_{0,\alpha_2}$ share the state space $\calH_+ \subset L^2(\bbR;d\mu)\subset\calH_-$, (here $\mu$ is the measure in the integral representation \eqref{hernev-real}  of $V_0(z)$), the  symmetric $\dot\cB$  and main $T^0_{\calB}$  operators  with the L-system $\Theta_{0}$ (see Remark \ref{r-15} and Appendix \ref{A1} for details). Recall that we are using the fixed normalized in $L^2(\bbR;d\mu)$ deficiency pair $g_\pm$ given by \eqref{e-52-def}.
 What makes $\Theta_{0,\alpha_1}$ and $\Theta_{0,\alpha_2}$ different  from each other are the quasi-kernels of the real parts of the state-space operators of the form  \eqref{e-77-qk} that are
$$
\begin{aligned}
\dom(\cB_{\alpha_1})&=\dom (\dot \cB)\dot +\linspan\left\{\,\frac{1}{\cdot -i}+ (e^{2i\alpha_1})\frac{1}{\cdot +i}\right \},\\
\dom(\cB_{\alpha_2})&=\dom (\dot \cB)\dot +\linspan\left\{\,\frac{1}{\cdot -i}+ (e^{2i\alpha_2})\frac{1}{\cdot +i}\right \}.
\end{aligned}
$$
Recall that $\alpha_2=\alpha_1+\pi/2$ and hence $e^{2i\alpha_1}=-e^{2i\alpha_2}$.
As it was laid out in Remark \ref{r-15} we introduce new normalized deficiency vectors for $\dot\cB$ of the form \eqref{e-76-def}
\begin{equation}\label{e-67-def}
g_+^\alpha=g_+(t)=\frac{1}{t-i},\quad g_-^\alpha=(-e^{2i\alpha})g_-(t)=-\frac{e^{2i\alpha}}{t +i},\quad\alpha=\alpha_1.
\end{equation}
Note that for this choice of deficiency vectors the L-systems $\Theta_{0,\alpha_1}$ and $\Theta_{0,\alpha_2}$ satisfy Hypothesis \ref{setup} and \ref{setup-1} (with $\kappa=0$).
We are going to find the values of unimodular factors $e^{2i\alpha_1}$ and $e^{2i\alpha_2}$ in terms of our parameter $Q$.
According to  \eqref{e-59-alpha-q},  $\tan2\alpha=-2/Q$, and hence we have
$$
\cos 2\alpha_1=-\frac{Q}{\sqrt{Q^2+4}}\quad\textrm{ and }\quad \sin 2\alpha_1=\frac{2}{\sqrt{Q^2+4}},
$$
or
$$
\cos 2\alpha_2=\frac{Q}{\sqrt{Q^2+4}}\quad\textrm{ and }\quad \sin 2\alpha_2=-\frac{2}{\sqrt{Q^2+4}}.
$$
Thus,
\begin{equation}\label{e-65-B}
\begin{aligned}
-e^{2i\cdot{\alpha_1}}&=-\cos2\alpha_1-i\sin2\alpha_1=\frac{Q-2i}{\sqrt{Q^2+4}},\\
-e^{2i\cdot{\alpha_2}}&=-\cos2\alpha_2-i\sin2\alpha_2=-\frac{Q-2i}{\sqrt{Q^2+4}}.
\end{aligned}
\end{equation}
Now we can construct two new L-systems $\Theta_{{\alpha_1}}$  and  $\Theta_{{\alpha_2}}$  by modifying $\Theta_{0,\alpha_1}$ and $\Theta_{0,\alpha_2}$ as follows: the main operator $T^0_{\calB}$ (with parameter $\kappa=0$) is replaced with $\whB $ of the form \eqref{nacha3-ap} and having the von Neumann parameter $\kappa$ of the form \eqref{e-53-kappa'} with respect to the modified deficiency basis \eqref{e-67-def}.
%\begin{equation}\label{e-77-new-k}
%    \kappa=\frac{|Q|(-e^{2i\alpha})}{\sqrt{Q^2+4}},\quad where \quad -e^{2i\alpha}=\frac{Q-2i}{\sqrt{Q^2+4}},\; \alpha=\alpha_1,
%\end{equation}
%(see formulas \eqref{e-65-B}).
Following Appendices \ref{A2} and \ref{A1} we have
$$
\Theta_{\alpha_1}= \begin{pmatrix} \dB_{\alpha_1}&K_{\alpha_1}&\ 1\cr \calH_+ \subset L^2(\bbR;d\mu) \subset\calH_-& &\dC\cr \end{pmatrix},
$$
where $g_+^\alpha- g_-^\alpha\in \dom(\cB_{\alpha_1})$ and $g_+^\alpha-\kappa g_-^\alpha\in \dom (\whB )$ for $\kappa$ defined in \eqref{e-53-kappa'}.
Also, applying \eqref{e-57'} and \eqref{e-17-real} (with $\varphi=\frac{1}{\sqrt2}\calR^{-1}g_+^\alpha$ and $\psi=\frac{1}{\sqrt2}\calR^{-1}g_-^\alpha$) yields
$$
       \IM\dB_{\alpha_1}=(\cdot,\chi_{\alpha_1})\chi_{\alpha_1},\quad \chi_{\alpha_1}=\sqrt{\frac{1-\kappa}{1+\kappa}}\left(\frac{1}{2}\,\calR^{-1}g_+^\alpha- \frac{1}{2}\,\calR^{-1}g_-^\alpha\right),
 $$
 $$
  \RE\dB_1=\dot\cB^*+\frac{i}{4}\left(\cdot,\calR^{-1}g_+^\alpha+\calR^{-1}g_-^\alpha\right)\left(\calR^{-1}g_+^\alpha-\calR^{-1}g_-^\alpha\right),
 $$
and  $K_{\alpha_1} c=c\cdot \chi_{\alpha_1}$, $K_1^*f=(f,\chi_{\alpha_1})$, $(f\in\calH_+)$.
%Following Appendix \ref{A2}  we have
%$$
%\Theta_{\alpha_1}= \begin{pmatrix} \dB_{\alpha_1}&K_{\alpha_1}&\ 1\cr \calH_+ \subset L^2(\bbR;d\mu) \subset\calH_-& &\dC\cr \end{pmatrix},
%$$
%where $g_++(e^{2i\alpha_1})g_-\in \dom(\cB_{\alpha_1})$ and $g_+-\kappa g_-\in \dom (\whB )$ for $\kappa$ defined in \eqref{e-53-kappa'}.
%Also,
%$$
%       \IM\dB_{\alpha_1}=(\cdot,\chi_{\alpha_1})\chi_{\alpha_1},
% $$
%where $\chi_{\alpha_1}$ is given by \eqref{e-212} with values of $U=e^{2i\alpha_1}$ and $\kappa$ as in \eqref{e-53-kappa'}. Also, $\RE\dB_{\alpha_1}$ is determined via \eqref{e-214} and $K_{\alpha_1} c=c\cdot \chi_{\alpha_1}$, $K_1^*f=(f,\chi_{\alpha_1})$, $(f\in\calH_+)$.

The L-system $\Theta_{\alpha_2}$ has exact same format once parameter $\alpha_1$ is replaced with $\alpha_2$. Alternatively it can also be written in terms of $\alpha_1$ using the fact that $e^{2i\alpha_1}=-e^{2i\alpha_2}$.
 We remind the reader that in this case the entire construction of $\Theta_{{\alpha_1}}$  (or  $\Theta_{{\alpha_2}}$) is based upon the measure $\mu$ in the integral representation \eqref{hernev} of our unperturbed function $V_{0}(z)\in\sM$ and that $\Theta_{{\alpha_1}}$  and  $\Theta_{{\alpha_2}}$ realize $V_{\Theta_{\alpha_1}}(z)$ and $V_{\Theta_{\alpha_2}}(z)$, respectively.
By construction $\Theta_{{\alpha_1}}$ and $\Theta_{{\alpha_2}}$ share the same main operator and hence the same value of $\kappa$ of the form \eqref{e-53-kappa'}.

%Suppose $\Theta$ is an L-system that realizes $V(z)$ and contains the same main operator $\whB$ and symmetric operator $\dot\cB$ as in $\Theta_{{\alpha_1}}$ and  $\Theta_{{\alpha_2}}$.
Let $\Theta_{{\alpha_1}}$ and  $\Theta_{{\alpha_2}}$ be constructed above L-systems whose impedance functions are $V_{\Theta_{\alpha_1}}(z)$ and $V_{\Theta_{\alpha_2}}(z)$.
For $j=1,2$ consider $W_{\Theta_{\alpha_{j}}}(z)$, the transfer function of $\Theta_{{\alpha_{j}}}$ related to $V_{\Theta_{{\alpha_{j}}}}(z)$  via \eqref{e6-3-6} with $J=1$, and the function
$$
W(z)=\frac{1-i V(z)}{1+i V(z)}.
$$
It was shown in  \cite[Theorem 8.3.2]{ABT} that in this case
\begin{equation}\label{e-61-Junitary}
W(z)=(-e^{2i\cdot{\alpha_j}})W_{\Theta_{{\alpha_j}}}(z),\quad j=1,2.
\end{equation}
Relation \eqref{e-61-Junitary} above allows us to apply the Theorem on constant $J$-unitary factor (see \cite[Theorem 8.2.3]{ABT}, \cite{ArTs03}) that guarantees the existence of an L-system $\Theta$ with the same main operator $\whB$ as in $\Theta_{\alpha_j}$, $(j=1,2)$ and such that  $V(z)=V_\Theta(z)$, ($z\in\dC_\pm$). In order to find the quasi-kernel $\cB_U$ of the real part of the state-space operator in $\Theta$ and its unimodular parameter $U$  we apply formula \eqref{e-216-ls} of Appendix \ref{A2}. Adapted to our case and written for $z=-i$ it becomes
$$
 \left\{
      \begin{array}{ll}
        \frac{\overline{\kappa^2+1+2\kappa U}}{\sqrt2|1+\kappa U|\sqrt{1-\kappa^2}}\, a(-i)+\frac{\overline{\kappa^2 U+2\kappa+U}}{\sqrt2|1+\kappa U|\sqrt{1-\kappa^2}}\,b(-i)=V(-i) & \hbox{} \\
&\\
        -\frac{i\sqrt{1-\kappa^2}}{\sqrt2|1+\kappa U|}a(-i)+\frac{i\sqrt{1-\kappa^2}\bar U}{\sqrt2|1+\kappa U|}b(-i)=1. & \hbox{}
      \end{array}
    \right.
$$
Taking into account that $a(-i)=0$ and $V(-i)=Q+V_0(-i)=Q-i$ we get
\begin{equation}\label{e-70-sys}
     \left\{
      \begin{array}{ll}
       \frac{\overline{\kappa^2 U+2\kappa+U}}{\sqrt2|1+\kappa U|\sqrt{1-\kappa^2}}\,b(-i)=Q-i & \hbox{} \\
&\\
        \frac{i\sqrt{1-\kappa^2}\bar U}{\sqrt2|1+\kappa U|}b(-i)=1. & \hbox{}
      \end{array}
    \right.
\end{equation}
Eliminating $b(-i)$ and solving \eqref{e-70-sys} for $U$ yields
$$
U=\frac{-2\kappa^2+Qi-\kappa^2Qi}{2\kappa}.
$$
Substituting the value of $\kappa$ from \eqref{e-53-kappa'} and simplifying produces
$$
U=\frac{Q}{|Q|}\cdot\frac{-Q+2i}{\sqrt{Q^2+4}},
$$
that matches \eqref{e-54-U-M-q}. As it was explained in the end of Appendix \ref{A2}, linear system \eqref{e-216-ls} always has a unique solution for any value of $U$ and any $z\in\dC_\pm$.  In particular, using $U$ of the form \eqref{e-54-U-M-q} above and $V_\Theta(z)=Q+V_0(z)$ in \eqref{e-216-ls} produces the following solution pair
$$
a(z)=\frac{1}{\sqrt2}(V_0(z)+i),\quad b(z)=\frac{1}{\sqrt2}(V_0(z)-i)U.
$$

%%%%%%%%%%%%%%%%%%%%%%%%%%%%%--> delete

Now we are ready to describe the L-system $\Theta$ that realizes $V(z)$. Let $\Theta_{{\alpha_1}}$ be the  L-system that  was described earlier in the proof. Then $\Theta_{{\alpha_1}}$ is based upon the  triple $(\dot\cB, \whB ,\cB_{\alpha_1})$ that relies on the measure $\mu$ in integral representation \eqref{hernev-real} of the function $V_0(z)$ and so does the state space $\calH_+ \subset L^2(\bbR;d\mu)\subset\calH_-$ of $\Theta_{{\alpha_1}}$. We construct the L-system $\Theta$ of the form \eqref{e-59} in Appendix \ref{A2} corresponding to the model triple $(\dot\cB,\whB,\cB_U)$. This  L-system $\Theta$ shares the state space $\calH_+ \subset L^2(\bbR;d\mu)\subset\calH_-$, symmetric $\dot \cB$, and main $\whB$ operators with  $\Theta_{{\alpha_1}}$ but only differs by the quasi-kernel of the real part of the state-space operator $\cB_U$ which is defined via \eqref{e-187-new}. Note that in this construction we are using modified  deficiency vectors $g_\pm^\alpha$ of the form \eqref{e-67-def}. The values of $\kappa$ and $U$ in $\Theta$ are given by \eqref{e-53-kappa'} and \eqref{e-54-U-M-q}, respectively. The components of  $\Theta$ are described in Appendix \ref{A1}. This model L-system $\Theta$ realizes our function $V(z)$.

The proof is complete.
\end{proof}
\begin{remark}\label{r-16}
As we mentioned  in the end of the proof of Theorem \ref{t-18-M-q}, the modified deficiency basis $g_\pm^\alpha$ of the form \eqref{e-67-def} is used in the construction and description of the L-system $\Theta$ that realizes our function $V(z)$. Both von Neumann's parameters $\kappa$ and $U$ in $\Theta$ are given by \eqref{e-53-kappa'} and \eqref{e-54-U-M-q} with respect to these deficiency vectors. Alternatively, we could use the original normalized deficiency pair $g_\pm$ of the form \eqref{e-52-def} in the space $L^2(\bbR;d\mu)$. This approach, however, would produce a (generally speaking) complex value of the parameter $\kappa$ and a different value of parameter $U$. The corresponding formulas are
\begin{equation}\label{e-71-k}
    \kappa=\frac{|Q|(-e^{2i\alpha})}{\sqrt{Q^2+4}}=\frac{|Q|(Q-2i)}{Q^2+4}
\end{equation}
and
\begin{equation}\label{e-72-U}
 U=\frac{Q(-e^{2i\alpha})}{|Q|}\cdot\frac{-Q+2i}{\sqrt{Q^2+4}}=\frac{Q}{|Q|}\cdot\frac{4-Q^2+4Qi}{Q^2+4}.
\end{equation}
Here we used the value of $-e^{2i\alpha}$ found via \eqref{e-65-B}.

\end{remark}

Note that formulas  \eqref{e-53-kappa'} and \eqref{e-54-U-M-q} in the statement of Theorem \ref{t-18-M-q} indicate that the change of $Q$ to  $(-Q)$ in integral representation \eqref{e-52-M-q} of a function $V(z)$ to be realized, maintains the same value of $\kappa$ in the realizing L-system but  changes $U$ to $\bar U$, i.e, $U(-Q)=\bar U(Q)$.
\begin{corollary}\label{c-19} Let $V_1(z)=Q+V_{10}(z)$ and $V_2(z)=Q+V_{20}(z)$ be two functions such that $V_{10}(z)$ and $V_{20}(z)$ $(V_{10}(z)\ne V_{20}(z))$ belong to the Donoghue class $\sM$. Then both $V_1(z)$ and $V_2(z)$ can be realized as the impedance functions  of two not bi-unitarily equivalent L-systems of the form \eqref{e-62} whose  main operators have the same  von Neumann's parameter  $\kappa$ determined by \eqref{e-53-kappa'} and the quasi-kernels of the real parts of state-space operators defined via the same $U$  of the form \eqref{e-54-U-M-q}.
\end{corollary}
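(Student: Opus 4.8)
The plan is to apply Theorem \ref{t-18-M-q} twice and then rule out bi-unitary equivalence by a direct comparison of impedance functions. First I would note that both $V_1$ and $V_2$ belong to the class $\sM^Q$: adding the real constant $Q$ does not alter the representing measure, so each of $V_1=Q+V_{10}$ and $V_2=Q+V_{20}$ has the integral representation \eqref{e-52-M-q} with the same $Q$ and with a measure satisfying the normalization \eqref{e-42-int-don} inherited from $V_{10},V_{20}\in\sM$. Applying Theorem \ref{t-18-M-q} to $V_1$ yields a minimal L-system $\Theta_1$ of the form \eqref{e-62} realizing $V_1$, and applying it to $V_2$ yields a minimal L-system $\Theta_2$ realizing $V_2$. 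Since formulas \eqref{e-53-kappa'} and \eqref{e-54-U-M-q} express $\kappa$ and $U$ solely through $Q$, and both functions carry the same value of $Q$, the main operators of $\Theta_1$ and $\Theta_2$ have the identical von Neumann parameter $\kappa$, and the quasi-kernels of $\RE\bA_1$ and $\RE\bA_2$ are determined through \eqref{DOMHAT} by one and the same unimodular parameter $U$.

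Next I would establish the non-equivalence. By construction the impedance functions satisfy $V_{\Theta_1}(z)=V_1(z)=Q+V_{10}(z)$ and $V_{\Theta_2}(z)=V_2(z)=Q+V_{20}(z)$, so $V_{\Theta_1}(z)-V_{\Theta_2}(z)=V_{10}(z)-V_{20}(z)\not\equiv0$ by the hypothesis $V_{10}\ne V_{20}$; thus the two impedance functions are distinct. On the other hand, bi-unitary equivalence of minimal L-systems preserves the transfer function: the intertwining relations \eqref{167}, together with $U_-=(U_+^*)^{-1}$, show by a direct computation that $W_{\Theta_1}(z)$ and $W_{\Theta_2}(z)$ coincide on the common resolvent set, and then the relations \eqref{e6-3-6} (with $J=1$) force the impedance functions to coincide as well. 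Hence if $\Theta_1$ and $\Theta_2$ were bi-unitarily equivalent we would obtain $V_{\Theta_1}\equiv V_{\Theta_2}$, i.e. $V_{10}\equiv V_{20}$, contradicting the hypothesis. Therefore $\Theta_1$ and $\Theta_2$ are not bi-unitarily equivalent, and the proof is complete.

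I do not expect a genuine obstacle here: the only step meriting care is the verification that bi-unitary equivalence preserves the transfer function, but this is the elementary direction of the transfer-function/equivalence correspondence and follows immediately from the definition \eqref{167}. The conceptual content worth emphasizing is that the shared pair $(\kappa,U)$ does \emph{not} determine the realizing L-system up to bi-unitary equivalence; what distinguishes $\Theta_1$ from $\Theta_2$ is the representing measure $\mu$ inherited from the respective unperturbed functions $V_{10}$ and $V_{20}$, and it is precisely the freedom in this measure that yields two inequivalent realizations carrying identical $\kappa$ and $U$.
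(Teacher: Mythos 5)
Your proposal is correct and follows essentially the same route as the paper: realize each $V_j$ by Theorem \ref{t-18-M-q}, observe that \eqref{e-53-kappa'} and \eqref{e-54-U-M-q} depend only on $Q$, and rule out bi-unitary equivalence because equivalent minimal L-systems share the same impedance function while $V_{10}\ne V_{20}$ forces $V_{\Theta_1}\ne V_{\Theta_2}$. The only difference is cosmetic: you spell out the elementary verification that the intertwining relations \eqref{167} preserve the transfer function, whereas the paper simply asserts that equivalent systems would have matching impedance functions.
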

\begin{proof}
The proof immediately follows from Theorem \ref{t-18-M-q} and the structure of formula \eqref{e-53-kappa'}.  The realizing L-systems corresponding to $V_1(z)$ and $V_2(z)$ have different state-spaces and symmetric operators since they are constructed based on different measures in integral representation \eqref{hernev-0} of $V_{10}(z)$ and $V_{20}(z)$. The construction of these model realizing L-systems was described in details in the proof of Theorem \ref{t-18-M-q} and in Appendix \ref{A1}. The von Neumann parameters $\kappa$ and $U$, however, are the same for both L-systems and correspond to the respective basis of deficiency vectors \eqref{e-52-def} in a different space for each case.

Also, the realizing L-systems corresponding to different functions $V(z)\in\sM^Q$ are not bi-unitarily equivalent (otherwise their impedance functions would match).
\end{proof}
We bring attention of the reader to the fact that the von Neumann parameter $\kappa$ of an L-system  realizing a function $V(z)\in\sM^Q$ depends \textbf{only} on the value of $Q$ (see formula \eqref{e-53-kappa'}). The same is true about the parameter $U$ in \eqref{e-54-U-M-q}. Consequently, \textbf{all} the functions $V(z)\in\sM^Q$ with a fixed constant $Q$ in their integral representation \eqref{hernev-real} can be realized by  model L-systems $\Theta$ of the form \eqref{e-59} as described in the end of the proof of Theorem \ref{t-18-M-q}.
It is also worth mentioning that \eqref{e-53-kappa'} implies that $\kappa$ is an even function of $Q$ and hence $\kappa(-Q)=\kappa(Q)$.

%%%%%%%%%

\section{Perturbations of the classes $\sM_\kappa$ and $\sM_\kappa^{-1}$}\label{s6}
%%%%%%%%%%%%%%%
In this section we  treat the remaining perturbed subclasses of the class $\sN^Q$. We begin by proving an auxiliary result of algebraic nature that will become useful later on.
\begin{lemma}\label{l-20}
Let $b=Q^2+a^2-1$. The following statements are true for all non-zero $Q\in \dR$ and $a>0:$
\begin{enumerate}
  \item if $0<a<1$, then
\begin{equation}\label{e-78-ineq-1}
\frac{a(b-\sqrt{b^2+4Q^2})^2+4aQ^2}{\left(b-\sqrt{b^2+4Q^2}-2Q^2\right)^2+4a^2 Q^2}<1;
\end{equation}
  \item if $a>1$, then
\begin{equation}\label{e-79-ineq-2}
\frac{a(b+\sqrt{b^2+4Q^2})^2+4aQ^2}{\left(b+\sqrt{b^2+4Q^2}-2Q^2\right)^2+4a^2 Q^2}>1.
\end{equation}
\end{enumerate}
Moreover, for every $a>0$ and $Q\ne0$
\begin{equation}\label{e-85-recipr}
    \frac{a(b-\sqrt{b^2+4Q^2})^2+4aQ^2}{\left(b-\sqrt{b^2+4Q^2}-2Q^2\right)^2+4a^2 Q^2}\cdot\frac{a(b+\sqrt{b^2+4Q^2})^2+4aQ^2}{\left(b+\sqrt{b^2+4Q^2}-2Q^2\right)^2+4a^2 Q^2}=1.
\end{equation}
\end{lemma}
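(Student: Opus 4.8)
The plan is to strip away the apparent complexity by introducing the abbreviation $D=\sqrt{b^2+4Q^2}$ and observing that both fractions collapse to elementary rational expressions in $D$, $b$, and $Q^2$. The engine of the whole argument is the pair of quadratic identities
$$D^2-b^2=4Q^2,\qquad D^2-(2Q^2-b)^2=4a^2Q^2,$$
the first being immediate from the definition of $D$, and the second following from the hypothesis $b=Q^2+a^2-1$ after expanding $(2Q^2-b)^2$ (indeed $D^2-(2Q^2-b)^2=4Q^2(1+b-Q^2)=4Q^2a^2$). Since $Q\ne0$ forces $D>|b|$, and the second identity forces $D>|2Q^2-b|$, both $D\pm b$ and $D\pm(2Q^2-b)$ are strictly positive; I would record this at the outset, as it guarantees all denominators are nonzero and of the correct sign so that clearing denominators is always legitimate.

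First I would establish the factorizations of the numerators and denominators. Using only $D^2=b^2+4Q^2$ one checks
$$a\big[(b\pm D)^2+4Q^2\big]=2aD(D\pm b),$$
and using in addition $b=Q^2+a^2-1$ (equivalently the second identity above) one checks
$$(b\pm D-2Q^2)^2+4a^2Q^2=2D(D\pm b\mp2Q^2).$$
Dividing, the common factor $2D$ cancels and the two quantities appearing in the lemma become simply
$$E_-=\frac{a(D-b)}{D-b+2Q^2},\qquad E_+=\frac{a(D+b)}{D+b-2Q^2},$$
where $E_-$ is the left-hand side of \eqref{e-78-ineq-1} (lower signs) and $E_+$ that of \eqref{e-79-ineq-2} (upper signs).

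With these closed forms the inequalities reduce to a sign inspection. For part (1), since the denominator is positive, $E_-<1$ is equivalent to $(a-1)(D-b)<2Q^2$; when $0<a<1$ the left side is negative while the right side is positive, so this holds. For part (2), $E_+>1$ is equivalent to $(a-1)(D+b)>-2Q^2$; when $a>1$ the left side is positive, so this holds as well. Finally, for the product identity \eqref{e-85-recipr},
$$E_-E_+=\frac{a^2(D-b)(D+b)}{\big(D+(2Q^2-b)\big)\big(D-(2Q^2-b)\big)}=\frac{a^2(D^2-b^2)}{D^2-(2Q^2-b)^2}=\frac{4a^2Q^2}{4a^2Q^2}=1,$$
the two quadratic identities being applied to numerator and denominator respectively.

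The only genuine obstacle is \emph{spotting the two factorizations}; everything else is bookkeeping. Once the substitution $D=\sqrt{b^2+4Q^2}$ is made and the identities $D^2-b^2=4Q^2$ and $D^2-(2Q^2-b)^2=4a^2Q^2$ are isolated, the forbidding fractions reduce to one-line sign checks and the reciprocity \eqref{e-85-recipr} becomes transparent. I would present the verifications of the two displayed factorizations as short direct expansions, since they are the step that carries all the weight.
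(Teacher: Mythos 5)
Your proof is correct, and it takes a genuinely different and more elementary route than the paper's. The paper proves \eqref{e-78-ineq-1} and \eqref{e-79-ineq-2} by a calculus argument: it substitutes $z=Q^2$, rewrites each inequality in the form $4a|1-a|<f(z)$ for an explicit rational function $f$, and then minimizes $f$ over $z>0$ by differentiating, locating the critical point $z_0=a^2+a+1$, and checking that the minimum value $16|1-a|(a^2+a+1)$ (resp.\ $8(a-1)(a^2+a+1)$) exceeds $4a|1-a|$; the product identity \eqref{e-85-recipr} is dismissed as ``checked by direct computations.'' You instead exploit the two algebraic identities $D^2-b^2=4Q^2$ and $D^2-(2Q^2-b)^2=4a^2Q^2$ (the latter being exactly where the hypothesis $b=Q^2+a^2-1$ enters) to factor numerator and denominator as $2aD(D\pm b)$ and $2D\bigl(D\pm b\mp 2Q^2\bigr)$, collapsing both sides of the lemma to $E_\pm=\frac{a(D\pm b)}{D\pm b\mp 2Q^2}$; the inequalities then reduce to one-line sign checks of $(a-1)(D\mp b)$ against $\pm 2Q^2$, and the reciprocity $E_-E_+=1$ falls out of the same two identities applied as differences of squares. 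I verified the factorizations, the positivity claims $D>|b|$ and $D>|2Q^2-b|$ that justify clearing denominators, and the final sign arguments: all are correct. Your approach buys a uniform treatment of all three assertions with no differentiation and no case-by-case optimization, and it makes the reciprocity \eqref{e-85-recipr} transparent rather than a brute-force check; the paper's approach, by contrast, yields as a by-product the exact minimal gap between the two sides of each inequality, which your sign argument does not.
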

\begin{proof}
(1) Let $0<a<1$.
Since the denominator in the left hand side of \eqref{e-78-ineq-1} is always positive, inequality \eqref{e-78-ineq-1} is equivalent to
$$
4aQ^2-4a^2Q^2<\left(b-\sqrt{b^2+4Q^2}-2Q^2\right)^2-a(b-\sqrt{b^2+4Q^2})^2,
%{a(b-\sqrt{b^2+4Q^2})^2}<{\left(b-\sqrt{b^2+4Q^2}-2Q^2\right)^2}.
$$
or
\begin{equation}\label{e-85-ineq}
    4a(1-a)<\frac{\left(b-\sqrt{b^2+4Q^2}-2Q^2\right)^2-a(b-\sqrt{b^2+4Q^2})^2}{Q^2}.
\end{equation}
In order to prove \eqref{e-78-ineq-1} we are going to show that the absolute minimum of the right hand side of \eqref{e-85-ineq} is greater than $4a(1-a)$. For further convenience we set
$$
z=Q^2\textrm{ and } d(z)=b-\sqrt{b^2+4Q^2}=z+a^2-1-\sqrt{(z+a^2-1)^2+4z}.
$$
Then the right hand side of \eqref{e-85-ineq} can be written as a function of $z$, $(z>0)$ as
$$
f(z)=\frac{(d(z)-2z)^2-ad^2(z)}{z}=\frac{(1-a)d^2(z)-4zd(z)+4z^2}{z}.
$$
Taking the derivative with respect to $z$ yields
$$
f'(z)=\frac{(2-2a)zd(z)d'(z)-(1-a)d^2(z)-4z^2d'(z)+4z^2}{z^2}.
$$
Taking into account that $b=z+a^2-1$ and
$$
d'(z)=1-\frac{(z+a^2-1)+2}{\sqrt{(z+a^2-1)^2+4z}}=1-\frac{b+2}{b-d}=\frac{d(z)+2}{d(z)-b},
$$
we set $f'(z)=0$ and solve for $z>0$. This yields a point of the absolute minimum at $z_0=a^2+a+1$ such that
$$
f(z_0)=(16-16a)(a^2+a+1).
$$
Clearly, $f(z_0)>4a(1-a)$ for any $0<a<1$. This confirms \eqref{e-85-ineq} for any real $Q\ne0$ and hence \eqref{e-78-ineq-1} is proved.

(2) Let $a>1$.   Following  the first part  we write an equivalent to \eqref{e-79-ineq-2} inequalities:
inequality \eqref{e-79-ineq-2} is equivalent to
$$
4a^2 Q^2-4a Q^2<a(b+\sqrt{b^2+4Q^2})^2-\left(b+\sqrt{b^2+4Q^2}-2Q^2\right)^2,
%{a(b-\sqrt{b^2+4Q^2})^2}<{\left(b-\sqrt{b^2+4Q^2}-2Q^2\right)^2}.
$$
or
\begin{equation}\label{e-86-ineq}
    4a(a-1)<\frac{a(b+\sqrt{b^2+4Q^2})^2-\left(b+\sqrt{b^2+4Q^2}-2Q^2\right)^2}{Q^2}.
\end{equation}
Similarly to part (1) we set
$$
z=Q^2\textrm{ and } d(z)=b+\sqrt{b^2+4Q^2}=z+a^2-1+\sqrt{(z+a^2-1)^2+4z}.
$$
Then the right hand side of \eqref{e-86-ineq} can be written as a function of $z$, $(z>0)$ as
$$
f(z)=\frac{ad^2(z)-(d(z)-2z)^2}{z}=\frac{(a-1)d^2(z)+4zd(z)-4z^2}{z}.
$$
Following part (1) steps we take the derivative with respect to $z$ yielding
$$
f'(z)=\frac{(2a-2)zd(z)d'(z)-(a-1)d^2(z)+4z^2d'(z)-4z^2}{z^2}.
$$
Taking into account that $b=z+a^2-1$ and
$$
d'(z)=1+\frac{(z+a^2-1)+2}{\sqrt{(z+a^2-1)^2+4z}}=1+\frac{b+2}{d-b}=\frac{d(z)+2}{d(z)-b},
$$
we set $f'(z)=0$ and solve for $z>0$. As in part (1) we get a point of the absolute minimum at $z_0=a^2+a+1$ such that
$$
f(z_0)=(8a-8)(a^2+a+1).
$$
Clearly, $f(z_0)>4a(a-1)$ for any $a>1$. This confirms \eqref{e-86-ineq} for any real $Q\ne0$ and hence \eqref{e-79-ineq-2} is proved.

Formula \eqref{e-85-recipr} is checked by direct computations.
\end{proof}

Below we state and prove a realization theorem for the class $\sM_{\kappa_0}^Q$.% with a fixed value of $\kappa=\kappa_0$.
\begin{theorem}\label{t-18}
Let  $V(z)$ belong to the  class $\sM^Q_{\kappa_0}$ and have integral representation \eqref{hernev-real}. Let $0<a<1$ be a normalization parameter related to $V(z)$ via \eqref{e-66-L}. Then $V(z)$ can be realized as the impedance function $V_{\Theta}(z)$ of a minimal L-system $\Theta$  of the form \eqref{e-62} with  the main operator $T$ whose von Neumann's parameter  $\kappa$ ($0<\kappa<1$) is determined  by the formula
  \begin{equation}\label{e-53-kappa-prime}
    \kappa=\frac{\left(b-2Q^2-\sqrt{b^2+4Q^2}\right)^2-a\left(b-\sqrt{b^2+4Q^2}\right)^2+4Q^2a(a-1)}{\left(b-2Q^2-\sqrt{b^2+4Q^2}\right)^2+a\left(b-\sqrt{b^2+4Q^2}\right)^2+4Q^2a(a+1)},
 \end{equation}
 where $Q\ne0$ and
 \begin{equation}\label{e-78-b}
    b=Q^2+a^2-1.
\end{equation}
Moreover, the quasi-kernel $\hat A$ of $\RE\bA$ of the realizing L-system $\Theta$ is defined by \eqref{DOMHAT} with %$U$ of the form \eqref{e-54-U-M-q} and
\begin{equation}\label{e-75-U}
    U=\frac{(a+Qi)(1-\kappa^2)-1-\kappa^2}{2\kappa},\quad Q\ne0.
\end{equation}
%where $\kappa$ and $b$ are  given by \eqref{e-53-kappa-prime} and \eqref{e-78-b}, respectively.
 \end{theorem}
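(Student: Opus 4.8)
The plan is to follow the architecture of the proof of Theorem \ref{t-18-M-q}, the only genuinely new input being the heavier algebra, which has been isolated in Lemma \ref{l-20}. Write $V(z)=Q+V_0(z)$ with $V_0\in\sM_{\kappa_0}$, so that $V_0(i)=ia$ where $a=\int_\bbR d\mu/(1+\lambda^2)=(1-\kappa_0)/(1+\kappa_0)<1$ is the normalization parameter of \eqref{e-66-L}, and fix a model L-system $\Theta_0$ realizing $V_0$ (via Theorem \ref{t-14} and Appendix \ref{A1}). First I would introduce the one-parameter family
\[
V_{\Theta_\alpha}(z)=\frac{\cos\alpha+(\sin\alpha)V(z)}{\sin\alpha-(\cos\alpha)V(z)},\qquad \alpha\in[0,\pi),
\]
which by Lemma \ref{l-12} lies in $\sN^Q$ and carries an infinite Borel measure. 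The aim is to choose $\alpha$ so that the constant term of $V_{\Theta_\alpha}$ vanishes, landing in $\sN$, where Theorem \ref{t-10-new} applies.

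Next I would evaluate $V_{\Theta_\alpha}$ at $z=i$ using $V(i)=Q+ia$. Separating real and imaginary parts (with $2\cos\alpha\sin\alpha=\sin2\alpha$ and $\sin^2\alpha-\cos^2\alpha=-\cos2\alpha$) yields
\[
Q_\alpha=\frac{-\tfrac12 b\sin2\alpha-Q\cos2\alpha}{(\sin\alpha-Q\cos\alpha)^2+a^2\cos^2\alpha},\qquad
a_\alpha=\frac{a}{(\sin\alpha-Q\cos\alpha)^2+a^2\cos^2\alpha},
\]
where $b=Q^2+a^2-1$ as in \eqref{e-78-b}. Setting $Q_\alpha=0$ gives $\tan2\alpha=-2Q/b$, which via the double-angle identity becomes the quadratic $Q\tan^2\alpha-b\tan\alpha-Q=0$ with roots $\tan\alpha=\bigl(b\pm\sqrt{b^2+4Q^2}\bigr)/(2Q)$. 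Substituting each root back into $a_\alpha$ (using $\cos^2\alpha=1/(1+\tan^2\alpha)$ and $(\sin\alpha-Q\cos\alpha)^2+a^2\cos^2\alpha=\cos^2\alpha[(\tan\alpha-Q)^2+a^2]$) produces precisely the two expressions appearing on the left-hand sides of \eqref{e-78-ineq-1} and \eqref{e-79-ineq-2}; I would record these as $a_{\alpha_1}$ (minus sign) and $a_{\alpha_2}$ (plus sign).

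At this point Lemma \ref{l-20} does the decisive work: for $0<a<1$ it gives $a_{\alpha_1}<1$ and $a_{\alpha_2}>1$, while \eqref{e-85-recipr} gives $a_{\alpha_1}a_{\alpha_2}=1$. Hence $V_{\Theta_{\alpha_1}}\in\sM_\kappa$ and $V_{\Theta_{\alpha_2}}\in\sM_\kappa^{-1}$ for a common $\kappa\in(0,1)$ recovered from \eqref{e-45-kappa-1} as $\kappa=(1-a_{\alpha_1})/(1+a_{\alpha_1})$. Writing $a_{\alpha_1}=A/B$ with $A,B$ the numerator and denominator of \eqref{e-78-ineq-1}, this equals $(B-A)/(B+A)$, and a direct expansion of $B\mp A$ matches the numerator and denominator of \eqref{e-53-kappa-prime}, establishing the $\kappa$ formula.

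Finally, to produce $\Theta$ itself and the parameter $U$, I would build model L-systems $\Theta_{0,\alpha_1},\Theta_{0,\alpha_2}$ sharing the state space $\calH_+\subset L^2(\bbR;d\mu)\subset\calH_-$ and the symmetric operator $\dot\cB$ with $\Theta_0$ (following Remark \ref{r-15} and Appendices \ref{A2}, \ref{A1}), install the main operator $\whB$ with the von Neumann parameter \eqref{e-53-kappa-prime}, and then invoke the constant $J$-unitary factor theorem (\cite[Theorem 8.2.3]{ABT}) to obtain a minimal L-system $\Theta$ with the same main operator and $V_\Theta(z)=V(z)$. The unimodular parameter is then read off from the linear system \eqref{e-216-ls} of Appendix \ref{A2} evaluated at $z=-i$: with $a(-i)=0$ and $V(-i)=Q-ia$, eliminating $b(-i)$ gives $\overline U[(1-\kappa^2)(a+iQ)-(1+\kappa^2)]=2\kappa$, and after substituting \eqref{e-53-kappa-prime} (equivalently using the identity $[(1-\kappa^2)a-(1+\kappa^2)]^2+(1-\kappa^2)^2Q^2=4\kappa^2$ forced by that formula) one arrives at \eqref{e-75-U}. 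I expect the main obstacle to be purely computational: carrying the general $b=Q^2+a^2-1$ (rather than the simpler $b=Q^2$ of the $\sM^Q$ case) through the radical expressions for $a_{\alpha_{1,2}}$ and through the final simplification of $U$. Lemma \ref{l-20} removes the worst of this by certifying the inequalities and the reciprocity $a_{\alpha_1}a_{\alpha_2}=1$ in advance, so the remaining difficulty is the careful bookkeeping of sign choices in $\tan\alpha$ and the verification that the $U$ obtained from \eqref{e-216-ls} is unimodular and consistent with the derived $\kappa$.
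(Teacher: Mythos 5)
Your proposal is correct and follows essentially the same route as the paper's proof: the same M\"obius family $V_{\Theta_\alpha}$, the same evaluation at $z=i$ leading to $\tan 2\alpha=-2Q/b$ and the quadratic $Q\tan^2\alpha-b\tan\alpha-Q=0$, the same appeal to Lemma \ref{l-20} to certify $a_{\alpha}<1$ and the reciprocity of the two roots, the same extraction of $\kappa=(1-a_{\alpha_1})/(1+a_{\alpha_1})$, and the same endgame via the constant $J$-unitary factor theorem and the linear system \eqref{e-216-ls} at $z=-i$ to produce $U$. The only cosmetic difference is that you organize the two roots as a matched $\sM_\kappa$/$\sM_\kappa^{-1}$ pair up front (as in the proof of Theorem \ref{t-18-M-q}) rather than running the paper's four-case sign analysis before selecting the minus-sign branch \eqref{e-80-a-alpha}, and you correctly flag that unimodularity of $U$ is forced by the identity $[(1-\kappa^2)a-(1+\kappa^2)]^2+(1-\kappa^2)^2Q^2=4\kappa^2$.
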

\begin{proof}
%We know (see \cite[Theorem 6.5.2]{ABT}) that $V(z)$ can be realized as the impedance function $V_{\Theta}(z)$ of an L-system $\Theta$ of the form \eqref{e-62}. Suppose $\Theta_\alpha$ be another %system of the form \eqref{e-62} that has the same main operator $T$ as $\Theta$. Then according to \cite[Theorem 8.2.3]{ABT} we have $W_{\Theta_\alpha}=W_\Theta\cdot (-e^{2i\alpha})$, %($\alpha\in[0,\pi)$) and
As before we consider function $V_{\Theta_\alpha}(z)$ related to our function $V(z)\in \sM^Q_{\kappa_0}$ by \eqref{e-54-frac}
for some $\alpha\in[0,\pi)$. It is known (see \cite[Theorem 8.3.2]{ABT}) that $V(z)$ also falls into the class $\sN^Q$ and hence admits integral representation \eqref{hernev-real}. Applying  Lemma \ref{l-12} we conclude that the measure $\mu_\alpha$ in the integral representation \eqref{hernev-real} of $V(z)$ is unbounded.
By direct substitution one gets $V(i)=Q+ia$ and hence
$$
\begin{aligned}
V_{\Theta_\alpha}(i)&=\frac{\cos\alpha+(\sin\alpha)V(i)}{\sin\alpha-(\cos\alpha)V(i)}=\frac{\cos\alpha+(\sin\alpha)(Q+ia)}{\sin\alpha-(\cos\alpha)(Q+ia)}\\
&=\frac{(\cos\alpha+Q\sin\alpha)+ia\sin\alpha}{(\sin\alpha-Q\cos\alpha)-ia\cos\alpha}=\frac{(1/2)(1-Q^2-a^2)\sin2\alpha-Q\cos2\alpha}{(\sin\alpha-Q\sin\alpha)^2+a^2\cos^2\alpha}\\
&+i\,\frac{a}{(\sin\alpha-Q\cos\alpha)^2+a^2\cos^2\alpha}=Q_\alpha+i\int_{\dR}\frac{d\mu_\alpha(\lambda)}{1+\lambda^2}=Q_\alpha+ia_\alpha,
\end{aligned}
$$
where $Q_\alpha$  and  $\mu_\alpha$  are the elements of integral representation \eqref{hernev-real} of the function $V_{\Theta_\alpha}(z)$ and $a_\alpha=\int_{\dR}\frac{d\mu_\alpha(\lambda)}{1+\lambda^2}$. Thus,
\begin{equation}\label{e-55-q'}
    Q_\alpha=\frac{(1/2)(1-Q^2-a^2)\sin2\alpha-Q\cos2\alpha}{(\sin\alpha-Q\cos\alpha)^2+a^2\cos^2\alpha},
\end{equation}
and
\begin{equation}\label{e-56-q-int'}
   \int_{\dR}\frac{d\mu_\alpha(\lambda)}{1+\lambda^2}=\frac{a}{(\sin\alpha-Q\cos\alpha)^2+a^2\cos^2\alpha}.
\end{equation}
Both formulas \eqref{e-55-q'} and \eqref{e-56-q-int'} hold true for any value of $\alpha\in[0,\pi)$ and if we set $Q_\alpha=0$ in \eqref{e-55-q}, then we obtain that either $Q=0$ (we discard this case since it takes us back to the class $\sM_{\kappa_0}$) or
\begin{equation}\label{e-74-q-alpha}
    \tan 2\alpha=\frac{2Q}{1-Q^2-a^2}.
\end{equation}
Using the formula for the double angle
$$
\tan 2\alpha=\frac{2\tan\alpha}{1-\tan^2\alpha},
$$
we obtain the quadratic equation
$$
Q\tan^2\alpha-(Q^2+a^2-1)\tan\alpha-Q=0,
$$
whose solutions are
\begin{equation}\label{e-76-tan}
\tan\alpha=\frac{Q^2+a^2-1\pm\sqrt{(Q^2+a^2-1)^2+4Q^2}}{2Q}.
\end{equation}
For further convenience in calculations we will use $b=Q^2+a^2-1$ as defined in \eqref{e-78-b}.
Utilizing \eqref{e-76-tan} with \eqref{e-78-b} and basic trig relations we get
\begin{equation}\label{e-87-cos-tan}
\begin{aligned}
\tan\alpha&=\frac{b\pm\sqrt{b^2+4Q^2}}{2Q},\quad
\cos^2\alpha=\frac{4Q^2}{\left(b\pm\sqrt{b^2+4Q^2}\right)^2+4Q^2}.\\
%\sin\alpha&=\frac{|b\pm\sqrt{b^2+4Q^2}|}{\sqrt{\left(b\pm\sqrt{b^2+4Q^2}\right)^2+4Q^2}}.
\end{aligned}
\end{equation}
We use the above formulas for $\tan\alpha$ and $\cos^2\alpha$ to modify the denominator in \eqref{e-56-q-int'} and express it in terms of $Q$.
\begin{equation}\label{e-87-denom}
\begin{aligned}
(\sin\alpha&-Q\cos\alpha)^2+a^2\cos^2\alpha=[\cos\alpha(\tan\alpha-Q)]^2+a^2\cos^2\alpha\\
&=\cos^2\alpha[(\tan\alpha-Q)^2+a^2]\\
&=\frac{4Q^2}{\left(b\pm\sqrt{b^2+4Q^2}\right)^2+4Q^2}\left[\left(\frac{b\pm\sqrt{b^2+4Q^2}}{2Q}-Q\right)^2+a^2\right]\\
&=\frac{4Q^2}{\left(b\pm\sqrt{b^2+4Q^2}\right)^2+4Q^2}\left[\frac{\left(b\pm\sqrt{b^2+4Q^2}-2Q^2\right)^2}{4Q^2}+a^2\right]\\
&=\frac{4Q^2}{\left(b\pm\sqrt{b^2+4Q^2}\right)^2+4Q^2}\cdot\frac{\left(b\pm\sqrt{b^2+4Q^2}-2Q^2\right)^2+4Q^2a^2}{4Q^2}\\
&=\frac{\left(b\pm\sqrt{b^2+4Q^2}-2Q^2\right)^2+4Q^2a^2}{\left(b\pm\sqrt{b^2+4Q^2}\right)^2+4Q^2}.
\end{aligned}
\end{equation}
Case 1. Let us assume that $\alpha\in[0,\pi/2)$ and $Q>0$. Then  $\tan\alpha\ge0$ and formulas \eqref{e-87-cos-tan} become
\begin{equation}\label{e-83-cos-sin}
    \tan\alpha=\frac{b+\sqrt{b^2+4Q^2}}{2Q},\quad \cos^2\alpha=\frac{4Q^2}{\left(b+\sqrt{b^2+4Q^2}\right)^2+4Q^2}.
\end{equation}
Indeed, since $|b|\le \sqrt{b^2+4Q^2}$ for all $Q\in\dR$, then $\tan\alpha\ge0$ only when $(+)$ sign is chosen in \eqref{e-76-tan} and \eqref{e-87-cos-tan}. Then the  choice of signs in \eqref{e-83-cos-sin} is unique and follows from the positivity of  $\tan\alpha$ in \eqref{e-87-cos-tan}.
Substituting these values in \eqref{e-56-q-int'} we use \eqref{e-87-denom} to get
%%%%%%%%%%% Here change a to a^2
\begin{equation}\label{e-77-a-alpha}
 a_{\alpha}=   \int_{\dR}\frac{d\mu_{\alpha}(\lambda)}{1+\lambda^2}=\frac{a(b+\sqrt{b^2+4Q^2})^2+4aQ^2}{\left(b+\sqrt{b^2+4Q^2}-2Q^2\right)^2+4a^2Q^2},\quad Q>0,\;\alpha\in\left[0,\frac{\pi}{2}\right).
\end{equation}
Case 2. Let us assume that $\alpha\in[0,\pi/2)$ and $Q<0$. Then  $\tan\alpha\ge0$ and formulas \eqref{e-87-cos-tan} become
\begin{equation}\label{e-85-cos-sin}
    \tan\alpha=\frac{b-\sqrt{b^2+4Q^2}}{2Q},\quad \cos^2\alpha=\frac{4Q^2}{\left(b-\sqrt{b^2+4Q^2}\right)^2+4Q^2}.
\end{equation}
As before, since $|b|\le \sqrt{b^2+4Q^2}$ for all $Q\in\dR$, then $\tan\alpha\ge0$ only when $(-)$ sign is chosen in \eqref{e-76-tan} and \eqref{e-87-cos-tan}. Again the  choice of signs in \eqref{e-83-cos-sin} is unique and follows from the  positivity of $\tan\alpha$.
Substituting these values in \eqref{e-56-q-int'} with the help of \eqref{e-87-denom} yields
\begin{equation}\label{e-86-a-alpha}
 a_{\alpha}=  \frac{a(b-\sqrt{b^2+4Q^2})^2+4aQ^2}{\left(b-\sqrt{b^2+4Q^2}-2Q^2\right)^2+4a^2Q^2},\quad  Q<0,\;\alpha\in\left[0,\frac{\pi}{2}\right).
\end{equation}
Case 3. Now let $\alpha\in(\pi/2,\pi]$ and $Q>0$. Then  $\tan\alpha\le0$ and similar to the above analysis leads to
\begin{equation*}\label{e-87-cos-sin}
    \tan\alpha=\frac{b-\sqrt{b^2+4Q^2}}{2Q},\quad \cos^2\alpha=\frac{4Q^2}{\left(b-\sqrt{b^2+4Q^2}\right)^2+4Q^2}.
\end{equation*}
Consequently,
\begin{equation}\label{e-88-a-alpha}
 a_{\alpha}=  \frac{a(b-\sqrt{b^2+4Q^2})^2+4aQ^2}{\left(b-\sqrt{b^2+4Q^2}-2Q^2\right)^2+4a^2Q^2},\quad  Q>0,\;\alpha\in\left(\frac{\pi}{2},\pi\right].
\end{equation}
Case 4. Now let $\alpha\in(\pi/2,\pi]$ and $Q<0$. Then  $\tan\alpha\le0$ and hence
\begin{equation*}\label{e-89-cos-sin}
    \tan\alpha=\frac{b+\sqrt{b^2+4Q^2}}{2Q},\quad \cos^2\alpha=\frac{4Q^2}{\left(b+\sqrt{b^2+4Q^2}\right)^2+4Q^2}.
\end{equation*}
Consequently,
\begin{equation}\label{e-90-a-alpha}
 a_{\alpha}=   \frac{a(b+\sqrt{b^2+4Q^2})^2+4aQ^2}{\left(b+\sqrt{b^2+4Q^2}-2Q^2\right)^2+4a^2Q^2},\quad  Q<0,\;\alpha\in\left(\frac{\pi}{2},\pi\right].
\end{equation}
Therefore, for every given value of $Q\in\dR$, the value of $a_\alpha$ is found either with \eqref{e-88-a-alpha} or with \eqref{e-90-a-alpha}. But according to Lemma \ref{l-20}  and formula \eqref{e-85-recipr} we know that expressions in \eqref{e-88-a-alpha} and \eqref{e-90-a-alpha} are reciprocals of each other for all real $Q\ne0$. It is also worth noting that for $0<a<1$
$$
\lim_{Q\to 0}\frac{a(b-\sqrt{b^2+4Q^2})^2+4aQ^2}{\left(b-\sqrt{b^2+4Q^2}-2Q^2\right)^2+4a^2Q^2}=a,
$$
while
$$
\lim_{Q\to 0}\frac{a(b+\sqrt{b^2+4Q^2})^2+4aQ^2}{\left(b+\sqrt{b^2+4Q^2}-2Q^2\right)^2+4a^2Q^2}=\frac{1}{a}.
$$
As it was shown in \cite[Lemma 5.1 and Theorem 5.2]{BMkT-2} there are only two possible values of $a_\alpha$ such that $Q_\alpha=0$ and both cases lead to the same value of $\kappa$. Consequently, either of the formulas \eqref{e-88-a-alpha} and \eqref{e-90-a-alpha} can be used to find $\kappa$. Thus, without loss of generality, we set
\begin{equation}\label{e-80-a-alpha}
 a_{\alpha}=   \frac{a(b-\sqrt{b^2+4Q^2})^2+4aQ^2}{\left(b-\sqrt{b^2+4Q^2}-2Q^2\right)^2+4a^2Q^2}.
\end{equation}
Furthermore, examining \eqref{e-80-a-alpha} and applying Lemma \ref{t-18} show that for any $Q\in(-\infty,0)\cup(0,+\infty)$ and $0<a<1$ we have
$$
a_\alpha<1,
$$
implying that $V_{\Theta_\alpha}(z)\in\sM_\kappa$ for  some value $\kappa\in(0,1)$ such that
(see \eqref{e-45-kappa-1})
$$
\kappa=\frac{1-a_\alpha}{1+a_\alpha}.
$$
Combining this with \eqref{e-80-a-alpha} yields
$$
\kappa(Q,a)=\frac{\left(b-2Q^2-\sqrt{b^2+4Q^2}\right)^2-a\left(b-\sqrt{b^2+4Q^2}\right)^2+4Q^2a(a-1)}{\left(b-2Q^2-\sqrt{b^2+4Q^2}\right)^2+a\left(b-\sqrt{b^2+4Q^2}\right)^2+4Q^2a(a+1)},
$$
which proves \eqref{e-53-kappa-prime}. The graph of $\kappa$ as a function of $Q$ is shown on Figure \ref{fig-2}. Note that the vertex of the graph is located at the value of $\kappa=\kappa_0=\frac{1-a}{1+a}$. Moreover, if $a\rightarrow 1$, then $\kappa_0\rightarrow 0$ as indicated on the picture.
%%%%%%%%%%%
\begin{figure}
  % Requires \usepackage{graphicx}
  \begin{center}
  \includegraphics[width=110mm]{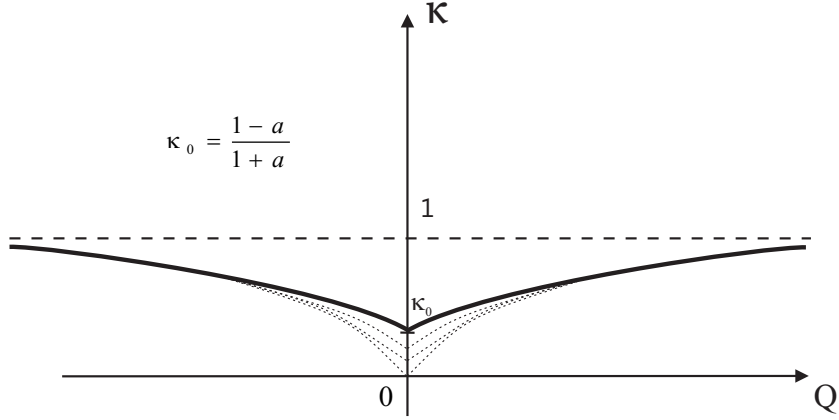}
  \caption{Class $\sM^Q_\kappa$: $\kappa$ as a function of $Q$ for $0<a<1$}\label{fig-2}
  \end{center}
\end{figure}
%%%%%%%%%%%

 Applying Theorems  \ref{t-14}  we can realize $V_{\Theta_{\alpha}}(z)$ by a minimal model L-system $\Theta_{{\alpha}}$ of the form  \eqref{e-59'}  described in details in Appendix \ref{A1}.  We remind the reader that in this case the entire construction of $\Theta_{{\alpha}}$  depends on the measure $\mu_\alpha$ in the integral representation \eqref{hernev} of our function $V_{\Theta_{{\alpha}}}(z)$. Also $\Theta_{{\alpha}}$ is based upon the model triple $(\dot\cB, \whB ,\cB)$ of the form \eqref{nacha1-ap}--\eqref{nacha3-ap} with $\kappa$ given by \eqref{e-53-kappa-prime} and satisfies the conditions of Hypothesis \ref{setup}. Moreover,  the value of $\kappa$ in \eqref{e-53-kappa-prime} used in this constructions is the von Neumann parameter of the main operator $\whB$ with respect to deficiency vectors of the form \eqref{e-52-def} normalized in the space $L^2(\bbR;d\mu_\alpha)$.
Consider $W_{\Theta_{\alpha}}(z)$, the transfer function of $\Theta_{{\alpha}}$, that is related to $V_{\Theta_{{\alpha}}}(z)$ via \eqref{e6-3-6} with $J=1$, and the function
$$
W(z)=\frac{1-i V(z)}{1+i V(z)}.
$$
Since $V_{\Theta_{{\alpha}}}(z)$ and $V(z)$ are related by \eqref{e-54-frac}, we have
%It was shown in  \cite[Theorem 8.3.2]{ABT} that in this case
%$$W_{\Theta_{{\alpha}}}(z)=W(z)\cdot (-e^{2i\cdot{\alpha}}),$$
%or
\begin{equation}\label{e-61-Junitary'}
W(z)=(-e^{-2i\cdot{\alpha}})W_{\Theta_{{\alpha}}}(z).%=B\,W_{\Theta_{{\alpha}}}(z).
\end{equation}
Relation \eqref{e-61-Junitary'} above allows us to apply the Theorem on constant $J$-unitary factor (see \cite[Theorem 8.2.3]{ABT}, \cite{ArTs03}) that guarantees the existence of another L-system $\Theta$ with the same main operator $\whB$ as $\Theta_{\alpha}$ and such that  $V(z)=V_\Theta(z)$, ($z\in\dC_\pm$).
In order to find the quasi-kernel $\cB_U$ of the real part of the state-space operator in  $\Theta$ and its unimodular parameter $U$ we simply repeat the argument in the end of the proof of Theorem \ref{t-18-M-q}. To find $U$ and show that it matches \eqref{e-75-U} we apply formula \eqref{e-216-ls} of Appendix \ref{A2}. Adapting \eqref{e-70-sys} to our case and taking into account that $V(-i)=Q-ai$  we get
\begin{equation}\label{e-93-sys}
     \left\{
      \begin{array}{ll}
       \frac{\overline{\kappa^2 U+2\kappa+U}}{\sqrt2|1+\kappa U|\sqrt{1-\kappa^2}}\,b(-i)=Q-ai & \hbox{} \\
&\\
        \frac{i\sqrt{1-\kappa^2}\bar U}{\sqrt2|1+\kappa U|}b(-i)=1. & \hbox{}
      \end{array}
    \right.
\end{equation}
Eliminating $b(-i)$ and solving \eqref{e-93-sys} for $U$ yields
$$
U=\frac{(a+Qi)(1-\kappa^2)-1-\kappa^2}{2\kappa}.
$$
%Substituting the value of $\kappa$ from \eqref{e-53-kappa'} and simplifying produces
%$$
%U=\frac{2a^3-2a^2-Q^2+2a^2Qi}{Q\sqrt{Q^2+4a^2}}.
%$$
that matches \eqref{e-75-U}.

%%%%%%%%%%%%%%%%%%%%%%%%%%%%%%%%%%%%%%%

Now we are ready to give a description of the model L-system $\Theta$ of the form \eqref{e-59} that realizes $V(z)$. This L-system is based upon the model triple $(\dot \cB,   \whB ,\cB_U)$  that relies on the measure $\mu_{\alpha}$ in  integral representation \eqref{hernev-real} of $V(z)$ and has the state space $\calH_+ \subset L^2(\bbR;d\mu_{\alpha})\subset\calH_-$. The operators $\whB$ and $\cB_U$ are parameterized by the von Neumann parameters $\kappa$ and $U$ given by \eqref{e-53-kappa-prime} and \eqref{e-75-U}, respectively. We also note that the realizing   L-system $\Theta$ shares the state space, symmetric, and main operators with the L-system $\Theta_{{\alpha}}$ but only differs by the quasi-kernel of the real part of the state-space operator.   The detailed construction of  $\Theta$ is described in Appendix \ref{A1}. This model L-system $\Theta$ realizes our function $V(z)$.
\end{proof}

\begin{remark}\label{r-19}
Both von Neumann's parameterizations of operators $\whB$ and $\cB_U$ in the proof of Theorem \ref{t-18}  are written with respect to the deficiency vectors  of the symmetric operator $\dot \cB$ of the form \eqref{e-52-def} in the space $L^2(\bbR;d\mu_{\alpha})$. Moreover, the result of Theorem \ref{t-18} also holds true for $Q=0$ as it directly follows from Theorem \ref{t-12-new}. In this case formulas \eqref{e-53-kappa-prime}  and \eqref{e-75-U} get reduced to the unperturbed case and yield $\kappa(0)=\kappa_0$ and $U(0)=-1$.
\end{remark}

\begin{corollary}\label{c-21} Let $V_1(z)=Q+V_{10}(z)$ and $V_2(z)=Q+V_{20}(z)$ be two functions such that $V_{10}(z)$ and $V_{20}(z)$ $(V_{10}(z)\ne V_{20}(z))$  belong to the generalized Donoghue class $\sM_{\kappa_0}$. Then both $V_1(z)$ and $V_2(z)$ can be realized as the impedance functions  of two not bi-unitarily equivalent L-systems of the form \eqref{e-62} whose  main operators have the same  von Neumann parameter  $\kappa$ determined by \eqref{e-53-kappa-prime} and the quasi-kernels of the real parts of state-space operators defined via the same $U$  of the form \eqref{e-75-U}.
\end{corollary}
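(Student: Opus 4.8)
The plan is to read off the statement as a direct consequence of Theorem~\ref{t-18}, in complete parallel with the proof of Corollary~\ref{c-19}. First I would observe that since both $V_{10}(z)$ and $V_{20}(z)$ belong to the same generalized Donoghue class $\sM_{\kappa_0}$, their representing measures satisfy the identical normalization \eqref{e-38-kap}, so both share the common value
$$
a=\int_\bbR\frac{d\mu(\lambda)}{1+\lambda^2}=\frac{1-\kappa_0}{1+\kappa_0}<1
$$
in the integral representation \eqref{hernev}. Consequently $V_1(z)=Q+V_{10}(z)$ and $V_2(z)=Q+V_{20}(z)$ both lie in the perturbed class $\sM^Q_{\kappa_0}$ with the \emph{same} normalization parameter $a$, so Theorem~\ref{t-18} applies to each and produces realizing minimal L-systems $\Theta_1$ and $\Theta_2$ of the form \eqref{e-62}.

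The key point is then simply to inspect the right-hand sides of formulas \eqref{e-53-kappa-prime} and \eqref{e-75-U}: with $b=Q^2+a^2-1$ as in \eqref{e-78-b}, the von Neumann parameter $\kappa$ is a function of $Q$ and $a$ alone, and $U$ is in turn a function of $\kappa$, $Q$, and $a$. Since $Q$ is fixed and $a$ is common to $V_1$ and $V_2$, both $\Theta_1$ and $\Theta_2$ carry the identical $\kappa$ and $U$, each expressed with respect to its own deficiency basis \eqref{e-52-def} in the respective model space. On the other hand, the two realizing L-systems are constructed (via Theorem~\ref{t-18} and Appendix~\ref{A1}) from the representing measures of $V_{10}(z)$ and $V_{20}(z)$; since $V_{10}(z)\ne V_{20}(z)$ these measures differ, so the model spaces $L^2(\bbR;d\mu)$ and the symmetric operators $\dot\cB$ of $\Theta_1$ and $\Theta_2$ genuinely differ.

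Finally, for the non-equivalence I would argue by contradiction using \cite[Theorem 6.6.10]{ABT}: if $\Theta_1$ and $\Theta_2$ were bi-unitarily equivalent, their transfer functions would coincide on the common resolvent set, whence by \eqref{e6-3-6} their impedance functions would agree, forcing $V_1(z)=V_2(z)$ and hence $V_{10}(z)=V_{20}(z)$, contrary to hypothesis. I do not expect any genuine obstacle here: the entire content is the observation that the realization formulas \eqref{e-53-kappa-prime} and \eqref{e-75-U} depend on the representing measure only through the scalar $a$, so once Theorem~\ref{t-18} is in hand the corollary is immediate.
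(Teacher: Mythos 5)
Your proposal is correct and follows essentially the same route as the paper's own proof: both arguments reduce the corollary to Theorem \ref{t-18} by noting that $V_1$ and $V_2$ share the common normalization $a=\frac{1-\kappa_0}{1+\kappa_0}$, that formulas \eqref{e-53-kappa-prime} and \eqref{e-75-U} depend only on $Q$ and $a$, and that bi-unitary equivalence would force the impedance functions (hence $V_{10}$ and $V_{20}$) to coincide. Your explicit spelling-out of the contradiction via \cite[Theorem 6.6.10]{ABT} is just a slightly more detailed rendering of the paper's parenthetical remark.
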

\begin{proof}
Both functions  $V_1(z)$ and $V_2(z)$ share the same value of $a=\frac{1-\kappa_0}{1+\kappa_0}$. The rest follows from
 from Theorem \ref{t-18} and the structure of formulas \eqref{e-53-kappa-prime} and \eqref{e-75-U}. The realizing L-systems corresponding to $V_1(z)$ and $V_2(z)$ have different state-spaces and symmetric operators since they are constructed based on different measures in integral representation \eqref{hernev-0} of $V_{10}(z)$ and $V_{20}(z)$. The construction of these model realizing L-systems was described in details in the proof of Theorem \ref{t-18} and in Appendix \ref{A1}. The von Neumann parameters $\kappa$ and $U$, however, are the same for both L-systems and correspond to the respective basis of deficiency vectors \eqref{e-52-def} in a different space for each case.

Also, the realizing L-systems corresponding to different functions $V(z)\in\sM^Q_{\kappa_0}$ are not bi-unitarily equivalent (otherwise their impedance functions would match).
\end{proof}

Now we consider realizations for class $\sM_{\kappa_0}^{-1,Q}$.
\begin{theorem}\label{t-20}
Let  $V(z)$ belong to the  class $\sM^{-1,Q}_{\kappa_0}$ and have integral representation \eqref{hernev-real}. Let $a>1$ be a normalization parameter related to $V(z)$ via \eqref{e-66-L}. Then $V(z)$ can be realized as the impedance function $V_{\Theta}(z)$ of a minimal L-system $\Theta$  of the form \eqref{e-62} with  the main operator $T$ whose von Neumann's parameter  $\kappa$ is determined by the formula
  \begin{equation}\label{e-85-kappa-prime}
    \kappa=\frac{a\left(b+\sqrt{b^2+4Q^2}\right)^2-\left(b-2Q^2+\sqrt{b^2+4Q^2}\right)^2-4Q^2a(a-1)}{\left(b-2Q^2+\sqrt{b^2+4Q^2}\right)^2+a\left(b+\sqrt{b^2+4Q^2}\right)^2+4Q^2a(a+1)}, \quad Q\ne0,
 \end{equation}
 where
 \begin{equation}\label{e-86-b}
    b=Q^2+a^2-1.
\end{equation}
Moreover, the quasi-kernel $\hat A$ of $\RE\bA$ of the realizing L-system $\Theta$ is defined by \eqref{DOMHAT} with %$U$ of the form \eqref{e-54-U-M-q} and
\begin{equation}\label{e-87-U}
    U=\frac{(a+Qi)(1-\kappa^2)-1-\kappa^2}{2\kappa},\quad Q\ne0,
\end{equation}
where $\kappa$ and $b$ are  given by \eqref{e-85-kappa-prime} and \eqref{e-86-b}, respectively\footnote{Even though the formulas \eqref{e-75-U} and \eqref{e-87-U} for $U$ formally look the same they produce different values due to the fact that they involve different values for $\kappa$ given by formulas \eqref{e-53-kappa-prime} and \eqref{e-85-kappa-prime}, respectively.}.
 \end{theorem}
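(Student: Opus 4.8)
The plan is to follow the scheme of the proof of Theorem~\ref{t-18} almost verbatim, selecting throughout the branch appropriate to the regime $a>1$ rather than $0<a<1$. First I would attach to the given $V(z)\in\sM^{-1,Q}_{\kappa_0}$ the one-parameter family
$$
V_{\Theta_\alpha}(z)=\frac{\cos\alpha+(\sin\alpha)V(z)}{\sin\alpha-(\cos\alpha)V(z)},\quad\alpha\in[0,\pi),
$$
exactly as in \eqref{e-54-frac}. By Lemma~\ref{l-12} each $V_{\Theta_\alpha}$ again belongs to $\sN^Q$ and carries an infinite Borel measure. Substituting $V(i)=Q+ia$ and separating real and imaginary parts reproduces, word for word, the formulas \eqref{e-55-q'} and \eqref{e-56-q-int'} for the constant term $Q_\alpha$ and the normalization $a_\alpha$ of $V_{\Theta_\alpha}$; these computations are insensitive to the sign of $a-1$.

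Next I would impose $Q_\alpha=0$. Discarding $Q=0$, this forces $\tan 2\alpha=\frac{2Q}{1-Q^2-a^2}$ and hence the quadratic $Q\tan^2\alpha-b\tan\alpha-Q=0$ with $b=Q^2+a^2-1$, whose roots and associated cosines are given by \eqref{e-76-tan} and \eqref{e-87-cos-tan}; the denominator simplification \eqref{e-87-denom} then yields two candidate values of $a_\alpha$ differing by the sign in front of $\sqrt{b^2+4Q^2}$, precisely the two quantities appearing in Lemma~\ref{l-20}. The essential departure from Theorem~\ref{t-18} enters here: since now $a>1$, I would invoke part~(2) of Lemma~\ref{l-20}, i.e.\ inequality \eqref{e-79-ineq-2}, to conclude that the $(+)$-branch
$$
a_\alpha=\frac{a\bigl(b+\sqrt{b^2+4Q^2}\bigr)^2+4aQ^2}{\bigl(b+\sqrt{b^2+4Q^2}-2Q^2\bigr)^2+4a^2Q^2}>1,
$$
so that $V_{\Theta_\alpha}(z)\in\sM_\kappa^{-1}$. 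The reciprocity relation \eqref{e-85-recipr} shows the other branch equals $1/a_\alpha<1$, so the choice is forced by the target class. Applying the normalization \eqref{e-45-kappa-2}, namely $\kappa=\frac{a_\alpha-1}{1+a_\alpha}$, and clearing denominators then produces precisely formula \eqref{e-85-kappa-prime}.

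With $\kappa$ in hand I would realize $V_{\Theta_\alpha}\in\sM_\kappa^{-1}$ by a minimal model L-system $\Theta_\alpha$ satisfying Hypothesis~\ref{setup-1}, via Theorem~\ref{t-14-new} and Appendix~\ref{A1}, based on the measure $\mu_\alpha$ in the integral representation of $V_{\Theta_\alpha}$. Using the $J$-unitary relation between $W(z)=\frac{1-iV(z)}{1+iV(z)}$ and $W_{\Theta_\alpha}(z)$ of the type \eqref{e-61-Junitary'} (coming from \cite[Theorem 8.3.2]{ABT}), the constant $J$-unitary factor theorem \cite[Theorem 8.2.3]{ABT} furnishes an L-system $\Theta$ with the same main operator $\whB$ and with $V_\Theta(z)=V(z)$. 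To extract $U$ I would feed $V(-i)=Q-ai$ into system \eqref{e-216-ls} of Appendix~\ref{A2} evaluated at $z=-i$, exactly as in \eqref{e-93-sys}, eliminate $b(-i)$, and solve, obtaining $U=\frac{(a+Qi)(1-\kappa^2)-1-\kappa^2}{2\kappa}$, which is \eqref{e-87-U}. The main obstacle is the branch bookkeeping: one must verify that for $a>1$ it is the $(+)$-sign choice of $\sqrt{b^2+4Q^2}$ that yields $a_\alpha>1$ across all four sign combinations of $(Q,\alpha)$, and this is exactly what part~(2) of Lemma~\ref{l-20} secures. Everything else is an algebraic transcription of Theorem~\ref{t-18} into the regime $a>1$; in particular the expression for $U$ is formally identical to \eqref{e-75-U} but numerically distinct, since $\kappa$ is now supplied by \eqref{e-85-kappa-prime} rather than \eqref{e-53-kappa-prime}.
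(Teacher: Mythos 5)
Your proposal is correct and follows essentially the same route as the paper's own proof: the paper likewise transplants the argument of Theorem \ref{t-18} to the regime $a>1$, selects the $(+)$-branch for $a_\alpha$ via part (2) of Lemma \ref{l-20} (together with the $Q\to 0$ limits and the reciprocity \eqref{e-85-recipr}), obtains $\kappa$ from $\kappa=\frac{a_\alpha-1}{1+a_\alpha}$, realizes $V_{\Theta_\alpha}\in\sM_\kappa^{-1}$ under Hypothesis \ref{setup-1} via Theorem \ref{t-14-new}, and extracts $U$ from the system \eqref{e-216-ls} at $z=-i$ exactly as you describe.
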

\begin{proof}
As before we consider function $V_{\Theta_\alpha}(z)$ related to our function $V(z)\in \sM^{-1,Q}_{\kappa_0}$ by \eqref{e-54-frac} for some $\alpha\in[0,\pi)$.
Using the same reasoning as in the proof of Theorem \ref{t-18} we have that $V_{\Theta_\alpha}(z)$
 falls into the class $\sN^Q$ and hence admits integral representation \eqref{hernev-real} with an unbounded measure $\mu_\alpha$.
As earlier we use  direct substitution to  get $V(i)=Q+ia$ and obtain
$$
V_{\Theta_\alpha}(i)=Q_\alpha+ia_\alpha,
$$
where $Q_\alpha$  is given by \eqref{e-55-q'}  and $a_\alpha$ by \eqref{e-56-q-int'}. Formally repeating the argument of the proof of Theorem \ref{t-18} we again obtain
\eqref{e-77-a-alpha} or  \eqref{e-86-a-alpha}, that is
\begin{equation*}
 a_{\alpha}=   \int_{\dR}\frac{d\mu_{\alpha}(\lambda)}{1+\lambda^2}=\frac{a(b\pm\sqrt{b^2+4Q^2})^2+4aQ^2}{\left(b\pm\sqrt{b^2+4Q^2}-2Q^2\right)^2+4a^2 Q^2}.
\end{equation*}
This time, however, we know that since $V(z)\in \sM^{-1,Q}_{\kappa_0}$ we have that $a>1$.
It is worth mentioning that for $a>1$
$$
\lim_{Q\to 0}\frac{a(b-\sqrt{b^2+4Q^2})^2+4aQ^2}{\left(b-\sqrt{b^2+4Q^2}-2Q^2\right)^2+4a^2 Q^2}=\frac{1}{a},
$$
while
$$
\lim_{Q\to 0}\frac{a(b+\sqrt{b^2+4Q^2})^2+4aQ^2}{\left(b+\sqrt{b^2+4Q^2}-2Q^2\right)^2+4aQ^2}=a.
$$
Applying similar to the proof of Theorem \ref{t-18} reasoning we choose the second formula and set
\begin{equation}\label{e-98-a-alpha}
 a_{\alpha}=   \frac{a(b+\sqrt{b^2+4Q^2})^2+4aQ^2}{\left(b+\sqrt{b^2+4Q^2}-2Q^2\right)^2+4a^2 Q^2}.
\end{equation}
Furthermore, examining \eqref{e-98-a-alpha} and applying Lemma \ref{l-20} show that for any $Q\in(-\infty,0)\cup(0,+\infty)$ and $a>1$ we have
$$
a_\alpha>1,
$$
implying that $V_{\Theta_\alpha}(z)\in\sM_\kappa^{-1}$ for  some value $\kappa\in(0,1)$ such that
(see \eqref{e-45-kappa-1})
$$
\kappa=\frac{a_\alpha-1}{1+a_\alpha}.
$$
Combining this with \eqref{e-80-a-alpha} yields
$$
\kappa(Q,a)=\frac{a\left(b+\sqrt{b^2+4Q^2}\right)^2-\left(b-2Q^2+\sqrt{b^2+4Q^2}\right)^2-4Q^2a(a-1)}{\left(b-2Q^2+\sqrt{b^2+4Q^2}\right)^2+a\left(b+\sqrt{b^2+4Q^2}\right)^2+4Q^2a(a+1)},
$$
which proves \eqref{e-85-kappa-prime}. The graph of $\kappa$ as a function of $Q$ is shown on Figure \ref{fig-3}. Note that the vertex of the graph is located at the value of $\kappa=\kappa_0=\frac{a-1}{1+a}$. Moreover, if $a\rightarrow\infty$, then $\kappa_0\rightarrow 1$ as indicated on the picture.
%%%%%%%%%%%
\begin{figure}
  % Requires \usepackage{graphicx}
  \begin{center}
  \includegraphics[width=110mm]{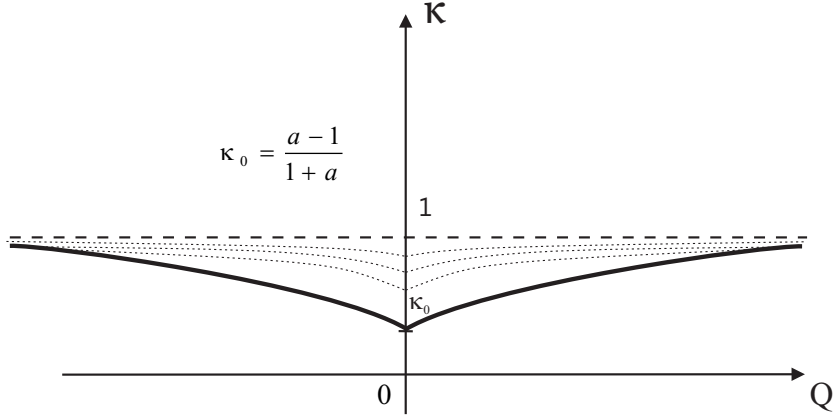}
  \caption{Class $\sM^{-1,Q}_\kappa$: $\kappa$ as a function of $Q$ for $a>1$}\label{fig-3}
  \end{center}
\end{figure}
%%%%%%%%%%%

 Applying Theorem  \ref{t-14-new}  we can realize $V_{\Theta_{\alpha}}(z)$ by a minimal model L-system $\Theta_{{\alpha}}$ of the form  \eqref{e-59'}  described in details in Appendix \ref{A1}.  As we did before, we note that in this case the entire construction of $\Theta_{{\alpha}}$  depends on the measure $\mu_\alpha$ in the integral representation \eqref{hernev} of our function $V_{\Theta_{{\alpha}}}(z)$. Also $\Theta_{{\alpha}}$ is based upon the model triple $(\dot\cB, \whB ,\cB)$ of the form \eqref{nacha1-ap}--\eqref{nacha3-ap} with $\kappa$ given by \eqref{e-85-kappa-prime} and satisfies the conditions of Hypothesis \ref{setup-1}. Moreover,  the value of $\kappa$ in \eqref{e-85-kappa-prime}  used in this constructions is the von Neumann parameter of the main operator $\whB$ with respect to deficiency vectors of the form \eqref{e-52-def} normalized  in the space $L^2(\bbR;d\mu_\alpha)$.
% Following our steps in the proof of Theorem \ref{t-18-M-q} we  construct an L-system $\Theta$ that realizes $V(z)$ and contains the same main operator $T$ as $\Theta_{{\alpha}}$.
Consider $W_{\Theta_{\alpha}}(z)$, the transfer function of $\Theta_{{\alpha}}$, that is related to $V_{\Theta_{{\alpha}}}(z)$ via \eqref{e6-3-6} with $J=1$, and the function
$$
W(z)=\frac{1-i V(z)}{1+i V(z)}.
$$
Since $V_{\Theta_{{\alpha}}}(z)$ and $V(z)$ are related by \eqref{e-54-frac}, we have
%It was shown in  \cite[Theorem 8.3.2]{ABT} that in this case
%$$W_{\Theta_{{\alpha}}}(z)=W(z)\cdot (-e^{2i\cdot{\alpha}}),$$
%or
\begin{equation}\label{e-97-Junitary}
W(z)=(-e^{-2i\cdot{\alpha}})W_{\Theta_{{\alpha}}}(z).%=B\,W_{\Theta_{{\alpha}}}(z).
\end{equation}
Relation \eqref{e-97-Junitary} above allows us to apply the Theorem on constant $J$-unitary factor (see \cite[Theorem 8.2.3]{ABT}, \cite{ArTs03}) that guarantees the existence of another L-system $\Theta$ with the same main operator $\whB$ as $\Theta_{\alpha}$ and such that $V(z)=V_\Theta(z)$, ($z\in\dC_\pm$).
In order to find the quasi-kernel $\cB_U$ of the real part of the state-space operator in  $\Theta$ and its unimodular parameter $U$ we simply repeat the argument in the end of the proof of Theorem \ref{t-18}.

At this point we can describe the model L-system $\Theta$ of the form \eqref{e-59} that realizes $V(z)$. This L-system is based upon the model triple $(\dot \cB,   \whB ,\cB_U)$  that relies on the measure $\mu_{\alpha}$ in integral representation \eqref{hernev-real} of $V(z)$ and has the state space $\calH_+ \subset L^2(\bbR;d\mu_{\alpha})\subset\calH_-$. The operators $\whB$ and $\cB_U$ are parameterized by the von Neumann parameters $\kappa$ and $U$ given by \eqref{e-85-kappa-prime}  and \eqref{e-87-U}, respectively. Our realizing   L-system $\Theta$ shares the state space, symmetric, and main operators with the L-system $\Theta_{{\alpha}}$ but only differs by the quasi-kernel of the real part of the state-space operator.  The detailed construction of  $\Theta$ is described in Appendix \ref{A1}. This model L-system $\Theta$ realizes our function $V(z)$.
\end{proof}

\begin{remark}\label{r-21}
Both von Neumann's parameterizations of operators $\whB$ and $\cB_U$ in the proof of Theorem \ref{t-20}  are written with respect to the deficiency vectors  of the symmetric operator $\dot \cB$ of the form \eqref{e-52-def} in the space $L^2(\bbR;d\mu_{\alpha})$. Moreover, the result of Theorem \ref{t-20} also holds true for $Q=0$ as it directly follows from Theorem \ref{t-13-new}. In this case formulas \eqref{e-85-kappa-prime}  and \eqref{e-87-U} get reduced to the unperturbed case and yield $\kappa(0)=\kappa_0$ and $U(0)=1$.
\end{remark}

\begin{corollary}\label{c-23} Let $V_1(z)=Q+V_{10}(z)$ and $V_2(z)=Q+V_{20}(z)$ be two functions such that $V_{10}(z)$ and $V_{20}(z)$ $(V_{10}(z)\ne V_{20}(z))$ belong to the generalized Donoghue class $\sM_{\kappa_0}^{-1}$. Then both $V_1(z)$ and $V_2(z)$ can be realized as the impedance functions  of two not bi-unitarily equivalent L-systems of the form \eqref{e-62} whose main operators have the same  von Neumann parameter  $\kappa$ determined by \eqref{e-85-kappa-prime}  and the quasi-kernels of the real parts of state-space operators defined via the same $U$  of the form \eqref{e-87-U}.
\end{corollary}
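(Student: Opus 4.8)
The plan is to reduce the statement directly to Theorem~\ref{t-20}, exactly as Corollaries~\ref{c-19} and~\ref{c-21} were reduced to their respective realization theorems. First I would observe that, since $V_{10}(z)$ and $V_{20}(z)$ both lie in the class $\sM_{\kappa_0}^{-1}$, the normalization condition \eqref{e-39-kap} forces them to share the \emph{same} value of the parameter
$$
a=\int_\bbR\frac{d\mu(\lambda)}{1+\lambda^2}=\frac{1+\kappa_0}{1-\kappa_0}>1
$$
in their integral representations \eqref{hernev}. Consequently $V_1(z)=Q+V_{10}(z)$ and $V_2(z)=Q+V_{20}(z)$ both belong to the perturbed class $\sM^{-1,Q}_{\kappa_0}$ and carry the same pair $(Q,a)$, hence the same value of $b=Q^2+a^2-1$ from \eqref{e-86-b}.

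Next I would apply Theorem~\ref{t-20} to each of $V_1(z)$ and $V_2(z)$ separately. This produces two minimal L-systems of the form \eqref{e-62} realizing $V_1(z)$ and $V_2(z)$, respectively, whose main operators have von Neumann parameter $\kappa$ given by \eqref{e-85-kappa-prime} and whose quasi-kernels are determined by $U$ given by \eqref{e-87-U}. The point to emphasize---and essentially the only thing that needs checking---is that the right-hand sides of \eqref{e-85-kappa-prime} and \eqref{e-87-U} are functions of $Q$ and $a$ alone (through $b$), and therefore yield identical values of $\kappa$ and $U$ for both realizations.

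On the other hand, the two model L-systems are genuinely distinct: following the construction in the proof of Theorem~\ref{t-20} and in Appendix~\ref{A1}, each is built over the model state space $\calH_+\subset L^2(\bbR;d\mu_{\alpha})\subset\calH_-$ determined by the representing measure of the corresponding unperturbed function, and since $V_{10}(z)\ne V_{20}(z)$ these representing measures differ. Thus the two systems have different state spaces and different symmetric operators $\dot\cB$, while $\kappa$ and $U$ coincide, each referred to its own deficiency basis \eqref{e-52-def} in the respective $L^2$-space.

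Finally I would rule out bi-unitary equivalence. By the definition of bi-unitary equivalence and the relations \eqref{167}, two bi-unitarily equivalent minimal L-systems necessarily have identical transfer functions, hence identical impedance functions. But the impedance functions here are $V_1(z)$ and $V_2(z)$, which differ because $V_{10}(z)\ne V_{20}(z)$; this contradiction shows the two realizing L-systems cannot be bi-unitarily equivalent. I do not anticipate a real obstacle: the whole content is the observation that \eqref{e-85-kappa-prime} and \eqref{e-87-U} depend on the data only through $(Q,a)$, combined with the non-equivalence argument already employed for the sibling classes $\sM^Q$ and $\sM^Q_{\kappa_0}$.
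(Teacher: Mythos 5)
Your proposal is correct and follows essentially the same route as the paper's own proof: both reduce the statement to Theorem~\ref{t-20}, note that \eqref{e-85-kappa-prime} and \eqref{e-87-U} depend only on the shared pair $(Q,a)$ with $a=\frac{1+\kappa_0}{1-\kappa_0}$, observe that the model realizations live over different measures and hence have different state spaces and symmetric operators, and rule out bi-unitary equivalence because it would force the impedance functions to coincide. No gaps.
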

\begin{proof}
Both functions  $V_1(z)$ and $V_2(z)$ share the same value of $a=\frac{1+\kappa_0}{1-\kappa_0}$. The rest follows from   Theorem \ref{t-20} and the structure of formulas \eqref{e-85-kappa-prime} and \eqref{e-87-U}. The realizing L-systems corresponding to $V_1(z)$ and $V_2(z)$ have different state-spaces and symmetric operators since they are constructed based on different measures in integral representation \eqref{hernev-0} of $V_{10}(z)$ and $V_{20}(z)$. The construction of these model realizing L-systems was described in details in the proof of Theorem \ref{t-20} and in Appendix \ref{A1}. The von Neumann parameters $\kappa$ and $U$, however, are the same for both L-systems and correspond to the respective basis of deficiency vectors \eqref{e-52-def} in a different space for each case.

Also, the realizing L-systems corresponding to different functions $V(z)\in\sM^Q_{\kappa_0}$ are not bi-unitarily equivalent (otherwise their impedance functions would match).
\end{proof}

As we did  in the end of  Section \ref{s5}, we point out
 that the von Neumann parameter $\kappa$ of an L-system  realizing a function $V(z)\in\sM^Q_{\kappa_0}$ or $V(z)\in\sM^{-1,Q}_{\kappa_0}$ depends \textbf{only} on the value of $Q$ (see formulas \eqref{e-53-kappa-prime} or \eqref{e-85-kappa-prime}) for a fixed value of $\kappa_0$. The same is true about the parameter $U$ in \eqref{e-75-U} or \eqref{e-87-U}. Consequently, \textbf{all} the functions $V(z)\in\sM^Q_{\kappa_0}$ or $V(z)\in\sM^{-1,Q}_{\kappa_0}$  with  fixed values of constant $Q$ and $\kappa_0$  can be realized with model L-systems $\Theta$ of the form \eqref{e-59} described in the  proofs of Theorems \ref{t-18} and \ref{t-20}, respectively. These realizing L-systems, however, differ from each other by state-spaces and symmetric operators that depend on different measures in integral representation \eqref{hernev-0} of $V(z)$. An alternative approach when realization is performed with L-systems that have the same $Q$-independent state space and  symmetric operator  will be presented in the next section. Also, the realizing L-systems corresponding to different functions $V(z)\in\sM^Q_{\kappa_0}$ (or $V(z)\in\sM^{-1,Q}_{\kappa_0}$) are not bi-unitarily equivalent (otherwise their impedance functions would match).

We also mention that Theorems \ref{t-18} and \ref{t-20} imply that  $\kappa(-Q)=\kappa(Q)$ and $U(-Q)=\bar{U}(Q)$.

\section{A universal model and resolvent formula for    L-systems}\label{s10}

In this section we discuss a universal model of an L-system realizing a function from  generalized Donoghue classes perturbed by a parameter $Q$ and  derive a formula for the resolvent of its main operator.
  First we are going to show that a perturbed  function $Q+V(z)\in\sN^Q$ can be realized  by an L-system that has the same state space and symmetric operator that do not depend on the value of $Q$.
  In order to do that we develop an alternative to Theorems \ref{t-18} and \ref{t-20} realization method.

 For any  function $Q+V(z)\in\sN^Q$ we have that
 $$
Q+V(z)=Q+aV_0(z)=Q+a\int_\bbR \left(\frac{1}{\lambda-z}-\frac{\lambda}{1+\lambda^2}\right)d\mu(\lambda)\;\textrm{ with }\;  \int_\bbR\frac{d\mu(\lambda)}{1+\lambda^2}=1,
 $$
where $V(z)=a V_0(z)$ (see \eqref{e-imp-m}) and $V_0(z)\in\sM$.
Regardless of whether the value of normalizing  parameter $a$  is greater, smaller, or equal to 1 we proceed as follows. %We know (see \eqref{e-imp-m}) that  $V(z)=a V_0(z)$, where $V_0(z)\in\sM$. Then
$$
\begin{aligned}
Q+V(z)&=Q+aV_0(z)=a\left(\frac{Q}{a}+V_0(z)\right)\\
&=a\left(\frac{Q}{a}+\int_\bbR \left(\frac{1}{\lambda-z}-\frac{\lambda}{1+\lambda^2}\right)d\mu(\lambda)\right),
\end{aligned}
 $$
where
$$
V_0(z)=\int_\bbR \left(\frac{1}{\lambda-z}-\frac{\lambda}{1+\lambda^2}\right)d\mu(\lambda),\quad \int_\bbR\frac{d\mu(\lambda)}{1+\lambda^2}=1.
$$
% Then
% $$
%Q+V(z)=Q+a V_0(z)=a\cdot\left(\frac{Q}{a}+V_0(z)\right), \quad V_0(z)\in\sM.
% $$
As we have shown in Theorem \ref{t-18-M-q} the perturbed function $\frac{Q}{a}+V_0(z)\in\sM^Q$ above can be  realized  by an L-system $\ti\Theta$ that has the same state space and symmetric operator as an L-system $\Theta_0$ realizing $V_0(z)\in\sM$. Moreover, the corresponding von Neumann parameters of L-system $\ti\Theta$ are found via \eqref{e-53-kappa'} and \eqref{e-54-U-M-q}, that is
\begin{equation*}
    \ti\kappa=\frac{|Q/a|}{\sqrt{(Q/a)^2+4}},\quad  \ti U=\frac{Q/a}{|Q/a|}\cdot\frac{-Q/a+2i}{\sqrt{(Q/a)^2+4}},
\end{equation*}
or
\begin{equation}\label{e-112-k-u}
    \ti\kappa=\frac{|Q|}{\sqrt{Q^2+4a^2}},\quad  \ti U=\frac{Q}{|Q|}\cdot\frac{-Q+2ai}{\sqrt{Q^2+4a^2}}.
\end{equation}
Both von Neumann's parameters $\ti\kappa$ and $\ti U$ in \eqref{e-112-k-u} correspond to the deficiency basis
\begin{equation}\label{e-100-def}
g_+^\alpha(t)=\frac{1}{t-i},\quad g_-^\alpha(t)=-\frac{e^{2i\alpha}}{t +i},
\end{equation}
normalized in the space $L^2(\bbR;d\mu)$, where
\begin{equation}\label{e-101-phase}
-e^{2i{\alpha}}=\frac{Q/a-2i}{\sqrt{(Q/a)^2+4}}=\frac{Q-2ai}{\sqrt{Q^2+4a^2}},
\end{equation}
(see \eqref{e-67-def} and the proof of Theorem \ref{t-18-M-q}).

We are going to use Theorem \ref{t-18-M-q}, Appendix \ref{A2}, and the model triple described there to assemble $\ti\Theta$ based on these values of $\ti\kappa$ and $\ti U$.  In order to do that we take the Hilbert space $L^2(\bbR;d\mu)$ and introduce a symmetric  multiplication  operator $\dot\cB$ of the form \eqref{nacha2-ap} (see Appendix \ref{A1}) by
\begin{equation}\label{e-136-nacha2-ap}
\dom(\dot \cB)=\left \{f\in \dom(\cB_{\ti U})\, \bigg | \, \int_\bbR f(\lambda)d \mu(\lambda) =0\right \},
\end{equation}
whose normalized deficiency elements $g_+^\alpha$ and $g_-^\alpha$ of the form \eqref{e-100-def} in the space $L^2(\bbR;d\mu)$. We let $\cB_{\ti U}$ be a self-adjoint extension of $\dot\cB$ of the form \eqref{e-136-nacha2-ap} with
$$
\dom(\cB_{\ti U})=\dom (\dot \cB)\dot +\linspan\left\{\,\frac{1}{\cdot -i}+\ti U\frac{(-e^{2i{\alpha}})}{\cdot +i}\right \},
$$
where $\ti U$ is given by \eqref{e-112-k-u} and let  $T_{\calB}^{\ti\kappa} $ be   the dissipative restriction of the operator  $\dot \cB^*$  on
\begin{equation*}\label{e-137-nacha3-ap}
\dom(T_{\calB}^{\ti\kappa})=\dom (\dot \cB)\dot +\linspan\left\{\,\frac{1}{\cdot -i}- \ti\kappa\frac{(-e^{2i{\alpha}})}{\cdot +i}\right \},
\end{equation*}
where $\ti\kappa$ is given by \eqref{e-112-k-u}. Using the model triple $(\dot \cB,   T_{\calB}^{\ti\kappa} ,\cB_{\ti U})$ above in the Hilbert space $L^2(\bbR;d\mu)$ we built the $(*)$-extension $\ti\dB$ of $T_{\calB}^{\ti\kappa}$ and construct an L-system
\begin{equation}\label{e-114-un}
\ti\Theta= \begin{pmatrix} \ti\dB&\ti K&\ 1\cr \calH_+ \subset L^2(\bbR;d\mu) \subset
\calH_-& &\dC\cr \end{pmatrix}
\end{equation}
of the form \eqref{e-59} as explained in the proof of  Theorem \ref{t-18-M-q}. Here  $\ti K c=c\cdot \ti\chi$, $\ti K^*f=(f,\ti\chi)$, $(f\in\calH_+)$ where $\ti\chi$ is given by \eqref{e-212} or
$$
    \ti\chi=\frac{\ti\kappa^2+1+2\ti\kappa \ti U}{\sqrt2|1+\ti\kappa \ti U|\sqrt{1-\ti\kappa^2}}\varphi+ \frac{\ti\kappa^2\ti U+2\ti\kappa+\ti U}{\sqrt2|1+\ti\kappa\ti U|\sqrt{1-\ti\kappa^2}}\psi.
$$
Taking into account that in the above formula $\|\varphi\|_-=\|\psi\|_-=1$ we use
\begin{equation}\label{e-106-phi-psi}
\varphi=\calR^{-1}\left(\frac{1}{\sqrt2}g^\alpha_+\right)\quad\textrm{ and }\quad\psi=\calR^{-1}\left(\frac{1}{\sqrt2}g_-^\alpha\right),
\end{equation}
where $\calR$ is the  Riesz-Berezansky   operator (see Appendix \ref{A2}), we get
$$
    \ti\chi=\frac{\ti\kappa^2+1+2\ti\kappa \ti U}{2|1+\ti\kappa \ti U|\sqrt{1-\ti\kappa^2}}(\calR^{-1}g^\alpha_+)+ \frac{\ti\kappa^2\ti U+2\ti\kappa+\ti U}{2|1+\ti\kappa\ti U|\sqrt{1-\ti\kappa^2}}(\calR^{-1}g^\alpha_-).
$$
%%%%%%%%%%%%%%%%%%%
 As one can see  $\dot\cB$ is chosen to be the same as a symmetric operator in L-system $\Theta_0$ realizing $V_0(z)\in\sM$  and  hence does not depend on the value of $Q$.
 Moreover, since $\ti\Theta$ realizes $\frac{Q}{a}+V_0(z)$, we have
$$
V_{\ti\Theta}(z)=\frac{Q}{a}+V_0(z)=\left((\RE \ti\dB-z I)^{-1}\ti\chi,\ti\chi) \right).
$$
Furthermore, (see \eqref{e-214})
$$
    \begin{aligned}
\RE\ti\dB&=\dot\cB^*-\frac{i}{2|1+\ti\kappa\ti U|^2}(\cdot,\varphi-\ti U\psi)\left((\ti\kappa^2+1+2\ti\kappa\ti U)\varphi+ (\ti\kappa^2\ti U+2\ti\kappa+\ti U)\psi\right)\\
&=\dot\cB^*-\left(\cdot,\frac{-i\sqrt{1-\ti\kappa^2}}{\sqrt2|1+\ti\kappa\ti U|}\varphi+ \frac{i\ti U\sqrt{1-\ti\kappa^2}}{\sqrt2|1+\ti\kappa\ti U|}\psi\right)\ti\chi\\
&=\dot\cB^*-\frac{i\sqrt{1-\ti\kappa^2}}{\sqrt2|1+\ti\kappa\ti U|}\left(\cdot,\varphi-\ti U\psi\right)\ti\chi\\
&=\dot\cB^*-\frac{i\sqrt{1-\ti\kappa^2}}{2|1+\ti\kappa\ti U|}\left(\cdot,(\calR^{-1}g^\alpha_+)-\ti U(\calR^{-1}g^\alpha_-)\right)\ti\chi.
    \end{aligned}
$$
Let
\begin{equation}\label{e-113-chi}
        \begin{aligned}
\chi&=\sqrt{a}\,\ti\chi=\sqrt{\frac{a}{2}}\left(\frac{\ti\kappa^2+1+2\ti\kappa \ti U}{|1+\ti\kappa \ti U|\sqrt{1-\ti\kappa^2}}\varphi+ \frac{\ti\kappa^2\ti U+2\ti\kappa+\ti U}{|1+\ti\kappa\ti U|\sqrt{1-\ti\kappa^2}}\psi\right)\\
&={\frac{\sqrt a}{2}}\left(\frac{\ti\kappa^2+1+2\ti\kappa \ti U}{|1+\ti\kappa \ti U|\sqrt{1-\ti\kappa^2}}(\calR^{-1}g^\alpha_+)+ \frac{\ti\kappa^2\ti U+2\ti\kappa+\ti U}{|1+\ti\kappa\ti U|\sqrt{1-\ti\kappa^2}}(\calR^{-1}g^\alpha_-)\right).
    \end{aligned}
\end{equation}
Then applying \eqref{e-106-phi-psi} and \eqref{e-113-chi}
$$
        \begin{aligned}
\RE\ti\dB&=\dot\cB^*-\frac{i\sqrt{1-\ti\kappa^2}}{\sqrt{2a}|1+\ti\kappa\ti U|}\left(\cdot,\varphi-\ti U\psi\right)\chi\\
&=\dot\cB^*-\frac{i\sqrt{1-\ti\kappa^2}}{2\sqrt{a}|1+\ti\kappa\ti U|}\left(\cdot,\calR^{-1}g^\alpha_+-\ti U\calR^{-1}g^\alpha_-\right)\chi.
    \end{aligned}
$$
Observe,
$$
Q+V(z)=a\left(\frac{Q}{a}+V_0(z)\right)=a\left((\RE \ti\dB-z I)^{-1}\ti\chi,\ti\chi \right)=\left((\RE \ti\dB-z I)^{-1}\chi,\chi\right).
$$
We introduce a new operator
\begin{equation}\label{e-114-bA}
    \dB=\RE\ti\dB+i(\cdot,\chi)\chi.
\end{equation}
According to \cite[Theorem 6.4.6]{ABT} this operator $\dB$ is a $(*)$-extension of an operator $T$ ($T\supset\dot\cB$, $T^*\supset\dot\cB$) with some von Neumann's parameter $\kappa$. We are going to find $\kappa$. We have
$$
 \begin{aligned}
\dB&=\dot\cB^*-\left(\cdot,\frac{-i\sqrt{1-\ti\kappa^2}}{\sqrt{2a}|1+\ti\kappa\ti U|}\varphi+ \frac{i\ti U\sqrt{1-\ti\kappa^2}}{\sqrt{2a}|1+\ti\kappa\ti U|}\psi\right)\chi+i(\cdot,\chi)\chi\\
&=\dot\cB^*-\left(\cdot,\frac{-i\sqrt{1-\ti\kappa^2}}{\sqrt{2a}|1+\ti\kappa\ti U|}\varphi+ \frac{i\ti U\sqrt{1-\ti\kappa^2}}{\sqrt{2a}|1+\ti\kappa\ti U|}\psi\right)\chi\\
&\quad+\sqrt{\frac{a}{2}}\left(\cdot,-i\frac{\ti\kappa^2+1+2\ti\kappa \ti U}{|1+\ti\kappa \ti U|\sqrt{1-\ti\kappa^2}}\varphi-i\frac{\ti\kappa^2\ti U+2\ti\kappa+\ti U}{|1+\ti\kappa\ti U|\sqrt{1-\ti\kappa^2}}\psi\right)\chi\\
&=\dot\cB^*+\left(\cdot,\left[\frac{i\sqrt{1-\ti\kappa^2}}{\sqrt{2a}|1+\ti\kappa\ti U|}-\frac{i\sqrt{a}(\ti\kappa^2+1+2\ti\kappa \ti U)}{\sqrt2|1+\ti\kappa \ti U|\sqrt{1-\ti\kappa^2}}\right]\varphi\right.\\
&\quad+\left.\left[\frac{-i\ti U\sqrt{1-\ti\kappa^2}}{\sqrt{2a}|1+\ti\kappa\ti U|}-\frac{i\sqrt{a}(\ti\kappa^2\ti U+2\ti\kappa +\ti U)}{\sqrt2|1+\ti\kappa \ti U|\sqrt{1-\ti\kappa^2}}\right]\psi\right)\chi.\\
    \end{aligned}
$$
Applying \eqref{e-106-phi-psi} yields
$$
 \begin{aligned}
\dB&=\dot\cB^*+\left(\cdot,\left[\frac{i\sqrt{1-\ti\kappa^2}}{2\sqrt{a}|1+\ti\kappa\ti U|}-\frac{i\sqrt{a}(\ti\kappa^2+1+2\ti\kappa \ti U)}{2|1+\ti\kappa \ti U|\sqrt{1-\ti\kappa^2}}\right](\calR^{-1}g^\alpha_+)\right.\\
&\quad+\left.\left[\frac{-i\ti U\sqrt{1-\ti\kappa^2}}{2\sqrt{a}|1+\ti\kappa\ti U|}-\frac{i\sqrt{a}(\ti\kappa^2\ti U+2\ti\kappa +\ti U)}{2|1+\ti\kappa \ti U|\sqrt{1-\ti\kappa^2}}\right](\calR^{-1}g^\alpha_-)\right)\chi\\
&=\dot\cB^*+\frac{i}{2\sqrt{a}|1+\ti\kappa\ti U|\sqrt{1-\ti\kappa^2}}\left(\cdot,\left[a(\ti\kappa^2+2\ti\kappa\ti U+1)-1+\ti\kappa^2\right](\calR^{-1}g^\alpha_+)\right.\\
&\quad+\left.\left[a(\ti\kappa^2\ti U+2\ti\kappa+\ti U)+\ti U-\ti U\ti\kappa^2\right](\calR^{-1}g^\alpha_-)\right)\chi.
   \end{aligned}
$$
In order to find $\kappa$ we rely on the fact that if $f=g_+^\alpha-\kappa g_-^\alpha\in\dom(T)$, then $\dB f=T f$. This yields
$$
\left[a(\ti\kappa^2+2\ti\kappa\ti U+1)-1+\ti\kappa^2\right]-\bar\kappa\left[a(\ti\kappa^2\ti U+2\ti\kappa+\ti U)+\ti U-\ti U\ti\kappa^2\right]=0.
$$
Solving for $\kappa$ results in
\begin{equation*}
    \begin{aligned}
    \kappa&=\overline{\left(\frac{a(\ti \kappa^2+2\ti \kappa \ti U+1)-(1-\ti\kappa^2)}{a(\ti\kappa^2\ti U+2\ti\kappa +\ti U)+\ti U(1-\ti\kappa^2)}\right)}=\frac{a(\ti \kappa^2+2\ti \kappa \overline{\ti U}+1)-(1-\ti\kappa^2)}{a(\ti\kappa^2\overline{\ti U}+2\ti\kappa +\overline{\ti U})+\overline{\ti U}(1-\ti\kappa^2)}\cdot\frac{\ti U}{\ti U}\\
    &=\frac{a(\ti\kappa^2{\ti U}+2\ti\kappa +{\ti U})-{\ti U}(1-\ti\kappa^2)}{a(\ti \kappa^2+2\ti \kappa {\ti U}+1)+(1-\ti\kappa^2)},
    \end{aligned}
\end{equation*}
where $\ti\kappa$ and $\ti U$ are defined by \eqref{e-112-k-u}. Substituting the values of $\ti\kappa$ and $\ti U$ from \eqref{e-112-k-u} into the above formula yields the following
\begin{equation}\label{e-115-kappa}
    \kappa=\frac{(a-1-Qi)\sqrt{Q^2+4a^2}}{(a-1)Q-(Q^2+2a^2+2a)i}.
\end{equation}
Therefore, $\dB$ is a $(*)$-extension of an operator $T=T(Q)$ defined by the von Neumann parameter $\kappa$ in \eqref{e-115-kappa} and $\dot\cB\subset T\subset\dB$, $\dot\cB\subset T^*\subset\dB^*$. Furthermore, we can include $\dB$ into a new L-system
\begin{equation}\label{e-136-un-model}
\Theta^Q= \begin{pmatrix} \dB&K&\ 1\cr \calH_+ \subset L^2(\bbR;d\mu) \subset
\calH_-& &\dC\cr \end{pmatrix},
\end{equation}
that will share the symmetric operator $\dot\cB$ and state-space with $\ti\Theta$ (and ultimately with $\Theta_0$). Here  $K c=c\cdot \chi$, $K^*f=(f,\chi)$, $(f\in\calH_+)$.
According to our explicit construction the impedance function $V_\Theta(z)=Q+V(z)$. Consequently, we have presented another realization of $Q+V(z)$ by a minimal L-system $\Theta^Q$ whose symmetric operator and state-space are independent of the perturbing parameter $Q$. We will refer to $\Theta^Q$ in \eqref{e-136-un-model} as a \textit{universal model} of  L-system of the form \eqref{e-62}.

As we mentioned in Remark \ref{r-16}, the modified deficiency basis $g_\pm^\alpha$ of the form \eqref{e-100-def} is used in the construction and description of the universal model  $\Theta^Q$. Both von Neumann's parameters $\kappa$ and $\ti U$ in $\Theta^Q$ are given by \eqref{e-115-kappa} and \eqref{e-112-k-u} with respect to these deficiency vectors. Alternatively, we could use the original normalized deficiency pair $g_\pm$ of the form \eqref{e-52-def} in the space $L^2(\bbR;d\mu)$. This approach produces different values of parameters $\kappa$ and  $\ti U$. The corresponding formulas are
\begin{equation}\label{e-111-k}
    \kappa=\frac{(a-1-Qi)\sqrt{Q^2+4a^2}}{(a-1)Q-(Q^2+2a^2+2a)i}\cdot\frac{Q-2ai}{\sqrt{Q^2+4a^2}}=\frac{(a-1-Qi)(Q-2ai)}{(a-1)Q-(Q^2+2a^2+2a)i},
\end{equation}
and
\begin{equation}\label{e-112-U}
\ti U=\frac{Q}{|Q|}\cdot\frac{-Q+2ai}{\sqrt{Q^2+4a^2}}\cdot\frac{Q-2ai}{\sqrt{Q^2+4a^2}}=-\frac{Q}{|Q|}\cdot\frac{4a^2-Q^2+8aQi}{Q^2+4a^2}.
\end{equation}
Here we used the value of $-e^{2i\alpha}$ found via \eqref{e-101-phase}.

\begin{theorem}\label{t-34-model}
Let $\Theta$ be a minimal L-system of the form \eqref{e-62} whose impedance function is
\begin{equation}\label{e-138-V}
V_\Theta(z)=Q+a\int_\bbR \left(\frac{1}{\lambda-z}-\frac{\lambda}{1+\lambda^2}\right)d\mu(\lambda)\quad\textrm{ with }\quad  \int_\bbR\frac{d\mu(\lambda)}{1+\lambda^2}=1.
\end{equation}
Then $\Theta$ is bi-unitarily equivalent to a universal model L-system $\Theta^Q$ of the form \eqref{e-136-un-model} constructed upon the values of $Q$ and $a$.

Moreover, the resolvent\footnote{A functional model in resolvent form of a dissipative quasi-selfadjoint extension of a prime symmetric operator with deficiency indices $(1,1)$ was established first by K.A.~Makarov and one of the authors (E.T.) in \cite{MT-S}.} of the main operator $T=T(Q)$ of $\Theta^Q$ acting on $L^2(\bbR;d\mu)$ is given by
\begin{equation}\label{e-98-res-form-1}
(T - zI )^{-1}=(\cB- zI )^{-1}-p(z)(\cdot\, ,g_{\overline{z}})g_z, %\quad g_z\in\Ker(\dot \cB^*-z I),
\end{equation}
with
\begin{equation}\label{e-99-res-form-2}
p(z)=\left (M(\dot\cB, \cB)(z)+i\frac{\mathds{k}+1}{\mathds{k}-1}\right )^{-1},\quad z\in\rho(T)\cap \rho(\cB).
\end{equation}
Here $\dot\cB$ is symmetric operator of the form \eqref{e-136-nacha2-ap}, $g_z$, $g_\pm=g_{\pm i}$ are deficiency vectors of $\dot\cB$ of the form \eqref{e-52-def}, $\cB$ is a self-adjoint extension of $\dot\cB$ such that $g_+-g_-\in\dom(\cB)$,  $M(\dot\cB, \cB)$ is the Weyl-Titchmarsh function associated with the pair  $(\dot\cB, \cB)$, and %$\mathds{k}=(-e^{2i{\alpha}})\kappa$, where $\kappa$ is given by \eqref{e-115-kappa} and $-e^{2i{\alpha}}$ by \eqref{e-101-phase}.
$$
\mathds{k}=\frac{(a-1-Qi)(Q-2ai)}{(a-1)Q-(Q^2+2a^2+2a)i}.
$$
\end{theorem}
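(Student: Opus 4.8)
The plan is to separate the two assertions. The bi-unitary equivalence is essentially built into the construction of $\Theta^Q$ that precedes the theorem, while the resolvent formula I would obtain by a Krein-type rank-one computation whose coefficient is then matched against the Weyl--Titchmarsh function. For the equivalence, recall that the universal model $\Theta^Q$ in \eqref{e-136-un-model} was assembled so that its impedance function is $V_{\Theta^Q}(z)=Q+V(z)$, which is exactly $V_\Theta(z)$ from \eqref{e-138-V}. Both $\Theta$ and $\Theta^Q$ are minimal L-systems with one-dimensional input-output and direction operator $J=1$, so the relations \eqref{e6-3-6} turn the coincidence of the impedance functions into the coincidence of the transfer functions $W_\Theta\equiv W_{\Theta^Q}$. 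By the uniqueness theorem \cite[Theorem 6.6.10]{ABT}, minimal L-systems with equal transfer functions are bi-unitarily equivalent with the intertwining $U$ uniquely determined, which proves the first claim.

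For the resolvent formula I would work directly in the model, where the main operator $T=T(Q)$ of $\Theta^Q$ is the dissipative quasi-self-adjoint extension of $\dot\cB$ whose von Neumann parameter, computed with respect to the original normalized deficiency basis $g_\pm$ of \eqref{e-52-def} in $L^2(\bbR;d\mu)$, equals $\mathds{k}$ from \eqref{e-111-k}; this is the quantity in the statement. Since $T$ and the self-adjoint extension $\cB$ (for which $g_+-g_-\in\dom(\cB)$) share the symmetric operator $\dot\cB$ with deficiency indices $(1,1)$, for $z\in\rho(T)\cap\rho(\cB)$ we have $(\dot\cB^*-zI)\big[(T-zI)^{-1}-(\cB-zI)^{-1}\big]=0$; hence the resolvent difference maps $L^2(\bbR;d\mu)$ into $\Ker(\dot\cB^*-zI)=\linspan\{g_z\}$ and is of rank one. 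Passing to adjoints identifies the accompanying functional as $(\cdot\,,g_{\overline z})$, so that $(T-zI)^{-1}=(\cB-zI)^{-1}-p(z)(\cdot\,,g_{\overline z})g_z$ for a scalar $p(z)$ to be determined.

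To pin down $p(z)$ I would impose the boundary condition defining $\dom(T)$, namely $g_+-\mathds{k}g_-\in\dom(T)$. Writing any $w\in\dom(\dot\cB^*)$ as $w=w_0+c_+g_++c_-g_-$ with $w_0\in\dom(\dot\cB)$, the condition $w\in\dom(T)$ reads $\mathds{k}c_++c_-=0$ while $w\in\dom(\cB)$ reads $c_++c_-=0$. Applying the tentative formula to an arbitrary $f$, decomposing $(\cB-zI)^{-1}f\in\dom(\cB)$, and using the model identity $g_z=g_++(z-i)(\cB-zI)^{-1}g_+$ (which holds for the normalization \eqref{e-52-def}, since there $g_z(t)=(t-z)^{-1}$ and $\cB$ is multiplication by $t$) to read off the $g_\pm$-components of $g_z$, the requirement $\mathds{k}c_++c_-=0$ becomes one scalar equation for $p(z)$. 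Rewriting the coefficients through the Donoghue function $M(\dot\cB,\cB)(z)=z+(z^2+1)\big((\cB-zI)^{-1}g_+,g_+\big)$ collapses it to $p(z)=\big(M(\dot\cB,\cB)(z)+i\frac{\mathds{k}+1}{\mathds{k}-1}\big)^{-1}$, i.e. \eqref{e-99-res-form-2}; this reproduces the functional model of \cite{MT-S}. The degenerate value $\mathds{k}=1$, giving $p\equiv0$ and $T=\cB$, is a convenient sanity check.

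The step I expect to be the main obstacle is this last coefficient computation: extracting the $g_\pm$-components of $(\cB-zI)^{-1}g_+$ and of $(\cB-zI)^{-1}f$ and verifying that $\mathds{k}c_++c_-$ reassembles exactly into $M(\dot\cB,\cB)(z)+i\frac{\mathds{k}+1}{\mathds{k}-1}$ with the normalization of $g_z$ forced by \eqref{e-52-def}. Performing the bookkeeping concretely in $L^2(\bbR;d\mu)$, where $g_z(t)=(t-z)^{-1}$, makes every term explicit and is how I would discharge it.
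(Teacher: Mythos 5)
Your first part is exactly the paper's argument: the universal model $\Theta^Q$ was built so that $V_{\Theta^Q}=V_\Theta$, both systems are minimal, and \cite[Theorem 6.6.10]{ABT} gives bi-unitary equivalence. For the resolvent formula, however, you take a genuinely different route. The paper does not derive \eqref{e-98-res-form-1}--\eqref{e-99-res-form-2} at all: it simply invokes \cite[Theorem 5.2]{MT-S}, observing that the model triple $(\dot\cB,T,\cB)$ sitting inside $\Theta^Q$ satisfies Hypothesis \ref{setup} once one uses the original deficiency basis \eqref{e-52-def} and the (complex) von Neumann parameter $\mathds{k}$ of \eqref{e-111-k}. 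You instead reconstruct that result from scratch: the rank-one structure of $R_T(z)-R_\cB(z)$ from $T,\cB\subset\dot\cB^*$, the identification of the functional as $(\cdot\,,g_{\bar z})$ by passing to adjoints, and the determination of $p(z)$ by imposing $\mathds{k}c_++c_-=0$ on the von Neumann coordinates, using $g_z=g_++(z-i)(\cB-zI)^{-1}g_+$ and $M(\dot\cB,\cB)(z)=z+(z^2+1)\bigl((\cB-zI)^{-1}g_+,g_+\bigr)$ (both of which are correct in the model with the normalization $\|g_\pm\|=1$). This is precisely the standard derivation behind the cited theorem, so it is sound; what it buys is a self-contained proof at the cost of the coefficient bookkeeping you rightly flag as the delicate step, whereas the paper's citation buys brevity but requires the reader to check that the hypotheses of \cite[Theorem 5.2]{MT-S} (in particular that the relevant parameter is $\mathds{k}$ relative to the basis \eqref{e-52-def}, not the $\alpha$-rotated basis used elsewhere in Section \ref{s10}) are indeed met — a point both you and the paper address correctly.
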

\begin{proof}
As we have shown above, the model L-system $\Theta^Q$ of the form \eqref{e-136-un-model} constructed upon the values of $Q$ and $a$ realizes the function
 $$
Q+a\int_\bbR \left(\frac{1}{\lambda-z}-\frac{\lambda}{1+\lambda^2}\right)d\mu(\lambda)\quad\textrm{ with }\quad  \int_\bbR\frac{d\mu(\lambda)}{1+\lambda^2}=1.
 $$
 Hence, $V_\Theta(z)=V_{\Theta^Q}(z)$ for all $z\in\dC_+$. Also, both L-systems $\Theta$ and $\Theta^Q$ are minimal. Then, according to Theorem on Bi-unitary Equivalence (see \cite[Theorem 6.6.10]{ABT}),  L-system $\Theta^Q$ is bi-unitarily equivalent to  our L-system $\Theta$.

Resolvent formula \eqref{e-98-res-form-1}-\eqref{e-99-res-form-2} was proven in \cite[Theorem 5.2]{MT-S} for the model triple %$(\dot\cB, \whB, \cB)$
with an appropriate (possibly complex) value of $\kappa$ mentioned in Hypothesis \ref{setup}. In our case we simply apply this result to the model triple $(\dot \cB,   T ,\cB)$ in the Hilbert space $L^2(\bbR;d\mu)$ that is present in our universal model L-system $\Theta$ and clearly satisfies Hypothesis \ref{setup} if one chooses the deficiency basis $g_\pm$  of the form \eqref{e-52-def} and von Neumann's parameter $\mathds{k}$ for the main operator $T$ given by \eqref{e-111-k}.
\end{proof}
We should mention that the Weyl-Titchmarsh function $M(\dot\cB, \cB)$ that appears in formula \eqref{e-99-res-form-2} can be replaced with a linear-fractional transformation of the Weyl-Titchmarsh function $M(\dot\cB, \cB_U)$ associated with the quasi-kernel of the $\RE\dB$ of the L-system $\Theta^Q$. In order to do that we utilize the corresponding transformation law for the Weyl-Titchmarsh functions  (see   \cite{GT}, \cite{MT-S}) for some $\alpha \in [0,\pi)$ as follows
\begin{equation*}\label{transm}
M(\dot\cB, \cB)=\frac{(\cos\alpha) \, M(\dot\cB, \cB_U)-\sin \alpha}{
\cos\alpha +(\sin\alpha) \,M(\dot\cB, \cB_U)}.
\end{equation*}

We note that any L-system $\Theta$ of the form \eqref{e-62} has its impedance function in the form \eqref{e-138-V} for some values of $Q$ and $a$. Thus, $\Theta^Q$ justifies its name of the universal model. We should  mention that $\Theta^Q$ is bi-unitarily equivalent to the realization L-systems  constructed in Theorems \ref{t-18} and \ref{t-20}. We also know (see \cite{ABT}) that if two minimal L-systems are bi-unitarily equivalent, then their main operators are unitarily equivalent and hence the absolute value of the von Neumann parameters  is preserved.   In particular, it follows directly from Theorems \ref{t-18}, \ref{t-20}, and \ref{t-34-model} that if $\kappa$ is defined by \eqref{e-115-kappa} then
\begin{equation}\label{e-120-kappas}
   |\kappa|=\left\{
               \begin{array}{ll}
                 \frac{\left(b-2Q^2-\sqrt{b^2+4Q^2}\right)^2-a\left(b-\sqrt{b^2+4Q^2}\right)^2+4Q^2a(a-1)}{\left(b-2Q^2-\sqrt{b^2+4Q^2}\right)^2+a\left(b-\sqrt{b^2+4Q^2}\right)^2+4Q^2a(a+1)}, & \hbox{if $a<1$;} \\
                 \frac{a\left(b+\sqrt{b^2+4Q^2}\right)^2-\left(b-2Q^2+\sqrt{b^2+4Q^2}\right)^2-4Q^2a(a-1)}{\left(b-2Q^2+\sqrt{b^2+4Q^2}\right)^2+a\left(b+\sqrt{b^2+4Q^2}\right)^2+4Q^2a(a+1)}, & \hbox{if $a>1$;} \\
                 \frac{|Q|}{\sqrt{Q^2+4}}, & \hbox{if $a=1$.}
              \end{array}
             \right.
\end{equation}
Here as before $b$ is defined by \eqref{e-78-b}.

We also observe that the value of $\kappa$  defined by \eqref{e-115-kappa} becomes real and non-negative if $a=1$ and matches the third case in \eqref{e-120-kappas}. If $a\ne1$, then
$\kappa$ is, generally speaking, complex and hence $\kappa=|\kappa|e^{i\phi},$ where $\phi=\phi(Q,a)$ depends on parameters $Q$ and $a$. Thus, the value of the phase factor of $\kappa$ can be found for given $Q$ and $a$ as
$$
e^{i\phi}=\frac{\kappa}{|\kappa|},
$$
where $|\kappa|$ is defined by \eqref{e-120-kappas}. In Example 2 in the end of the current paper we will present two different realizations of a function $V(z)\in\sM^Q_{\kappa_0}$ such that one of them uses a real positive $\kappa$ and the other one complex.

The following theorem will rely on the derivations in the beginning of this section to enhance Theorem \ref{t-18}. It shows that an existing L-system realization of an unperturbed function of the class $\sM_{\kappa_0}$ can be used to create a realization of a perturbed function preserving the state space and symmetric operator of the unperturbed L-system for any value of $Q$.
\begin{theorem}\label{t-22-A}
Let $V(z)\in\sM_{\kappa_0}$ be realized by an L-system $\Theta_0$ containing  a symmetric operator $\dA$ and a state-space $\calH_-\subset\calH\subset\calH_-$. Then for any real $Q\ne0$ the function $Q+V(z)$ can be realized by an L-system $\Theta$ with the same symmetric operator $\dA$ and a state space $\calH_-\subset\calH\subset\calH_-$ as in $\Theta_0$.
\end{theorem}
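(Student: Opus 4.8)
The plan is to read this statement as a direct corollary of the universal-model construction developed earlier in this section, carried out abstractly on the state space of $\Theta_0$ rather than on a concrete model space $L^2(\bbR;d\mu)$. First I would reduce to the normalized situation: since $V(z)\in\sM_{\kappa_0}$, its representing measure satisfies $\int_\bbR\frac{d\mu}{1+\lambda^2}=a$ with $a=\frac{1-\kappa_0}{1+\kappa_0}\le1$, so by \eqref{e-imp-m} we may write $V(z)=aV_0(z)$ with $V_0(z)\in\sM$, and hence $Q+V(z)=a\bigl(\tfrac{Q}{a}+V_0(z)\bigr)$. This is precisely the factorization exploited in the derivation preceding Theorem \ref{t-34-model}.

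Next I would record what $\Theta_0$ supplies. After the harmless rotation of the deficiency basis described in Remark \ref{r-12}, we may assume $\Theta_0$ satisfies Hypothesis \ref{setup} with real von Neumann parameter $\kappa_0$; this is possible because the quasi-kernel $\hat A$ of $\RE$ of the state-space operator of $\Theta_0$ is a self-adjoint extension of $\dA$ (by \eqref{DOMHAT}), so choosing $g_-=-U_0g_+$ forces $g_+-g_-\in\dom(\hat A)$, and neither the symmetric operator $\dA$ nor the rigging $\calH_+\subset\calH\subset\calH_-$ is affected by this choice. Thus $\Theta_0$ provides $\dA$, a normalized deficiency basis $g_\pm$, the self-adjoint extension $\hat A$, and the Riesz–Berezansky operator $\calR$. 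The crucial observation is that all ingredients of the recipe of this section --- the parameters $\ti\kappa,\ti U$ of \eqref{e-112-k-u}, the modified basis $g_\pm^\alpha$ of \eqref{e-100-def}--\eqref{e-101-phase}, the channel vector $\ti\chi$, and the operator $\RE\ti\dB$ --- are expressed purely in terms of $\dA$, $g_\pm$, $\calR$ and the scalars $Q,a$; none of them requires the concrete model space. So I run that construction verbatim inside the abstract triple of $\Theta_0$.

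Then I rescale. Setting $\chi=\sqrt a\,\ti\chi$ as in \eqref{e-113-chi} and defining the $(*)$-extension $\dB=\RE\ti\dB+i(\,\cdot\,,\chi)\chi$ as in \eqref{e-114-bA}, the resulting L-system $\Theta$ of the form \eqref{e-136-un-model} has impedance $V_\Theta(z)=\bigl((\RE\ti\dB-zI)^{-1}\chi,\chi\bigr)=a\bigl((\RE\ti\dB-zI)^{-1}\ti\chi,\ti\chi\bigr)=a\bigl(\tfrac{Q}{a}+V_0(z)\bigr)=Q+V(z)$, the middle equality holding because the impedance depends quadratically on the channel vector. Since $\RE\dB=\RE\ti\dB$, its quasi-kernel is again a self-adjoint extension of the \emph{same} $\dA$, and the rescaling leaves $\dA$ and the rigging untouched; only the main operator's von Neumann parameter changes from $\ti\kappa$ to the value \eqref{e-115-kappa}. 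Therefore $\Theta$ realizes $Q+V(z)$ while sharing $\dA$ and the state space $\calH_+\subset\calH\subset\calH_-$ with $\Theta_0$, as required.

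The main obstacle is the second step: one must be certain that passing from the parameter $\kappa_0$ of $\Theta_0$ to the $Q$-dependent parameter of \eqref{e-115-kappa} alters only the main operator and the quasi-kernel, not the symmetric operator or the rigging. This rests on \eqref{e-imp-m} together with the fact that the rescaling $\ti\chi\mapsto\sqrt a\,\ti\chi$ preserves $\RE\ti\dB$ (hence its quasi-kernel, a self-adjoint extension of $\dA$), while the modified basis $g_\pm^\alpha$ is merely a $Q$-dependent rotation within the deficiency subspaces $\Ker(\dA^*\mp iI)$ and thus does not change $\dA$ itself. Once this invariance is verified the theorem follows immediately.
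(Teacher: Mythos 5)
Your argument reaches the correct conclusion but by a genuinely different route from the paper. The paper's proof uses the universal-model construction of this section only for the $Q$-independence of the state space and symmetric operator, and then anchors that common space to the given $\Theta_0$ through a single special value $Q^*=\sqrt{1-a^2}$: for that choice the parameter $b$ of \eqref{e-78-b} vanishes, $\alpha=\pi/4$, and the trigonometric identities \eqref{e-116-cot}--\eqref{e-117-V1} exhibit $Q^*+V(z)$ as a chain of rotations and rescalings of $V_0(z)=V(z)/a$, each of which preserves $\dA$ and the rigging; the general $Q$ is then inherited from the common model space. You instead transplant the entire universal-model recipe --- the parameters \eqref{e-112-k-u}, the rotated deficiency basis \eqref{e-100-def}, the channel vector \eqref{e-113-chi} and the operator \eqref{e-114-bA} --- directly into the abstract triple of $\Theta_0$ and handle every $Q$ at once. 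That is cleaner, avoids the trigonometric detour, and makes the $Q$-dependence of the resulting $\kappa$ and $U$ explicit from the start.

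Two steps need shoring up. First, the normalization: choosing $g_-$ so that $g_+-g_-\in\dom(\hat A)$ secures the first condition of Hypothesis \ref{setup}, but the further claim that the von Neumann parameter of the main operator is then the \emph{real} number $\kappa_0$ does not follow from the basis choice alone; it follows from $V_{\Theta_0}(i)=ia$ with $a$ real combined with \eqref{e-imp-m}, or from bi-unitary equivalence of the minimal $\Theta_0$ with the model system of Appendix \ref{A1}. Second, and more substantively, the identity $\bigl((\RE\ti\dB-zI)^{-1}\ti\chi,\ti\chi\bigr)=\frac{Q}{a}+V_0(z)$ for the transplanted system is not a formal consequence of the formulas being ``expressed purely in terms of $\dA$, $g_\pm$, $\calR$ and the scalars'': the impedance also depends on the Weyl--Titchmarsh data of the pair $(\dA,\hat A)$, and this identity is precisely the content of Theorem \ref{t-18-M-q} (in its abstract form, Theorem \ref{t-22-M-q}), whose proof runs through the rotation \eqref{e-54-frac} and the constant $J$-unitary factor theorem. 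Those ingredients are indeed abstract, so your step is true, but it must be invoked as a theorem rather than derived from the abstractness of the formulas; once it is, your proof closes.
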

\begin{proof}
Most of the work was done in the beginning of the section where we have shown that for any  value of $Q\ne0$  an L-system realizing $Q+V(z)$  can be chosen to keep the same symmetric operator and state-space.  All we have to do now is to show that there exists a value of $Q\ne0$ such that a realizing L-system for  $Q+V(z)$ can be picked so that its symmetric operator and state-space will match $\dA$ and $\calH_-\subset\calH\subset\calH_-$ of $\Theta_0$.

Let $Q=\sqrt{1-a^2}$, where $a$ is the parameter corresponding to $V(z)$ via \eqref{e-66-L}. In this case parameter $b$ in \eqref{e-78-b} is $b=Q^2-(1-a^2)=0$. Consequently, the value of parameter $\alpha$ in \eqref{e-74-q-alpha} is such that
$$
    \tan 2\alpha=\frac{2Q}{1-Q^2-a^2}=+\infty,
$$
and hence $\alpha=\pi/4$. Consider function $V_{\alpha}(z)$ related to our function $Q+V(z)$ by \eqref{e-54-frac}
$$
 V_{\alpha}(z)=\frac{\cos\alpha+(\sin\alpha)(Q+V(z))}{\sin\alpha-(\cos\alpha)(Q+V(z))}.
$$
Under our assumptions we have
$$
\begin{aligned}
 V_{\frac{\pi}{4}}(z)&=\frac{\frac{1}{\sqrt2}+\frac{1}{\sqrt2}(\sqrt{1-a^2}+V(z))}{\frac{1}{\sqrt2}-\frac{1}{\sqrt2}(\sqrt{1-a^2}+V(z))}=\frac{1+\sqrt{1-a^2}+V(z)}{1-\sqrt{1-a^2}-V(z)}\\
 &=\frac{1+\sqrt{1-a^2}+aV_0(z)}{1-\sqrt{1-a^2}-aV_0(z)},
 \end{aligned}
$$
where $V_0(z)\in\sM$ is such that $V(z)=aV_0(z)$ (see \eqref{e-imp-m}). On the other hand by our construction and \eqref{e-imp-m} again $V_{\frac{\pi}{4}}(z)=a_{\frac{\pi}{4}}V_1(z)$, where $V_1(z)\in\sM$. We use \eqref{e-80-a-alpha} to find
$$
a_{\frac{\pi}{4}}=\frac{4a(1-a^2)+4a(1-a^2)}{[-2(1-a^2)+2\sqrt{1-a^2}]^2+4a^2(1-a^2)}=\frac{a}{1-\sqrt{1-a^2}}.
$$
Taking into account that $0<a<1$ we set
$$
\sin\beta=a,\quad \cos\beta=\sqrt{1-a^2},\quad \beta\in(0,\pi/2).
$$
Using trigonometric identities we get
\begin{equation}\label{e-116-cot}
\cot(\beta/2)=\frac{2\sin(\beta/2)\cos(\beta/2)}{2\sin^2(\beta/2)}=\frac{\sin\beta}{1-\cos\beta}=\frac{a}{1-\sqrt{1-a^2}}=a_{\frac{\pi}{4}}.
\end{equation}
Moreover, performing transformations below and applying \eqref{e-116-cot} yields
$$
\begin{aligned}
V_{\frac{\pi}{4}}(z)&=a_{\frac{\pi}{4}}V_1(z)=\frac{1+\sqrt{1-a^2}+aV_0(z)}{1-\sqrt{1-a^2}-aV_0(z)}=\frac{1+\cos\beta+(\sin\beta) V_0(z)}{1-\cos\beta-(\sin\beta) V_0(z)}\\
&=\frac{2\cos^2(\beta/2)+2\cos(\beta/2)\sin(\beta/2)V_0(z)}{2\sin^2(\beta/2)-2\sin(\beta/2)\cos(\beta/2)V_0(z)}\\
&=\cot(\beta/2)\frac{\cos(\beta/2)+\sin(\beta/2)V_0(z)}{\sin(\beta/2)-\cos(\beta/2)V_0(z)}=a_{\frac{\pi}{4}}\frac{\cos(\beta/2)+\sin(\beta/2)V_0(z)}{\sin(\beta/2)-\cos(\beta/2)V_0(z)}.
\end{aligned}
$$
Therefore,
\begin{equation}\label{e-117-V1}
    V_1(z)=\frac{\cos(\beta/2)+\sin(\beta/2)V_0(z)}{\sin(\beta/2)-\cos(\beta/2)V_0(z)}.
\end{equation}
%and thus $V_1(z)$ and $V_0(z)$ are related via \eqref{e-54-frac}.
Consequently, $\sqrt{1-a^2}+V(z)$ can be realized by an L-system that has the same symmetric operator $\dA$ and a state-space $\calH_-\subset\calH\subset\calH_-$ as in $\Theta_0$. This completes the proof.
\end{proof}
\begin{corollary}\label{c-26}
Let $V(z)\in\sM_{\kappa_0}$ and its realizing L-system $\Theta_0$ be the same as in Theorem \ref{t-22-A}. Then the function $Q+V(z)$ can be realized by an L-system $\Theta$ with the same symmetric operator $\dA$ and a state space $\calH_-\subset\calH\subset\calH_-$ as in $\Theta_0$ and  von Neumann's parameters $\kappa$ and $U$ given by \eqref{e-53-kappa-prime}  and \eqref{e-75-U}, respectively.
\end{corollary}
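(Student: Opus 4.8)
The plan is to read off the conclusion by superimposing Theorem~\ref{t-22-A}, which controls the symmetric operator and the state space, onto Theorem~\ref{t-18}, which controls the von Neumann parameters. The only genuine content is to check that these two pieces of data can be carried by \emph{one and the same} realizing L-system, and this will follow from the bi-unitary equivalence of minimal realizations together with the fact that $\kappa$ and $U$ depend on nothing but $Q$ and the normalization parameter $a$.

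First I would fix the normalization. Since $V(z)\in\sM_{\kappa_0}$ with $0<\kappa_0<1$, its measure satisfies $\int_\bbR \frac{d\mu(\lambda)}{1+\lambda^2}=a$ with $a=\frac{1-\kappa_0}{1+\kappa_0}\in(0,1)$ by \eqref{e-66-L}; adding the real constant $Q$ leaves the measure untouched, so $Q+V(z)\in\sM^Q_{\kappa_0}$ with the same $a<1$. Next I would invoke Theorem~\ref{t-22-A} to produce a minimal L-system $\Theta$ realizing $Q+V(z)$ whose symmetric operator is the prescribed $\dA$ and whose rigging is the prescribed $\calH_+\subset\calH\subset\calH_-$. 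In parallel, Theorem~\ref{t-18} provides a minimal model L-system realizing the \emph{same} function $Q+V(z)$ whose main operator has von Neumann parameter given by \eqref{e-53-kappa-prime} and whose quasi-kernel of $\RE\bA$ is described through \eqref{e-75-U}, with $b=Q^2+a^2-1$ as in \eqref{e-78-b}.

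The last step is to transfer these parameters to $\Theta$. Both L-systems are minimal and share the impedance function $Q+V(z)$, hence by the Bi-unitary Equivalence Theorem (\cite[Theorem 6.6.10]{ABT}) they are bi-unitarily equivalent, and in particular their main operators are unitarily equivalent. Because the von Neumann parameter, once the deficiency basis is normalized to be real and nonnegative as in Remark~\ref{r-12}, is an invariant of the pair $(\dA,T)$ and depends only on $Q$ and $a$ through \eqref{e-53-kappa-prime}, it is shared by $\Theta$; the associated unimodular parameter $U$ is then recovered from the same linear system \eqref{e-216-ls} that produced \eqref{e-75-U}, yielding \eqref{e-75-U} for $\Theta$ as well.

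I expect the main obstacle to be the bookkeeping of the deficiency basis: Theorem~\ref{t-22-A} is phrased in terms of an abstract operator $\dA$ and an arbitrary rigging, whereas Theorem~\ref{t-18} is stated for the concrete model triple built on $L^2(\bbR;d\mu_\alpha)$, and the von Neumann parameter is basis dependent. Making the identification clean requires transporting the model deficiency pair along the unitary implementing the bi-unitary equivalence and verifying that the phase normalization of Remark~\ref{r-12} pins down the real value \eqref{e-53-kappa-prime} rather than a rotated complex one; this is precisely the mechanism already recorded in \eqref{e-115-kappa}--\eqref{e-120-kappas}, where the complex $\kappa$ of the universal model and the real $\kappa$ of Theorem~\ref{t-18} are seen to have equal moduli.
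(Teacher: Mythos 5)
Your argument is correct, but it transfers the parameters to $\Theta$ by a different mechanism than the paper does. The paper's proof stays entirely constructive inside the prescribed state space: it applies the Cayley-type transformation \eqref{e-54-frac} with $\tan\alpha$ from \eqref{e-76-tan} to $Q+V(z)$, lands in $\sM_\kappa$ with $\kappa$ as in \eqref{e-53-kappa-prime}, uses the Theorem on constant $J$-unitary factor to build the intermediate system $\Theta_\alpha$ on the given rigging with the given $\dA$, and then literally re-runs the remaining steps of the proof of Theorem~\ref{t-18} (including the linear system \eqref{e-93-sys} for $U$) with $\Theta_\alpha$ substituted for the model system. You instead produce two realizations of the same function --- one from Theorem~\ref{t-22-A} with the right symmetric operator, one from Theorem~\ref{t-18} with the right parameters --- and invoke the Bi-unitary Equivalence Theorem to identify them. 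That is a legitimate shortcut (the paper uses exactly this device in the proof of Theorem~\ref{t-24}), and it buys brevity; what it costs is the basis bookkeeping you correctly flag: bi-unitary equivalence only preserves $|\kappa|$, so you must rotate the deficiency basis as in Remark~\ref{r-12} to pin down the real value \eqref{e-53-kappa-prime}, and then re-derive $U$ from \eqref{e-216-ls} relative to that rotated basis rather than inheriting it directly from the model. Since you do both of these things, the argument closes; the paper's route avoids the issue altogether because it never leaves the fixed deficiency basis of $\Theta_0$.
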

\begin{proof}
All we have to do is apply Theorem \ref{t-22-A} to the proof of Theorem \ref{t-18}. Since we already know that according to Theorem \ref{t-22-A} the perturbed function $Q+V(z)$ is realized by an L-system $\Theta$ with the symmetric operator $\dA$ and a state space $\calH_-\subset\calH\subset\calH_-$, we apply the transformation \eqref{e-54-frac} to $Q+V(z)$ with $\tan\alpha$ given by \eqref{e-76-tan}. Doing this and following the proof of Theorem \ref{t-18} yields the function $V_{\Theta_\alpha}(z)\in \sM_{\kappa}$, where $\kappa$ is given by \eqref{e-53-kappa-prime}.  Applying the Theorem on constant $J$-unitary factor (see \cite[Theorem 8.2.3]{ABT}, \cite{ArTs03}) we obtain an L-system $\Theta_\alpha$ that realizes $V_{\Theta_\alpha}(z)\in \sM_{\kappa}$ but has the same symmetric operator $\dA$ and a state space $\calH_-\subset\calH\subset\calH_-$ as in $\Theta_0$. Then the transfer functions of $\Theta$ and $\Theta_\alpha$ are related by  \eqref{e-61-Junitary'}. We use this L-system $\Theta_\alpha$ to replace the model L-system with the same name in the proof of Theorem \ref{t-18}. Then we follow the remaining steps  in the proof of Theorem \ref{t-18} that lead us to the conclusion that the  von Neumann parameters $\kappa$ and $U$ of the corresponding operators in $\Theta$ are given by \eqref{e-53-kappa-prime}  and \eqref{e-75-U}, respectively.
\end{proof}

Similar to Theorem \ref{t-22-A} and Corollary \ref{c-26}  results can be obtained for a function $V(z)\in\sM_{\kappa_0}^{-1}$.
%%%%%%%%%%%%%%%%%%%%%%%%%

%%%%%%%%%%%%%%%%%%%%

\section{Forward and inverse theorems}\label{s7}

In this section we treat forward  and inverse realization results for all the subclasses of the class $\sN^Q$ that was introduced in Section \ref{s5}.
\begin{theorem}\label{t-24}
Let $\Theta$ be a minimal L-system   of the form \eqref{e-62} with  the main operator $T$ and its von Neumann's parameter  $\kappa$, $(0\le\kappa<1)$. Then only one of the following takes place:
\begin{enumerate}
  \item $V_\Theta(z)$ belongs to class $\sM^Q$ and $\kappa$ is determined by \eqref{e-53-kappa'} for some $Q$;
  \item $V_\Theta(z)$ belongs to class $\sM^Q_{\kappa_0}$ and $\kappa$ is determined by \eqref{e-53-kappa-prime} for some $Q$ and $a=\frac{1-\kappa_0}{1+\kappa_0}$;
  \item $V_\Theta(z)$ belongs to class $\sM^{-1,Q}_{\kappa_0}$  and $\kappa$ is determined by \eqref{e-85-kappa-prime} for some $Q$ and $a=\frac{1+\kappa_0}{1-\kappa_0}$.
\end{enumerate}
The values of $Q$ and $\kappa_0$ are determined from integral representation \eqref{hernev-real} of $V_\Theta(z)$.
 \end{theorem}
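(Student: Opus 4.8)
The plan is to reduce the classification entirely to the integral representation \eqref{hernev-real} of the impedance function, combined with the bi-unitary identification already established in Theorem \ref{t-34-model}. First I would observe that, since $\Theta$ is a minimal L-system of the form \eqref{e-62}, its impedance $V_\Theta(z)$ is realizable by a one-dimensional input-output L-system and therefore, by \cite{ABT}, \cite{BT4}, admits representation \eqref{hernev-real} with a real constant $Q=\bar Q$ and an \emph{infinite} Borel measure $\mu$. Both the constant term $Q$ and the mass $a=\int_\bbR\frac{d\mu(\lambda)}{1+\lambda^2}$ are uniquely read off from the Herglotz-Nevanlinna representation of $V_\Theta$, so $Q$ and (through $a$) the parameter $\kappa_0$ are determined by the function itself; this is exactly the form \eqref{e-138-V} with $V_\Theta(z)=Q+aV_0(z)$, $V_0\in\sM$.

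Next I would carry out the trichotomy on $a>0$. Exactly one of $a=1$, $0<a<1$, $a>1$ holds, and by the normalization conditions \eqref{e-42-int-don}, \eqref{e-38-kap}, \eqref{e-39-kap} together with \eqref{e-45-kappa-1} and \eqref{e-45-kappa-2} these place $V_\Theta$ in precisely one of $\sM^Q$, $\sM^Q_{\kappa_0}$ with $a=\frac{1-\kappa_0}{1+\kappa_0}$, or $\sM^{-1,Q}_{\kappa_0}$ with $a=\frac{1+\kappa_0}{1-\kappa_0}$. The ``only one'' clause is then immediate from the mutual exclusivity of the three ranges of $a$.

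To pin down $\kappa$ in each case I would invoke Theorem \ref{t-34-model}: since $V_\Theta$ has the form \eqref{e-138-V}, the system $\Theta$ is bi-unitarily equivalent to the universal model $\Theta^Q$ built on $Q$ and $a$. Bi-unitary equivalence of minimal L-systems forces unitary equivalence of their main operators, hence preservation of the modulus of the von Neumann parameter, and formula \eqref{e-120-kappas} records this modulus in all three cases, coinciding with \eqref{e-53-kappa'}, \eqref{e-53-kappa-prime}, and \eqref{e-85-kappa-prime} respectively (equivalently, one reads these off directly from Theorems \ref{t-18} and \ref{t-20}). Because we operate under the normalization $0\le\kappa<1$ of Remark \ref{r-12}, the parameter $\kappa$ of $\Theta$ equals its own modulus and is therefore exactly the value produced by the relevant formula.

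The one point that genuinely requires care is that the von Neumann parameter is defined only after a deficiency basis is fixed, and a change of basis multiplies $\kappa$ by a unimodular factor (cf.\ \eqref{e-115-kappa} versus its basis-independent modulus \eqref{e-120-kappas}); thus the intrinsic quantity attached to $V_\Theta$ is $|\kappa|$, which is precisely what \eqref{e-53-kappa'}, \eqref{e-53-kappa-prime}, \eqref{e-85-kappa-prime} compute, and the hypothesis $0\le\kappa<1$ is what removes the phase ambiguity. A secondary, routine point I would record is that the degenerate value $Q=0$ is also covered: Remarks \ref{r-19} and \ref{r-21} give $\kappa(0)=\kappa_0$ in cases (2)--(3), while case (1) yields $\kappa=0$, consistent with the unperturbed classes $\sM$, $\sM_{\kappa_0}$, $\sM_{\kappa_0}^{-1}$.
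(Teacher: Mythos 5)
Your proposal is correct and follows essentially the same route as the paper's proof: read $Q$ and $a$ off the integral representation \eqref{hernev-real}, split into the three mutually exclusive cases $a=1$, $0<a<1$, $a>1$, and transfer the von Neumann parameter from the model realizing L-systems of Theorems \ref{t-18-M-q}, \ref{t-18}, \ref{t-20} to $\Theta$ via bi-unitary equivalence of minimal L-systems with equal impedance functions, which preserves $|\kappa|$. Your explicit remark that the hypothesis $0\le\kappa<1$ is what collapses $\kappa$ to its modulus, and your treatment of $Q=0$, are both consistent with (and slightly more carefully articulated than) the paper's argument.
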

\begin{proof}
It is known (see \cite{ABT}) that the impedance function $V_\Theta(z)$ has integral representation \eqref{hernev-real}, that is
$$
V_\Theta(z)= Q+\int_\bbR\left (\frac{1}{\lambda-z}-\frac{\lambda}{1+\lambda^2}\right )d\mu,\quad
\int_\bbR\frac{d\mu(\lambda)}{1+\lambda^2}=a<\infty.
$$
If $Q=0$, then the result of our theorem follows from Theorems \ref{t-10-new}--\ref{t-13-new}. Assume that $Q\ne0$.
Obviously, either $0<a<1$ or $a=1$ or $a>1$. If $a=1$, then $V_\Theta(z)$ belongs to class $\sM^Q$ by the definition. To see that the von Neumann parameter  $\kappa$ of the main operator $T$ of our L-system $\Theta$ is determined by \eqref{e-53-kappa'} for some $Q$ we simply note that
%it was shown in the proof of Theorem \ref{t-18-M-q} that $\kappa(Q)$ is a surjective even function mapping $\dR$ onto $[0,1)$. Hence every $0\le\kappa<1$ given by \eqref{e-53-kappa'} has a corresponding $Q$ such that $\kappa=\kappa(Q)$.
Theorem \ref{t-18-M-q} offers a model L-system realizing $V_\Theta(z)$ and having the von Neumann parameter  $\kappa$ of the main operator $T$ given by \eqref{e-53-kappa'} that uses $Q$ from integral representation \eqref{hernev-real} above. This L-system  is bi-unitarily equivalent to our L-system $\Theta$ (see \cite[Theorem 6.6.10]{ABT}) and hence preserves the absolute value of the main operators' von Neumann's parameters. Thus, $\kappa$ in this case is related to $Q$ from integral representation \eqref{hernev-real} of $V_\Theta(z)$ by \eqref{e-53-kappa'}.

Suppose $0<a<1$. Then, by the definition, $V_\Theta(z)$ belongs to class $\sM^Q_{\kappa_0}$ with (see \eqref{e-45-kappa-1})
$$
\kappa_0=\frac{1-a}{1+a}.
$$
In order to show that the von Neumann parameter  $\kappa$ of the main operator $T$ of our L-system $\Theta$ is determined by \eqref{e-53-kappa-prime} we note that Theorem \ref{t-18} offers a model L-system realizing $V_\Theta(z)$ and having the von Neumann parameter  $\kappa$ of the main operator $T$ given by \eqref{e-53-kappa-prime} based on the parameter $Q$ from integral representation \eqref{hernev-real} above. This L-system  is bi-unitarily equivalent to our L-system $\Theta$ (see \cite[Theorem 6.6.10]{ABT}) and hence preserves the absolute value of the main operators' von Neumann's parameters. Thus, $\kappa$ in this case is related to $Q$ from integral representation \eqref{hernev-real} of $V_\Theta(z)$ by \eqref{e-53-kappa-prime}.

%Namely, we construct an auxiliary L-system $\Theta_\alpha$ with specific value of $\alpha$ such that $V_{\Theta_\alpha}(z)\in\sM_\kappa$ where $\kappa$ is given by \eqref{e-53-kappa-prime}. By construction procedure (see proof of %Theorem \ref{t-18}), this L-system $\Theta_\alpha$ has the same main operator $T$ as our original L-system $\Theta$. Hence, the von Neumann parameter  $\kappa$ of $T$ is determined by \eqref{e-53-kappa-prime}.

Assume now that $a>1$. Then, by the definition, $V_\Theta(z)$ belongs to class $\sM^{-1,Q}_{\kappa_0}$ with (see \eqref{e-45-kappa-2})
$$
\kappa_0=\frac{a-1}{1+a}.
$$
Similarly to the previous case we note that Theorem \ref{t-20} offers a model L-system realizing $V_\Theta(z)$ and having the von Neumann parameter  $\kappa$ of the main operator $T$ given by \eqref{e-85-kappa-prime} based on the parameter $Q$ from integral representation \eqref{hernev-real} above. This L-system  is bi-unitarily equivalent to our L-system $\Theta$ and hence preserves the absolute value of the main operators' von Neumann's parameters. Thus, $\kappa$ in this case is related to $Q$ from integral representation \eqref{hernev-real} of $V_\Theta(z)$ by \eqref{e-85-kappa-prime}.
%utilize the argument of the first part of the proof of Theorem \ref{t-20}. Constructing an auxiliary L-system $\Theta_\alpha$ with the value of $\alpha$ such that $V_{\Theta_\alpha}(z)\in\sM_\kappa^{-1}$ and the same main operator $T$ as our L-system $\Theta$ leads us to the conclusion that the von Neumann parameter  $\kappa$ of $T$ is determined by \eqref{e-85-kappa-prime}.

The proof is complete.
\end{proof}

Now we state an inverse realization theorem for the largest class under consideration $\sN^Q$.
\begin{theorem}\label{t-25}
Let  $V(z)$ belong to the  class $\sN^Q$, ($Q\ne0$) and have integral representation \eqref{e-52-M-q}.  Then $V(z)$ can be realized as the impedance function $V_{\Theta}(z)$ of a minimal L-system $\Theta$  of the form \eqref{e-62} with  the main operator $T$ and its von Neumann's parameter  $\kappa=\kappa(Q)$ and the quasi-kernel $\hat A$ of $\RE\bA$ and its von Neumann's parameter  $U=U(Q)$. Moreover, if $Q$ is a constant term from integral representation \eqref{e-52-M-q} of $V(z)$, then  exactly only one of the following takes place:
\begin{enumerate}
  \item  $\kappa$ and $U$ are determined by \eqref{e-53-kappa'} and \eqref{e-54-U-M-q};
  \item  $\kappa$ and $U$ are determined by \eqref{e-53-kappa-prime} and \eqref{e-75-U};
  \item $\kappa$ and $U$ are determined by  \eqref{e-85-kappa-prime} and \eqref{e-87-U}.
\end{enumerate}
The value of $a$ in the above formulas is determined via \eqref{e-66-L} for the measure $\mu$ from integral representation \eqref{hernev-real} of $V_\Theta(z)$.
 \end{theorem}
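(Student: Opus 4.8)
The plan is to reduce the statement directly to the three realization theorems already established, dispatching on the single normalization constant $a$. Since $V(z)\in\sN^Q$ admits the integral representation \eqref{hernev-real}, the quantity
$$
a=\int_\bbR\frac{d\mu(\lambda)}{1+\lambda^2}
$$
of \eqref{e-66-L} is a well-defined finite positive number attached to $V$. This one number governs the whole argument: exactly one of the three mutually exclusive relations $a=1$, $0<a<1$, $a>1$ holds, and these correspond precisely to the disjoint decomposition $\sN=\sM^-\cup\sM\cup\sM^+$, i.e.\ to membership of $V$ in $\sM^Q$, $\sM^Q_{\kappa_0}$, or $\sM^{-1,Q}_{\kappa_0}$. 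It is this trichotomy of $a$ that will produce the ``exactly one'' clause.

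Next I would treat each case by invoking the corresponding theorem. If $a=1$, then by definition $V\in\sM^Q$, and Theorem \ref{t-18-M-q} furnishes a minimal realizing L-system $\Theta$ whose main-operator von Neumann parameter is $\kappa$ of the form \eqref{e-53-kappa'} and whose quasi-kernel parameter is $U$ of the form \eqref{e-54-U-M-q}; this is case (1). If $0<a<1$, then $V\in\sM^Q_{\kappa_0}$ with $\kappa_0=\tfrac{1-a}{1+a}$ by \eqref{e-45-kappa-1}, and Theorem \ref{t-18} delivers $\kappa$ by \eqref{e-53-kappa-prime} and $U$ by \eqref{e-75-U}; this is case (2). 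If $a>1$, then $V\in\sM^{-1,Q}_{\kappa_0}$ with $\kappa_0=\tfrac{a-1}{1+a}$ by \eqref{e-45-kappa-2}, and Theorem \ref{t-20} delivers $\kappa$ by \eqref{e-85-kappa-prime} and $U$ by \eqref{e-87-U}; this is case (3). In every case the $a$ entering these formulas is exactly the normalization constant \eqref{e-66-L} computed from $\mu$, as the final sentence of the theorem demands.

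The only content beyond bookkeeping is the exclusivity assertion. Because the three ranges of $a$ partition $(0,\infty)$, no two of the alternatives can occur simultaneously, while at least one occurs since $0<a<\infty$; hence exactly one holds. I expect the only (minor) obstacle to be checking consistency at the boundary $a\to 1$, namely that the pairs \eqref{e-53-kappa-prime}/\eqref{e-75-U} and \eqref{e-85-kappa-prime}/\eqref{e-87-U} degenerate to \eqref{e-53-kappa'}/\eqref{e-54-U-M-q} so that the three cases meet coherently. This, however, is already contained in the limit computations carried out inside the proofs of Theorems \ref{t-18} and \ref{t-20} and in the consolidated formula \eqref{e-120-kappas}, so no fresh estimate is required. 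The proof therefore amounts to computing $a$, reading off which of the three disjoint classes $V$ belongs to, and quoting the matching realization theorem.
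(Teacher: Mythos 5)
Your proposal is correct and follows essentially the same route as the paper: the paper's proof likewise dispatches on the disjoint partition $\sN^Q=\sM^{-Q}\cup\sM^Q\cup\sM^{+Q}$ (equivalently, the trichotomy $a>1$, $a=1$, $0<a<1$ from \eqref{e-66-L}) and then quotes Theorems \ref{t-18-M-q}, \ref{t-18}, and \ref{t-20} in the respective cases, with exclusivity coming from the partition. Your extra remark about coherence of the formulas as $a\to 1$ is not needed for the argument but is consistent with the limit computations already in the paper.
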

\begin{proof}
The class $\sN^Q$ was introduced in Section \ref{s5} as a ``perturbed" version of the class $\sN$. As we have shown in Section \ref{s5}, the class $\sN$ allows the partition
$$
\sN=\sM^-\cup\sM\cup\sM^+,
$$
where sets on the right do not intersect. Clearly then a similar non-intersecting partition
\begin{equation}\label{e-92-partition}
\sN^Q=\sM^{-Q}\cup\sM^Q\cup\sM^{+Q},
\end{equation}
takes place.
Consequently, every function $V(z)\in\sN^Q$ falls into exactly one of the classes $\sM^{-Q}$, $\sM^Q$, or $\sM^{+Q}$. Suppose that $V(z)\in\sM^{-Q}$. Then according to \eqref{e-66-sub}  we have that $V(z)\in\sM^{-1,Q}_{\kappa_0}$ for some $0<\kappa_0<1$ that is found from the integral representation \eqref{e-52-M-q} of $V(z)$ via \eqref{e-66-L} and  \eqref{e-45-kappa-2}. Therefore, we can apply Theorem \ref{t-20} that will confirm case (3) of the current theorem. Cases (1) and (2) are proved similarly. The partition formula \eqref{e-92-partition} implies that only one case is possible for a given function $V(z)\in\sN^{Q}$.
\end{proof}
Theorems \ref{t-24} and \ref{t-25} above improve and refine general realization theorems (see \cite{ABT}) for the case of one-dimensional input-output space. These results give explicit formulas for von Neumann's parameters of the main operator  and the quasi-kernel of the real part of the state-space operator of the realizing L-system.

\section{Unimodular transformations}\label{s8}

Now let us consider an L-system $\Theta$ of the form \eqref{e-62} with a main operator $T$ and the transfer function $W_\Theta(z)$. Let $B$ be a complex number such that $|B|=1$. It was shown in \cite[Theorem 8.2.3]{ABT} that there exists another L-system $\Theta_B$ of the form \eqref{e-62} with the same main operator $T$ and such that $W_{\Theta_B}(z)=W_\Theta(z)B$. The following definition was introduced in \cite{BMkT-4}.
\begin{definition}\label{d-7}
L-systems $\Theta$ and  $\Theta_\alpha$ of the form \eqref{e-62} are called  \textbf{unimodular transformations} of each other  for some $\alpha\in[0,\pi)$ if
\begin{equation}\label{e-35-uni}
    W_{\Theta_\alpha}(z)=W_\Theta(z)\cdot (-e^{2i\alpha}),
\end{equation}
where $W_\Theta(z)$ and $W_{\Theta_\alpha}(z)$ are transfer functions of the corresponding L-systems. %We will also say that in this case $\Theta$ and $\Theta_\alpha$ are unimodular transformations of each other.
\end{definition}
Note that $\Theta_{\frac{\pi}{2}}=\Theta$.
It is known (see \cite[Theorem 8.3.1]{ABT}) that if $\Theta_\alpha$ is a unimodular transformation of $\Theta$ and  $V_{\Theta_\alpha}(z)$ is its impedance function, then \eqref{e-64-alpha} takes place.

\begin{proposition}\label{p-30}
Let $Q\ne0$ and
$$
V_1(z)=Q+\int_\bbR \left(\frac{1}{\lambda-z}-\frac{\lambda}{1+\lambda^2}\right )d\mu(\lambda),\,  V_2(z)=-Q+\int_\bbR \left(\frac{1}{\lambda-z}-\frac{\lambda}{1+\lambda^2}\right )d\mu(\lambda)
$$
be two functions belonging to classes $\sN^Q$ and $\sN^{-Q}$, respectively. Then $V_1(z)$ and $V_2(z)$ can be realized by two L-systems $\Theta_1$ and $\Theta_2$ that are unimodular transformations of each other.
\end{proposition}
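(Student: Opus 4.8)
The plan is to reduce the claim to a statement about transfer functions and then exploit the $Q\mapsto -Q$ symmetry of the von Neumann parameters recorded in Theorems~\ref{t-18-M-q}, \ref{t-18}, and \ref{t-20}. Writing $V_0(z)=\int_\bbR\big(\frac{1}{\lambda-z}-\frac{\lambda}{1+\lambda^2}\big)d\mu(\lambda)\in\sN$ for the common unperturbed function, we have $V_1=Q+V_0$ and $V_2=-Q+V_0$, so the two functions differ only in the sign of the constant term. By Theorem~\ref{t-25} each $V_j$ is realizable by a minimal L-system of the form \eqref{e-62}; moreover, since $V_1$ and $V_2$ share the measure $\mu$, I would realize them through the universal model of Theorem~\ref{t-34-model}, so that $\Theta_1$ and $\Theta_2$ can be taken to act on the \emph{same} state space $\calH_+\subset L^2(\bbR;d\mu)\subset\calH_-$ with the same symmetric operator $\dot\cB$. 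Their main operators then carry von Neumann parameters $\mathds{k}(Q)$ and $\mathds{k}(-Q)$ given by \eqref{e-111-k}, and the sign flip of $Q$ manifests as the symmetry $\mathds{k}(-Q)=\overline{\mathds{k}(Q)}$, so in particular $|\mathds{k}(-Q)|=|\mathds{k}(Q)|$.

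By Definition~\ref{d-7} it suffices to produce a single $\alpha\in[0,\pi)$ with $W_{\Theta_2}(z)=(-e^{2i\alpha})W_{\Theta_1}(z)$, as in \eqref{e-35-uni}. Using the scalar dictionary \eqref{e6-3-6} with $J=1$, namely $W_{\Theta_j}(z)=\frac{1-iV_j(z)}{1+iV_j(z)}$, the condition to be verified is that the quotient $W_{\Theta_2}(z)/W_{\Theta_1}(z)$ is a unimodular constant. The candidate value of $\alpha$ I would read off at a normalization point: evaluating at $z=i$, where $V_0(i)=ia$, turns $W_{\Theta_2}(i)/W_{\Theta_1}(i)$ into an explicit unimodular expression in $Q$ and $a$, and this singles out $-e^{2i\alpha}$. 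Equivalently, one writes each $W_{\Theta_j}$ through the characteristic function $S_j(z)=\frac{s(z)-\mathds{k}_j}{\overline{\mathds{k}_j}\,s(z)-1}$ of \eqref{e-42-Liv}, built from the single Livšic function $s(z)$ of the common $\dot\cB$, so that $W_{\Theta_2}/W_{\Theta_1}=S_1/S_2$ becomes a M\"obius function of $s(z)$ whose coefficients depend only on $\mathds{k}(\pm Q)$.

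The delicate point, and the step I expect to be the main obstacle, is verifying that this quotient is genuinely independent of $z$, i.e.\ that the $z$-dependence entering through $s(z)$ (equivalently through $V_0(z)$) cancels; naively the quotient is only unimodular on $\bbR$ and not constant, so the cancellation must be forced by the precise relation between $\mathds{k}_1$ and $\mathds{k}_2$ and not merely by $|\mathds{k}_1|=|\mathds{k}_2|$. To make it go through I would not keep a fixed deficiency basis but instead write the two characteristic functions with respect to deficiency bases rotated against one another by the phase recorded in \eqref{e-101-phase}, so that the passage $Q\mapsto -Q$ is exactly absorbed into a constant $J$-unitary (unimodular) factor rather than into the $z$-dependent part. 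Concretely, I would first realize $V_1$ by $\Theta_1$, then apply the transformation \eqref{e-54-frac} and the Theorem on constant $J$-unitary factor (\cite[Theorem~8.2.3]{ABT}) to produce from $\Theta_1$ a system with the \emph{same} main operator whose transfer function is the required unimodular multiple, and finally identify that system with a realization $\Theta_2$ of $V_2$ by matching the parameters $\kappa(-Q)$ and $U(-Q)=\bar U(Q)$. Establishing that this matching is consistent, i.e.\ that the phase produced by the $J$-unitary factor is precisely the one demanded by the sign change in the constant term, is the crux of the argument.
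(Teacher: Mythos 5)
Your overall strategy points in the right direction, but as written it has two genuine problems. First, routing the construction through the universal model of Theorem \ref{t-34-model} undermines the argument: there the von Neumann parameter $\mathds{k}$ of \eqref{e-111-k} is genuinely complex for $a\ne 1$, and the symmetry you correctly identify, $\mathds{k}(-Q)=\overline{\mathds{k}(Q)}$, gives only $|\mathds{k}(-Q)|=|\mathds{k}(Q)|$, not $\mathds{k}(-Q)=\mathds{k}(Q)$. Two quasi-self-adjoint extensions of the same symmetric operator with different von Neumann parameters relative to the same deficiency basis have different domains, so your two universal models do \emph{not} share a main operator, and nothing in that setup forces their transfer functions to differ by a constant. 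The paper instead takes the realizations of Theorems \ref{t-18-M-q}, \ref{t-18}, and \ref{t-20}, where $\kappa$ is given by the real formulas \eqref{e-53-kappa'}, \eqref{e-53-kappa-prime}, \eqref{e-85-kappa-prime}; these are even functions of $Q$, so $\kappa(-Q)=\kappa(Q)$ exactly, the two realizing L-systems share the same symmetric \emph{and} main operators, and only $U$ changes (to $\bar U$), which affects the quasi-kernel but not $T$.

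Second, the step you yourself call the crux --- that $W_{\Theta_2}(z)/W_{\Theta_1}(z)$ is a $z$-independent unimodular constant --- is left unresolved. Your plan via \eqref{e-54-frac} and \cite[Theorem 8.2.3]{ABT} only manufactures, from $\Theta_1$, \emph{some} L-system with a prescribed unimodular factor; you would still have to prove its impedance equals $V_2$, which is precisely the matching you flag as unverified. The missing tool is the one the paper cites and you do not: \cite[Theorem 8.2.1]{ABT}, which asserts that two L-systems sharing the same symmetric and main operators automatically have transfer functions differing by a constant unimodular factor $-e^{2i\alpha}$. Once the two realizations are known to share the main operator, that theorem finishes the proof in one line; no pointwise computation of the quotient, no Liv\v{s}ic-function manipulation, and no phase matching are required. (Your closing paragraph is in effect the strategy of the paper's later Theorem \ref{t-39}, where the factor \eqref{e-127-uni} is computed explicitly --- a viable but longer route, and one your proposal does not complete.)
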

\begin{proof}
Theorems \ref{t-18-M-q}, \ref{t-18}, and \ref{t-20} provide us with a way to realize both $V_1(z)$ and $V_2(z)$ by model L-systems $\Theta_1$ and $\Theta_2$ sharing the same state space and symmetric operator regardless of whether
$$a=\int_\bbR\frac{d\mu(\lambda)}{1+\lambda^2}$$
is less, greater, or equal to 1. Moreover, in either case formulas \eqref{e-53-kappa'}, \eqref{e-53-kappa-prime}, or \eqref{e-85-kappa-prime} describe $\kappa$ as an even function of $Q$, i.e., $\kappa(-Q)=\kappa(Q)$. Consequently, both L-systems $\Theta_1$ and $\Theta_2$ will also share the same main operator. Applying \cite[Theorem 8.2.1]{ABT} gives us $ W_{\Theta_1}(z)=W_{\Theta_2}(z)\cdot (-e^{2i\alpha})$ for some  $\alpha\in[0,\pi)$ and hence $\Theta_1$ and $\Theta_2$  are unimodular transformations of each other.
\end{proof}

\begin{theorem}\label{t-22-Q}
Let  $V(z)$ and $V_0(z)$ belong to the  class $\sM_{\kappa_0}$, ($\kappa_0\ne0$) and $\sM$, respectively, and are related by
 \begin{equation}\label{e-90-new}
    V(z)= \frac{1-\kappa_0}{1+\kappa_0}\,V_0(z),\quad z\in\dC_+.
 \end{equation}
Then  the following statements are true:
\begin{enumerate}
  \item  $V(z)$ and $V_0(z)$ can be realized as the impedance functions  of  minimal L-systems $\Theta$ and $\Theta_0$  of the form \eqref{e-62} with   the same symmetric operator $\dA$;
  \item  $\Theta$ is not a unimodular transformation of $\Theta_0$ for any $\alpha\in[0,\pi)$;
  \item for every number $Q\ne0$ there exist a  positive number $Q_0$ such that the perturbed functions $Q+V(z)$ and $ Q_0+V_0(z)$ can be realized as the impedance functions  of  minimal L-systems $\Theta^Q$ and $\Theta_0^{Q_0}$ so that $\Theta^Q$ is a unimodular transformation of $\Theta_0^{Q_0}$.
\end{enumerate}
 \end{theorem}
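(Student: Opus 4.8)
The plan is to build all the systems from one common model, so that the three assertions reduce to bookkeeping with the formulas already established. For part (1) I start from the realization of $V_0\in\sM$ furnished by Corollary \ref{c-14}: it gives a minimal model L-system $\Theta_0$ on $L^2(\bbR;d\mu)$ (with $\mu$ the measure of $V_0$), symmetric operator $\dA=\dot\cB$, and von Neumann parameter $\kappa=0$, so that $V_{\Theta_0}(z)=V_0(z)$. On the \emph{same} space and with the \emph{same} symmetric operator $\dot\cB$ I then assemble, following Appendix \ref{A2} and Theorem \ref{t-14}, a second minimal L-system $\Theta$ whose main operator has von Neumann parameter $\kappa=\kappa_0$; this value is forced, since $V\in\sM_{\kappa_0}$ has normalization $a=\tfrac{1-\kappa_0}{1+\kappa_0}$ and \eqref{e-45-kappa-1} returns $\kappa=\tfrac{1-a}{1+a}=\kappa_0$. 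The scaling identity \eqref{e-imp-m} then yields $V_\Theta(z)=\tfrac{1-\kappa_0}{1+\kappa_0}V_{\Theta_0}(z)=\tfrac{1-\kappa_0}{1+\kappa_0}V_0(z)=V(z)$, and both systems satisfy Hypothesis \ref{setup} with the common $\dA$, which is exactly (1).

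For part (2) I argue by contradiction. If $\Theta$ were a unimodular transformation $(\Theta_0)_\alpha$ of $\Theta_0$, then by \eqref{e-64-alpha} the impedances would satisfy $V_\Theta(z)=\frac{\cos\alpha+(\sin\alpha)V_0(z)}{\sin\alpha-(\cos\alpha)V_0(z)}$. But $V_\Theta=\tfrac{1-\kappa_0}{1+\kappa_0}V_0$ is a real scalar multiple of $V_0$ with factor $\neq1$ (as $\kappa_0\neq0$); clearing denominators and reading the resulting rational identity in the variable $V_0$, the coefficient of $V_0^2$ equals $-(\cos\alpha)\tfrac{1-\kappa_0}{1+\kappa_0}$, which forces $\cos\alpha=0$, whereupon the identity collapses to $\tfrac{1-\kappa_0}{1+\kappa_0}V_0=V_0$, i.e. $\kappa_0=0$, a contradiction. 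Equivalently, unimodular transformations keep the main operator fixed, whereas the main operators of $\Theta$ and $\Theta_0$ carry the distinct von Neumann parameters $\kappa_0\neq0$ and $0$. This proves (2).

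For part (3) I realize both perturbed functions by universal-model systems of the type constructed in Section \ref{s10} (or via Theorem \ref{t-22-A} and Corollary \ref{c-26}): realize $Q+V(z)=Q+\tfrac{1-\kappa_0}{1+\kappa_0}V_0(z)$ by $\Theta^Q$ and $Q_0+V_0(z)$ by $\Theta_0^{Q_0}$, arranging both to act on the same $L^2(\bbR;d\mu)$ with the same symmetric operator $\dot\cB$ (this is possible since, writing each impedance as $Q+aV_0$ with $V_0$ normalized, the underlying measure is $\mu$ in both cases). The modulus of the von Neumann parameter of each main operator is read off from \eqref{e-120-kappas}: with $a=\tfrac{1-\kappa_0}{1+\kappa_0}<1$ the first line gives a fixed value $|\kappa^Q|\in(0,1)$ for $\Theta^Q$, while the third line gives $|\kappa_0^{Q_0}|=Q_0/\sqrt{Q_0^2+4}$ for $\Theta_0^{Q_0}$. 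Since $Q_0\mapsto Q_0/\sqrt{Q_0^2+4}$ is a strictly increasing bijection of $(0,\infty)$ onto $(0,1)$, there is a unique positive $Q_0=2|\kappa^Q|/\sqrt{1-|\kappa^Q|^2}$ making $|\kappa_0^{Q_0}|=|\kappa^Q|$.

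With the two moduli matched and the systems sharing symmetric operator and state space, I would use the freedom in the deficiency basis (Remark \ref{r-12}) to align the phases of the two von Neumann parameters, making $\Theta^Q$ and $\Theta_0^{Q_0}$ share one and the same main operator $T$; then \cite[Theorem 8.2.1]{ABT} supplies a constant $-e^{2i\alpha}$ with $W_{\Theta^Q}(z)=W_{\Theta_0^{Q_0}}(z)\cdot(-e^{2i\alpha})$, which by Definition \ref{d-7} is precisely the assertion that $\Theta^Q$ is a unimodular transformation of $\Theta_0^{Q_0}$ (cf. the proof of Proposition \ref{p-30}). I expect the main obstacle to be exactly this final alignment step: matching $|\kappa|$ alone only renders the two main operators unitarily equivalent, and to upgrade this to genuine coincidence (so Definition \ref{d-7} applies, rather than mere bi-unitary equivalence) one must track the \emph{phase} of $\kappa$ through the explicit formulas --- using the dynamics $\kappa(-Q)=\kappa(Q)$, $U(-Q)=\bar U(Q)$ recorded after Theorems \ref{t-18-M-q} and \ref{t-20} --- and verify that the basis rotation of Remark \ref{r-12} can be chosen to carry one main operator exactly onto the other.
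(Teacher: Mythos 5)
Your overall route coincides with the paper's: realize both functions on the common model space with the common symmetric operator, rule out a unimodular relation between the unperturbed systems, and then match von Neumann parameters to produce $Q_0$. Parts (1) and (2) are fine. For (2), note that the paper simply evaluates \eqref{e-64-alpha} at $z=i$, using $V_0(i)=i$ to get $V(i)=i$ and hence $\kappa_0=0$; this is cleaner than your ``coefficient of $V_0^2$'' argument, which tacitly assumes $V_0$ is non-constant (the class $\sM$ contains the constant $V_0\equiv i$, as in Example~1). Your backup argument --- that a unimodular transformation preserves the main operator while the two main operators have $|\kappa|$ equal to $0$ and $\kappa_0\ne 0$ respectively --- is valid and covers that case.

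The genuine gap is in part (3), and it is one you flag yourself but do not close. By building $\Theta^Q$ from the universal model of Section \ref{s10}, you inherit the generally \emph{complex} von Neumann parameter \eqref{e-115-kappa}, so matching moduli via \eqref{e-120-kappas} only gives unitary equivalence of the main operators; Definition \ref{d-7} together with \cite[Theorem 8.2.1]{ABT} requires the two systems to share literally the same main operator, and your proposed ``phase alignment via Remark \ref{r-12}'' is left as an unverified promise. The paper avoids the issue entirely: it realizes $Q+V(z)$ through Corollary \ref{c-26} (so the main operator of $\Theta^Q$ has the \emph{real} parameter $\kappa_1(Q)\in(0,1)$ of \eqref{e-53-kappa-prime}) and $Q_0+V_0(z)$ through Theorem \ref{t-18-M-q} (real parameter $\kappa_2(Q_0)$ of \eqref{e-53-kappa'}), both taken with respect to the same fixed deficiency basis of the shared symmetric operator. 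Then setting $\kappa_1(Q)=\kappa_2(Q_0)$ and solving gives exactly your $Q_0=2\kappa_1(Q)/\sqrt{1-\kappa_1^2(Q)}$ (the paper's \eqref{e-92-Q-0}), and since a quasi-self-adjoint extension is uniquely determined by a real parameter relative to a fixed normalized deficiency basis, the two main operators coincide outright. Your argument becomes complete if you replace the universal-model realization by the Table \ref{Table-1} (real-$\kappa$) constructions; as written, the final step is missing.
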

 \begin{proof}
 (1) As we have shown in \cite[Section 4]{BMkT}, both functions $V(z)$ and $V_0(z)$ allow model realization by minimal L-systems $\Theta$ and $\Theta_0$ of the form \eqref{e-62-1-1} whose constructions is described in Appendix \ref{A1} and uses the same model symmetric operator $\dot B$. Note that under this construction both L-systems $\Theta$ and $\Theta_0$ obey Hypothesis \ref{setup}.

(2) Let us assume that $\Theta$ is  a unimodular transformation of $\Theta_0$ for some $\alpha\in[0,\pi)$. Then, $V(z)$ and $V_0(z)$ must be related via \eqref{e-64-alpha}. In particular,
$$
 V(i)=\frac{\cos\alpha+(\sin\alpha) V_0(i)}{\sin\alpha-(\cos\alpha) V_0(i)}.
$$
Moreover, $V_0(z)\in\sM$ and hence $V_0(i)=i$. Plugging this value in the above formula we obtain that $V(i)=i$ as well. On the other hand, \eqref{e-90-new} yields then
$$
i=V(i)= \frac{1-\kappa_0}{1+\kappa_0}\,V_0(i)=\frac{1-\kappa_0}{1+\kappa_0}\,i,
$$
that is only possible if $\kappa_0=0$. Thus we arrive at a contradiction with the fact that $\kappa_0\ne0$.

(3) %Without loss of generality we can assume that $Q>0$ (see Proposition \ref{p-30}).
According to Corollary \ref{c-26} we know that the functions $Q+V(z)$ and $Q_0+V_0(z)$ can be realized by minimal L-systems $\Theta^Q$ and $\Theta_0^{Q_0}$ sharing the state space and symmetric operator for any value of $Q_0$. Moreover, the von Neumann parameter $\kappa_1(Q)$ of the main operator of $\Theta^Q$ is determined by \eqref{e-53-kappa-prime} while the von Neumann parameter $\kappa_2(Q_0)$ of the main operator of $\Theta_0^{Q_0}$ is given by \eqref{e-53-kappa'}. We are going to find the value of $Q_0$ so that $\kappa_1(Q)=\kappa_2(Q_0)$.
%Theorem \ref{t-18} above provides us with a construction of a realizing L-system $\Theta^Q$ whose impedance function is $V_{\Theta^Q}(z)=Q+V(z)$.
We set the value of $\kappa_1(Q)$ in the left hand side of formula \eqref{e-53-kappa'} to obtain
\begin{equation}\label{e-91-kappa}
    \kappa_1(Q)=\frac{Q_0}{\sqrt{Q_0^2+4}},\quad Q>0,\; Q_0>0.
\end{equation}
%As we have shown it in the proof of Theorem \ref{t-18}, the right hand side of \eqref{e-91-kappa} is a continuous increasing on $(0,+\infty)$ function of $Q$ that bijectively maps $(0,+\infty)$ onto $(0,1)$. Thus, there is a %unique solution $Q_0$ to \eqref{e-91-kappa} for every value of $\kappa\in(0,1)$.
Solving \eqref{e-91-kappa} for $Q_0>0$ we get
\begin{equation}\label{e-92-Q-0}
    Q_0=\frac{2\kappa_1(Q)}{\sqrt{1-\kappa^2_1(Q)}}.
\end{equation}
This value of $Q_0$ is used in the construction of $\Theta_0^{Q_0}$ and guarantees that  $\kappa_1(Q)=\kappa_2(Q_0)$. Consequently,  $\Theta^Q$ and  $\Theta_0^{Q_0}$ are two model L-systems with the same state space, symmetric, and main operators.  Applying \cite[Theorem 8.2.1]{ABT} yields that $\Theta^Q$ is a unimodular transformation of $\Theta_0^{Q_0}$.
\end{proof}
A similar result takes place for the class $\sM_{\kappa_0}^{-1}$.
\begin{theorem}\label{t-23-Q}
Let  $V(z)$ and $V_0(z)$ belong to the  class $\sM_{\kappa_0}^{-1}$, ($\kappa_0\ne0$) and $\sM$, respectively, and are related by
 \begin{equation}\label{e-92-new}
    V(z)= \frac{1+\kappa_0}{1-\kappa_0}\,V_0(z),\quad z\in\dC_+.
 \end{equation}
Then  the following statements are true:
\begin{enumerate}
  \item  $V(z)$ and $V_0(z)$ can be realized as the impedance functions  of  minimal L-systems $\Theta$ and $\Theta_0$  of the form \eqref{e-62} with  the  same symmetric operator $\dA$;
  \item  $\Theta_0$ is not a unimodular transformation of $\Theta$ for any $\alpha\in[0,\pi)$;
  \item for every number $Q\ne0$ there exists a  positive number $Q_0$ such that the perturbed functions $Q+V(z)$ and $Q_0+V_0(z)$ can be realized as the impedance functions  of  minimal L-systems $\Theta^Q$ and $\Theta_0^{Q_0}$ so that $\Theta^Q$ is a unimodular transformation of $\Theta_0^{Q_0}$.
\end{enumerate}
 \end{theorem}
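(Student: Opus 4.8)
The plan is to mirror the proof of Theorem~\ref{t-22-Q} step by step, with $\sM_{\kappa_0}$ replaced by $\sM_{\kappa_0}^{-1}$, the identity \eqref{e-90-new} replaced by \eqref{e-92-new}, Hypothesis~\ref{setup} replaced by Hypothesis~\ref{setup-1} for the perturbed factor, Theorem~\ref{t-18} replaced by Theorem~\ref{t-20}, and Corollary~\ref{c-26} replaced by its $\sM_{\kappa_0}^{-1}$-analogue announced at the end of Section~\ref{s10}. Since $V(z)=\frac{1+\kappa_0}{1-\kappa_0}V_0(z)$, the two functions share a representing measure up to the scalar factor $\frac{1+\kappa_0}{1-\kappa_0}$, so their model realizations can be built on one common symmetric operator. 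For part (1) I would realize $V_0\in\sM$ by a minimal model L-system $\Theta_0$ satisfying Hypothesis~\ref{setup} (Corollary~\ref{c-14}) and $V\in\sM_{\kappa_0}^{-1}$ by a minimal model $\Theta$ satisfying Hypothesis~\ref{setup-1} (Theorem~\ref{t-14-new}), both using the model multiplication operator $\dot\cB$ on $L^2(\bbR;d\mu)$ of Appendix~\ref{A1}; thus $\Theta$ and $\Theta_0$ share the symmetric operator $\dA=\dot\cB$.

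For part (2) I argue by contradiction. If $\Theta_0$ and $\Theta$ were unimodular transformations of each other, then by \eqref{e-64-alpha} their impedances would obey $V(z)=\frac{\cos\alpha+(\sin\alpha)V_0(z)}{\sin\alpha-(\cos\alpha)V_0(z)}$ for some $\alpha\in[0,\pi)$. Evaluating at $z=i$ and using $V_0(i)=i$ collapses the right-hand side to $i$, forcing $V(i)=i$; but \eqref{e-92-new} gives $V(i)=i\,\frac{1+\kappa_0}{1-\kappa_0}\neq i$ because $\kappa_0\neq0$, a contradiction. (Since Definition~\ref{d-7} is symmetric in the two systems, the direction of the transformation is immaterial here.)

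For part (3), the $\sM_{\kappa_0}^{-1}$-analogue of Corollary~\ref{c-26} lets me realize $Q+V(z)$, for any $Q\neq0$, by a minimal L-system $\Theta^Q$ that keeps the state space $\calH_+\subset L^2(\bbR;d\mu)\subset\calH_-$ and symmetric operator $\dot\cB$, with main-operator von Neumann parameter $\kappa_1(Q)$ given by \eqref{e-85-kappa-prime}. Independently, $Q_0+V_0(z)\in\sM^{Q_0}$ is realized on the same state space and symmetric operator by $\Theta_0^{Q_0}$ with parameter $\kappa_2(Q_0)=\frac{Q_0}{\sqrt{Q_0^2+4}}$ from \eqref{e-53-kappa'}. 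Both are real and nonnegative, so I match them: solving $\kappa_1(Q)=\frac{Q_0}{\sqrt{Q_0^2+4}}$ gives the positive value $Q_0=\frac{2\kappa_1(Q)}{\sqrt{1-\kappa_1^2(Q)}}$ (positive since $\kappa_1(Q)\ge\kappa_0>0$ for $Q\neq0$). With this $Q_0$ the systems $\Theta^Q$ and $\Theta_0^{Q_0}$ share state space, symmetric operator, and main operator, so by \cite[Theorem 8.2.1]{ABT} their transfer functions differ by a unimodular constant, i.e.\ $\Theta^Q$ is a unimodular transformation of $\Theta_0^{Q_0}$.

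The main obstacle is the bookkeeping in part (3): I must ensure that $\kappa_1(Q)$ from \eqref{e-85-kappa-prime} and $\kappa_2(Q_0)$ from \eqref{e-53-kappa'} are the real, nonnegative von Neumann parameters taken with respect to the \emph{same} normalized deficiency pair of $\dot\cB$, so that equality of these single real numbers produces literally the same main operator $T$ rather than merely unitarily equivalent ones. This is exactly what the $Q$-independent universal-model construction of Section~\ref{s10}, transported to the $\sM_{\kappa_0}^{-1}$ setting, guarantees; once the deficiency bases are aligned, a single scalar equality pins down $T$ and the constant-unimodular-factor theorem closes the argument.
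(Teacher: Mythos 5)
Your proposal is correct and follows essentially the same route as the paper, whose proof of this theorem simply states that it "completely resembles" the proof of Theorem~\ref{t-22-Q} with the $\sM_{\kappa_0}^{-1}$ realization machinery (Theorem~\ref{t-14-new}, Theorem~\ref{t-20}, and the $\sM_{\kappa_0}^{-1}$-analogue of Corollary~\ref{c-26}) substituted in. Your explicit bookkeeping — using \eqref{e-85-kappa-prime} for $\kappa_1(Q)$, the evaluation at $z=i$ for part (2), and the matching $Q_0=2\kappa_1(Q)/\sqrt{1-\kappa_1^2(Q)}$ with aligned deficiency bases — is exactly the intended adaptation.
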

 \begin{proof}
The proof structure here completely resembles the one of Theorem \ref{t-22-Q} except for the fact that it also relies on the realization theorem for the class $\sM_{\kappa_0}^{-1}$ found in \cite[Theorem 5.6]{BMkT-2}.
\end{proof}

\begin{remark}\label{r-25}
It can be shown that under the conditions of Theorem \ref{t-23-Q} there are  values of $a>1$ from \eqref{e-66-L} (corresponding to the integral representation \eqref{hernev-real} of $V(z)$) such that there exists  a special value of $Q>0$. This $Q$ is special in a way that both perturbed functions $Q+V(z)$ and $Q+V_0(z)$ can be realized as the impedance functions  of   L-systems $\Theta^Q$ and $\Theta_0^{Q}$ such that $\Theta^Q$ is a unimodular transformation of $\Theta_0^{Q}$. For example, let $a=4$. The existence of such a universal value $Q$ is confirmed by the Intermediate Value Theorem while solving the equation
$$
\begin{aligned}
f(Q)&=\frac{a\left(b+\sqrt{b^2+4Q^2}\right)^2-\left(b-2Q^2+\sqrt{b^2+4Q^2}\right)^2-4Q^2a(a-1)}{\left(b-2Q^2+\sqrt{b^2+4Q^2}\right)^2+a\left(b+\sqrt{b^2+4Q^2}\right)^2+4Q^2a(a+1)}\\
&\quad-\frac{Q}{\sqrt{Q^2+4}}=0,
\end{aligned}
$$
where $a=4$ is related to $\kappa_0=3/5$ via \eqref{e-45-kappa-2} and $b$ is given by \eqref{e-78-b}. In order to show that $f(Q)=0$ has solutions one confirms that $f(1)>0$ while $f(3)<0$. Thus there is a value of $Q\in(1,3)$ such that $f(Q)=0$.
As one can immediately see, the first term in the difference above gives the value of the von Neumann parameter $\kappa$ described by  formula \eqref{e-85-kappa-prime} and the second term provides the same via formula \eqref{e-53-kappa'}  for positive $Q$. The solution of the equation guarantees (see Theorems \ref{t-18-M-q} and \ref{t-18}) the existence of  L-systems $\Theta^Q$ and $\Theta_0^{Q}$ with the same main operator. Consequently, $\Theta^Q$ is a unimodular transformation of $\Theta_0^{Q}$.
The question whether such a value of $Q$ exists for all $a>1$ remains, however, open.
\end{remark}

\section{Inverse problems for L-systems}\label{s9}

In this section we are going to show how to re-construct a so-called ``perturbed" L-system based on a given one. Suppose we are given an L-system $\Theta_0$ whose impedance function $V_{\Theta_0}(z)$ belongs to one of the Donoghue classes $\sM$, $\sM_{\kappa_0}$, or $\sM_{\kappa_0}^{-1}$. Let also $Q\ne0$ be any real number. Our goal is to build another L-system $\Theta^Q$ based on the elements of the original L-system $\Theta_0$ so that its impedance function $V_{\Theta^Q}(z)$ is such that $V_{\Theta^Q}(z)=Q+V_{\Theta_0}(z)$. Our techniques and procedures will be based on the ones developed in Sections \ref{s5} and \ref{s6}. We begin with the basic Donoghue class $\sM$.
\begin{theorem}\label{t-22-M-q}
Let $\Theta_0$ be  an L-system   of the form \eqref{e-62} satisfying the conditions of Hypothesis \ref{setup} and such that its impedance function $V_{\Theta_0}(z)$  belongs to the  class $\sM$.  Then for any real number $Q\ne0$ there exists another L-system $\Theta^Q$  with the same symmetric operator $\dA$ as in $\Theta_0$ and such that
$$
V_{\Theta^Q}(z)=Q+V_{\Theta_0}(z).
$$
Moreover, the von Neumann parameter  $\kappa=\kappa(Q)$ of   its main operator $T^Q$  is determined by the formula  \eqref{e-53-kappa'} while the quasi-kernel $\hat A^Q$ of $\RE\bA^Q$ of the L-system $\Theta^Q$ is defined by \eqref{DOMHAT} with \eqref{e-54-U-M-q}.
\end{theorem}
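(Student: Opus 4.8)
The plan is to recognize the desired L-system as the $a=1$ specialization of constructions already in hand, and then to use bi-unitary equivalence to pin its symmetric operator to the one carried by $\Theta_0$. Since $V_{\Theta_0}(z)$ belongs to $\sM$, it has the integral representation \eqref{hernev} with $\int_\bbR \frac{d\mu(\lambda)}{1+\lambda^2}=1$, equivalently $V_{\Theta_0}(i)=i$; in the notation of \eqref{e-66-L} this is exactly the case $a=1$. The perturbed function $V(z):=Q+V_{\Theta_0}(z)$ has the same measure $\mu$ and constant term $Q$, so it satisfies \eqref{e-52-M-q} and belongs to the class $\sM^Q$.

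First I would apply Theorem \ref{t-18-M-q} to $V(z)$. This yields a minimal L-system realizing $V(z)=Q+V_{\Theta_0}(z)$ whose main operator has the real von Neumann parameter $\kappa$ given by \eqref{e-53-kappa'} and whose quasi-kernel $\hat A$ of $\RE\bA$ is described by \eqref{DOMHAT} with $U$ as in \eqref{e-54-U-M-q}, both measured with respect to the modified deficiency basis \eqref{e-67-def}. The point I would emphasize, which is already contained in the proof of Theorem \ref{t-18-M-q} and made fully explicit by the universal model $\Theta^Q$ of \eqref{e-136-un-model} in Section \ref{s10} specialized to $a=1$, is that this realization is assembled on the state space $L^2(\bbR;d\mu)$ with the model symmetric multiplication operator $\dot\cB$ determined solely by the measure $\mu$ of the unperturbed function $V_{\Theta_0}(z)$.

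It then remains to identify this model symmetric operator with the symmetric operator $\dA$ of the given $\Theta_0$. Because $\Theta_0$ is minimal, satisfies Hypothesis \ref{setup}, and has impedance in $\sM$, Corollary \ref{c-14} furnishes a model realization of the same function $V_{\Theta_0}(z)$ on $L^2(\bbR;d\mu)$ carrying the symmetric operator $\dot\cB$; since both L-systems are minimal and share the impedance, hence the transfer function, the Bi-unitary Equivalence Theorem \cite[Theorem 6.6.10]{ABT} supplies a triplet of intertwining isometries under which $\dA$ and $\dot\cB$ become unitarily equivalent. Transporting the constructed L-system by this unitary produces the required $\Theta^Q$, living in the same rigged space $\calH_+\subset\calH\subset\calH_-$ as $\Theta_0$ and with symmetric operator precisely $\dA$; unitary equivalence preserves $V_{\Theta^Q}(z)=Q+V_{\Theta_0}(z)$, the modulus of the von Neumann parameter, and the quasi-kernel data. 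The main obstacle I anticipate is the bookkeeping of the deficiency basis: the explicit formulas \eqref{e-53-kappa'} and \eqref{e-54-U-M-q} are valid only relative to the modified pair $g_\pm^\alpha$ of \eqref{e-67-def}, so one must check that the unitary transport and the passage from the original normalized pair $g_\pm$ to $g_\pm^\alpha$ are performed consistently (cf. Remark \ref{r-16}), lest $\kappa$ pick up a spurious phase and cease to be the real nonnegative value asserted by \eqref{e-53-kappa'}.
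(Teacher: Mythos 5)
Your proposal is correct in substance but takes a genuinely different route from the paper. The paper's proof never leaves the original state space: it fixes the deficiency vectors $g_\pm$ of the given symmetric operator $\dA$, applies the linear--fractional transformation \eqref{e-54-frac} to $Q+V_{\Theta_0}(z)$ to land in $\sM_\kappa$ with $\kappa$ from \eqref{e-53-kappa'}, and then invokes the explicit $(*)$-extension machinery of Appendix \ref{A2} directly on $\dA$ --- first with $U=1$ to build $\Theta_\alpha$, then with $U$ replaced by \eqref{e-54-U-M-q} to obtain $\Theta^Q$. You instead realize $Q+V_{\Theta_0}(z)$ on the model space $L^2(\bbR;d\mu)$ via Theorem \ref{t-18-M-q} (equivalently the universal model with $a=1$), realize $V_{\Theta_0}(z)$ itself on the same model space via Corollary \ref{c-14}, and use the Bi-unitary Equivalence Theorem to transport the perturbed model back so that its symmetric operator becomes $\dA$. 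Both arguments are sound; yours buys economy (no repetition of the Appendix \ref{A2} construction, everything is reduced to theorems already proved), while the paper's buys explicitness --- the resulting $\Theta^Q$ is written down concretely in the original rigged space, and the values of $\kappa$ and $U$ are tied to the originally fixed deficiency basis without any transport. The one point you should nail down rather than merely flag is the deficiency-basis bookkeeping: the intertwining unitary carries the model basis (and its modification \eqref{e-67-def}) to \emph{some} normalized basis of $\Ker(\dA^*\mp iI)$, and you need to argue --- e.g.\ by Remark \ref{r-12} together with the fact that only the pair $(\kappa, U)$ relative to a common basis is an invariant --- that after a unimodular adjustment of $g_-$ the transported parameters are exactly \eqref{e-53-kappa'} and \eqref{e-54-U-M-q}; otherwise $\kappa$ could appear with a spurious phase, as you yourself note.
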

\begin{proof}
Let $g_+$ and $g_-$ be the deficiency vectors of the symmetric operator $\dA$ of $\Theta_0$.  Then  according to conditions of Hypothesis \ref{setup} we have $g_+- g_-\in \dom (\hat A_0)$, where $\hat A_0$ is
the quasi-kernel of $\RE\bA_0$ of the L-system $\Theta_0$. We note that since $V_{\Theta_0}(z)\in\sM$, then the von Neumann parameter  $\kappa_0$ of  the main operator $T_0$ equals zero, i.e., $\kappa_0=0$ (see \cite{BMkT}).

Consider a function $V(z)=Q+V_{\Theta_0}(z)$ for some non-zero real number $Q$. If one applies transformation \eqref{e-54-frac} to $V(z)$ he would get
$$
V_{\alpha}(z)=\frac{\cos\alpha+(\sin\alpha)V(z)}{\sin\alpha-(\cos\alpha)V(z)}=\frac{\cos\alpha+(\sin\alpha)(Q+V_0(z))}{\sin\alpha-(\cos\alpha)(Q+V_0(z))}.
$$
As we did in the proof of Theorem \ref{t-18-M-q}, we show that
$
V_{\Theta_\alpha}(i)=Q_\alpha+ia_\alpha,
$
where $Q_\alpha$  and  $\mu_\alpha$  are the elements of integral representation \eqref{hernev-real} of the function $V_{\alpha}(z)$ and $a_\alpha=\int_{\dR}\frac{d\mu_\alpha(\lambda)}{1+\lambda^2}$.
We have also shown in the proof of Theorem \ref{t-18-M-q} that the function $V_{\alpha}(z)$ above would belong to the class $\sM_\kappa$, where $\kappa$ is determined from $Q$ via \eqref{e-53-kappa'}, if the value
of the angle $\alpha$ is determined by \eqref{e-61-cos-sin} for $Q<0$ and by \eqref{e-63-cos-sin} for $Q>0$. Using the procedure described in Appendix \ref{A2} we can construct an L-system $\Theta_\alpha$
obeying the conditions of Hypothesis \ref{setup} and such that $V_{\Theta_\alpha}(z)=V_{\alpha}(z)$. This construction is unique and relies on the fixed choice of  the deficiency vectors $g_+$ and $g_-$ of the
symmetric operator $\dA$ of $\Theta_0$ and the value of $\kappa$ given by \eqref{e-53-kappa'}. In order to obtain the desired L-system $\Theta^Q$ we follow the approach of the proof of Theorem \ref{t-18-M-q} and
modify the construction of $\Theta_\alpha$ so that it will include the value of $U$ given by \eqref{e-54-U-M-q} instead of $U=1$ (see Appendix \ref{A2}). Then, as it was shown in the proof of Theorem \ref{t-18-M-q},
we have an L-system $\Theta^Q$ such that $V_{\Theta^Q}(z)=Q+V_{\Theta_0}(z).$
\end{proof}

A similar to Theorem \ref{t-22-M-q} result takes place for the class $\sM_{\kappa_0}$.

\begin{theorem}\label{t-23}
Let $\Theta_{\kappa_0}$ be  an L-system   of the form \eqref{e-62} %satisfying the conditions of Hypothesis \ref{setup} and
such that its impedance function $V_{\Theta_0}(z)$  belongs to the  class $\sM_{\kappa_0}$.  Then for any real number $Q\ne0$ there exists another L-system $\Theta^Q_\kappa$  with the same symmetric operator $\dA$
as in $\Theta_{\kappa_0}$ and such that
$$
V_{\Theta^Q_{\kappa}}(z)=Q+V_{\Theta_{\kappa_0}}(z).
$$
Moreover, the von Neumann parameter  $\kappa=\kappa(Q)$ of   its main operator $T^Q$  is determined by the formula  \eqref{e-53-kappa-prime} while the quasi-kernel $\hat A^Q$ of $\RE\bA^Q$ of the L-system $\Theta^Q_\kappa$ is defined by \eqref{DOMHAT} with \eqref{e-75-U}.
\end{theorem}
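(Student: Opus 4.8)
The plan is to run the proof of Theorem~\ref{t-22-M-q} essentially verbatim, with Theorem~\ref{t-18} (the realization theorem for $\sM^Q_{\kappa_0}$) taking over the role that Theorem~\ref{t-18-M-q} (realization of $\sM^Q$) played there. First I would fix the deficiency vectors $g_\pm$ of the symmetric operator $\dA$ of the given $\Theta_{\kappa_0}$ and record that, since $V_{\Theta_{\kappa_0}}(z)\in\sM_{\kappa_0}$, its representing measure carries the normalization $a=\frac{1-\kappa_0}{1+\kappa_0}<1$ by \eqref{e-38-kap}. Setting $V(z):=Q+V_{\Theta_{\kappa_0}}(z)\in\sM^Q_{\kappa_0}$, a direct substitution gives $V(i)=Q+ia$, which feeds the computation of Theorem~\ref{t-18}.

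Next I would pass to the linear-fractional transform $V_\alpha(z)$ of $V(z)$ given by \eqref{e-54-frac} and quote the calculation of Theorem~\ref{t-18}: the condition $Q_\alpha=0$ fixes $\alpha$ through \eqref{e-74-q-alpha}, and the attendant normalization $a_\alpha$, given by \eqref{e-80-a-alpha}, is $<1$ by Lemma~\ref{l-20}, so that $V_\alpha(z)\in\sM_\kappa$ with $\kappa$ exactly the quantity \eqref{e-53-kappa-prime}. The realizing system $\Theta_\alpha$ would then be assembled by the Appendix~\ref{A2} recipe, fed with the \emph{fixed} basis $g_\pm$ (rotated as in Remark~\ref{r-15} so that the resulting $\kappa$ is real and nonnegative) and the value \eqref{e-53-kappa-prime}; this $\Theta_\alpha$ obeys Hypothesis~\ref{setup}, has $V_{\Theta_\alpha}(z)=V_\alpha(z)$, and retains the original symmetric operator $\dA$.

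To return to $V(z)$ I would undo the transformation: the relation between $V$ and $V_\alpha$ is a constant $J$-unitary factor on transfer functions, so \cite[Theorem~8.2.3]{ABT} yields an L-system $\Theta^Q_\kappa$ with the same main operator (hence the same $\dA$) and $V_{\Theta^Q_\kappa}(z)=V(z)=Q+V_{\Theta_{\kappa_0}}(z)$. Solving the linear system \eqref{e-216-ls} at $z=-i$, now with $V(-i)=Q-ia$ in place of $Q-i$ — i.e. the system \eqref{e-93-sys} of Theorem~\ref{t-18} — pins the unimodular parameter of the quasi-kernel $\hat A^Q$ to the value \eqref{e-75-U}, completing the claim. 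This conclusion is in fact already recorded, in the equivalent language of Section~\ref{s10}, as Corollary~\ref{c-26}, which may simply be invoked.

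The main obstacle is precisely what separates this statement from Theorem~\ref{t-18}: there $\Theta_\alpha$ was built on a \emph{new} model operator manufactured from the measure $\mu_\alpha$, whereas here the symmetric operator must remain the \emph{given} $\dA$, even though the $\alpha$-transformation genuinely changes the underlying measure. The way around this is the invariance established in Theorem~\ref{t-22-A}: both the constant $J$-unitary factor and the scaling \eqref{e-imp-m} linking $\sM$ to $\sM_\kappa$ leave the symmetric operator untouched, so the change of measure is absorbed entirely into the choice of main operator (the parameter $\kappa$) and of quasi-kernel (the parameter $U$), with $\dA$ held fixed throughout. Verifying that the Appendix~\ref{A2} construction with these prescribed $\kappa$ and $U$ indeed reproduces $V_\alpha$, and hence $V$, on the original $\dA$ is the single step demanding care.
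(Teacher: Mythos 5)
Your proposal is correct and follows essentially the same route as the paper: the paper's proof likewise fixes the deficiency vectors $g_\pm$ of $\dA$, reruns the argument of Theorem~\ref{t-22-M-q} with the computations of Theorem~\ref{t-18} supplying $\kappa$ from \eqref{e-53-kappa-prime} and $U$ from \eqref{e-75-U}, and invokes the Appendix~\ref{A2} construction together with Theorem~\ref{t-22-A} and Corollary~\ref{c-26} to keep the symmetric operator fixed. Your write-up merely makes explicit the intermediate steps (the $\alpha$-transform, the constant $J$-unitary factor, and the system \eqref{e-93-sys} determining $U$) that the paper compresses into references.
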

\begin{proof}
The idea of the proof resembles the one of Theorem \ref{t-22-M-q}. We start with a fixed pair of deficiency vectors $g_+$ and $g_-$ of the symmetric operator $\dA$ of $\Theta_{\kappa_0}$ and then work our way down following the steps above to construct an L-system $\Theta^Q_\kappa$. Again we use the construction procedure described in Appendix \ref{A2} with $g_+$ and $g_-$ and values of $\kappa$ and $U$ defined by \eqref{e-53-kappa-prime} and \eqref{e-75-U}, respectively. As a result we obtain an L-system  $\Theta^Q_\kappa$ such that $V_{\Theta^Q_{\kappa}}(z)=Q+V_{\Theta_{\kappa_0}}(z)$ as it was explained  in Theorem \ref{t-22-A} and Corollary \ref{c-26}.
\end{proof}
We  state an analogues result for the class $\sM_{\kappa_0}^{-1}$. Its proof is done with exactly same method as the one in Theorems \ref{t-22-M-q} and \ref{t-23}.
\begin{theorem}\label{t-24-M}
Let $\Theta_{\kappa_0}$ be  an L-system   of the form \eqref{e-62} %satisfying the conditions of Hypothesis \ref{setup} and
such that its impedance function $V_{\Theta_0}(z)$  belongs to the  class $\sM_{\kappa_0}^{-1}$.  Then for any real number $Q\ne0$ there exists another L-system $\Theta^Q_\kappa$  with the same symmetric operator $\dA$
as in $\Theta_{\kappa_0}$ and such that
$$
V_{\Theta^Q_{\kappa}}(z)=Q+V_{\Theta_{\kappa_0}}(z).
$$
Moreover, the von Neumann parameter  $\kappa=\kappa(Q)$ of   its main operator $T^Q$  is determined by the formula  \eqref{e-85-kappa-prime} while the quasi-kernel $\hat A^Q$ of $\RE\bA^Q$ of the L-system $\Theta^Q$ is defined by \eqref{DOMHAT} with \eqref{e-87-U}.
\end{theorem}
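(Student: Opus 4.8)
The plan is to mirror the arguments of Theorems \ref{t-22-M-q} and \ref{t-23}, replacing Hypothesis \ref{setup} by Hypothesis \ref{setup-1} throughout and invoking the realization machinery of Theorem \ref{t-20} in place of Theorem \ref{t-18}. First I would fix the deficiency basis $g_\pm$ of the symmetric operator $\dA$ of $\Theta_{\kappa_0}$. Since $V_{\Theta_{\kappa_0}}(z)\in\sM_{\kappa_0}^{-1}$, the normalization parameter in \eqref{e-66-L} is $a=\frac{1+\kappa_0}{1-\kappa_0}>1$ by \eqref{e-45-kappa-2}, and $\Theta_{\kappa_0}$ satisfies Hypothesis \ref{setup-1}. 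I then form the perturbed function $V(z)=Q+V_{\Theta_{\kappa_0}}(z)$, which lies in $\sM_{\kappa_0}^{-1,Q}$ with the same $a>1$, so that $V(-i)=Q-ai$.

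Next I would apply the linear-fractional transformation \eqref{e-54-frac} to $V(z)$, producing $V_\alpha(z)$; by Lemma \ref{l-12} this is again a Herglotz-Nevanlinna function with an infinite Borel measure. Following the case analysis in the proof of Theorem \ref{t-20} (the regime $a>1$), I would select the angle $\alpha$ so that the constant term $Q_\alpha$ vanishes, which via \eqref{e-98-a-alpha} and the relation $\kappa=(a_\alpha-1)/(1+a_\alpha)$ forces $V_\alpha(z)\in\sM_\kappa^{-1}$ with $\kappa$ given precisely by \eqref{e-85-kappa-prime}. The value of $U$ is then obtained exactly as at the end of the proof of Theorem \ref{t-18}: adapting the linear system \eqref{e-216-ls}/\eqref{e-93-sys} with $V(-i)=Q-ai$ and eliminating $b(-i)$ yields \eqref{e-87-U}.

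The substantive step is to realize $V_\alpha(z)\in\sM_\kappa^{-1}$ by an L-system $\Theta_\alpha$ that shares the given symmetric operator $\dA$ and state space of $\Theta_{\kappa_0}$, using the construction of Appendix \ref{A2} with the fixed basis $g_\pm$ and $\kappa$ from \eqref{e-85-kappa-prime}; this is the $\sM_{\kappa_0}^{-1}$-analogue of Theorem \ref{t-22-A} and Corollary \ref{c-26}. Once $\Theta_\alpha$ is in hand, the functions $V$ and $V_\alpha$ are related by \eqref{e-54-frac}, so the corresponding transfer functions differ by the constant $J$-unitary factor \eqref{e-97-Junitary}. Applying the Theorem on constant $J$-unitary factor (\cite[Theorem 8.2.3]{ABT}, \cite{ArTs03}) then produces the desired $\Theta^Q_\kappa$ with the same main operator as $\Theta_\alpha$—hence the same $\dA$ and state space as $\Theta_{\kappa_0}$—and with $V_{\Theta^Q_\kappa}(z)=V(z)=Q+V_{\Theta_{\kappa_0}}(z)$, the quasi-kernel of $\RE\bA^Q$ being described by \eqref{DOMHAT} with $U$ as in \eqref{e-87-U}.

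I expect the main obstacle to be precisely this symmetric-operator-matching step. In Theorem \ref{t-22-A} it rested on the special value $Q=\sqrt{1-a^2}$, which makes $b=Q^2+a^2-1$ vanish and collapses \eqref{e-54-frac} (at $\alpha=\pi/4$) into a pure rotation of $V_0\in\sM$, thereby preserving $\dot\cB$. That choice is unavailable here, since $a>1$ makes $\sqrt{1-a^2}$ imaginary, so the reduction must be reorganized—for instance by passing through $\tilde V_\alpha=-1/V_\alpha$ and the reciprocal identity \eqref{e-56-1} to convert the problem into the already-settled $\sM_\kappa$ situation of Theorem \ref{t-23}, or by a direct trigonometric identity analogous to \eqref{e-116-cot} adapted to the $a>1$ regime. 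Checking that this rerouting still yields an L-system built on the \emph{same} $\dA$ and state space, rather than merely on a bi-unitarily equivalent copy, is where the care is required; everything else is a routine transcription of the computations already carried out for Theorems \ref{t-18} and \ref{t-20}.
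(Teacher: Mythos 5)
Your proposal follows essentially the same route as the paper, whose own proof of this theorem is a one-line remark that the method of Theorems \ref{t-22-M-q} and \ref{t-23} applies verbatim. The obstacle you flag — that the special value $Q=\sqrt{1-a^2}$ underlying Theorem \ref{t-22-A} is unavailable for $a>1$ — is genuine, but the paper itself leaves the $\sM_{\kappa_0}^{-1}$ analogue of that step unproved (it merely asserts that ``similar results can be obtained''), so your suggested rerouting through the reciprocal identity \eqref{e-56-1} is, if anything, more careful than the source.
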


The ``perturbed" L-system $\Theta^Q$ whose construction is based on a given L-system $\Theta$ (subject to either of Hypotheses \ref{setup} or \ref{setup-1}) and described in details in the proofs of Theorems \ref{t-22-M-q}--\ref{t-24-M} above will be called the \textbf{perturbation} of an L-system  $\Theta$. As we mentioned above the construction of the perturbation of a given L-system  relies on the fixed choice of the deficiency vectors of the symmetric operator of $\Theta$ and a $Q$-dependent pair of von Neumann's parameters $\kappa$ and $U$. Finally, the impedance functions of the perturbed and original L-systems are related by $V_{\Theta^Q}(z)=Q+V_{\Theta}(z)$.

We continue with this section studying unimodular transformations of perturbed L-systems.
\begin{theorem}\label{t-29}
Let $\Theta_{\kappa_0}$ and $\Theta$  be   L-systems   of the form \eqref{e-62} with  the same symmetric operator $\dA$, satisfying the conditions of Hypothesis \ref{setup}, and
such that their impedance function $V_{\Theta_{\kappa_0}}(z)$ and $V_{\Theta}(z)$ belong to the  class $\sM_{\kappa_0}$ and $\sM$, respectively.
Then for any perturbation $\Theta^Q_{\kappa_0}$, ($Q>0$) of $\Theta_{\kappa_0}$ there exists a  perturbation $\Theta^{Q_0}$, ($Q_0>0$) of $\Theta$ such that  $\Theta^Q_{\kappa_0}$ is a unimodular transformation of $\Theta^{Q_0}$.
\end{theorem}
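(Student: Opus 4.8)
The plan is to reduce this statement to part (3) of Theorem \ref{t-22-Q}, recognizing that the present hypotheses reproduce its setup. Since $\Theta_{\kappa_0}$ and $\Theta$ share the symmetric operator $\dA$ and both satisfy Hypothesis \ref{setup}, with $V_{\Theta_{\kappa_0}}(z)\in\sM_{\kappa_0}$ and $V_\Theta(z)\in\sM$, formula \eqref{e-imp-m} gives $V_{\Theta_{\kappa_0}}(z)=\frac{1-\kappa_0}{1+\kappa_0}V_\Theta(z)$. First I would invoke Theorem \ref{t-23} (via Corollary \ref{c-26}) to build the perturbation $\Theta^Q_{\kappa_0}$ so that it keeps the symmetric operator $\dA$ and the state space of $\Theta_{\kappa_0}$, its main operator $T^Q$ having von Neumann parameter $\kappa_1(Q)$ given by \eqref{e-53-kappa-prime} (with $a=\frac{1-\kappa_0}{1+\kappa_0}$). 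Analogously I would use Theorem \ref{t-22-M-q} to build the perturbation $\Theta^{Q_0}$ of $\Theta$, again on the same $\dA$ and state space, with main operator whose von Neumann parameter is $\kappa_2(Q_0)=Q_0/\sqrt{Q_0^2+4}$ from \eqref{e-53-kappa'} (here $a=1$).

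Next I would match the two main operators. Because all the L-systems involved are built on the common symmetric operator $\dA$ with the same fixed deficiency basis $g_\pm$, each main operator is determined by its von Neumann parameter through the inclusion $g_+-\kappa g_-\in\dom(T)$ of \eqref{parpar}. Thus it suffices to choose $Q_0>0$ with $\kappa_2(Q_0)=\kappa_1(Q)$. Solving $Q_0/\sqrt{Q_0^2+4}=\kappa_1(Q)$ yields
\[
Q_0=\frac{2\kappa_1(Q)}{\sqrt{1-\kappa_1^2(Q)}},
\]
exactly as in \eqref{e-92-Q-0}; this is well defined and positive since $\kappa_1(Q)\in(0,1)$ for $Q>0$ (the parameter $a=\frac{1-\kappa_0}{1+\kappa_0}$ lies in $(0,1)$ because $\kappa_0\ne0$). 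With this $Q_0$ the main operators of $\Theta^Q_{\kappa_0}$ and $\Theta^{Q_0}$ coincide.

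Finally, since $\Theta^Q_{\kappa_0}$ and $\Theta^{Q_0}$ now share the same symmetric operator, state space, and main operator, \cite[Theorem 8.2.1]{ABT} produces $\alpha\in[0,\pi)$ with $W_{\Theta^Q_{\kappa_0}}(z)=(-e^{2i\alpha})W_{\Theta^{Q_0}}(z)$, which by Definition \ref{d-7} means $\Theta^Q_{\kappa_0}$ is a unimodular transformation of $\Theta^{Q_0}$. The only genuinely delicate point is the very first step: one must guarantee that both perturbations can be realized on the unperturbed symmetric operator and state space rather than on the auxiliary measures $\mu_\alpha$ appearing in Theorems \ref{t-18} and \ref{t-18-M-q}. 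This is precisely what the universal-model construction of Section \ref{s10} and Corollary \ref{c-26} secure, so I would cite them explicitly; after that the argument is a routine matching of a single scalar parameter.
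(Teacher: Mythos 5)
Your proposal is correct and follows essentially the same route as the paper: establish the relation \eqref{e-imp-m}, build both perturbations on the common symmetric operator via the constructions of Sections \ref{s9}/\ref{s10}, match the von Neumann parameters of the main operators by solving for $Q_0$ exactly as in \eqref{e-92-Q-0}, and conclude with \cite[Theorem 8.2.1]{ABT}. Your explicit remark that equality of the von Neumann parameters forces equality of the main operators (because the deficiency basis is fixed) is a point the paper leaves implicit, but otherwise the arguments coincide.
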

\begin{proof}
First we note that under the condition of the theorem formula \eqref{e-imp-m} takes place, that is  $V_{\Theta_{\kappa_0}}(z)$ and $V_{\Theta}(z)$ satisfy
 \begin{equation}\label{e-94-new}
    V_{\Theta_{\kappa_0}}(z)= \frac{1-\kappa_0}{1+\kappa_0}\,V_{\Theta}(z),\quad z\in\dC_+.
 \end{equation}
Then (as it was explained in the proof of Theorem \ref{t-22-Q}) $\Theta_{\kappa_0}$ can not be a unimodular transformation of $\Theta$ while  \eqref{e-94-new} takes place.
Starting with L-system $\Theta_{\kappa_0}$ and an arbitrary $Q>0$, we use  Appendix \ref{A2} together with  Theorem \ref{t-18} that provide us with a construction of a perturbed  L-system $\Theta^Q_{\kappa_0}$ (with  the the same symmetric operator $\dA$) whose impedance function is $V_{\Theta_{\kappa_0}^Q}(z)=Q+V_{\Theta_{\kappa_0}}(z)$. Moreover, the von Neumann parameter $\kappa=\kappa(Q)$ of the main operator of $\Theta^Q$ is determined by \eqref{e-53-kappa-prime}. Following the logic of step (3) of the proof of Theorem \ref{t-22-Q} we calculate another number $Q_0>0$ using formula \eqref{e-92-Q-0}. We use this number $Q_0$ to perturb the L-system $\Theta$ to obtain $\Theta^{Q_0}$ so that $V_{\Theta^{Q_0}}(z)=Q_0+V_{\Theta}(z)$. By construction both L-systems $\Theta^Q_{\kappa_0}$ and $\Theta^{Q_0}$ share the same von Neumann's parameters of their main operators and hence $\Theta^{Q_0}$ is a unimodular transformation of $\Theta^Q_{\kappa_0}$.
\end{proof}
A similar result with a similar proof is stated for the class $\sM_{\kappa_0}^{-1}$.
\begin{theorem}\label{t-30}
Let $\Theta_{\kappa_0}$ and $\Theta$  be   L-systems   of the form \eqref{e-62} with  the  same symmetric operator $\dA$, satisfying the conditions of Hypothesis \ref{setup-1}, and
such that their impedance function $V_{\Theta_{\kappa_0}}(z)$ and $V_{\Theta}(z)$ belong to the  class $\sM_{\kappa_0}^{-1}$ and $\sM$, respectively.
Then for any perturbation $\Theta^Q_{\kappa_0}$, ($Q>0$) of $\Theta_{\kappa_0}$ there exists a  perturbation $\Theta^{Q_0}$, ($Q_0>0$) of $\Theta$ such that  $\Theta^Q_{\kappa_0}$ is a unimodular transformation of $\Theta^{Q_0}$.
\end{theorem}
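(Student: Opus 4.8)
The plan is to follow the proof of Theorem~\ref{t-29} in structure, systematically exchanging Hypothesis~\ref{setup} for Hypothesis~\ref{setup-1}, the class $\sM_{\kappa_0}$ for $\sM_{\kappa_0}^{-1}$, and Theorem~\ref{t-18} for Theorem~\ref{t-20}. First I would observe that, because $\Theta_{\kappa_0}$ and $\Theta$ share the symmetric operator $\dA$ and both obey Hypothesis~\ref{setup-1}, their impedance functions satisfy the $\sM_{\kappa_0}^{-1}$-analogue of \eqref{e-imp-m}, namely relation \eqref{e-92-new},
\[
V_{\Theta_{\kappa_0}}(z)=\frac{1+\kappa_0}{1-\kappa_0}\,V_{\Theta}(z),\qquad z\in\dC_+,
\]
which is the content of \cite[Theorem 5.6]{BMkT-2}. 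As in part (2) of Theorem~\ref{t-23-Q}, evaluating at $z=i$ and using $V_{\Theta}(i)=i$ shows that the two \emph{unperturbed} systems cannot be unimodular transformations of one another while $\kappa_0\neq0$; this is the phenomenon the theorem repairs, though it is not needed for the conclusion itself.

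Next, fix an arbitrary $Q>0$. Using the $\sM_{\kappa_0}^{-1}$ counterpart of Theorem~\ref{t-22-A} and Corollary~\ref{c-26} (whose existence is recorded at the end of Section~\ref{s10}) together with Theorem~\ref{t-20}, I would construct the perturbation $\Theta^Q_{\kappa_0}$ of $\Theta_{\kappa_0}$. It retains the symmetric operator $\dA$ and state space of $\Theta_{\kappa_0}$, realizes $Q+V_{\Theta_{\kappa_0}}(z)$, and its main operator carries the von Neumann parameter $\kappa_1(Q)$ given by \eqref{e-85-kappa-prime} with $a=\frac{1+\kappa_0}{1-\kappa_0}$, computed with respect to the fixed deficiency basis of $\dA$.

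Then I would pin down $Q_0$. Perturbing the Donoghue function $V_\Theta\in\sM$ by a constant $Q_0$ via Theorem~\ref{t-22-M-q} produces an L-system $\Theta^{Q_0}$ over the same $\dA$ and state space, whose main operator has the real von Neumann parameter $\kappa_2(Q_0)=\frac{Q_0}{\sqrt{Q_0^2+4}}$ of \eqref{e-53-kappa'}. Imposing $\kappa_1(Q)=\kappa_2(Q_0)$ and solving for $Q_0$, exactly as in step (3) of the proof of Theorem~\ref{t-22-Q}, gives
\[
Q_0=\frac{2\kappa_1(Q)}{\sqrt{1-\kappa_1^2(Q)}},
\]
which is formula \eqref{e-92-Q-0}; since $\kappa_1(Q)\in(0,1)$ this is a well-defined positive number. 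With this choice $\Theta^Q_{\kappa_0}$ and $\Theta^{Q_0}$ share state space, the symmetric operator $\dA$, and, since the von Neumann parameters of their main operators agree relative to the common deficiency basis, the same main operator. The constant $J$-unitary factor theorem \cite[Theorem 8.2.1]{ABT} then yields $W_{\Theta^Q_{\kappa_0}}(z)=W_{\Theta^{Q_0}}(z)\cdot(-e^{2i\alpha})$ for some $\alpha\in[0,\pi)$, so that $\Theta^Q_{\kappa_0}$ is a unimodular transformation of $\Theta^{Q_0}$ in the sense of Definition~\ref{d-7}.

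The step I expect to be delicate is precisely the passage from matching parameters to identical main operators. Formulas \eqref{e-85-kappa-prime} and \eqref{e-53-kappa'} furnish the \emph{moduli} of the von Neumann parameters, and each perturbation is naturally described against a $Q$-dependent modified deficiency basis $g_\pm^\alpha$; one must therefore verify that both $\Theta^Q_{\kappa_0}$ and $\Theta^{Q_0}$ can be re-expressed over one and the same fixed deficiency basis of $\dA$ with \emph{real} von Neumann parameters, so that the scalar equality $\kappa_1(Q)=\kappa_2(Q_0)$ forces equality of the main operators $T$ themselves rather than mere unitary equivalence. Once this normalization of bases is made explicit---inherited from the constructions of Theorem~\ref{t-20}, Theorem~\ref{t-22-M-q}, and Corollary~\ref{c-26}---the remainder of the argument is the same bookkeeping as in Theorem~\ref{t-29}.
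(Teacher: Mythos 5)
Your proposal is correct and follows essentially the same route as the paper, which proves Theorem~\ref{t-29} in detail and then states that Theorem~\ref{t-30} is obtained by the same argument with Hypothesis~\ref{setup-1}, the class $\sM_{\kappa_0}^{-1}$, relation \eqref{e-92-new}, and formula \eqref{e-85-kappa-prime} in place of their counterparts, with $Q_0$ again given by \eqref{e-92-Q-0}. Your closing caution about fixing a single deficiency basis so that equal von Neumann parameters yield equal (not merely unitarily equivalent) main operators is a fair point the paper handles implicitly through the constructions of Theorems~\ref{t-22-M-q}--\ref{t-24-M}, but it does not change the argument.
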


In the next theorem we are going to look into a unimodular transformation that changes a given L-system to the one with the impedance function having the opposite constant term.
\begin{theorem}\label{t-39}
Let $\Theta$  be  an L-system   of the form \eqref{e-62} with the main operator $T$ parameterized by von Neumann's parameter $\kappa$ and the quasi-kernel $\hat A$ of the real part of the state-space operator $\bA$ parameterized by $U$. Let also the impedance function of $\Theta$ be
$$
V_\Theta(z)=Q+\int_\bbR \left(\frac{1}{\lambda-z}-\frac{\lambda}{1+\lambda^2}\right )d\mu(\lambda) \;\textrm{ with }\;  \int_\bbR\frac{d\mu(\lambda)}{1+\lambda^2}=a\;\textrm{ and }\; Q\ne0.%\,
$$
Then there exists a unimodular transformation $\Theta_\alpha$ of $\Theta$ having the same state space, symmetric and main operators, and such that
\begin{equation}\label{e-126-V}
V_{\Theta_\alpha}(z)=-Q+\int_\bbR \left(\frac{1}{\lambda-z}-\frac{\lambda}{1+\lambda^2}\right )d\mu(\lambda).
\end{equation}
Moreover, the value of the unimodular factor is
\begin{equation}\label{e-127-uni}
    -e^{2i\alpha}=\frac{1-a^2-Q^2-2Qi}{1-a^2-Q^2+2Qi}.
\end{equation}
\end{theorem}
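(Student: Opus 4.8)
The statement asserts the existence of a \emph{unimodular transformation}, so by Definition~\ref{d-7} and \cite[Theorem 8.3.1]{ABT} the impedance of the sought system $\Theta_\alpha$ is not free: it is forced to equal the M\"obius image \eqref{e-64-alpha}, that is $V_{\Theta_\alpha}(z)=\frac{\cos\alpha+(\sin\alpha)V_\Theta(z)}{\sin\alpha-(\cos\alpha)V_\Theta(z)}$. The plan is therefore to isolate the one value of $\alpha\in[0,\pi)$ for which this image has the integral representation \eqref{e-126-V}, i.e.\ the same normalization constant $a$ but the opposite free term $-Q$, and then to read off the unimodular factor $-e^{2i\alpha}$ explicitly.

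First I would pin down $\alpha$ by testing the representation at the single point $z=i$. Since $\int_\bbR\frac{d\mu(\lambda)}{1+\lambda^2}=a$, one has $V_\Theta(i)=Q+ia$, while the right-hand side of \eqref{e-126-V} equals $-Q+ia$ at $z=i$. Substituting $V_\Theta(i)=Q+ia$ into \eqref{e-64-alpha} and separating real and imaginary parts, I expect $\IM V_{\Theta_\alpha}(i)=\frac{a}{D}$ and $\RE V_{\Theta_\alpha}(i)=Q_\alpha$, where $D=(\sin\alpha-Q\cos\alpha)^2+a^2\cos^2\alpha$ is precisely the denominator of \eqref{e-56-q-int'} and $Q_\alpha$ is the free term recorded in \eqref{e-55-q'}. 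Imposing $V_{\Theta_\alpha}(i)=-Q+ia$ thus amounts to the two scalar demands $D=1$ (invariance of the normalization $a$) and $Q_\alpha=-Q$ (reversal of the free term); a short computation shows that each of them collapses to the single relation $\tan\alpha=\frac{Q^2+a^2-1}{2Q}$, which has a unique solution in $[0,\pi)$ for every $Q\ne0$.

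With $\alpha$ fixed, the conclusion follows in two moves. On one hand, Lemma~\ref{l-12} guarantees that $V_{\Theta_\alpha}$ is again a Herglotz-Nevanlinna function whose measure in \eqref{hernev-real} is infinite; combined with the values $Q_\alpha=-Q$ and $a_\alpha=a$ extracted above, this yields the representation \eqref{e-126-V}. On the other hand, the Theorem on a constant $J$-unitary factor \cite[Theorem 8.2.3]{ABT} produces an L-system $\Theta_\alpha$ with $W_{\Theta_\alpha}(z)=W_\Theta(z)\cdot(-e^{2i\alpha})$ that retains the state space, the symmetric operator $\dA$, and the main operator $T$ of $\Theta$, so $\Theta_\alpha$ is a genuine unimodular transformation of $\Theta$ in the sense of \eqref{e-35-uni}. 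Finally, converting $\tan\alpha=\frac{Q^2+a^2-1}{2Q}$ into $\cos2\alpha$ and $\sin2\alpha$ by the double-angle identities and forming $-e^{2i\alpha}=-\cos2\alpha-i\sin2\alpha$ delivers the explicit factor \eqref{e-127-uni}.

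The step I expect to be the crux is the verification that one value of $\alpha$ can simultaneously force $D=1$ and $Q_\alpha=-Q$: a priori these are two real constraints on a single parameter, and their compatibility rests on the algebraic identity $(Q^2+a^2-1-2Q^2)^2+4a^2Q^2=(Q^2+a^2-1)^2+4Q^2$, which is exactly $D=1$ evaluated at $\tan\alpha=\frac{Q^2+a^2-1}{2Q}$. This identity is the mechanism that reverses the free term while leaving the normalization $a$ untouched, and it is what makes \eqref{e-126-V} hold; establishing it carefully, and confirming through Lemma~\ref{l-12} that the transformed measure is once more infinite, is where the real work lies, the passage to the closed form \eqref{e-127-uni} being routine trigonometry.
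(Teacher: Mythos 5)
Your route is genuinely different from the paper's, and it has a real gap at its center. You propose to obtain $\Theta_\alpha$ as the image of $\Theta$ under the M\"obius map \eqref{e-64-alpha} with the angle fixed by testing at $z=i$. The test at one point only controls two real numbers: the constant term $Q_\alpha$ and the normalization $a_\alpha=\int_\bbR\frac{d\mu_\alpha(\lambda)}{1+\lambda^2}$ of the representing measure of the transformed function. But \eqref{e-126-V} asserts much more, namely that the transformed impedance has the \emph{same measure} $\mu$ as $V_\Theta$, with only the free term reversed. Writing $V_\Theta(z)=Q+M(z)$, the identity
$\frac{\cos\alpha+(\sin\alpha)(Q+M(z))}{\sin\alpha-(\cos\alpha)(Q+M(z))}=-Q+M(z)$
for all $z$ is equivalent (after clearing denominators) to $M(z)^2=Q^2-1-2Q\tan\alpha$, i.e.\ to $M$ being a \emph{constant} function. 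So for any non-constant $M$ the M\"obius image at your $\alpha$ agrees with $-Q+M(z)$ only at $z=\pm i$; it is a different element of $\sN^{-Q}$ with a different (though equally normalized) measure, and the L-system you produce does not satisfy \eqref{e-126-V}. Your computations that $Q_\alpha=-Q$ and $D=1$ both reduce to $\tan\alpha=\frac{Q^2+a^2-1}{2Q}$ are correct, as is the algebraic identity you flag as the crux, but they prove the wrong thing.

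The paper argues in the opposite direction: it first \emph{realizes} the target function $-Q+\int_\bbR\bigl(\frac{1}{\lambda-z}-\frac{\lambda}{1+\lambda^2}\bigr)d\mu(\lambda)$ directly, by re-running the construction of Theorem \ref{t-22-M-q} (and Appendix \ref{A2}) with the same deficiency vectors, the same $\kappa$, and $U$ replaced by $\bar U$ --- using the fact, established in Theorems \ref{t-18-M-q}, \ref{t-18}, \ref{t-20}, that $\kappa(-Q)=\kappa(Q)$ and $U(-Q)=\bar U(Q)$. This guarantees the same state space, symmetric and main operators, whence the constant $J$-unitary factor theorem yields that the two systems are unimodular transformations of each other; only then is the factor identified, by evaluating the two transfer functions at $z=-i$ via \eqref{e6-3-6} and taking their ratio, which gives \eqref{e-127-uni} with no trigonometry at all. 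A secondary point: even within your scheme, $-e^{2i\alpha}=-\cos2\alpha-i\sin2\alpha$ at $\tan\alpha=\frac{Q^2+a^2-1}{2Q}$ equals $\frac{1-a^2-Q^2+2Qi}{1-a^2-Q^2-2Qi}$, the complex conjugate of \eqref{e-127-uni}; the paper's formula is the ratio $W_\Theta(-i)/W_{\Theta_\alpha}(-i)$, i.e.\ the factor in the opposite orientation, so your "routine trigonometry" would not in fact deliver \eqref{e-127-uni} as written.
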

\begin{proof}
We rely on the proof of Theorem \ref{t-22-M-q} and ideas of Proposition \ref{p-30}. Clearly, $V_\Theta(z)\in\sN^Q$.  We start with a fixed pair of deficiency vectors $g_+$ and $g_-$ of the symmetric operator $\dA$ of $\Theta$ and then work our way down following the steps of the proof of Theorem \ref{t-22-M-q} to construct an L-system $\Theta_\alpha$. We apply the construction procedure described in Appendix \ref{A2} with $g_+$ and $g_-$. While doing so we use the values of $\kappa$ that parameterizes $T$ in $\Theta$ and $\bar U$ as it was mentioned in Theorems \ref{t-18-M-q}, \ref{t-18}, and \ref{t-20} that changing $Q$ to $-Q$ results in changing $U$ to $\bar U$ in formulas \eqref{e-54-U-M-q}, \eqref{e-75-U}, and \eqref{e-87-U}. As a result we obtain an L-system  $\Theta_\alpha$ such that $V_{\Theta_\alpha}(z)$ is given by \eqref{e-126-V}. Since both L-systems share the state space, symmetric and main operators, we can apply \cite[Theorem 8.2.1]{ABT} that gives us $ W_{\Theta}(z)=W_{\Theta_\alpha}(z)\cdot (-e^{2i\alpha})$ for some  $\alpha\in[0,\pi)$ and hence $\Theta$ is a   unimodular transformation of $\Theta_\alpha$.

In order to find the value of $(-e^{2i\alpha})$ we observe that for $V(-i)=Q-ai$ and $V_{\Theta_\alpha}(-i)=-Q-ai$. %, where $a$ is the normalization value for the measure $\mu$ defined by \eqref{e-66-L}.
Then using \eqref{e6-3-6} we get
$$
W_\Theta(-i)=\frac{1-a-iQ}{1+a+iQ} \quad\textrm{ and }\quad W_{\Theta_\alpha}(-i)=\frac{1-a+iQ}{1+a-iQ}.
$$
Consequently,
$$
-e^{2i\alpha}=\frac{W_{\Theta}(-i)}{W_{\Theta_\alpha}(-i)}=\frac{1-a-iQ}{1+a+iQ}\cdot \frac{1+a-iQ}{1-a+iQ}=\frac{1-a^2-Q^2-2Qi}{1-a^2-Q^2+2Qi},
$$
that confirms \eqref{e-127-uni}.
\end{proof}

Now, suppose $\Theta$ is a fixed L-system of the form \eqref{e-62} such that $V_{\Theta}(z)$  belongs to the  class $\sM$. Assume in addition that $\Theta$ satisfies the conditions of Hypothesis \ref{setup}. Consider an L-system valued function $F(Q)$ that takes a real non-zero value of $Q$ and maps it into a perturbed L-system $\Theta^Q$ constructed as explained above. That is,
\begin{equation}\label{e-101-F}
    F:Q\mapsto \Theta^Q.
\end{equation}
Clearly, the impedance function of $F(Q)$ is $V_{F(Q)}(z)=V_{\Theta^Q}(z)=Q+V_{\Theta}(z)$ and $F(0)=\Theta$. It follows from the construction of the perturbed L-system that all $F(Q)$ share the same symmetric operator $\dA$ as the original L-system $\Theta$. The rest of the components of L-systems $F(Q)$ are explained by Theorem \ref{t-22-M-q}. Let us introduce a unimodular transformation  $F_\alpha(Q)$ of $F(Q)$, where $\alpha=\alpha(Q)$ is defined by \eqref{e-59-alpha-q}, i.e. $\tan2\alpha=-2/Q$. It follows from Definition \ref{d-7} of the unimodular transformation and \eqref{e-64-alpha} that
  \begin{equation}\label{e-102-F-alpha}
    V_{F_\alpha(Q)}(z)=\frac{\cos\alpha+(\sin\alpha) V_{F(Q)}(z)}{\sin\alpha-(\cos\alpha) V_{F(Q)}(z)}=\frac{\cos\alpha+(\sin\alpha) (Q+V_{\Theta}(z))}{\sin\alpha-(\cos\alpha) (Q+V_{\Theta}(z)}.
  \end{equation}

We will be particularly interested in L-systems that represent the one-sided limits of $F_\alpha(Q)$ as $Q\to0$. Let
\begin{equation}\label{e-102-Q+Q}
    \Theta_-=\lim_{Q\to0-}F_\alpha(Q)\quad\textrm{ and }\quad \Theta_+=\lim_{Q\to0+}F_\alpha(Q).
\end{equation}
For every $Q\ne0$ the von Neumann parameters $\kappa=\kappa(Q)$ and $U=U(Q)$ of the perturbed L-system $F(Q)=\Theta^Q$ are described by the formulas  \eqref{e-53-kappa'} and \eqref{e-54-U-M-q}, respectively. Consequently, the limit values are
\begin{equation}\label{e-103-k-U}
   \kappa_+=\kappa_-= \lim_{Q\to0}\kappa(Q)=0,\; U_-=\lim_{Q\to0-}U(Q)=-i,\; U_+=\lim_{Q\to0+}U(Q)=i.
\end{equation}
We are going to use these values to provide a constructive  description of L-systems $\Theta_-$ and $\Theta_+$ with the help of Appendix \ref{A2}. In order to describe $\Theta_-$, we use formula \eqref{e3-39-new} with values $\kappa=0$ and $U=U_-=-i$ to get $H=H_-=-1$. Then \eqref{e4-62} yields
$$
S_{\bA_-}=\left(
            \begin{array}{cc}
              0 & -1 \\
              0 & i \\
            \end{array}
          \right),\quad S_{\bA_-^*}=\left(
            \begin{array}{cc}
              -i & 0 \\
              -1 & 0 \\
            \end{array}
          \right).
$$
Furthermore, according to \eqref{e3-40} and \eqref{e-21-star} we have that
\begin{equation}\label{e-104-bA-}
    \bA_-=\dA^*-(\cdot,\psi)[\varphi-i\psi],\quad \bA_-^*=\dA^*-i(\cdot,\varphi)[\varphi-i\psi],
\end{equation}
are the state space operator of the L-system $\Theta_-$ and its adjoint. Here $\varphi=\calR^{-1}(g_+)$ and $\psi=\calR^{-1}(g_-)$ as laid out by Appendix \ref{A2}. Consequently,
\begin{equation}\label{e-105-chi-}
       \IM\bA_-=\left(\frac{1}{2}\right)(\cdot,\varphi-i\psi)(\varphi- i\psi)=(\cdot,\chi_-)\chi_-,\quad \chi_-=\frac{1}{\sqrt2}\,(\varphi- i\psi).
\end{equation}
Setting $K_-c=c\cdot\chi_-$, $(c\in\dC)$, we obtain
\begin{equation}\label{e-106-Theta-}
\Theta_-= \begin{pmatrix} \bA_-&K_-&\ 1\cr \calH_+ \subset \calH \subset
\calH_-& &\dC\cr \end{pmatrix}.
\end{equation}
Similarly one derives $\Theta_+$. Using formula \eqref{e3-39-new} with values $\kappa=0$ and $U=U_+=i$ we get $H=H_+=1$. Then \eqref{e4-62} yields
$$
S_{\bA_+}=\left(
            \begin{array}{cc}
              0 & 1 \\
              0 & i \\
            \end{array}
          \right),\quad S_{\bA_+^*}=\left(
            \begin{array}{cc}
              -i & 0 \\
              1 & 0 \\
            \end{array}
          \right).
$$
Applying \eqref{e3-40} and \eqref{e-21-star} again gives
\begin{equation}\label{e-104-bA+}
    \bA_+=\dA^*+(\cdot,\psi)[\varphi+i\psi],\quad \bA_+^*=\dA^*-i(\cdot,\varphi)[\varphi+i\psi],
\end{equation}
and
$$
\IM\bA_+=\left(\frac{1}{2}\right)(\cdot,\varphi+i\psi)(\varphi+ i\psi)=(\cdot,\chi_+)\chi_+,\quad \chi_+=\frac{1}{\sqrt2}\,(\varphi+ i\psi).
$$
Setting $K_+c=c\cdot\chi_+$, $(c\in\dC)$, we obtain
\begin{equation}\label{e-108-Theta+}
\Theta_+= \begin{pmatrix} \bA_+&K_+&\ 1\cr \calH_+ \subset \calH \subset
\calH_-& &\dC\cr \end{pmatrix}.
\end{equation}
Let us see that the L-systems $\Theta_-$ and $\Theta_+$ given by \eqref{e-106-Theta-} and \eqref{e-108-Theta+} are the ``limit" L-systems we are looking for.
%Note, that in spite of the fact that $$\lim_{Q\to0}V_{\Theta^Q}(z)=\lim_{Q\to0}(Q+V_{\Theta}(z))=V_{\Theta}(z),\quad z\in\dC_+,$$ we have that, in  general, the impedance function $V_{\Theta}(z)$ of the original L-system $\Theta$ %does not coincide with the impedance functions $V_{\Theta_+}(z)$ or $V_{\Theta_-}(z)$ of the ``limit" L-systems $\Theta_+$ and $\Theta_-$.
In order to build the connection between $V_{\Theta_-}(z)$ and $V_{\Theta}(z)$ we let $Q\to0-$ in \eqref{e-59-alpha-q}. This yields the value
$$\alpha_-=\lim_{Q\to0-}\alpha(Q)=\pi/4$$
and hence \eqref{e-102-F-alpha} gives for $z\in\dC_+$
\begin{equation}\label{e-110-V-}
 V_{\Theta_-}(z)=\frac{\cos\alpha_-+(\sin\alpha_-)V_\Theta(z)}{\sin\alpha_--(\cos\alpha_-)V_\Theta(z)}=\frac{\frac{1}{\sqrt2}+\frac{1}{\sqrt2}V_\Theta(z)}{\frac{1}{\sqrt2}-\frac{1}{\sqrt2}V_\Theta(z)}=\frac{1+V_\Theta(z)}{1-V_\Theta(z)}.
%\quad z\in\dC_+.
\end{equation}
Similarly, when we let $Q\to0+$ in \eqref{e-59-alpha-q}, we have $$\alpha_+=\lim_{Q\to0+}\alpha(Q)=3\pi/4,$$ and thus for $z\in\dC_+$
\begin{equation}\label{e-110-V+}
 V_{\Theta_+}(z)=\frac{\cos\alpha_++(\sin\alpha_+)V_\Theta(z)}{\sin\alpha_+-(\cos\alpha_+)V_\Theta(z)}=\frac{-\frac{1}{\sqrt2}+\frac{1}{\sqrt2}V_\Theta(z)}{\frac{1}{\sqrt2}+\frac{1}{\sqrt2}V_\Theta(z)}=-\frac{1-V_\Theta(z)}{1+V_\Theta(z)}.
%\quad z\in\dC_+.
\end{equation}
Functions $V_{\Theta_-}(z)$ and $V_{\Theta_+}(z)$ in \eqref{e-110-V-} and \eqref{e-110-V+} are indeed the impedance functions of L-systems $\Theta_-$ and $\Theta_+$ given by \eqref{e-106-Theta-} and \eqref{e-108-Theta+}, respectively. This follows from the fact that $\Theta_-$ and $\Theta_+$ are unimodular transformations of the original L-system $\Theta$ corresponding to the values of $\alpha_-=\pi/4$ and $\alpha_+=3\pi/4$ since the transfer functions $W_{\Theta_-}(z)$ and $W_{\Theta_+}(z)$ are related to $W_\Theta(z)$ as
$$
\begin{aligned}
W_{\Theta_-}(z)&=W_\Theta(z)\cdot (-e^{2i\alpha_-})=W_\Theta(z)\cdot (-e^{2i\cdot\frac{\pi}{4}})=-iW_\Theta(z),\\
W_{\Theta_+}(z)&=W_\Theta(z)\cdot (-e^{2i\alpha_+})=W_\Theta(z)\cdot (-e^{2i\cdot\frac{3\pi}{4}})=iW_\Theta(z).\\
\end{aligned}
$$
The above relations can be obtained from \eqref{e-103-k-U} and \cite[Theorem 7]{BMkT}.

Considering the relations \eqref{e-110-V-} and \eqref{e-110-V+} we conclude that the only situation when $V_\Theta(z)= V_{\Theta_+}(z)= V_{\Theta_-}(z)$ in $\dC_+$ occurs when $V_\Theta(z)\equiv i$.

\section{Realization Guide and Uniqueness}\label{s11}

In this section we are going to summarize our realization results for different classes of $\sN^Q$, $Q\ne0$. Table \ref{Table-1} schematically describes realization structure for each subclass  of $\sN^Q$. We note that the construction of a realizing L-system can be made in a way that the state-space is $ \calH_+ \subset L^2(\bbR;d\mu) \subset\calH_-$, where $\mu$ is a Borel measure from the integral representation of the function $V_0(z)\in\sM$ that is
$$
V_0(z)=\int_\bbR \left(\frac{1}{\lambda-z}-\frac{\lambda}{1+\lambda^2}\right )d\mu(\lambda) \quad\textrm{ with }\quad  \int_\bbR\frac{d\mu(\lambda)}{1+\lambda^2}=1,\quad z\in \bbC_+.
$$
%The deficiency vectors of the symmetric operator of our model realizing L-system are of the form \eqref{e-52-def}.
The realizing ($Q$-dependent) L-system takes the form
\begin{equation}\label{e-129-model}
\Theta= \begin{pmatrix} \dB&K&\ 1\cr \calH_+ \subset L^2(\bbR;d\mu) \subset
\calH_-& &\dC\cr \end{pmatrix},
\end{equation}
where all the operators are constructed according to the procedure described in Appendix \ref{A1} with moduli of
von Neumann's parameters $\kappa=\kappa(Q)$ and corresponding $U=U(Q)$ given by  Table \ref{Table-1} for each specific class under consideration. We note that the value of the von Neumann parameter $\kappa$ is  such that $0<\kappa<1$. Moreover, Theorem \ref{t-22-A} and Corollary \ref{c-26} yield that the deficiency vectors of the symmetric operator of $\Theta$ can be chosen of the form \eqref{e-52-def} in $L^2(\bbR;d\mu)$. Also, it follows from \cite[Section 6.3]{ABT}, \cite{BMkT} that the transfer function $W_\Theta(z)$ of $\Theta$ in \eqref{e-129-model} has the normalization condition
$$
|W_\Theta(i)|=\frac{1}{\kappa(Q)},\quad  |W_\Theta(-i)|={\kappa(Q)},\quad Q\ne0,
$$
where $\kappa=\kappa(Q)$ is given in  Table \ref{Table-1} for each specific class.
\begin{table}[]
\centering
\begin{tabular}{|c|c|c|}
\hline
 &  &  \\
 Class& Function & L-system   \\
  &  &  \\ \hline
 &  &  \\
&  $V(z)=Q+V_0(z)$,&  $\kappa=\frac{|Q|}{\sqrt{Q^2+4}}$\\
  $\sM^Q$ & $V_0(z)\in\sM$ &  \\
 &  &  $U=\frac{Q}{|Q|}\cdot\frac{-Q+2i}{\sqrt{Q^2+4}}$\\
  &  &  \\  \hline
  &  &  \\
 & $V(z)=Q+aV_0(z),$ & $\kappa=\frac{\left(b-2Q^2-\sqrt{b^2+4Q^2}\right)^2-a\left(b-\sqrt{b^2+4Q^2}\right)^2+4Q^2a(a-1)}{\left(b-2Q^2-\sqrt{b^2+4Q^2}\right)^2+a\left(b-\sqrt{b^2+4Q^2}\right)^2+4Q^2a(a+1)}$ \\
 &  &  \\
 $\sM^Q_{\kappa_0}$&  $V_0(z)\in\sM$& $b=Q^2+a^2-1$ \\
 &  &  \\
   & $a=\frac{1-\kappa_0}{1+\kappa_0}$ & $U=\frac{(a+Qi)(1-\kappa^2)-1-\kappa^2}{2\kappa}$ \\
  &  &  \\  \hline
 &  &  \\
&  $V(z)=Q+aV_0(z),$& $\kappa=\frac{a\left(b+\sqrt{b^2+4Q^2}\right)^2-\left(b-2Q^2+\sqrt{b^2+4Q^2}\right)^2-4Q^2a(a-1)}{\left(b-2Q^2+\sqrt{b^2+4Q^2}\right)^2+a\left(b+\sqrt{b^2+4Q^2}\right)^2+4Q^2a(a+1)}$ \\
 &  &  \\
 $\sM^{-1,Q}_{\kappa_0}$  &  $V_0(z)\in\sM$& $b=Q^2+a^2-1$ \\
  &  &  \\
   & $a=\frac{1+\kappa_0}{1-\kappa_0}$ & $U=\frac{(a+Qi)(1-\kappa^2)-1-\kappa^2}{2\kappa}$ \\
     &  &  \\  \hline
     \multicolumn{1}{l}{} & \multicolumn{1}{l}{} & \multicolumn{1}{l}{}
\end{tabular}
\caption{Realization Guide}
\label{Table-1}
\end{table}
Alternatively, a realizing L-system can be constructed with a universal model method described in Section \ref{s10}. In this case the values of  the von Neumann parameters $\kappa$ and $U$ are (generally speaking) complex and given by
$$
  \kappa=\frac{(a-1-Qi)\sqrt{Q^2+4a^2}}{(a-1)Q-(Q^2+2a^2+2a)i}\quad \textrm{and }\quad  U=\frac{Q}{|Q|}\cdot\frac{-Q+2ai}{\sqrt{Q^2+4a^2}}
$$
regardless of the value of $a$. A model realizing L-system $\Theta$ in this case still has a form \eqref{e-129-model} but its operators are constructed differently using the procedure presented in Section \ref{s10}. %We note that realizing L-systems obtained via two different methods are bi-unitarily equivalent to each other since they share the same impedance function (see \cite[Theorem 6.6.10]{ABT}).

We also emphasize that both parameters $\kappa$ and $U$ (in either form in Table \ref{Table-1}) do not depend on a particular realization of an unperturbed function $aV_0(z)$  but only on perturbing parameter $Q$ and  normalization $a$ of the representing measure. Consequently, an L-system realizing the perturbed function can  be constructed based on some realization of $aV_0(z)$ with the help of  parameters $\kappa$ and $U$.

The following theorem gives certain uniqueness condition for the realization of a function of the class $\sN^Q$.
\begin{theorem}\label{t-33}
Suppose $V(z)=Q+aV_0(z)$, $V_0(z)\in\sM$ is a function of the class $\sN^Q$, ($Q\ne0$). Let also $\Theta_0$ be  a minimal L-system   of the form \eqref{e-62} that realizes $V_0(z)$ and contains a symmetric operator $\dA$ with deficiency vectors $g_z$ and  fixed normalized deficiency vectors $g_\pm=g_{\pm i}$. Then the perturbed function $V(z)$ admits a unique realization by L-system $\Theta$ with the same symmetric operator $\dA$  in the corresponding state space if the Liv\u sic function
$$
s(z)=\frac{z-i}{z+i}\cdot \frac{(g_z, g_-)}{(g_z, g_+)},
$$
of $\dA$ is not identical zero in the upper half-plane.
\end{theorem}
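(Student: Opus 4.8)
The plan is to reduce the uniqueness of the realizing L-system to the uniqueness of a single pair of von Neumann parameters, and then to extract that pair from $V(z)$ using the Liv\v sic function. Since the symmetric operator $\dA$, its normalized deficiency basis $g_\pm$, and the state space are frozen in advance, an L-system of the form \eqref{e-62} realizing $V(z)$ is completely determined once we know its main operator $T$ and the quasi-kernel $\hat A$ of $\RE\bA$. Indeed, $T$ is pinned down by its von Neumann parameter $\kappa$ through \eqref{parpar}, while $\hat A$ is pinned down by the parameter $U$ through \eqref{DOMHAT}; the $(*)$-extension $\bA$ is then recovered from the triple $(\dA, T,\hat A)$ as in \cite[Section 4.3]{ABT}, and the channel operator $K$ (with $J=1$) is fixed by $\IM\bA=KK^*$ together with the range condition $\ran(K)=\ran(\IM\bA)$. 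Hence it suffices to show that the pair $(\kappa,U)$, computed with respect to the fixed basis $g_\pm$, is uniquely determined by $V(z)$.

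First I would recover $\kappa$. Since $V=V_\Theta$, the transfer function $W_\Theta(z)$ is uniquely determined by $V(z)$ through \eqref{e6-3-6} (with $J=1$), and by \cite[Theorem 7]{BMkT} (cf. Remark \ref{r-15}) the characteristic function $S(z)$ of the main operator is recovered from $W_\Theta(z)$. Combining this with \eqref{e-42-Liv} gives $S(z)=\frac{s(z)-\kappa}{\bar\kappa\,s(z)-1}$, where the Liv\v sic function $s(z)$ depends only on the fixed data $(\dA,g_\pm)$ and is therefore known. The crucial observation is that $s(i)=0$ for every Liv\v sic function of this normalization, so the hypothesis $s(z)\not\equiv 0$ forces $s$ to be non-constant. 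If two values $\kappa_1,\kappa_2$ produced the same $S(z)$, then clearing denominators yields, for all $z\in\bbC_+$, the identity $(\bar\kappa_2-\bar\kappa_1)\,s(z)^2+(\kappa_2\bar\kappa_1-\kappa_1\bar\kappa_2)\,s(z)+(\kappa_1-\kappa_2)=0$; as $s(z)$ is non-constant it takes infinitely many values, so the quadratic vanishes identically, whence $\bar\kappa_2=\bar\kappa_1$ and $\kappa_1=\kappa_2$. Thus $\kappa$ is uniquely determined by $V(z)$ exactly when $s\not\equiv 0$.

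With $\kappa$ in hand, I would recover $U$. Reading the constant term $Q$ and the normalization $a$ off the integral representation \eqref{hernev-real} of $V(z)$ via \eqref{e-66-L}, the parameter $U$ is supplied by the explicit formula \eqref{e-54-U-M-q}, \eqref{e-75-U}, or \eqref{e-87-U} according to whether $a=1$, $a<1$, or $a>1$ (in the appropriate basis; cf. Remarks \ref{r-16}, \ref{r-19}, \ref{r-21}). Equivalently, and without case distinction, once $\kappa$ is fixed the linear system \eqref{e-216-ls} of Appendix \ref{A2} has for each $z$ a unique solution, which determines the quasi-kernel of $\RE\bA$ and hence $U$. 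Either way $U$ is a function of the data $(Q,a,\kappa)$ and the fixed basis, so it too is uniquely determined by $V(z)$. Therefore the pair $(\kappa,U)$, and with it the L-system $\Theta$, is unique.

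The main obstacle I anticipate is the careful justification of the passage $V\to W_\Theta\to S$: one must verify that the characteristic function of the main operator is genuinely recoverable from the impedance (not merely up to the bi-unitary freedom of \cite[Theorem 6.6.10]{ABT}) once the symmetric operator and its deficiency basis are frozen, and that the possibly complex $\kappa$ arising with respect to the original basis $g_\pm$ is handled consistently with the convention of Remark \ref{r-12}. The injectivity computation and the identity $s(i)=0$ are routine; the conceptual weight of the argument rests on linking the impedance to the Liv\v sic function and on recognizing that $s\not\equiv 0$ is precisely the non-degeneracy needed to invert the map $\kappa\mapsto S$.
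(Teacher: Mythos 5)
Your proposal takes a genuinely different route from the paper's. The paper argues by contradiction: two distinct realizations $\Theta_1,\Theta_2$ of $V(z)$ sharing the symmetric operator $\dA$ would have equal impedance functions, hence be bi-unitarily equivalent by \cite[Theorem 6.6.10]{ABT}, hence have unitarily equivalent (but distinct) main operators $T_1\ne T_2$; by \cite{AtDerTsek81} the existence of two distinct yet unitarily equivalent dissipative quasi-self-adjoint extensions of the same symmetric operator forces $s(z)\equiv 0$, contradicting the hypothesis. You instead try to reconstruct the pair $(\kappa,U)$ directly from $V(z)$ through the chain $V\to W_\Theta\to S\to\kappa$ and then recover $U$ from \eqref{e-216-ls} evaluated at $z=-i$. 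That is a legitimate and more explicit strategy, and your observation that $s(i)=0$ (so $s\not\equiv 0$ forces $s$ to be non-constant) is exactly the right way to bring the hypothesis into play.

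However, the step you flag as the ``main obstacle'' is a genuine gap, not a routine verification, and your argument does not close it. The identity $W_\Theta=1/S$ of \cite[Theorem 7]{BMkT} is valid for L-systems whose quasi-kernel satisfies Hypothesis \ref{setup}; for a realizing L-system with an arbitrary parameter $U$ in \eqref{DOMHAT} the transfer function acquires a constant unimodular factor (this is precisely the content of Remark \ref{r-15} and the constant $J$-unitary factor theorem). Since $U$ is one of the unknowns, from $V$ alone you may only conclude that two candidate main operators satisfy $S_2(z)=\gamma\,S_1(z)$ for some constant $\gamma$ with $|\gamma|=1$, whereas your injectivity computation treats only the case $\gamma=1$. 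The gap is closable: inserting \eqref{e-42-Liv} into $S_2=\gamma S_1$, clearing denominators, and comparing coefficients of the resulting quadratic identity in $s$ (legitimate because $s$ is non-constant) yields $\kappa_2=\gamma\kappa_1$, $\bar\kappa_1=\gamma\bar\kappa_2$, and $(1-\gamma)\bigl(1-|\kappa_1|^2\bigr)=0$; since $|\kappa_1|<1$ this forces $\gamma=1$ and hence $\kappa_1=\kappa_2$, after which your determination of $U$ from $\kappa$ and $V(-i)$ goes through. With that repair your proof is complete; as written, the decisive case is missing. The paper's route avoids this bookkeeping entirely, at the cost of invoking the external result of \cite{AtDerTsek81}.
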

\begin{proof}
Assume the contrary, that is there are two different L-systems $\Theta_1$ and $\Theta_2$ of the form \eqref{e-62} that realize $V(z)$ and share the state-space and symmetric operator $\dA$ with $\Theta_0$. Then $\Theta_1$ and $\Theta_2$ contain two different main operators $T_1$ and $T_2$, ($T_1\ne T_2$) that are both dissipative quasi-selfadjoint extensions of the same symmetric operator $\dA$. At the same time since both  $\Theta_1$ and $\Theta_2$ have the same impedance function $V(z)$, they are bi-unitarily equivalent to each other (see \cite[Theorem 6.6.10]{ABT}). This implies that the operators $T_1$ and $T_2$ are unitarily equivalent. As it was shown in \cite{AtDerTsek81} in this case $s(z)\equiv0$ in the upper half-plane which contradicts the condition of the theorem.
\end{proof}
\begin{remark}\label{r-36}
Let the conditions of Theorem \ref{t-33} be satisfied. Then in the case when $s(z)\equiv0$ it is possible to construct two different realizing L-systems sharing the same state-space and symmetric operator as it will be shown in Example 2 in the next section. We note that all different realizations of $V(z)$ by minimal L-systems sharing the same state-space and symmetric operator will be bi-unitarily equivalent to each other (see \cite[Theorem 6.6.10]{ABT}).
\end{remark}

\section{Examples}

%\subsection*{Example 1}\label{ex-0}
\noindent
\textbf{Example 1.}
This example is designed to illustrate the  construction of a perturbed L-system starting with an L-system whose impedance function belongs to the class $\sM$ using the method developed in the proof of Theorem  \ref{t-18-M-q}.

In the space $\calH=L^2_{\dR}=L^2_{(-\infty,0]}\oplus L^2_{[0,\infty)}$ we consider a prime symmetric operator
\begin{equation}\label{e-87-sym}
\dA x=i\frac{dx}{dt}
\end{equation}
on
$$
\begin{aligned}
\dom(\dA)&=\left\{x(t)=\left(
                       \begin{array}{c}
                         x_1(t) \\
                         x_2(t) \\
                       \end{array}
                     \right)\,\Big|\,x(t) -\text{abs. cont.},\right.\\
                     &\qquad\left. x'(t)\in L^2_{\dR},\, x_1(0-)=x_2(0+)=0\right\}.\\
\end{aligned}
$$
This operator $\dA$ is a model operator (according to the Liv\u sic Theorem  \cite{AG93}, \cite{L})
for any prime symmetric operator with deficiency indices $(1, 1)$ that admits  dissipative extension with the spectrum filling the entire open upper half-plane. At the same time this operator is a model for any prime symmetric operator that admits different dissipative unitarily equivalent extensions \cite{AtDerTsek81}.
Its  deficiency vectors  are easy to find
\begin{equation}\label{e-87-def}
g_z=\left(
    \begin{array}{c}
      e^{-izt}  \\
                0 \\
      \end{array}
     \right),\; \IM z>0,\qquad
g_z=\left(
\begin{array}{c}
               0 \\
      e^{-izt} \\
       \end{array}
     \right),\; \IM z<0.
\end{equation}
In particular, for $z=\pm i$ the   (normalized in $(+)$-norm) deficiency vectors  are
\begin{equation}\label{e-88-def}
g_+=\left(
    \begin{array}{c}
      e^t  \\
                0 \\
      \end{array}
     \right)
\in \sN_i,\, (t<0),\qquad
g_-=\left(
\begin{array}{c}
               0 \\
      e^{-t} \\
       \end{array}
     \right)\in \sN_{-i},\,(t>0).
\end{equation}
Consider also,
\begin{equation}\label{e-89-ext}
    \begin{aligned}
A x&=i\frac{dx}{dt},\\
\dom(A)&=\left\{x(t)=\left(
                       \begin{array}{c}
                         x_1(t) \\
                         x_2(t) \\
                       \end{array}
                     \right)
\,\Big|\,x_1(t),\,x_2(t) -\text{abs. cont.},\right.\\
&\left. x'_1(t)\in L^2_{(-\infty,0]},\, x'_2(t)\in L^2_{[0,\infty)},\,x_1(0-)=-x_2(0+)\right\}.\\
    \end{aligned}
\end{equation}
Clearly, $g_+-g_-\in\dom(A)$ and hence $A$ is  a self-adjoint extension of $\dA$ satisfying the conditions of Hypothesis \ref{setup}. Furthermore,
\begin{equation}\label{e-90-T}
    \begin{aligned}
T x&=i\frac{dx}{dt},\\
\dom(T)&=\left\{x(t)=\left(
                       \begin{array}{c}
                         x_1(t) \\
                         x_2(t) \\
                       \end{array}
                     \right)
\,\Big|\,x_1(t),\,x_2(t) -\text{abs. cont.},\right.\\
&\left. x'_1(t)\in L^2_{(-\infty,0]},\, x'_2(t)\in L^2_{[0,\infty)},\,x_2(0+)=0\right\}\\
    \end{aligned}
\end{equation}
is a quasi-self-adjoint extension of $\dA$ parameterized by a von Neumann parameter $\kappa=0$ that satisfies the conditions of Hypothesis \ref{setup}. Using direct check we obtain
\begin{equation}\label{e-91-T-star}
    \begin{aligned}
T^* x&=i\frac{dx}{dt},\\
\dom(T^*)&=\left\{x(t)=\left(
                       \begin{array}{c}
                         x_1(t) \\
                         x_2(t) \\
                       \end{array}
                     \right)
\,\Big|\,x_1(t),\,x_2(t) -\text{abs. cont.},\right.\\
&\left. x'_1(t)\in L^2_{(-\infty,0]},\, x'_2(t)\in L^2_{[0,\infty)},\,x_1(0-)=0\right\}.\\
    \end{aligned}
\end{equation}
Similarly one finds
\begin{equation}\label{e-92-adj}
    \begin{aligned}
\dot A^* x&=i\frac{dx}{dt},\\
\dom(\dot A^*)&=\left\{x(t)=\left(
                       \begin{array}{c}
                         x_1(t) \\
                         x_2(t) \\
                       \end{array}
                     \right)
\,\Big|\,x_1(t),\,x_2(t) -\text{abs. cont.},\right.\\
&\left. x'_1(t)\in L^2_{(-\infty,0]},\, x'_2(t)\in L^2_{[0,\infty)}\right\}.\\
    \end{aligned}
\end{equation}
Then $\calH_+=\dom(\dA^\ast)=W^1_2(-\infty,0]\oplus W^1_2[0,\infty)$, where $W^1_2$ is a Sobolev space.  Construct a rigged Hilbert space
\begin{equation}\label{e-139-triple}
\begin{aligned}
&\calH_+ \subset \calH \subset\calH_-\\
&=W^1_2(-\infty,0]\oplus W^1_2[0,\infty)\subset L^2_{(-\infty,0]}\oplus L^2_{[0,\infty)}\subset (W^1_2(-\infty,0]\oplus W^1_2[0,\infty))_-
\end{aligned}
\end{equation}
and consider operators
\begin{equation}\label{e-93-bA}
\begin{aligned}
\bA x&=i\frac{dx}{dt}+i x(0+)\left[\delta(t+)-\delta(t-)\right],\\
\bA^\ast x&=i\frac{dx}{dt}+i x(0-)\left[\delta(t+)-\delta(t-)\right],
\end{aligned}
\end{equation}
where $x(t)\in W^1_2(-\infty,0]\oplus W^1_2[0,\infty)$, $\delta(t+)$, $\delta(t-)$ are delta-functions and elements of $(W^1_2(-\infty,0]\oplus W^1_2[0,\infty))_-=(W^1_2(-\infty,0])_-\oplus (W^1_2[0,\infty))_-$ such that
$$
\delta(t+)=\left(
                       \begin{array}{c}
                         0 \\
                         \delta_2(t+) \\
                       \end{array}
                     \right), \qquad \delta(t-)=\left(
                       \begin{array}{c}
                        \delta_1(t-)  \\
                          0
                       \end{array}
                     \right),
$$
and generate functionals by the formulas
$$x(0+)=(x,\delta(t+))=(x_1,0)+(x_2,\delta_2(t+))=x_2(0+),$$
and
$$
x(0-)=(x,\delta(t-))=(x_1,\delta_1(t-))+(x_2,0)=x_1(0-).
$$
It is easy to see
that
$\bA\supset T\supset \dA$, $\bA^\ast\supset T^\ast\supset \dA,$
and
\begin{equation}\label{e-140-RbA}
\RE\bA x=i\frac{dx}{dt}+\frac{i }{2}(x(0+)+x(0-))\left[\delta(t+)-\delta(t-)\right].
\end{equation}
Clearly, $\RE\bA$ has its quasi-kernel equal to $A$ in \eqref{e-89-ext}. Moreover,
$$
\IM\bA =\left(\cdot,\frac{1}{\sqrt 2}[\delta(t+)-\delta(t-)]\right) \frac{1}{\sqrt 2}[\delta(t+)-\delta(t-)]=(\cdot,\chi)\chi,
$$
where $\chi=\frac{1}{\sqrt 2}[\delta(t+)-\delta(t-)]$.
Now we can build
\begin{equation}\label{e6-125-mom}
\Theta_0=
\begin{pmatrix}
\bA &K &1\\
&&\\
\calH_+ \subset \calH \subset\calH_- &{ } &\dC
\end{pmatrix},
\end{equation}
that is an L-system with $\calH_+ \subset \calH \subset\calH_-$ of the form \eqref{e-139-triple},
\begin{equation}\label{e7-62-new}
\begin{aligned}
Kc&=c\cdot \chi=c\cdot \frac{1}{\sqrt 2}[\delta(t+)-\delta(t-)], \quad (c\in \dC),\\
K^\ast x&=(x,\chi)=\left(x,  \frac{1}{\sqrt
2}[\delta(t+)-\delta(t-)]\right)=\frac{1}{\sqrt
2}[x(0+)-x(0-)],\\
\end{aligned}
\end{equation}
and $x(t)\in \calH_+= W^1_2(-\infty,0]\oplus W^1_2[0,\infty)$.
It was shown in \cite{BMkT-2} that  $s(\dA,A)(z)\equiv0$ for all $z$ in $\dC_+$ and $V_{\Theta_0}(z)=i$ for all $z\in\dC_+$. Thus $V_{\Theta_0}(z)$ is a constant function of the class $\sM$.

Now let us consider
\begin{equation}\label{e-001-ex}
    V(z)=1+V_{\Theta_0}(z)=1+i, \quad z\in\dC_+.
\end{equation}
Clearly, by construction $V(z)\in\sM^1$. In order to construct a perturbed L-system $\Theta$ that realizes $V(z)$ we will rely on the method described in  the proof of Theorem \ref{t-18-M-q} in order to preserve the same symmetric operator $\dA$ and state-space as in L-system $\Theta_0$. Taking $Q=1$ in \eqref{e-53-kappa'} % \eqref{e-77-new-k}
 we  obtain
\begin{equation}\label{e-002-ex'}
\kappa=\frac{1}{\sqrt5}.
\end{equation}
Then applying \eqref{e-54-U-M-q} yields
\begin{equation}\label{e-158-U}
    U=\frac{-1+2i}{\sqrt5}.
\end{equation}
We are going  to  construct an L-system $\Theta$ out of the L-system $\Theta_0$ such that $V_\Theta(z)=V(z)\equiv 1+i$, $(z\in\dC_+)$.  Our construction is going to  be based on the elements of the unperturbed L-system $\Theta_0$ in \eqref{e6-125-mom} with the values of $\kappa$ and $U$ given by \eqref{e-002-ex'} and \eqref{e-158-U}. In order to utilize this construction we use formulas \eqref{e-212} and \eqref{e-214} of Appendix \ref{A2}. To apply these formulas, first we  must find  $\varphi=\calR^{-1}(g_+)$ and $\psi=\calR^{-1}(g_-)$, where $\calR$ is a  Riesz-Berezansky   operator where $g_+$ and $g_-$ are defined by \eqref{e-88-def}. Observe that by the properties of rigged Hilbert triplets we have
$$
(g_+,\varphi)=(g_+,\calR^{-1}(g_+))=(g_+,g_+)_+=1,
$$
where $(\cdot,\cdot)_+$ is the inner product in $\calH_+= W^1_2(-\infty,0]\oplus W^1_2[0,\infty)$. On the other hand, it follows from the definition of $\delta(t+)$ and $\delta(t-)$ that
\begin{equation}\label{e-154-Riesz}
(g_+,\delta(t+))=0,\quad (g_+,\delta(t-))=1,\quad (g_-,\delta(t+))=1,\quad (g_-,\delta(t-))=0.
\end{equation}
%or
%$$
%\quad (g_+,\sqrt2\delta(t-))=2,\quad (g_-,\sqrt2\delta(t+))=2.
%$$
Since $\varphi$, $\psi$ and  $\delta(t-)$, $\delta(t+)$ are  elements of the same two-dimensional subspace of  $(W^1_2(-\infty,0])_-\oplus (W^1_2[0,\infty))_-$ that is $(\cdot)$-orthogonal to $\DdA$, we get
$$
\varphi=c_{11}\delta(t-)+c_{12}\delta(t+),\quad \psi=c_{21}\delta(t-)+c_{22}\delta(t+).
$$
In order to find the constants $c_{ij}$, ($i,j=1,2$) above we use the above derivations and write the linear system
$$
\left\{\begin{array}{ll}
        (g_+,\varphi)=1 & \hbox{} \\
         (g_+,\psi)=0 & \hbox{}
      \end{array} \right.
      \quad\textrm{ or }\quad \left\{\begin{array}{ll}
        (g_+,c_{11}\delta(t-)+c_{12}\delta(t+))=1 & \hbox{} \\
         (g_+,c_{21}\delta(t-)+c_{22}\delta(t+))=0. & \hbox{}
      \end{array} \right.
$$
Expanding the left sides of both equations and taking into account \eqref{e-154-Riesz} we obtain $c_{11}=1$ and $c_{21}=0$. Then
$$
\left\{\begin{array}{ll}
        (g_-,\varphi)=0 & \hbox{} \\
         (g_-,\psi)=1 & \hbox{}
      \end{array} \right.
      \quad\textrm{ or }\quad \left\{\begin{array}{ll}
        (g_+,\delta(t-)+c_{12}\delta(t+))=0 & \hbox{} \\
         (g_+,c_{22}\delta(t+))=1, & \hbox{}
      \end{array} \right.
$$
yields $c_{12}=0$ and $c_{21}=1$. Therefore, we can conclude that
\begin{equation}\label{e-155-phi-psi}
    \varphi=\calR^{-1}(g_+)=\delta(t-) \quad\textrm{ and }\quad \psi=\calR^{-1}(g_-)=\delta(t+).
\end{equation}

To continue with construction of the perturbed L-system we are moving to the modified deficiency pair $g_\pm^\alpha$ of the form  \eqref{e-76-def}.
The unimodular constant responsible for changing the deficiency vectors of the form \eqref{e-65-B} is the following
\begin{equation}\label{e-159-e-b}
    -e^{2i\alpha}=\frac{1-2i}{\sqrt{5}}.
\end{equation}
We have
\begin{equation}\label{e-160-def-alpha}
    g_+^\alpha=g_+=\left(
    \begin{array}{c}
      e^t  \\
                0 \\
      \end{array}
     \right),\, (t<0),\quad
g_-^\alpha=(-e^{2i\alpha})g_-=\frac{1-2i}{\sqrt{5}}\left(
\begin{array}{c}
               0 \\
      e^{-t} \\
       \end{array}
     \right),\,(t>0).
\end{equation}
Introduce a new operator $T_1$ as follows
\begin{equation}\label{e-90-T1}
    \begin{aligned}
T_1 x&=i\frac{dx}{dt},\\
\dom(T_1)&=\left\{x(t)=\left(
                       \begin{array}{c}
                         x_1(t) \\
                         x_2(t) \\
                       \end{array}
                     \right)
\,\Big|\,x_1(t),\,x_2(t) -\text{abs. cont.},\right.\\
%&\left. x'_1(t)\in L^2_{(-\infty,0]},\, x'_2(t)\in L^2_{[0,\infty)},\,\sqrt5 x_2(0+)=-x_1(0-)\right\}.\\
&\left. x'_1(t)\in L^2_{(-\infty,0]},\, x'_2(t)\in L^2_{[0,\infty)},\,5 x_2(0+)=(-1+2i)x_1(0-)\right\}.\\
    \end{aligned}
\end{equation}
Now $\kappa=\frac{1}{\sqrt5}$ is the von Neumann parameter of $T_1$ corresponding to the deficiency vectors $g_+^\alpha$ and $g_-^\alpha$. Using the direct check we obtain
\begin{equation}\label{e-91-T1-star}
    \begin{aligned}
T^*_1 x&=i\frac{dx}{dt},\\
\dom(T^*_1)&=\left\{x(t)=\left(
                       \begin{array}{c}
                         x_1(t) \\
                         x_2(t) \\
                       \end{array}
                     \right)
\,\Big|\,x_1(t),\,x_2(t) -\text{abs. cont.},\right.\\
&\left. x'_1(t)\in L^2_{(-\infty,0]},\, x'_2(t)\in L^2_{[0,\infty)},\, (1+2i)x_2(0+)=-5 x_1(0-)\right\}.\\
    \end{aligned}
\end{equation}
Consider also,
\begin{equation}\label{e-161-ext}
    \begin{aligned}
A_1 x&=i\frac{dx}{dt},\\
\dom(A_1)&=\left\{x(t)=\left(
                       \begin{array}{c}
                         x_1(t) \\
                         x_2(t) \\
                       \end{array}
                     \right)
\,\Big|\,x_1(t),\,x_2(t) -\text{abs. cont.},\right.\\
%&\left. x'_1(t)\in L^2_{(-\infty,0]},\, x'_2(t)\in L^2_{[0,\infty)},\,\sqrt{5}x_2(0+)=(-1+2i)x_1(0-)\right\}.\\
&\left. x'_1(t)\in L^2_{(-\infty,0]},\, x'_2(t)\in L^2_{[0,\infty)},\,{5}x_2(0+)=(3+4i)x_1(0-)\right\}.\\
    \end{aligned}
\end{equation}
It is easy to check that $g_+^\alpha+U g_-^\alpha\in\dom(A_1)$, where $U$ is given by \eqref{e-158-U}. Operator $A_1$ is a self-adjoint extension of $\dA$.
Taking into account \eqref{e-155-phi-psi} and making adjustments for the new deficiency vectors \eqref{e-160-def-alpha} we have
$$
\varphi^\alpha=\varphi=\delta(t-)\quad\textrm{ and }\quad \psi^\alpha=(-e^{2i\alpha})\psi=\frac{1-2i}{\sqrt{5}}\delta(t+).
$$
Then we construct operator $\bA$  according to the formulas \eqref{e-212}--\eqref{e-205-A} as described in the Appendix \ref{A2}. We  have
\begin{equation}\label{e-160-chi1}
%\chi_1=\frac{1}{\sqrt{10}}\Big((\sqrt5+\sqrt5 i)\delta(t-)+(1+3i)\delta(t+)\Big),
\chi_1=\frac{1+i}{\sqrt{2}}\delta(t-)+\frac{7+i}{5\sqrt{2}}\delta(t+),
\end{equation}
%and
$$
%\RE\bA_1 x=i\frac{dx}{dt}-\frac{i}{\sqrt{10}}\Big(\sqrt{5} x(0+)+(1-2i) x(0-)\Big)\chi_1,
\RE\bA_1 x=i\frac{dx}{dt}-\frac{i}{\sqrt{2}}\Big({5} x(0+)-(3+4i) x(0-)\Big)\chi_1,
$$
%where $\chi_1$ is given by \eqref{e-160-chi1}. Moreover,
%$$
%\IM\bA_1 x=(x,\chi_1)\chi_1=\frac{1}{\sqrt{10}}\left((\sqrt5+\sqrt5 i) x(0-)+ (1+3i)x(0+)\right)\chi_1,
%\IM\bA_1 x=(x,\chi_1)\chi_1=\frac{1}{5\sqrt{2}}\left((5+5i) x(0-)+ (7+i)x(0+)\right)\chi_1,
%$$
and  %(see \eqref{e-205-A})
$$
\begin{aligned}
\bA_1 x&=i\frac{dx}{dt}+\frac{\sqrt2 i(\kappa+\bar U)}{|1+\kappa U|\sqrt{1-\kappa^2}}\Big(x, \kappa\varphi^\alpha+\psi^\alpha\Big)\chi_1\\
&=i\frac{dx}{dt}-\frac{i}{2\sqrt5}\Big({5} x(0+)+(1-2i) x(0-)\Big)\big((5+5i)\delta(t-)+(7+i)\delta(t+)\big),
\end{aligned}
$$
where  all other components are described above. Now we can compose an L-system
\begin{equation*}%\label{e6-125}
\Theta= %\left(
\begin{pmatrix}%{smallmatrix}
\bA_1&K_1 &1\\
&&\\
\calH_+ \subset \calH \subset\calH_- &{ } &\dC
\end{pmatrix},
\end{equation*}
where $\calH_+ \subset \calH \subset\calH_-$ is of the form \eqref{e-139-triple}, $K_1 c=c\cdot \chi_1$, $(c\in \dC)$, $K^*_1 x=(x,\chi_1)$, and $x(t)\in \calH_+$.
In order to confirm that the L-system $\Theta$ above realizes our function $V(z)=1+i$, ($z\in\dC_+$) we will find the impedance function $V_\Theta(z)$. To do that we compute the resolvent of $A_1$ first. Consider
$$
(A_1-zI)x(t)=g(t), \quad x(t)=\left[
                       \begin{array}{c}
                         x_1(t) \\
                         x_2(t) \\
                       \end{array}
                     \right]\in\dom(A_1),
$$
and solve linear differential equation to obtain
\begin{equation}\label{e-164-res}
    (A_1-zI)^{-1}\left[
                       \begin{array}{c}
                         g_1(t) \\
                         g_2(t) \\
                       \end{array}
                     \right]=e^{-izt}\left[
                       \begin{array}{c}
                         x_1(0-)+i\int_x^0 g_1(v)e^{izv}dv \\
                        \\
                         x_2(0+)- i\int_0^x g_2(v)e^{izv}dv \\
                       \end{array}
                     \right],
\end{equation}
where  (see \eqref{e-161-ext})  $5x_2(0+)=(3+4i)x_1(0-)$. %Here $U$ and $-e^{2i\alpha}$ are given by \eqref{e-158-U} and \eqref{e-159-e-b}, respectively.
Then we extend the resolvent \eqref{e-164-res} to $\calH_-$ by $(-,\cdot)$-continuity as explained in \cite[Section 4.5]{ABT} to obtain the extended resolvent $\overline{(A_1-zI)^{-1}}$. Furthermore, (see \cite{ABT})
\begin{equation}\label{e-res-152}
    \begin{aligned}
V_\Theta(z)&=\Big((\RE\bA_1-zI)^{-1}\chi_1,\chi_1 \Big)=\Big(\overline{(A_1-zI)^{-1}}\chi_1,\chi_1 \Big)\\
&=\left(e^{-izt}\left[
                       \begin{array}{c}
                         x_1(0-)+id \\
                        \\
                         x_2(0+)- ic \\
                       \end{array}
                     \right],\left[
                       \begin{array}{c}
                         d\delta(t-) \\
                        \\
                         c\delta(t+) \\
                       \end{array}
                     \right]\right),
    \end{aligned}
\end{equation}
where $c= \frac{7+i}{5\sqrt{2}}$ and $d=\frac{1+i}{\sqrt2}$ are the coefficients of the vector $\chi_1$ in \eqref{e-160-chi1}. It is also known (see \cite{ABT}) that if $\IM z>0$, then $\overline{(A_1-zI)^{-1}}\chi_1\in\sN_z$ and hence \eqref{e-res-152} implies that $x_2(0+)- ic=0$. Using this together with the fact that $5x_2(0+)=(3+4i)x_1(0-)$ (see \eqref{e-161-ext}),  we perform necessary calculations in \eqref{e-res-152} to obtain
$$
V_\Theta(z)=i\bar U c \bar d+id\bar d=1+i\equiv V(z), \quad z\in\dC_+.
$$
Thus, our L-system $\Theta$ realizes function $V(z)$.

\vskip.5cm

%\subsection*{Example 2}\label{ex-1}

\noindent
\textbf{Example 2.}
In this Example  we are going to illustrate the method of construction described in Section \ref{s10} where the universal model was developed. We will also show how to construct a ``perturbed" L-system based on a given one. We will rely on some objects presented in Example 1 but with certain changes. Consider an L-system
\begin{equation}\label{e-154-mom_0}
\Xi=
\begin{pmatrix}
\bA_0 &K_0 &1\\
&&\\
\calH_+ \subset \calH \subset\calH_- &{ } &\dC
\end{pmatrix}.
\end{equation}
The state space of $\Xi$ is $\calH_+ \subset \calH \subset\calH_-$ of the form \eqref{e-139-triple}  and its symmetric operator $\dA$ is given by \eqref{e-87-sym} as in Example 1.
%Let $\dA$ and $A$ be still defined by formulas \eqref{e-87-sym} and \eqref{e-89-ext}, respectively.
The main operator $T_0$ of $\Xi$ is defined as follows
\begin{equation}\label{e-149-T}
    \begin{aligned}
T_0 x&=i\frac{dx}{dt},\\
\dom(T_0)&=\left\{x(t)=\left(
                       \begin{array}{c}
                         x_1(t) \\
                         x_2(t) \\
                       \end{array}
                     \right)
\,\Big|\,x_1(t),\,x_2(t) -\text{abs. cont.},\right.\\
&\left. x'_1(t)\in L^2_{(-\infty,0]},\, x'_2(t)\in L^2_{[0,\infty)},\,3 x_2(0+)=-x_1(0-)\right\}.\\
    \end{aligned}
\end{equation}
It follows from \eqref{e-88-def} that $g_+-\frac{1}{3}g_-\in\dom(T_0)$ and hence $\kappa_0=\frac{1}{3}$ is the von Neumann parameter of $T_0$ corresponding to the deficiency vectors \eqref{e-88-def}. Its adjoint operator $T_0^*$ is given by
\begin{equation}\label{e-150-T-star}
    \begin{aligned}
T^*_0 x&=i\frac{dx}{dt},\\
\dom(T^*_0)&=\left\{x(t)=\left(
                       \begin{array}{c}
                         x_1(t) \\
                         x_2(t) \\
                       \end{array}
                     \right)
\,\Big|\,x_1(t),\,x_2(t) -\text{abs. cont.},\right.\\
&\left. x'_1(t)\in L^2_{(-\infty,0]},\, x'_2(t)\in L^2_{[0,\infty)},\, x_2(0+)=-3 x_1(0-)\right\}.\\
    \end{aligned}
\end{equation}
The state-space operator of $\Xi$ in the rigged Hilbert space \eqref{e-139-triple} is
\begin{equation}\label{e-152-bA}
\begin{aligned}
\bA_0 x&=i\frac{dx}{dt}+\frac{i}{2} (3 x(0+)+x(0-))\left[\delta(t+)-\delta(t-)\right],\\
\bA^*_0 x&=i\frac{dx}{dt}+\frac{i}{2}  ( x(0+)+3 x(0-))\left[\delta(t+)-\delta(t-)\right],
\end{aligned}
\end{equation}
where all the components are defined in Example 1. Direct check (see also \eqref{e-17-real}) reveals that %$\RE\bA$ is defined by \eqref{e-140-RbA} and
$$
\RE\bA_0 x=i\frac{dx}{dt}+i(x(0+)+x(0-))\left[\delta(t+)-\delta(t-)\right],
$$
and
$$
\IM\bA_0 x=\frac{1}{2}\left(\cdot,[\delta(t+)-\delta(t-)]\right) [\delta(t+)-\delta(t-)]=(\cdot,\chi_0)\chi_0,
$$
where $\chi_0={\frac{1}{\sqrt2}}\big(\delta(t+)-\delta(t-)\big)$.
Finally, the channel operator of L-system $\Xi$ is
\begin{equation*}\label{e-155-new_0}
\begin{aligned}
K_0c&=c\cdot \chi_0=c\cdot {\frac{1}{\sqrt2}}[\delta(t+)-\delta(t-)], \quad (c\in \dC),\\
K^*_0 x&=(x,\chi_0)={\frac{1}{\sqrt2}}(x(0+)-x(0-)),\\
\end{aligned}
\end{equation*}
and $x(t)\in \calH_+= W^1_2(-\infty,0]\oplus W^1_2[0,\infty)$. As we have mentioned in Example 1,  $s(\dA,A)(z)\equiv0$ for all $z$ in $\dC_+$, where $A$ is the quasi-kernel of $\RE\bA_0$ defined by \eqref{e-89-ext}. Consequently, (see \cite{MT-S}, \cite{BMkT}) the characteristic function $S(\dA,T_0,A)(z)$ of the form \eqref{e-42-Liv} is
 $$S(\dA,T_0,A)(z)\equiv\kappa_0={1}/{3}$$
 for all $z\in\dC_+$. Furthermore, applying \cite[Theorem 7]{BMkT}, we have that $$W_{\Xi}(z)={1}/{S(\dA,T_0,A)(z)}\equiv 3,\quad z\in\dC_+.$$
Applying \eqref{e6-3-6} we get
$$
V_{\Xi}(z)\equiv \frac{3-1}{3+1}i=\frac{1}{2}i,\quad z\in\dC_+.
$$
Observe that $V_{\Xi}$ belongs to the class $\sM_{\kappa_0}$, where $\kappa_0=\frac{1}{3}$ and $a$ given by \eqref{e-66-L} is
$$
a=\frac{1-\kappa_0}{1+\kappa_0}=\frac{1}{2}.
$$
Now we are going %to use the method described in the proof of Theorem \ref{t-18}
to  construct a perturbed L-system $\Theta$ out of the elements of L-system $\Xi$ such that
$$V_\Theta(z)=V(z)\equiv 1+\frac{1}{2}i,\quad z\in\dC_+.$$
Clearly, by construction $V(z)\in\sM^1_{1/3}$. %Rather than using Theorem \ref{t-18} this time,
We  rely on the method described in Section  \ref{s10} in order to preserve the same symmetric operator $\dA$ and state-space as in L-system $\Xi$. Using the values $Q=1$ and $a=1/2$ in \eqref{e-112-k-u} we find
\begin{equation}\label{e-167-ex-k-U}
    \ti\kappa=\frac{1}{\sqrt{2}}\quad \textrm{and} \quad\ti U=\frac{-1+ i}{\sqrt{2}}.
\end{equation}
Substituting the above values in \eqref{e-115-kappa} we obtain
\begin{equation}\label{e-168-kappa}
\kappa=\frac{-2-4i}{-\sqrt2+5\sqrt2 i}=\frac{\sqrt2}{26}(11-3i).
\end{equation}
To continue with construction of the perturbed L-system we are moving to the modified deficiency pair $g_\pm^\alpha$ of the form  \eqref{e-76-def}.
The unimodular constant responsible for changing the deficiency vectors of the form \eqref{e-101-phase} is the following
\begin{equation}\label{e-169-e-b}
    -e^{2i\alpha}=\frac{1-i}{\sqrt{2}}.
\end{equation}
We have
\begin{equation}\label{e-170-def-alpha}
    g_+^\alpha=g_+=\left(
    \begin{array}{c}
      e^t  \\
                0 \\
      \end{array}
     \right),\, (t<0),\quad
g_-^\alpha=(-e^{2i\alpha})g_-=\frac{1-i}{\sqrt{2}}\left(
\begin{array}{c}
               0 \\
      e^{-t} \\
       \end{array}
     \right),\,(t>0).
\end{equation}
Also,
$
\kappa(-e^{2i\alpha})=\frac{4-7i}{13}.
$
Introduce an operator $T_1$ as follows
\begin{equation}\label{e-156-T}
    \begin{aligned}
&\quad\quad T_1 x=i\frac{dx}{dt},\\
&\dom(T_1)=\left\{x(t)=\left[
                       \begin{array}{c}
                         x_1(t) \\
                         x_2(t) \\
                       \end{array}
                     \right]
\,\Big|\,x_1(t),\,x_2(t) -\text{abs. cont.}, x'_1(t)\in L^2_{(-\infty,0]},\right.\\
&\left.  x'_2(t)\in L^2_{[0,\infty)},\,13 x_2(0+)=(-4+7i)\,x_1(0-)\right\}.\\
    \end{aligned}
\end{equation}
Also,
\begin{equation}\label{e-157-T-star}
    \begin{aligned}
&\quad\quad T^*_1 x=i\frac{dx}{dt},\\
&\dom(T^*_1)=\left\{x(t)=\left[
                       \begin{array}{c}
                         x_1(t) \\
                         x_2(t) \\
                       \end{array}
                     \right]
\,\Big|\,x_1(t),\,x_2(t) -\text{abs. cont.}, x'_1(t)\in L^2_{(-\infty,0]},\right.\\
&\left.  x'_2(t)\in L^2_{[0,\infty)}, (4+7i) \,x_2(0+)=-13 x_1(0-)\right\}.\\
%&\left.  x'_2(t)\in L^2_{[0,\infty)},  {26}\,x_2(0+)=-{\sqrt2}(11+3i) x_1(0-)\right\}.\\
    \end{aligned}
\end{equation}
Consider also,
\begin{equation}\label{e-173-ext}
    \begin{aligned}
A_1 x&=i\frac{dx}{dt},\\
\dom(A_1)&=\left\{x(t)=\left[
                       \begin{array}{c}
                         x_1(t) \\
                         x_2(t) \\
                       \end{array}
                     \right]
\,\Big|\,x_1(t),\,x_2(t) -\text{abs. cont.},\right.\\
%&\left. x'_1(t)\in L^2_{(-\infty,0]},\, x'_2(t)\in L^2_{[0,\infty)},\,\sqrt{5}x_2(0+)=(-1+2i)x_1(0-)\right\}.\\
&\left. x'_1(t)\in L^2_{(-\infty,0]},\, x'_2(t)\in L^2_{[0,\infty)},\,x_2(0+)=x_1(0-)\right\}.\\
    \end{aligned}
\end{equation}
It is easy to check that $g_+^\alpha+\ti U g_-^\alpha\in\dom(A_1)$, where $\ti U$ is given by \eqref{e-167-ex-k-U}. Operator $A_1$ is a self-adjoint extension of $\dA$.
Taking into account \eqref{e-155-phi-psi} and making adjustments for the new deficiency vectors \eqref{e-170-def-alpha} we have %that $\varphi=\delta(t-)$ and using $\psi=\delta(t+)$
$$
\varphi^\alpha=\varphi=\delta(t-)\quad\textrm{ and }\quad \psi^\alpha=(-e^{2i\alpha})\psi=\frac{1-i}{\sqrt{2}}\delta(t+).
$$
Then we construct the operator $\bA$  according to the formulas \eqref{e-113-chi} and \eqref{e-114-bA} and the method described in the Section \ref{s10}.
We have
\begin{equation}\label{e-171-chi}
\chi_1=\frac{1}{2}\Big((2+i) \delta(t-)+ (1+i)\delta(t+)\Big),
\end{equation}
%and
$$
\RE\bA_1 x=i\frac{dx}{dt}-\frac{i}{\sqrt{2}}\Big( x(0)-x(0+)\Big)\chi_1,
$$
%where $\chi_1$ is given by \eqref{e-171-chi}. Moreover,
%$$
%\IM\bA_1 x=\frac{1}{2}\left((2+i) x(0-)+ (1+i)x(0+)\right)\chi_1=(x,\chi_1)\chi_1,
%$$
and  %(see \eqref{e-205-A})
$$
\begin{aligned}
\bA_1 x&=\RE\bA_1x+i(x,\chi_1)\chi_1\\
&=i\frac{dx}{dt}-\frac{i}{\sqrt{2}}\Big( 13x(0+)+(4-7i)x(0-)\Big)\Big((2+i) \delta(t-)+ (1+i)\delta(t+)\Big),
\end{aligned}
$$
where all the components are described above. Now we can compose an L-system
\begin{equation*}%\label{e6-125}
\Theta_1= %\left(
\begin{pmatrix}%{smallmatrix}
\bA_1&K_1 &1\\
&&\\
\calH_+ \subset \calH \subset\calH_- &{ } &\dC
\end{pmatrix},
%\right),
%\quad (J=-1),
\end{equation*}
where $\calH_+ \subset \calH \subset\calH_-$ is of the form \eqref{e-139-triple}, $K_1 c=c\cdot \chi_1$, $(c\in \dC)$, $K^\ast_1 x=(x,\chi_1)$ and $x(t)\in \calH_+$.
According to  Section \ref{s10} this L-system $\Theta_1$ realizes our function $V(z)$, that is
$$
V_\Theta(z)=V(z)\equiv 1+\frac{1}{2}i,\quad z\in\dC_+.
$$

%%%%%%%%%%%%%%%
Alternatively, we can construct a realization of $V(z)=1+\frac{1}{2}i$, ($z\in\dC_+$) based on the result of Corollary \ref{c-26}. We are going to use the techniques of Theorem \ref{t-18} and the original set of deficiency vectors \eqref{e-88-def}. This will require a positive value of $\kappa$ of the form \eqref{e-53-kappa-prime} and $U$ of the form \eqref{e-75-U} to yield
\begin{equation}\label{e-161-k-U}
    \kappa=\frac{\sqrt{65}}{13}\quad \textrm{and} \quad U=\frac{-7+4i}{\sqrt{65}}.
\end{equation}
Note that  $\kappa$ in \eqref{e-161-k-U} equals (see \eqref{e-120-kappas}) the absolute value of the complex $\kappa$ from \eqref{e-168-kappa}. Then
\begin{equation}\label{e-162-T1}
    \begin{aligned}
&\quad\quad T_{11} x=i\frac{dx}{dt},\\
&\dom(T_{11})=\left\{x(t)=\left[
                       \begin{array}{c}
                         x_1(t) \\
                         x_2(t) \\
                       \end{array}
                     \right]
\,\Big|\,x_1(t),\,x_2(t) -\text{abs. cont.}, x'_1(t)\in L^2_{(-\infty,0]},\right.\\
&\left.  x'_2(t)\in L^2_{[0,\infty)},\,\sqrt{65} x_2(0+)=-13\,x_1(0-)\right\},\\
    \end{aligned}
\end{equation}
and
\begin{equation}\label{e-163-T1-star}
    \begin{aligned}
&\quad\quad T^*_{11} x=i\frac{dx}{dt},\\
&\dom(T^*_{11})=\left\{x(t)=\left[
                       \begin{array}{c}
                         x_1(t) \\
                         x_2(t) \\
                       \end{array}
                     \right]
\,\Big|\,x_1(t),\,x_2(t) -\text{abs. cont.}, x'_1(t)\in L^2_{(-\infty,0]},\right.\\
&\left.  x'_2(t)\in L^2_{[0,\infty)},  13\,x_2(0+)=-\sqrt{65} x_1(0-)\right\}.\\
    \end{aligned}
\end{equation}
%Following formulas \eqref{e-212} and \eqref{e-214} of Appendix \ref{A2} we construct
%\begin{equation}\label{e-164-chi1}
%\chi_1=\frac{1}{2\sqrt{65}}\left(\sqrt{65}(1+2i) \delta(t-)+ (1+18 i)\delta(t+)\right),
%\end{equation}
%and
%$$
%\RE\bA_1 x=i\frac{dx}{dt}-\frac{i}{\sqrt{65}}\Big(\sqrt{65} x(0-)+(7-4i)x(0+)\Big)\chi_1,
%$$
%where $\chi_1$ is given by \eqref{e-164-chi1}. Moreover,
%$$
%\IM\bA_1 x=\frac{1}{2\sqrt{65}}\left(\sqrt{65}(1+2i) x(0-)+ (1+18 i)x(0+)\right)\chi_1=(x,\chi_1)\chi_1,
%$$
Following formulas \eqref{e-212}, \eqref{e-214}, and \eqref{e-205-A} of Appendix \ref{A2} we construct
\begin{equation}\label{e-164-chi1}
\chi_{11}=\frac{1}{2\sqrt{65}}\left(\sqrt{65}(1+2i) \delta(t-)+ (1+18 i)\delta(t+)\right),
\end{equation}
$$
\RE\bA_{11} x=i\frac{dx}{dt}-\frac{i}{\sqrt{65}}\Big(\sqrt{65} x(0-)+(7-4i)x(0+)\Big)\chi_1,
$$
and %(see \eqref{e-205-A})
$$
\begin{aligned}
\bA_{11} x&=i\frac{dx}{dt}-\frac{i}{20}\Big(\sqrt{65} x(0+)+13x(0-)\Big)\left(\sqrt{65}(4+3i) \delta(t-)+ (20+35 i)\delta(t+)\right).
\end{aligned}
$$
Now we can compose an L-system
\begin{equation}\label{e6-125-11}
\Theta_{11}= %\left(
\begin{pmatrix}%{smallmatrix}
\bA_{11}&K_{11} &1\\
&&\\
\calH_+ \subset \calH \subset\calH_- &{ } &\dC
\end{pmatrix},
%\right),
%\quad (J=-1),
\end{equation}
where $\calH_+ \subset \calH \subset\calH_-$ is of the form \eqref{e-139-triple}, $K_{11} c=c\cdot \chi_1$, $(c\in \dC)$, $K^\ast_{11} x=(x,\chi_1)$, $\chi_1$ is given by \eqref{e-164-chi1}, and $x(t)\in \calH_+$. This L-system $\Theta_{11}$ is yet another realization of our
function $V(z)=1+\frac{1}{2}i$, ($z\in\dC_+$). According to \cite[Theorem 6.6.10]{ABT} the L-systems $\Theta_{1}$ and $\Theta_{11}$ are bi-unitarily equivalent. Also, using \eqref{e6-3-6} we get
\begin{equation}\label{e-180-W}
W_{\Theta_{11}}(z)=W_{\Theta_{1}}(z)=-\frac{1+8i}{5}, \quad z\in\dC_+.
\end{equation}

\vskip.5cm

%\subsection*{Example 2}\label{ex-1}

\noindent
\textbf{Example 3.} This example is meant to illustrate the concept of unimodular transformation and the results of Theorem \ref{t-22-Q}. As before we will rely on the objects constructed in Examples 1 and 2. Let
$$
V(z)=\frac{1}{2}i\quad\textrm{ and }\quad V_0(z)=i,\quad z\in\dC_+.
$$
As we have shown this in Examples 1 and 2 these functions $V(z)$ and $V_0(z)$ are realized by L-systems $\Xi$ and $\Theta_0$ of the forms \eqref{e-154-mom_0} and \eqref{e6-125-mom}, respectively, with all the components completely described in Examples 1 and 2. As we have proved it in Theorem \ref{t-22-Q} part (2), L-system $\Xi$ cannot be a unimodular transformation of $\Theta_0$. On the other hand, part (3) of Theorem \ref{t-22-Q} claims that for $Q=1$ there is a value of perturbing parameter $Q_0$ such that $Q_0+V_0(z)$ can be realized by an L-system $\Theta^{Q_0}$ that is a unimodular transformation of $\Theta_{11}$ of the form \eqref{e6-125-11} that realizes $1+V(z)=1+\frac{1}{2}i$. First, we use \eqref{e-92-Q-0} to find one value of $Q_0$. Using the value of $\kappa$ from \eqref{e-161-k-U} we obtain
$$
Q_0=\frac{2\frac{\sqrt{65}}{13}}{\sqrt{1-\frac{65}{169}}}=\sqrt{\frac{5}{2}}.
$$
Now we construct an L-system $\Theta$ that realizes the function
$$
Q_0+V_0(z)=\sqrt{\frac{5}{2}}+i,\quad z\in\dC_+,
$$
and is a unimodular transformation of $\Theta_{11}$ of the form \eqref{e6-125-11}. As we have shown  in the proof of part (3) of Theorem \ref{t-22-Q}, the L-system $\Theta$ we seek shares the state space $\calH_+ \subset \calH \subset\calH_-$  of the form \eqref{e-139-triple}, symmetric operator $\dA$ of the form \eqref{e-87-sym}, and main operator of the form \eqref{e-162-T1} with L-system $\Theta_{11}$. In order to find the value of von Neumann's parameter $U$ we use \eqref{e-54-U-M-q}
\begin{equation}\label{e-180-U}
    U=\frac{-Q_0+2i}{\sqrt{Q^2_0+4}}=\frac{-\sqrt{\frac{5}{2}}+2i}{\sqrt{\frac{5}{2}+4}}=\frac{-\sqrt5+2\sqrt2 i}{\sqrt{13}}.
\end{equation}
Following formulas \eqref{e-212} and \eqref{e-205-A} of Appendix \ref{A2} with values for $\kappa$ and $U$ from \eqref{e-161-k-U} and \eqref{e-180-U} we construct
\begin{equation}\label{e-164-chi2}
\chi_{2}=\frac{1}{2\sqrt{13}}\left((\sqrt{26}+\sqrt{65} i) \delta(t-)+ (\sqrt{10}+9i)\delta(t+)\right),
\end{equation}
and %(see \eqref{e-205-A})
$$
\begin{aligned}
\bA_{2} x&=i\frac{dx}{dt}-\frac{i}{4}\Big(\sqrt{65} x(0+)+13x(0-)\Big)\left((\sqrt{26}+\sqrt{65} i) \delta(t-)+ (\sqrt{10}+9i)\delta(t+)\right).
\end{aligned}
$$
Now we can compose an L-system
\begin{equation}\label{e6-125-2}
\Theta= %\left(
\begin{pmatrix}%{smallmatrix}
\bA_{2}&K_{2} &1\\
&&\\
\calH_+ \subset \calH \subset\calH_- &{ } &\dC
\end{pmatrix},
\end{equation}
where $\calH_+ \subset \calH \subset\calH_-$ is of the form \eqref{e-139-triple}, $K_{2} c=c\cdot \chi_2$, $(c\in \dC)$, $K^\ast_{2} x=(x,\chi_2)$, $\chi_2$ is given by \eqref{e-164-chi2}, and $x(t)\in \calH_+$.
Its transfer function is
$$
W_\Theta(z)=-\frac{5+2\sqrt{10}\,i}{5},\quad z\in\dC_+
$$
and it is related to the transfer function  $W_{\Theta_{11}}$ in \eqref{e-180-W} as follows
$$
W_\Theta(z)=W_{\Theta_{11}}(z)\cdot(-e^{2i\alpha})=\frac{-1-8i}{5}\cdot(-e^{2i\alpha}),\quad z\in\dC_+.
$$
This allows us to find the unimodular factor responsible for the unimodular transformation
$$
-e^{2i\alpha}=\frac{5+2\sqrt{10}i}{1+8i}.
$$
Clearly, $\Theta$ is a unimodular transformation of $\Theta_{11}$.
%%%%%%%%%%%%%

\appendix

%\appendix
\section{$(*)$-extensions as state-space operators of L-systems}\label{A2}

Here we provide an explicit construction of  an L-system based upon a given $(*)$-extension that becomes the state-space operator of an obtained system.
We will also demonstrate the case when the corresponding operators of this L-system  satisfy the conditions of Hypotheses \ref{setup} or \ref{setup-1}.  This construction can be found in \cite{T69} and its detailed treatment in \cite[Section 4]{BMkT-2}.

Let $\dA$ be a densely defined closed symmetric operator with finite deficiency indices $(1,1)$ and $(+)$-normalized deficiency vectors $g_+$ and $g_-$. Let $T$ be a dissipative quasi-self-adjoint extension of $\dA$ parameterized (see \eqref{parpar}) with the von Neumann parameter $\kappa$, ($0\le\kappa<1$) and $A$ be a self-adjoint extension of $\dA$ whose von Neumann parameter in \eqref{DOMHAT} is $U$. Let $S_{\bA}$ and $S_{\bA^*}$ be $(2\times2)$-matrices of the form
\begin{equation}\label{e4-62}
    S_\bA=\left(
            \begin{array}{cc}
              H\kappa & H \\
              \kappa^2 H+i\kappa & i+\kappa H \\
            \end{array}
          \right),\quad
S_{\bA^*}=\left(
            \begin{array}{cc}
              \kappa \bar H -i & \kappa^2 \bar H -i\kappa \\
              \bar H  & \bar H \kappa \\
            \end{array}
          \right),
\end{equation}
where
\begin{equation}\label{e3-39-new}
H=\frac{i}{1-\kappa^2}\left(\frac{\kappa+\bar U}{1+\kappa\bar U}+\kappa \right).
\end{equation}
 Then  any ($*$)-extension $\bA$ of $T$ takes a form
\begin{equation}\label{e3-40}
    \bA=\dA^*+\left[p(\cdot,\varphi)+q(\cdot,\psi) \right]\varphi     +\left[v(\cdot,\varphi)+w(\cdot,\psi) \right]\psi,
\end{equation}
where $S_{\bA}=\left(\begin{array}{cc}
                                        p & q \\
                                        v & w \\
                                      \end{array}
                                    \right)$ is defined by \eqref{e4-62}. Here $\varphi=\calR^{-1}(g_+)$ and $\psi=\calR^{-1}(g_-)$, where $\calR$ is a  Riesz-Berezansky   operator.  Similarly we write
\begin{equation}\label{e-21-star}
    \bA^*=\dA^*+\left[p^\times(\cdot,\varphi)+q^\times(\cdot,\psi) \right]\varphi
    +\left[v^\times(\cdot,\varphi)+w^\times(\cdot,\psi) \right]\psi,
\end{equation}
where $S_{\bA^*}=\left(\begin{array}{cc}
                                        p^\times & q^\times \\
                                        v^\times & w^\times \\
                                      \end{array}
                                    \right)$
is also defined by \eqref{e4-62}.

Any ($*$)-extension $\bA$ in \eqref{e3-40} can be included in an L-system of the form \eqref{e-62}. For the sake of simplicity we are going to illustrate this inclusion process for the case of Hypotheses \ref{setup} or \ref{setup-1}. First, (see \cite{BMkT}) let us assume Hypotheses \ref{setup} yielding $U=-1$. Then $\bA=\bA_1$  becomes
\begin{equation}\label{e-H1}
H=\frac{i}{1-\kappa^2}[(\kappa-1)(1-\kappa)^{-1}+\kappa]=\frac{-i}{1+\kappa},
\end{equation}
and
\begin{equation}\label{e-15}
    \begin{aligned}
    S_{\bA_1}&=\left(
            \begin{array}{cc}
              -\frac{i\kappa}{1+\kappa} & \frac{-i}{1+\kappa} \\
              \frac{i\kappa }{1+\kappa} & \frac{i}{1+\kappa} \\
            \end{array}
          \right)=\frac{i}{1+\kappa}\left(
                                            \begin{array}{cc}
                                              -\kappa & -1 \\
                                              \kappa & 1 \\
                                            \end{array}
                                          \right),\\
S_{\bA^*_1}&=\left(
            \begin{array}{cc}
              \frac{-i}{1+\kappa} & -\frac{i\kappa}{1+\kappa}  \\
              \frac{i}{1+\kappa} & \frac{i\kappa}{1+\kappa} \\
            \end{array}
          \right)=\frac{i}{1+\kappa}\left(
                                            \begin{array}{cc}
                                              -1 & -\kappa \\
                                              1 & \kappa \\
                                            \end{array}
                                          \right).
          \end{aligned}
\end{equation}
Then using \eqref{e3-40} and \eqref{e-21-star} with \eqref{e-15} one obtains (see \cite{BMkT})
\begin{equation}\label{e-17}
    \begin{aligned}
    \IM\bA_1&=\left(\frac{1}{2}\right)\frac{1-\kappa}{1+\kappa}(\cdot,\varphi-\psi)(\varphi- \psi)=(\cdot,\chi_1)\chi_1,
       \end{aligned}
\end{equation}
where
\begin{equation}\label{e-18}
    \chi_1=\sqrt{\frac{1-\kappa}{2+2\kappa}}\,(\varphi- \psi)=\sqrt{\frac{1-\kappa}{1+\kappa}}\left(\frac{1}{\sqrt2}\,\varphi- \frac{1}{\sqrt2}\,\psi\right).
\end{equation}
Also,
\begin{equation}\label{e-17-real}
    \begin{aligned}
    \RE\bA_1&=\dA^*+\frac{i}{2}(\cdot,\varphi+\psi)(\varphi-\psi).
       \end{aligned}
\end{equation}
As one can see from \eqref{e-17-real}, the domain $\dom(\hat A_1)$ of the quasi-kernel $\hat A_1$ of $\RE\bA_1$ consists of such vectors $f\in\calH_+$ that are orthogonal to $(\varphi+\psi)$.
The $(*)$-extension $\bA_1$  that we have just described  can be included in an L-system
\begin{equation}\label{e-62-1-1}
\Theta_1= \begin{pmatrix} \bA_1&K_1&\ 1\cr \calH_+ \subset \calH \subset
\calH_-& &\dC\cr \end{pmatrix}
\end{equation}
with $K_1c=c\cdot\chi_1$, $(c\in\dC)$.

Now let us assume (see \cite{BMkT-2}) the case of Hypotheses \ref{setup-1} with $U=1$ and describe a $(*)$-extension $\bA=\bA_2$.
Then formula \eqref{e3-39-new} yields
\begin{equation}\label{e-H2}
H=\frac{i}{1-\kappa^2}[(\kappa+1)(1+\kappa)^{-1}+\kappa]=\frac{i}{1-\kappa}.
\end{equation}
Similarly to the above, we substitute this value of $H$ into \eqref{e4-62} and obtain
$$
   \begin{aligned}
    S_{\bA_2}&=\left(
            \begin{array}{cc}
              \frac{i\kappa}{1-\kappa} & \frac{i}{1-\kappa} \\
              \frac{i\kappa}{1-\kappa} & \frac{i}{1-\kappa} \\
            \end{array}
          \right)=\frac{i}{1-\kappa}\left(
                                            \begin{array}{cc}
                                              \kappa & 1 \\
                                              \kappa & 1 \\
                                            \end{array}
                                          \right),\\
S_{\bA^*_2}&=\left(
            \begin{array}{cc}
              -\frac{i}{1-\kappa} & \frac{-i\kappa}{1-\kappa}  \\
              -\frac{i}{1-\kappa} & \frac{-i\kappa}{1-\kappa} \\
            \end{array}
          \right)=\frac{i}{1-\kappa}\left(
                                            \begin{array}{cc}
                                              -1 & -\kappa \\
                                              -1 & -\kappa \\
                                            \end{array}
                                          \right).
          \end{aligned}
$$
Furthermore,
\begin{equation}\label{e-17-1}
       \IM\bA_2= \left(\frac{1}{2}\right)\frac{1+\kappa}{1-\kappa}\Big((\cdot,\varphi+\psi)(\varphi+\psi)\Big)=(\cdot,\chi_2)\chi_2,
    \end{equation}
where
\begin{equation}\label{e-18-1}
    \chi_2=\sqrt{\frac{1+\kappa}{2-2\kappa}}\,(\varphi+ \psi)=\sqrt{\frac{1+\kappa}{1-\kappa}}\left(\frac{1}{\sqrt2}\,\varphi+ \frac{1}{\sqrt2}\,\psi\right).
\end{equation}
Also,
\begin{equation}\label{e-32-real}
    \begin{aligned}
    \RE\bA_2&=\dA^*-\frac{i}{2}(\cdot,\varphi-\psi)(\varphi+ \psi).
    \end{aligned}
\end{equation}
As one can see from \eqref{e-32-real}, the domain $\dom(\hat A_2)$ of the quasi-kernel $\hat A_2$ of $\RE\bA_2$ consists of such vectors $f\in\calH_+$ that are orthogonal to $(\varphi- \psi)$.
Again we include $\bA_2$  into an L-system
\begin{equation}\label{e-62-1-3}
\Theta_2= \begin{pmatrix} \bA_2&K_2&\ 1\cr \calH_+ \subset \calH \subset
\calH_-& &\dC\cr \end{pmatrix}
\end{equation}
with $K_2c=c\cdot\chi_2$, $(c\in\dC)$.

Note that two L-systems $\Theta_1$ and $\Theta_2$ in \eqref{e-62-1-1} and \eqref{e-62-1-3} are constructed in a way that the quasi-kernels $\hat A_1$ of $\RE\bA_1$ and $\hat A_2$ of $\RE\bA_2$ satisfy the conditions of Hypotheses \ref{setup} and \ref{setup-1}, respectively, as it follows from \eqref{e-17-real} and \eqref{e-32-real}.

Now we make a similar construction for an arbitrary parameter $U$. It follows directly from \eqref{e4-62} (see also \cite{ABT}) that
\begin{equation}\label{e-203}
    \frac{S_{\bA}+S_{\bA^*}}{2}=\frac{1}{2}\left(
            \begin{array}{cc}
              H\kappa+\bar H\kappa-i & H+\kappa^2\bar H-i\kappa \\
              \kappa^2 H+\bar H+i\kappa & i+\kappa H+\kappa\bar H \\
            \end{array}
          \right)
\end{equation}
and
\begin{equation}\label{e-204}
    \frac{S_{\bA}-S_{\bA^*}}{2i}=\frac{1}{2i}\left(
            \begin{array}{cc}
              H\kappa-\bar H\kappa+i & H-\kappa^2\bar H+i\kappa \\
              \kappa^2 H-\bar H+i\kappa & i+\kappa H-\kappa\bar H \\
            \end{array}
          \right).
\end{equation}
To simplify calculations we write the matrix in  \eqref{e-203} as
\begin{equation}\label{e-205}
 \frac{S_{\bA}+S_{\bA^*}}{2}=\left(
            \begin{array}{cc}
              \calA & \calB \\
              \calC & \calD \\
            \end{array}
          \right),
\end{equation}
where $\calA$, $\calB$, $\calC$, and $\calD$ are the corresponding entries in the right hand side of \eqref{e-203}. It can be shown then (see also \cite[Theorem 6.3.7]{ABT}) that
\begin{equation}\label{e-206}
    \calB=-\bar U\calA, \quad \calD=-\bar U\calC.
\end{equation}
Using \eqref{e3-40} and \eqref{e-21-star} in conjunction with \eqref{e-205}  and \eqref{e-206} we obtain
\begin{equation}\label{e-207}
        \RE\bA=\dA^*+(\cdot,\varphi-U \psi)(\calA\varphi+ \calC\psi).
\end{equation}
It can be shown (see also \cite{TSh1}) that the matrix
\begin{equation}\label{e-208}
    \Delta=\frac{1}{\kappa^2-1}\left(
            \begin{array}{cc}
              \kappa^2+1 & 2\kappa \\
              -2\kappa& -\kappa^2-1 \\
            \end{array}
          \right)
\end{equation}
is such that $\Delta^2=I$ and
\begin{equation}\label{e-209}
    \frac{S_{\bA}+S_{\bA^*}}{2}=i\left(\frac{S_{\bA}-S_{\bA^*}}{2i}\cdot\Delta \right).
\end{equation}
Using \eqref{e-205} this yields
$$
\begin{aligned}
\frac{S_{\bA}-S_{\bA^*}}{2i}&=(-i)\frac{S_{\bA}+S_{\bA^*}}{2}\cdot\Delta\\&=\frac{-i}{\kappa^2-1}\left(
            \begin{array}{cc}
              \calA(\kappa^2+1+2\kappa\bar U) & \calA(2\kappa+\bar U(\kappa^2+1) \\
              \calC(\kappa^2+1+2\kappa\bar U)&\calC(2\kappa+\bar U(\kappa^2+1) \\
            \end{array}
          \right).
\end{aligned}
$$
Applying \eqref{e3-40} and \eqref{e-21-star} with \eqref{e-205} and performing straightforward calculations we obtain
\begin{equation}\label{e-210}
    \begin{aligned}
        \IM\bA&=\left(\cdot,i\frac{\kappa^2+1+2\kappa U}{\kappa^2-1}\varphi+i\frac{2\kappa+ U(\kappa^2+1)}{\kappa^2-1}\psi\right)(\calA\varphi+ \calC\psi)\\
        &=\frac{-i}{\kappa^2-1}\left(\cdot,({\kappa^2+1+2\kappa U})\varphi+({2\kappa+ U(\kappa^2+1)})\psi\right)(\calA\varphi+ \calC\psi).
    \end{aligned}
\end{equation}
We recall that
$$
\calA= \frac{1}{2}(H\kappa+\bar H\kappa-i)\quad\textrm{ and }\quad\calC=\frac{1}{2}(\kappa^2 H+\bar H+i\kappa),
$$
and use \eqref{e-H2} to substitute the value for $H$  and obtain
\begin{equation}\label{e-211}
     \begin{aligned}
    \calA&= \left(-\frac{i U}{2}\right)\frac{\kappa^2+1+2\kappa U}{(U+\kappa)(1+\kappa U)}=\left(-\frac{i}{2}\right)\frac{\kappa^2+1+2\kappa U}{|1+\kappa U|^2},\\
    \calC&= \left(-\frac{i U}{2}\right)\frac{\kappa^2 U+2\kappa+ U}{(U+\kappa)(1+\kappa U)}=\left(-\frac{i}{2}\right)\frac{\kappa^2 U+2\kappa+ U}{|1+\kappa U|^2}.
     \end{aligned}
\end{equation}
Substituting \eqref{e-211} into \eqref{e-210} gives
$$
\begin{aligned}
        \IM\bA&=\frac{-i}{\kappa^2-1}\left(\cdot,({\kappa^2+1+2\kappa U})\varphi+({2\kappa+ U(\kappa^2+1)})\psi\right)(\calA\varphi+ \calC\psi)\\
        &=\frac{1}{2}\left(\cdot,  \frac{\kappa^2+1+2\kappa U}{|1+\kappa U|\sqrt{1-\kappa^2}}\varphi+ \frac{\kappa^2 U+2\kappa+ U}{|1+\kappa U|\sqrt{1-\kappa^2}}\psi\right)\\
        &\times\left(\frac{\kappa^2+1+2\kappa U}{|1+\kappa U|\sqrt{1-\kappa^2}}\varphi+ \frac{\kappa^2 U+2\kappa+ U}{|1+\kappa U|\sqrt{1-\kappa^2}}\psi\right)=(\cdot,\chi)\chi,
    \end{aligned}
$$
where
\begin{equation}\label{e-212}
    \chi=\frac{\kappa^2+1+2\kappa U}{\sqrt2|1+\kappa U|\sqrt{1-\kappa^2}}\varphi+ \frac{\kappa^2 U+2\kappa+ U}{\sqrt2|1+\kappa U|\sqrt{1-\kappa^2}}\psi.
\end{equation}
Also, \eqref{e-207} implies
\begin{equation}\label{e-214}
    \begin{aligned}
    \RE\bA&=\dA^*-\frac{i(\cdot,\varphi-U\psi)}{2|1+\kappa U|^2}\left((\kappa^2+1+2\kappa U)\varphi+ (\kappa^2 U+2\kappa+ U)\psi\right)\\
&=\dA^*-\frac{i\sqrt{1-\kappa^2}}{\sqrt2|1+\kappa U|}(\cdot,\varphi-U\psi)\chi.
    \end{aligned}
\end{equation}
Using the above relations we get
$$
 \begin{aligned}
\bA&=\RE\bA+i\IM\bA=\dA^*-\left(\cdot,\frac{-i\sqrt{1-\kappa^2}}{\sqrt2|1+\kappa U|}(\cdot,\varphi-U\psi)\right)\chi+i(\cdot,\chi)\chi\\
&=\dA^*-\left(\cdot,\frac{-i\sqrt{1-\kappa^2}}{\sqrt{2}|1+\kappa U|}\varphi+ \frac{iU\sqrt{1-\kappa^2}}{\sqrt{2}|1+\kappa U|}\psi\right)\chi\\
&\quad+{\frac{1}{\sqrt2}}\left(\cdot,-i\frac{\kappa^2+1+2\kappa U}{|1+\kappa U|\sqrt{1-\kappa^2}}\varphi-i\frac{\kappa^2 U+2\kappa+U}{|1+\kappa U|\sqrt{1-\kappa^2}}\psi\right)\chi\\
&=\dA^*+\left(\cdot,\left[\frac{i\sqrt{1-\kappa^2}}{\sqrt{2}|1+\kappa U|}-\frac{i(\kappa^2+1+2\kappa U)}{\sqrt2|1+\kappa U|\sqrt{1-\kappa^2}}\right]\varphi\right.\\
&\quad+\left.\left[\frac{-iU\sqrt{1-\kappa^2}}{\sqrt{2}|1+\kappa U|}-\frac{i(\kappa^2 U+2\kappa +U)}{\sqrt2|1+\kappa U|\sqrt{1-\kappa^2}}\right]\psi\right)\chi.\\
    \end{aligned}
$$
Simplifying the two sets of brackets above yields
\begin{equation}\label{e-205-A}
\begin{aligned}
\bA&=\dA^*+\left(\cdot, \frac{-\sqrt2 i\kappa(\kappa+U)}{|1+\kappa U|\sqrt{1-\kappa^2}}\varphi+\frac{-\sqrt2 i(\kappa+U)}{|1+\kappa U|\sqrt{1-\kappa^2}}\psi\right)\chi\\
&=\dA^*+\frac{\sqrt2 i(\kappa+\bar U)}{|1+\kappa U|\sqrt{1-\kappa^2}}\Big(\cdot, \kappa\varphi+\psi\Big)\chi.
   \end{aligned}
\end{equation}
Again we include $\bA$  into an L-system
\begin{equation}\label{e-215}
\Theta= \begin{pmatrix} \bA&K&\ 1\cr \calH_+ \subset \calH \subset
\calH_-& &\dC\cr \end{pmatrix}
\end{equation}
with $K c=c\cdot\chi$, $(c\in\dC)$. To find  $V_\Theta(z)$  we  solve the equation $(\RE\bA-z I)f=\chi$. This equation has  (see \cite[Theorems 4.3.2 and 4.5.12]{ABT}) a unique solution $f\in\ker(\dA^*-zI)$ that  can be written via von Neumann's  decomposition as
$$
f=f_0(z)+a(z)g_++b(z)g_-,\quad f_0(z)\in\DdA,
$$
where $a(z)$ and $b(z)$ are some coefficients that depend on $z$. Applying \eqref{e-214} we get
$$
\begin{aligned}
(\RE\bA-z I)f&=(\dA^*-z I)f-\frac{i\sqrt{1-\kappa^2}}{\sqrt2|1+\kappa U|}(f,\varphi-U\psi)\chi\\
&=-\frac{i\sqrt{1-\kappa^2}}{\sqrt2|1+\kappa U|}(f_0(z)+a(z)g_++b(z)g_-,\varphi-U\psi)\chi=\chi.
\end{aligned}
$$
Therefore,
$$
-\frac{i\sqrt{1-\kappa^2}}{\sqrt2|1+\kappa U|}(a(z)-b(z)\bar U)\chi=\chi,
$$
and hence
$$
-\frac{i\sqrt{1-\kappa^2}}{\sqrt2|1+\kappa U|}\Big(a(z)-b(z)\bar U\Big)=1.
$$
On the other hand, we know that
$$
V_\Theta(z)=((\RE\bA-z I)^{-1}\chi,\chi )=a(z)\frac{\overline{\kappa^2+1+2\kappa U}}{\sqrt2|1+\kappa U|\sqrt{1-\kappa^2}}+b(z)\frac{\overline{\kappa^2 U+2\kappa+U}}{\sqrt2|1+\kappa U|\sqrt{1-\kappa^2}}.
$$
Combining the above leads to a linear system for every $z\in\dC_\pm$
\begin{equation}\label{e-216-ls}
    \left\{
      \begin{array}{ll}
        \frac{\overline{\kappa^2+1+2\kappa U}}{\sqrt2|1+\kappa U|\sqrt{1-\kappa^2}}\, a(z)+\frac{\overline{\kappa^2 U+2\kappa+U}}{\sqrt2|1+\kappa U|\sqrt{1-\kappa^2}}\,b(z)=V_\Theta(z) & \hbox{} \\
&\\
        -\frac{i\sqrt{1-\kappa^2}}{\sqrt2|1+\kappa U|}a(z)+\frac{i\sqrt{1-\kappa^2}\bar U}{\sqrt2|1+\kappa U|}b(z)=1. & \hbox{}
      \end{array}
    \right.
\end{equation}
This system is always solvable because the determinant of the coefficient matrix equals $i\bar U$ for every $z\in\dC_\pm$ and hence is never zero. Obviously, $a(-i)=b(i)=0$.

\section{Model L-system based on a prime  triple}\label{A1}

In this Appendix we are going to explain the construction of a  functional model for a prime  dissipative triple\footnote{We call a triple $(\dot A, \whA , A)$
 a \textit{prime triple} if $\dot A$ is a prime symmetric operator with deficiency indices $(1,1)$,  $A$ is its self-adjoint extension, and $T \ne T^*$ is its dissipative extension defined by \eqref{parpar}.} (see \cite{BMkT}) and construct a minimal L-system based on that triple.

%Given a contractive, analytic in $\bbC_+$ function $s$ that satisfies the Liv\v{s}ic criterion \cite[Theorem 15]{L}, introduce the function

Let $M(z)$ be a Herglotz-Nevanlinna function such that
$$
M(z)=\int_\bbR \left
(\frac{1}{\lambda-z}-\frac{\lambda}{1+\lambda^2}\right )
d\mu(\lambda), \quad z\in \bbC_+,
$$
for some infinite Borel measure with
$$
\int_\bbR\frac{d\mu(\lambda)}{1+\lambda^2}=1.
$$
In the Hilbert space $L^2(\bbR;d\mu)$ introduce  the multiplication (self-adjoint) operator by the  independent variable $\cB$  on
\begin{equation}\label{nacha1-ap}
\dom(\cB)=\left \{f\in \,L^2(\bbR;d\mu) \,\bigg | \, \int_\bbR
\lambda^2 | f(\lambda)|^2d \mu(\lambda)<\infty \right \},
\end{equation} denote by  $\dot \cB$  its  restriction on
\begin{equation}\label{nacha2-ap}
\dom(\dot \cB)=\left \{f\in \dom(\cB)\, \bigg | \, \int_\bbR
f(\lambda)d \mu(\lambda) =0\right \},
\end{equation}
and let  $\whB $ be   the dissipative restriction of the operator  $(\dot \cB)^*$  on
\begin{equation}\label{nacha3-ap}
\dom(\whB )=\dom (\dot \cB)\dot +\linspan\left
\{\,\frac{1}{\cdot -i}- \kappa\frac{1}{\cdot +i}\right \},
\end{equation}
where $\kappa$, ($|\kappa|<1$) is the von Neumann parameter  of $\whB$. The deficiency elements $g_z\in\ker(\dot\cB^*-zI)$, ($z\ne\bar z$) are given by (see \cite{MT-S})
\begin{equation}\label{e-52-def}
g_z(\lambda)=\frac{1}{\lambda-z}, \quad
\,\, \text{$\mu$-a.e. }.
\end{equation}
The Liv\u sic function $s(z)$, the Weyl-Titchmarsh function $M(z)$ (see \cite{D}) for the pair $(\dA,A)$, and characteristic function $S(z)$ for the triple $(\dot A,\whA ,A)$  and their relations were introduced in \cite{MT-S} for a symmetric operator $\dA$, its self-adjoint extension $A$, and its dissipative  quasi-selfadjoint extension $T$ in an abstract Hilbert space $\calH$. For operators $\dot \cB$,   $\whB$, $\cB$ these functions are
\begin{equation}\label{e-197-Liv}
s(z)=\frac{z-i}{z+i}\cdot \frac{(g_z, g_-)}{(g_z, g_+)},\quad z\in \bbC_+,
\end{equation}
\begin{equation*}\label{e-WT}
M(z)=\frac1i\cdot\frac{s(z)+1}{s(z)-1},\quad z\in \bbC_+,
\end{equation*}
%Moreover,
\begin{equation}\label{e-198-char}
S(z)=\frac{s(z)-\kappa} {\overline{ \kappa }\,s(z)-1}, \quad z\in \bbC_+.
\end{equation}
%All the above functions can be found based on the triple $(\dot \cB,   \whB ,\cB)$.
%Consider an abstract pair $(\dA,A)$ of a symmetric operator $\dA$ and its self-adjoint extension $A$ in an abstract Hilbert space $\calH$ such that $\dA$ has normalized deficiency vectors $g_\pm$  and $g_+- g_-\in \dom(A)$. Let $T$ be a quasi-selfadjoint extension of $\dA$ with
%\begin{equation}\label{e-226-k}
%g_+-\kappa g_-\in \dom(T),\quad  |\kappa|<1.
%\end{equation}
 It was established in \cite{MT-S} that a prime triple $(\dot A,\whA ,A)$ satisfying  Hypothesis \ref{setup}  is unitarily equivalent to the model triple $(\dot \cB,   \whB ,\cB)$ in the Hilbert space $L^2(\bbR;d\mu)$ if their characteristic functions $S(z)$ match. We will refer to the triple  $(\dot \cB,   \whB ,\cB)$ as {\it  the model  triple } in the Hilbert space $L^2(\bbR;d\mu)$.
%Introducing a new reference operator $A^\theta$ by setting $g_+- (e^{i\theta})g_-\in \dom(A^\theta)$, we have the triple $(\dA, T, A^\theta)$ that satisfies Hypothesis \ref{setup} with the real von Neumann parameter $|\kappa|$ and is unitarily equivalent to the model triple $(\dot\cB, T_{\calB}^{|\kappa|} ,\cB)$.

Suppose $0\le\kappa<1$. We can follow the steps described in Appendix \ref{A2} (see also \cite[Section 4]{BMkT-2}) to construct a model L-system
\begin{equation}\label{e-59'}
\Theta_1= \begin{pmatrix} \dB_1&K_1&\ 1\cr \calH_+ \subset \calH \subset
\calH_-& &\dC\cr \end{pmatrix},
\end{equation}
corresponding to our model triple $(\dot \cB,   \whB ,\cB)$.
Here $\dB_1\in[\calH_+,\calH_-]$ is a  $(*)$-extension of $\whB $ such that $\RE\dB_1\supset \cB=\cB^*$ (see \eqref{e-17-real} under an appropriate normalization condition). Below we describe the construction of $\dB_1$. Relation \eqref{ddoomm14} of Hypothesis \ref{setup} implies that $g_+- g_-\in \dom(\cB)$ and $g_+-\kappa g_-\in \dom (\whB )$.
As it was shown in \cite[Section 4]{BMkT-2}
\begin{equation}\label{e-57'}
       \IM\dB_1=(\cdot,\chi_1)\chi_1,\quad \chi_1=\sqrt{\frac{1-\kappa}{1+\kappa}}\left(\frac{1}{2}\,\calR^{-1}g_+- \frac{1}{2}\,\calR^{-1}g_-\right),
    \end{equation}
and  $K_1 c=c\cdot \chi_1$, $K_1^*f=(f,\chi_1)$, $(f\in\calH_+)$.

Similarly, one can construct a model L-system complying with Hypothesis \ref{setup-1}. To do that we need to replace the operator $\cB$ defined on \eqref{nacha1-ap} in the model triple $(\dot \cB,   \whB ,\cB)$ with operator $\cB_1$ whose domain is given by
\begin{equation}\label{nacha1-a}
\dom(\cB_1)=\dom (\dot \cB)\dot +\linspan\left\{\,\frac{2(\cdot)}{(\cdot)^2 +1}\right \}.
\end{equation}
Then, as it was shown in \cite[Section 5]{BMkT-2}, the new model triple $(\dot \cB,   \whB ,\cB_1)$ is consistent with Hypothesis \ref{setup-1}. The corresponding to this model L-system is shown in \cite[Section 4]{BMkT-2} as
\begin{equation}\label{e-59''}
\Theta_2= \begin{pmatrix} \dB_2&K_2&\ 1\cr \calH_+ \subset \calH \subset
\calH_-& &\dC\cr \end{pmatrix}.
\end{equation}
Here $\dB_2\in[\calH_+,\calH_-]$ is another  $(*)$-extension of the same operator $\whB$  such that $\RE\dB_2\supset \cB_1=\cB_1^*$ (see \eqref{e-32-real}).  Hypothesis \ref{setup-1} implies that $g_++ g_-\in \dom(\cB_1)$. Consequently,
\begin{equation}\label{e-57''}
       \IM\dB_2=(\cdot,\chi_2)\chi_2,\quad \chi_2=\sqrt{\frac{1+\kappa}{1-\kappa}}\left(\frac{1}{2}\,\calR^{-1}g_++ \frac{1}{2}\,\calR^{-1}g_-\right),
    \end{equation}
and  $K_2 c=c\cdot \chi_2$, $K_2^*f=(f,\chi_2)$, $(f\in\calH_+)$.

A model L-system can be constructed for a model operator $\cB_U$ defined on
\begin{equation}\label{e-187-new}
\dom(\cB_U)=\dom (\dot \cB)\dot +\linspan\left\{\,\frac{1}{\cdot -i}+ U\frac{1}{\cdot +i}\right \},
\end{equation}
where $U$ is a complex number such that $|U|=1$. Hence we have that $g_++U g_-\in \dom(\cB_U)$. The model L-system then takes a form
\begin{equation}\label{e-59}
\Theta= \begin{pmatrix} \dB&K&\ 1\cr \calH_+ \subset \calH \subset
\calH_-& &\dC\cr \end{pmatrix}.
\end{equation}
Here $\dB\in[\calH_+,\calH_-]$ is a  $(*)$-extension of $\whB $ such that $\RE\dB\supset \cB_U=\cB_U^*$.
Construction of $\dB$ is governed by formulas \eqref{e3-40} and \eqref{e-21-star} of Appendix \ref{A2}.% where the parameter $H$ in \eqref{e3-39-new} depends on the value of $U$ from the formula \eqref{e-187-new} and $0\le\kappa<1$.

If the value of $\kappa$ of $\whB$ in \eqref{nacha3-ap} is complex and $\kappa=|\kappa|e^{i\theta}$, then we can change the deficiency basis $g_\pm$ to a new one $g_+$ and $(e^{i\theta})g_-$ as explained in Remark \ref{r-12}. Then, following the steps above developed for the real value of $\kappa$,  we can construct L-system $\Theta$ of the form \eqref{e-59} such that $\dB$ is a ($*$)-extension of $\whB$ and $\cB_U$ is the quasi-kernel of $\RE\dB$ for some $U$.

We note that all model L-systems constructed above are minimal since the symmetric operator $\dot \cB$ of the form \eqref{nacha2-ap} is prime \cite{MT-S}.
%%%%%%%%%%%%%%%

%%%%%%%%%%%%%%

\end{document}